\theoremstyle{plain}
\newtheorem{thm}{\protect\theoremname}[section]
  \theoremstyle{plain}
  \newtheorem{prop}[thm]{\protect\propositionname}
  \theoremstyle{plain}
  \newtheorem{cor}[thm]{\protect\corollaryname}
  \theoremstyle{definition}
  \newtheorem{defn}[thm]{\protect\definitionname}
  \theoremstyle{remark}
  \newtheorem{rem}[thm]{\protect\remarkname}
  \theoremstyle{plain}
  \newtheorem{lem}[thm]{\protect\lemmaname}
  \theoremstyle{remark}
  \newtheorem{claim}[thm]{\protect\claimname}
  \theoremstyle{remark}
  \newtheorem*{claim*}{\protect\claimname}
  \providecommand{\claimname}{Claim}
  \providecommand{\corollaryname}{Corollary}
  \providecommand{\definitionname}{Definition}
  \providecommand{\lemmaname}{Lemma}
  \providecommand{\propositionname}{Proposition}
  \providecommand{\remarkname}{Remark}
\providecommand{\theoremname}{Theorem}
\begin{document}
\global\long\def\R{\mathbb{R}}

\global\long\def\C{\mathbb{C}}

\global\long\def\Z{\mathbb{Z}}

\global\long\def\P{\mathbb{P}}

\global\long\def\F{\mathbb{F}}

\global\long\def\Dens{\mbox{{Dens}}}

\global\long\def\M{\mathcal{M}}

\global\long\def\sign{\mbox{sign}}

\global\long\def\Stab{\mbox{Stab}}

\global\long\def\Rad{\mbox{\ensuremath{\mathcal{R}}}}

\global\long\def\eps{\varepsilon}
 \global\long\def\alp{\alpha}
 \global\long\def\ome{\omega}
 \global\long\def\Ome{\Omega}
 \global\long\def\lam{\lambda}
 \global\long\def\Lam{\Lambda}
 \global\long\def\gam{\gamma}
 \global\long\def\to{\rightarrow}
 \global\long\def\qed{ Q.E.D. }
 \global\long\def\pt{\partial}
 \global\long\def\gr{{}\!^{\textbf{R}} Gr}
 \global\long\def\grc{{}\!^{\textbf{C}} Gr}
 \global\long\def\si{{}\!^{S}\int}
 \global\long\def\sio{{}\!^{S_{1}}\int}
 \global\long\def\sit{{}\!^{S_{2}}\int}
 \global\long\def\oid#1#2#3{\mbox{\ensuremath{{\cal #1}_{#2}^{#3}}}}
 \global\long\def\idn#1#2#3{\mbox{\ensuremath{{\bf #1}_{#2}^{#3}}}}
 \global\long\def\idrel#1#2#3{\mbox{\ensuremath{{\cal #1}\stackrel{1}{#2}{\cal #3}}}}
 \global\long\def\RR{\mathbb{R}}
 \global\long\def\CC{\mathbb{C}}
 \global\long\def\QQ{\mathbb{Q}}
 \global\long\def\NN{\mathbb{N}}
 \global\long\def\ZZ{\mathbb{Z}}
 \global\long\def\HH{\mathbb{H}}
 \global\long\def\DD{\mathbb{D}}
 \global\long\def\PP{\mathbb{P}}
 \global\long\def\fa{ f_{a} }
 \global\long\def\nuf{ \nu_{\fa} (\eps) }
 \global\long\def\bl{ balls}
 \global\long\def\an{ \oid A{}{} ( \CC^{n} ) }
 \global\long\def\as{ \oid A{}{} ( \CC^{s} ) }
\global\long\def\One{{1\hskip-2.5pt{\rm l}}}
 \swapnumbers 

\global\long\def\tphi{\tilde{\phi}}
 \global\long\def\tpsi{\tilde{\psi}}
 \global\long\def\tf{\tilde{f}}
 \global\long\def\tg{\tilde{g}}

\global\long\def\ca{{\cal A}}
 \global\long\def\cb{{\cal B}}
 \global\long\def\cc{{\cal C}}
 \global\long\def\cd{{\cal D}}
 \global\long\def\ce{{\cal E}}
 \global\long\def\cf{{\cal F}}
 \global\long\def\cg{{\cal G}}
 \global\long\def\ch{{\cal H}}
 \global\long\def\ci{{\cal I}}
 \global\long\def\cj{{\cal J}}
 \global\long\def\ck{{\cal K}}
 \global\long\def\cl{{\cal L}}
 \global\long\def\cm{{\cal M}}
 \global\long\def\cn{{\cal N}}
 \global\long\def\co{{\cal O}}
 \global\long\def\cp{{\cal P}}
 \global\long\def\cq{{\cal Q}}
 \global\long\def\car{{\cal R}}
 \global\long\def\cs{{\cal S}}
 \global\long\def\ct{{\cal T}}
 \global\long\def\cu{{\cal U}}
 \global\long\def\cv{{\cal V}}
 \global\long\def\cw{{\cal W}}
 \global\long\def\cx{{\cal X}}
 \global\long\def\cy{{\cal Y}}
 \global\long\def\cz{{\cal Z}}

\textheight=9in \topmargin=0pt\headheight=0pt\headsep=0pt \textwidth=6.5in
\oddsidemargin=0pt 
\global\long\def\inj{\hookrightarrow}
 \global\long\def\surj{\twoheadrightarrow}
 \global\long\def\vi{V^{\infty}}
 \global\long\def\vmi{V^{-\infty}}
 \global\long\def\vic{V_{c}^{\infty}}
 \global\long\def\vmic{V_{c}^{-\infty}}
 \global\long\def\vix{V^{\infty}(X)}
 \global\long\def\vmix{V^{-\infty}(X)}
 \global\long\def\vicx{V_{c}^{\infty}(X)}
 \global\long\def\vmicx{V_{c}^{-\infty}(X)}
 \global\long\def\supp{\operatorname{supp} \,}
 \global\long\def\htimes{\hat{\otimes}}
\textheight=9in \topmargin=0pt\headheight=0pt\headsep=0pt \textwidth=6.5in
\oddsidemargin=0pt 

\title{Convex valuations invariant under the Lorentz group}
\date{}

\author{Semyon Alesker and Dmitry Faifman%
\thanks{The authors were partially supported by ISF grants 701/08 and 1447/12%
}}
\maketitle
\begin{abstract}
We give an explicit classification of translation-invariant, Lorentz-invariant
continuous valuations on convex sets. We also classify the Lorentz-invariant
even generalized valuations.
\end{abstract}

\section{Introduction}

The main result of this paper is to give a complete classification
of translation invariant continuous valuations on convex sets in $\RR^{n}$
invariant under the connected component of the Lorentz group.

Let $\ck^{n}$ denote the family of convex compact subsets of $\RR^{n}$.
A (convex) valuation is a functional $\phi\colon\ck^{n}\to\CC$ which
satisfies the following additivity property 
\[
\phi(A\cup B)=\phi(A)+\phi(B)-\phi(A\cap B)
\]
whenever $A,B,A\cup B\in\ck^{n}$. A valuation is called continuous
if it is continuous with respect to the Hausdorff metric on $\ck^{n}$.

\hfill{}

Classification results are playing an important role in the valuations
theory and its applications to integral geometry since the fundamental
work of Hadwiger in the 1940's and 1950's. Probably the most famous
result in the area is Hadwiger's characterization \cite{hadwiger-book}
of continuous valuations on convex subsets of a Euclidean space invariant
under all isometries, i.e. translations and all orthogonal transformations,
as linear combinations of intrinsic volumes (see \cite{schneider-book}
for this notion); the subgroup of orientation preserving isometries
leads to the same list of invariant valuations. In recent years many
new classification results have been obtained for various classes
of valuations. Thus Klain \cite{klain} and Schneider \cite{schneider-simple}
have classified continuous translation invariant valuations which
are simple, i.e. vanish on convex sets of positive codimension. In
\cite{alesker-adv-2000} the first author have proven the following
general results: let $G$ be a compact subgroup of the linear group.
The subspace of $G$-invariant translation invariant continuous valuations
on convex sets is finite dimensional if and only if $G$ acts transitively
on the unit sphere; thus for such a group $G$ one may hope to get
certain finite classification. The problem to obtain such a classification
is under investigation of a few people in recent year. Notice that
the cases $G=O(n),SO(n)$ correspond to the Hadwiger theorem. The
next interesting case $G=U(n)$ was classified explicitly in geometric
terms by the first author \cite{Alesker-jdg} where also first applications
to Hermitian integral geometry were obtained. More thorough and complete
further study of $U(n)$-invariant valuations and Hermitian integral
geometry was done by Bernig and Fu \cite{bernig-fu-annals} and Fu
\cite{fu-jdg-algebra}. Several other cases of compact groups acting
transitively on the sphere were considered by Bernig \cite{bernig-sun},
\cite{bernig-exception}, \cite{bernig-quaternion}.

At the same time other classes of valuations were studied under weaker
assumptions on continuity but stronger assumptions on the symmetry
group, which usually was either $GL_{n}(\RR)$ or $SL_{n}(\RR)$.
Thus Ludwig and Reitzner \cite{ludwig-reitzner-99} have characterized
the affine surface area as the only (up to volume, Euler characteristic,
and non-negative multiplicative factor) upper semi-continuous valuation
invariant under affine volume preserving transformations. Other results
on $SL_{n}(\RR)$-invariant valuations were obtained again by Ludwig
and Reitzner \cite{ludwig-reitzner-annals2010}. Quite a few of classification
results in a different but related direction of convex body valued
valuations were obtained in \cite{ludwig-adv-math-2002}, \cite{ludwig-amer-j-math-2006},
\cite{schneider-schuster-imnr-2006}, \cite{schuster-wannerer}; see
also references therein.

\hfill{}

Let us now discuss in greater detail the main results of the present
paper. Let us fix on $\RR^{n}$ the Minkowski metric, i.e. sign indefinite
quadratic form $Q$ of signature $(n-1,1)$. In coordinates it is
given by $Q(x)=\sum_{i=1}^{n-1}x_{i}^{2}-x_{n}^{2}$. Let $O(n-1,1)$
denote the group of all linear transformations of $\RR^{n}$ preserving
$Q$. It is well known that $O(n-1,1)$ has four connected components.
Let us denote by $SO^{+}(n-1,1)$ the connected component of the identity.
Throughout the article, we refer to $SO^{+}(n-1,1)$ as the Lorentz
group.

Let us denote by $Val(\RR^{n})$ the space of all translation invariant
continuous valuations on $\RR^{n}$. For an integer $k$ let us denote
by $Val_{k}(\RR^{n})$ the subspace of $k$-homogeneous valuations
(a valuation $\phi$ is called $k$-homogeneous if $\phi(\lam K)=\lam^{k}\phi(K)$
for any $\lam\geq0$ and any convex compact set $K$). McMullen's
decomposition theorem \cite{mcmullen-decomp} says that 
\begin{eqnarray}
Val(\RR^{n})=\oplus_{k=0}^{n}Val_{k}(\RR^{n}).\label{E:mcmullen}
\end{eqnarray}
$Val_{k}(\RR^{n})$ can be decomposed further with respect to parity:
\[
Val_{k}(\RR^{n})=Val_{k}^{ev}(\RR^{n})\oplus Val_{k}^{odd}(\RR^{n}),
\]
where a valuation $\phi$ is called even (resp. odd) if $\phi(-K)=\phi(K)$
(resp. $\phi(-K)=-\phi(K)$) for any $K$.

It is easy to see that $Val_{0}(\RR^{n})$ is spanned by the Euler
characteristic, i.e. valuation which is equal to 1 on any convex compact
set. By a theorem of Hadwiger \cite{hadwiger-book}, $Val_{n}(\RR^{n})$
is spanned by the Lebesgue measure.

We denote by $Val(\RR^{n})^{SO^{+}(n-1,1)}$ the subspace of $SO^{+}(n-1,1)$-invariant
valuations, and similarly for subspaces of given parity and homogeneity.
McMullen's decomposition (\ref{E:mcmullen}) immediately implies 
\[
Val(\RR^{n})^{SO^{+}(n-1,1)}=\oplus_{k=0}^{n}(Val_{k}^{ev}(\RR^{n})^{SO^{+}(n-1,1)}\oplus Val_{k}^{odd}(\RR^{n})^{SO^{+}(n-1,1)})
\]

Our first main result classifies odd $SO^{+}(n-1,1)$-invariant valuations. 
\begin{thm}
For $0\leq k\leq n$, $k\ne n-1$, $Val_{k}^{odd}(\RR^{n})^{SO^{+}(n-1,1)}=0$.
\\
For $k=n-1$, 
\begin{eqnarray*}
\dim Val_{k}^{odd}(\RR^{n})^{SO^{+}(n-1,1)}=\left\{ \begin{array}{cc}
1, & n\geq3\\
2, & n=2
\end{array}\right.
\end{eqnarray*}
The last space will be described explicitly. 
\end{thm}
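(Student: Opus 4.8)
The plan is to use the Klain--Schneider framework for odd valuations, together with the fact that the Lorentz group $SO^{+}(n-1,1)$ acts on $\RR^n$ with the origin as the unique fixed point, so that $SO^{+}(n-1,1)$-invariance of a translation-invariant valuation is a condition on the associated Klain/Schneider data on the sphere. Recall that by the Klain embedding theorem an even $k$-homogeneous valuation is determined by its Klain function on the Grassmannian $Gr_k(\RR^n)$; for odd valuations the analogous object (Schneider, Bernig--Br{\"o}cker) is a section of a line bundle over the partial flag manifold of pairs (oriented $k$-plane, line in it), or equivalently the cosine/sine transform data, and the crucial input is that the only $SO^{+}(n-1,1)$-orbits to be understood on these flag manifolds are governed by the action on the isotropic cone. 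So the first step is to reduce the statement to a representation-theoretic count: by \cite{alesker-adv-2000} and McMullen's decomposition one knows each $Val_k^{odd}(\RR^n)$ is an (infinite-dimensional) continuous $GL_n(\RR)$-representation, and one restricts to $SO^{+}(n-1,1)$ and computes the dimension of the invariants.

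The key steps, in order: (1) Handle the trivial ranges $k=0$ and $k=n$ immediately --- $Val_0$ and $Val_n$ are spanned by the Euler characteristic and Lebesgue measure, both even, so the odd part is zero; this also disposes of $n=1$. (2) For $1\le k\le n-1$ pass to the sphere: an odd valuation in $Val_k^{odd}(\RR^n)$ corresponds (via the normal cycle / Crofton-type description, or via the Schneider description of odd valuations) to a generalized function, or a translation-invariant differential form, on the sphere bundle, and $SO^{+}(n-1,1)$-invariance forces this datum to be concentrated, in a suitable sense, on the set of isotropic directions for $Q$, i.e.\ the light cone $\{Q=0\}$, since off the cone the Lorentz orbits are the level sets $\{Q=c\}$ which are noncompact and carry no invariant measure of the required homogeneity. (3) Analyze the light cone: the stabilizer in $SO^{+}(n-1,1)$ of an isotropic line acts on the relevant fiber, and one computes the dimension of invariant tensors there; this is where the exceptional value $k=n-1$ appears, because for hyperplane-type ($(n-1)$-homogeneous) valuations the relevant bundle over the light cone admits exactly one (for $n\ge 3$) or two (for $n=2$) invariant sections, while for $1\le k\le n-2$ there are none. (4) For $k=n-1$ exhibit the invariant valuation(s) explicitly --- e.g.\ integrate over the boundary of a convex body the ``area measure'' weighted by a Lorentz-invariant density built from the position of the outer normal relative to the light cone (for $n=2$ the two-dimensional space is spanned by two such constructions, reflecting the two families of null lines in the Minkowski plane) --- and verify they are genuine continuous valuations and linearly independent, completing the dimension count.

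The main obstacle is step (2)--(3): showing that Lorentz-invariance really does force the Schneider data to live on the light cone and then controlling precisely what survives there. The difficulty is that $SO^{+}(n-1,1)$ is noncompact, so there is no averaging, and the $S^{n-1}$-orbit structure is subtle near the light cone, where hyperbolic and ``parabolic'' behaviour meet; one must understand the asymptotics of the cosine/sine transform, or equivalently the Bernig--Br{\"o}cker differential-form description, along the degenerating orbits, and rule out distributions supported on the isotropic cone except for the single natural family occurring in codimension one. I expect this to require a careful local model of the $SO^{+}(n-1,1)$-action in coordinates adapted to a null line together with a homogeneity/degree bookkeeping that matches $Q$'s signature $(n-1,1)$ against the homogeneity degree $k$ of the valuation, which is exactly why $k=n-1$ is singled out.
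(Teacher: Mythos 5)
Your step (2) is where the argument breaks down, and the mechanism you propose is not the one that makes the theorem true. You claim that $SO^{+}(n-1,1)$-invariance forces the Schneider data to concentrate on the light cone because the off-cone orbits are noncompact and ``carry no invariant measure.'' But the Schneider datum of a continuous odd valuation is a \emph{continuous section} of a bundle over a Grassmannian (of $(k+1)$-planes $\Omega$, with fiber $\Gamma_{odd}(\widetilde{K}^{k+1,k}(\Omega))/L(\Omega)$), not a measure; a continuous invariant section on an open orbit is determined by a single stabilizer-invariant vector in one fiber, and noncompactness of the orbit is no obstruction to its existence. Indeed, in the even case the paper exhibits a $2$-dimensional space of invariant continuous Klain sections supported precisely on the closures of the two open orbits, so your heuristic, applied there, would give the wrong answer. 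Moreover, a continuous section supported on the light cone (a nowhere-dense set) is automatically zero, so ``concentration on the cone'' cannot be the source of the surviving valuations at $k=n-1$ either.

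The actual proof runs differently and you are missing its two key ideas. First, the vanishing on the positive-definite open orbit $M_{+}$ comes from \emph{oddness}: the stabilizer of a positive $\Omega$ acts transitively on cooriented hyperplanes of $\Omega$, so an odd section must vanish there; on the mixed-signature open orbit $M_{-}$ the stabilizer-orbit structure of cooriented hyperplanes (in particular, that the mixed-signature hyperplanes form a single orbit \emph{including coorientation reversal} when $k+1\le n-1$) kills part of the fiber, and the surviving piece --- a density on one positive $k$-subspace --- is then eliminated by matching continuously against the vanishing on $M_{0}$: the limit must be a linear functional vanishing on the light cone of $\R^{k+1}$, hence zero. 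This boundary-matching step, not support considerations, is what distinguishes $1\le k\le n-2$ from $k=n-1$ (where $\Omega=V$ is a single fiber and there is no degeneration to match against). Second, since the Schneider fiber is a quotient by the linear sections $L(\Omega)$, one must lift an invariant section of the quotient to an invariant section upstairs; this requires the vanishing of the first continuous group cohomology $H_{c}^{1}(\Stab^{+}(\Omega);\Omega)$, computed via Hochschild--Mostow and relative Lie algebra cohomology. Your proposal contains no trace of either the coorientation/oddness analysis or the cohomological lifting, so as written it does not yield the theorem.
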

The proof of this result relies on Schneider's imbedding theorem and
makes use of Lie group continuous cohomology as one of the tools to
show that the Schneider bundle has no non-zero continuous $SO^{+}(n-1,1)$-invariant
sections for $1\leq k\leq n-2$.

\hfill{}

Our second main result classifies even $SO^{+}(n-1,1)$-invariant
valuations. Notice first of all that by the above discussion $0$-
and $n$-homogeneous valuations are proportional to the Euler characteristic
and Lebesgue measure, respectively. In particular they are even and
$SO^{+}(n-1,1)$-invariant.\\
For the remaining degrees of homogeneity, namely $1\leq k\leq n-1$,
the classification consists of two parts. First, we define and classify
the invariant generalized valuations. Roughly speaking, this is the
completion of the space of smooth valuations with respect to a certain
weak topology that is defined using the product structure on valuations
(see subsection \ref{sub:Lorentz-invariant-generalized-valuations}
for precise definitions). The space of generalized valuations naturally
contains the continuous valuations as a dense subspace. We then analyze
which of the invariant generalized valuations are in fact continuous.
The following two theorems summarize our results:
\begin{thm}
For all $1\leq k\leq n-1$, the space of $k$-homogeneous, even, $SO^{+}(n-1,1)$-invariant
generalized valuations is 2-dimensional. Those spaces will be described
explicitly.
\end{thm}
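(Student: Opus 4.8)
The plan is to use the Klain--Schneider picture together with the Alesker--Bernig--Schuster description of generalized valuations via their Klain sections on the Grassmannian. For $1\le k\le n-1$, an even generalized valuation $\phi$ of degree $k$ is determined by its Klain function, which is now a generalized section $\mathrm{Kl}_\phi$ of the Klain line bundle over the real Grassmannian $Gr_k(\RR^n)$ of $k$-planes; the map $\phi\mapsto\mathrm{Kl}_\phi$ is injective on the even part and its image is described by the cosine transform (more precisely, the image is the subspace of generalized sections that lie in the image of the cosine transform $T_k$ extended to distributions). The group $SO^+(n-1,1)$ acts on $Gr_k(\RR^n)$, and $SO^+(n-1,1)$-invariant even generalized valuations correspond to $SO^+(n-1,1)$-invariant generalized sections of the Klain bundle lying in the relevant image. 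So the first step is to reduce the problem to: \emph{classify $SO^+(n-1,1)$-invariant generalized sections of the Klain line bundle on $Gr_k(\RR^n)$, then intersect with the image of the cosine transform.}

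The second step is to analyze the orbit structure of $SO^+(n-1,1)$ on $Gr_k(\RR^n)$. A $k$-plane is classified, up to the Lorentz group, by the signature of the restriction of $Q$ to it: the restriction can be positive definite (spacelike, an open orbit), degenerate of rank $k-1$ (lightlike/null, a lower-dimensional orbit), or of signature $(k-1,1)$ (timelike, another open orbit). Thus there are exactly two open orbits — the spacelike $k$-planes and the timelike $k$-planes — separated by the closed hypersurface of null $k$-planes. On each open orbit the stabilizer is (up to components) $SO(k)\times SO(n-k-1,1)$ or $SO(k-1,1)\times SO(n-k)$; in both cases the stabilizer is noncompact with no invariant vectors obstruction, and one checks the Klain line bundle admits a (unique up to scalar) invariant \emph{smooth} section on each open orbit — this gives a $2$-dimensional space of invariant sections over the union of the open orbits. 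The key analytic point is then to show that each such smooth section on an open orbit extends to an invariant \emph{generalized} section across the null locus, and that these two extensions span the whole space of invariant generalized sections, i.e. there are no extra invariant distributions supported on the null $k$-planes. This is the kind of statement one proves by examining the normal bundle to the null orbit and the action of the stabilizer of a null plane on the conormal directions — showing the transversal jets carry no invariant functionals — so that the only invariant generalized sections are the ``interior'' ones; this is where I expect the bulk of the technical work, and it is the main obstacle.

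The third step is to verify that both of these invariant generalized Klain sections actually arise from generalized valuations, i.e. lie in the image of (the distributional extension of) the cosine transform $T_k$. Here one uses that $T_k$ is, up to a scalar, $SO(n)$-equivariant and, after inverting it on the nondegenerate part of its spectral decomposition (it is an isomorphism on the relevant isotypic components, the obstruction being only finitely many ``missing'' $SO(n)$-types), the preimage of a nice section is again a well-defined generalized section; since the two invariant sections we found are, away from the null hypersurface, real-analytic, a microlocal/ellipticity argument for $T_k$ shows they are in the image. (Equivalently, one can produce explicit invariant generalized valuations directly — e.g. by integrating an invariant density against the ``pseudo-Euclidean'' analogues of the intrinsic volumes on spacelike resp. timelike directions — and check these two are linearly independent and exhaust the invariant Klain sections by the dimension count from Step 2.) Combining: the space of $k$-homogeneous even $SO^+(n-1,1)$-invariant generalized valuations injects into the $2$-dimensional space of invariant generalized Klain sections and surjects onto it, hence is exactly $2$-dimensional. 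Finally, the explicit description promised in the statement is read off from the two constructions — one section ``supported toward spacelike'' $k$-planes, one ``toward timelike'' $k$-planes — giving the claimed basis.
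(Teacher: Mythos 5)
Your plan follows the paper's proof in all essentials: the upper bound comes from classifying $SO^{+}(n-1,1)$-invariant generalized sections of the Klain bundle (two open orbits, spacelike and timelike, each carrying a unique invariant section because the stabilizer acts on the fiber $D(\Lambda)$ with $|\det|=1$, plus a jet/normal-bundle computation showing that no invariant generalized sections are supported on the null locus --- in the paper this is Proposition \ref{prop:SectionsWithSubmanifoldSupport} together with the observation that a boost scales the fiber of $D^{*}(N_{\Lambda}M)\otimes Sym^{q}(N_{\Lambda}M)\otimes K^{n,k}|_{\Lambda}$ by $\kappa^{q+3/2}\neq1$), and the lower bound from realizing both invariant Klain sections as generalized valuations via the distributional cosine/Crofton transform. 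The one step whose primary justification would not survive scrutiny is your Step 3: the cosine transform is not elliptic, and for $2\le k\le n-2$ its kernel contains \emph{infinitely} many $SO(n)$-types (all highest weights with $|\lambda_{2}|\geq4$), not finitely many; consequently real-analyticity of a section away from the null hypersurface says nothing about membership in the image --- a smooth function lying in a killed isotypic component is real-analytic everywhere yet not in the image. The mechanism that actually works, and the one the paper uses, is that an $SO^{+}(n-1,1)$-invariant generalized section is in particular $SO(n-1)$-invariant, hence concentrated on the spherical-harmonic types $(d,0,\dots,0)$, on all of which the cosine transform is nonzero; thus $T_{n-k,k}$ restricts to an isomorphism between the $SO(n-1)$-invariant generalized Crofton measures and the $SO(n-1)$-invariant generalized Klain sections, and surjectivity onto the two-dimensional space is automatic. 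Your parenthetical fallback --- constructing the two invariant generalized Crofton measures explicitly --- is precisely what the paper does, via the meromorphic family $\cos_{\pm}^{\lambda}(2\alpha)$ specialized at $\lambda=-\frac{n+1}{2}$ and its residues; this explicit route has the added payoff of revealing that for odd $n$ one of the two invariant Crofton measures is supported on the light cone, a phenomenon invisible from the Klain side and needed later in the continuity analysis.
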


\begin{thm}
For $1\leq k\leq n-2$, $Val_{k}^{ev}(\RR^{n})^{SO^{+}(n-1,1)}=0$.
For $k=n-1$, $\dim Val_{k}^{ev}(\RR^{n})^{SO^{+}(n-1,1)}=2$. This
space will be described explicitly. 
\end{thm}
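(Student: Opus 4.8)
The plan is to combine the upper bound coming from the classification of invariant \emph{generalized} valuations (the preceding theorem) with a direct study, via the Klain embedding, of the continuous invariant valuations, treating the ranges $1\le k\le n-2$ and $k=n-1$ separately. First, $Val_{k}^{ev}(\RR^{n})$ embeds $SO^{+}(n-1,1)$-equivariantly (and densely) into the space of even $k$-homogeneous generalized valuations, whose invariant subspace is $2$-dimensional by the preceding theorem; hence $\dim Val_{k}^{ev}(\RR^{n})^{SO^{+}(n-1,1)}\le2$ for all $1\le k\le n-1$, and it remains to show that this dimension is $0$ when $1\le k\le n-2$ and is at least $2$ when $k=n-1$.

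Next I would translate the problem into the Klain picture. Recall the injective, $GL_{n}(\RR)$-equivariant Klain embedding $\mathrm{Kl}\colon Val_{k}^{ev}(\RR^{n})\hookrightarrow C(Gr_{k}(\RR^{n}),L)$, where $L$ is the line bundle with fiber $\Dens(E)$ over $E\in Gr_{k}(\RR^{n})$; a valuation is $SO^{+}(n-1,1)$-invariant precisely when its Klain section is. A $k$-subspace $E$ falls into one of three $SO^{+}(n-1,1)$-orbit types according to $Q|_{E}$: positive definite (the open orbit $\co_{+}$ of spacelike subspaces), of signature $(k-1,1)$ (the open orbit $\co_{-}$ of timelike subspaces), or degenerate (the closed null locus $\cn$); both $\co_{+}$ and $\co_{-}$ are nonempty for $1\le k\le n-1$. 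The stabilizer of $E\in\co_{\pm}$ maps into the orthogonal group $O(Q|_{E})$, whose elements have absolute determinant one and hence act trivially on $\Dens(E)$; therefore an invariant section over the dense open set $\co_{+}\cup\co_{-}$ is, on each open orbit, a constant multiple of the intrinsic density $|\mathrm{vol}_{Q|_{E}}|$. Both resulting global sections of $L$ — namely $E\mapsto|\mathrm{vol}_{Q|_{E}}|$ and $E\mapsto\sign(\det Q|_{E})\,|\mathrm{vol}_{Q|_{E}}|$ — are continuous on all of $Gr_{k}(\RR^{n})$ (they depend continuously on the quadratic form $Q|_{E}$, which vanishes along the null locus) and they are linearly independent. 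Thus the space $S_{k}$ of continuous $SO^{+}(n-1,1)$-invariant sections of $L$ is $2$-dimensional for every $1\le k\le n-1$, these two sections being a basis, and the entire problem becomes: which elements of $S_{k}$ lie in the image of $\mathrm{Kl}$?

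For $k=n-1$ I would show $S_{n-1}\subseteq\mathrm{Im}(\mathrm{Kl})$ by explicit construction. Using the description of $Val_{n-1}^{ev}(\RR^{n})$ by surface area measures, $\phi_{f}(K)=\int_{S^{n-1}}f\,dS_{n-1}(K,\cdot)$, one solves the linear condition on the even continuous function $f$ that makes $\phi_{f}$ invariant — equivalently, one represents the two sections of $S_{n-1}$ by convergent Crofton integrals over the families of affine lines with spacelike, resp.\ timelike, direction, equipped with their essentially unique $SO^{+}(n-1,1)$-invariant measures; linear independence of the two valuations follows by evaluating on, say, a Euclidean ball and on a box elongated along the $x_{n}$-axis, giving $\dim Val_{n-1}^{ev}(\RR^{n})^{SO^{+}(n-1,1)}=2$. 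For $1\le k\le n-2$ one must instead prove $S_{k}\cap\mathrm{Im}(\mathrm{Kl})=0$, and this is the heart of the matter. Here I would pass to the Crofton side via the identification $Val_{k}^{ev}(\RR^{n})\cong Val_{n-k}^{ev}(\RR^{n})$, under which a Klain function corresponds to a Crofton distribution on the affine Grassmannian of $(n-k)$-planes: the two candidate sections correspond to the would-be invariant Crofton integrals over the spacelike and the timelike families of affine $(n-k)$-planes, and one shows that for $n-k\ge2$ neither represents a continuous valuation — the density of the relevant invariant measure acquires a non-integrable singularity over the null directions, and there is no further invariant Crofton datum. Equivalently, on the Klain side, the square-root (corner) behaviour of $|\mathrm{vol}_{Q|_{E}}|$ transverse to the null locus is incompatible with the regularity a genuine Klain function must have once the codimension $n-k$ is at least $2$. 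I expect this local analysis near the null locus — pinpointing why codimension one is permissive and codimension $\ge2$ is not — to be the main obstacle; the remaining ingredients are the preceding theorem and standard properties of the Klain and Crofton maps.
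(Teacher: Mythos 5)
Your setup reproduces the paper's skeleton: the upper bound $\dim\le 2$ from the classification of invariant generalized valuations, the identification of the $2$-dimensional space of invariant continuous Klain sections (your basis $|\mathrm{vol}_{Q|_E}|$, $\sign(\det Q|_E)\,|\mathrm{vol}_{Q|_E}|$ spans the same space as the paper's sections supported on the closures of the spacelike and timelike orbits — though note that continuity of these sections across the null locus is itself a nontrivial computation, carried out in Proposition \ref{prop:KlainSectionsAre2Dimensional}, not an immediate consequence of "continuous dependence on $Q|_E$"), and the explicit realization for $k=n-1$ via surface area measures, which is exactly Corollary \ref{cor:Lorentz_surface_area}.

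The genuine gap is the case $1\le k\le n-2$, which you correctly identify as the heart of the matter but do not prove, and the two heuristics you offer for closing it would both fail. First, the non-integrability of the naive invariant Crofton density over the null directions is not an obstruction: the paper constructs bona fide invariant \emph{generalized} Crofton measures $f_{n,k}^{\pm}$ (regularizations of $|\cos 2\alpha|^{-\frac{n+1}{2}}$ by meromorphic continuation, plus light-cone-supported distributions for odd $n$), and these do define generalized valuations; so "there is no further invariant Crofton datum" is false. Second, and more seriously, the claim that the square-root behaviour of $|\mathrm{vol}_{Q|_E}|$ transverse to the null locus "is incompatible with the regularity a genuine Klain function must have" cannot be made to work: the paper shows (Section 6.1) that for $1\le k\le n-2$ these very Klain sections are continuous and lie in the \emph{closure} of the image of the Klain embedding — indeed this is how the paper deduces that the image is not closed. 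Hence no local regularity condition on the Klain section can detect the failure of continuity. The actual argument requires showing that the generalized valuations $Cr(f_{n,k}^{\pm})$ cannot be evaluated by continuity on a specific non-smooth body (the double cone): one first establishes (Lemma \ref{croftonMeasureDefinesValuationOnSmoothBodies} and Proposition \ref{prop:disjoint_singular_supports}) that any continuous extension would have to be computed by pairing the generalized Crofton measure with the $k$-support function whenever the singular supports are disjoint, and then carries out a delicate asymptotic analysis of the support functions of stretched double cones $C_{n,\epsilon}$, showing that $\phi(C_{n,\epsilon})$ either diverges or has unequal one-sided limits as $\epsilon\to 0^{\pm}$. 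None of this is present in, or recoverable from, your sketch.
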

Let us remark that the generalized Lorentz-invariant odd valuations
remain to be classified.\\
\\
The plan of the classification is as follows: First, we study $SO^{+}(n-1,1)$-invariant
continuous sections of the Klain bundle. For any $1\leq k\leq n-1$
we get a 2-dimensional space of $SO^{+}(n-1,1)$-invariant continuous
sections. By McMullen's theorem, this finishes the classification
of continuous $(n-1)$-homogeneous even valuations. For the remaining
$1\leq k\leq n-2$, it turns out that those sections correspond to
generalized valuations, which are not continuous. We construct the
corresponding generalized valuations explicitly (section \ref{sec:nonexistence of even Lorentz}),
and then proceed to show that they are discontinuous by proving that
they cannot be evaluated on the double cone (sections \ref{sec3: valuations of rotation invariant bodies},\ref{sec:5 real non existence}).
This last part of analysis involves lengthy technical arguments. Another
difficulty in comparison to the case of groups transitive on spheres
is that $SO^{+}(n-1,1)$-invariant valuations do not have to be smooth
in the sense of \cite{alesker-gafa-04}. \\

Finally, we give some applications of the classification. One is the
explicit construction of a continuous section of Klain's bundle that
lies in the closure of Klain's imbedding of the continuous valuations,
but outside the image of the imbedding. The non-closedness of the
image was proved very recently by Parapatits and Wannerer in \cite{Parapatis-Wennerer}
using different methods. Another corollary is the non-extendibility
by continuity of the Fourier transform from smooth to continuous valuations,
which also was proved in \cite{Parapatis-Wennerer} using different
methods. \\

\textbf{Acknowledgement.} We are grateful to José Miguel Figueroa-O'Farrill
who has explained to us Proposition \ref{prop:The-first-continuousCohomologyVanishes}
on computation of continuous group cohomology.

\section{Finding the Lorentz-invariant continuous sections of Klain's and
Schneider's bundles}

Let us introduce the notation used throughout the paper. For a linear
space $W$, $Vol(W)$ will denote the 1-dimensional space of volume
forms on $W$, and $D(W)$ the 1-dimensional space of densities on
$W$. $Gr(W,k)$ is the Grassmannian of $k$-subspaces of $W$. The
signature of a quadratic form $Q$ will be denoted $\sign Q$; We
write $SO^{+}(n-1,1)$ for the identity component of the full Lorentz
group $O(n-1,1)$. If a norm is given on $W$, $S(W)$ denotes the
unit sphere in $W$. For a vector bundle $E$ over a manifold $M$,
$\Gamma^{\pm\infty}(M,E)$, or sometimes simply $\Gamma^{\pm\infty}(E)$,
will denote the space of smooth resp. generalized sections.\\
\\
It is well-known that the even continuous valuations naturally form
a $GL(n)$-equivariant subspace of the continuous sections of Klain's
bundle, which is the line bundle of densities on $k$-dimensional
linear subspaces of $\R^{n}$, over $Gr(n,k)$. A similar result holds
for odd continuous valuations; the precise description is given below.
The definitions here are more technical and will be recalled later.
To find all Lorentz-invariant valuations, we begin by determining
all the invariant sections of those two bundles.\\
\\
In the following, $V$ stands for $\R^{n}$. Fix two symmetric 2-forms:
Euclidean $\langle u,v\rangle=\sum_{j=1}^{n}u_{j}v_{j}$ , and Lorentzian
$Q(u,v)=\sum_{j=1}^{n-1}u_{j}v_{j}-u_{n}v_{n}$. Let $(e_{j})$ be
the standard basis, and $\zeta(v):=\langle v,e_{n}\rangle$. The unit
$n\times n$ matrix is denoted $I$.

\subsection{Klain's bundle $K^{n,k}$}

Let $\gamma_{n}^{k}$ be the tautological vector bundle over $Gr(n,k)$,
so that the fiber over $\Lambda\in Gr(\R^{n},k)$ is simply $\Lambda$;
and $K^{n,k}$ is the bundle of densities on its fibers, which is
naturally a $GL(n)$-line bundle.The Euclidean structure defines a
density in every subspace, i.e. we have a global section $Area\in\Gamma(K^{n,k})$,
$Area$ is the only $SO(n)$-invariant continuous section (up to scale),
and it defines a trivialization of the bundle. Denote by $SO^{+}(n-1,1)\subset GL(n)$
the connected component of the identity in the group of isometries
of $Q$. We will study $SO^{+}(n-1,1)$-invariant continuous sections
of $K^{n,k}$. 
\begin{prop}
\label{prop:Lorentz_area}Given a Lorentz-orthogonal family ($v_{1},...,v_{k})$
s.t. $Q(v_{i})=1$ for $i\leq k-1$, and $Q(v_{k})=\pm1$, and denoting
$z_{j}=\zeta(v_{j}),$ one has
\[
Area(v_{1},...,v_{k})^{2}=\Bigg\{\begin{array}{c}
1+2\sum_{j=1}^{k}z_{j}^{2},\,\, Q(v_{k})=1\\
2(z_{k}^{2}-\sum_{j=1}^{k-1}z_{j}^{2})-1,\,\, Q(v_{k})=-1
\end{array}
\]
\end{prop}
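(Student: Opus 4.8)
The plan is to identify $Area(v_1,\dots,v_k)^2$ with the determinant of the Euclidean Gram matrix $G$ of the family, $G_{ij}=\langle v_i,v_j\rangle$, and then to rewrite $G$ in terms of the Lorentzian form, which by hypothesis is diagonal on this family. The key elementary observation is the pointwise identity between the two bilinear forms: for all $u,v\in\RR^{n}$ one has $\langle u,v\rangle-Q(u,v)=2u_{n}v_{n}=2\zeta(u)\zeta(v)$, hence $G_{ij}=Q(v_i,v_j)+2z_iz_j$. Since $Q(v_i,v_j)=\eps_i\delta_{ij}$ with $\eps_i=1$ for $i\le k-1$ and $\eps_k=Q(v_k)=\pm1$, this says $G=D+2zz^{\mathsf T}$, where $D=\mathrm{diag}(\eps_1,\dots,\eps_k)$ and $z=(z_1,\dots,z_k)^{\mathsf T}$. (In particular the determinant of the Lorentzian Gram matrix is $\prod_i\eps_i=\pm1\ne0$, so the family is a basis of its span and $Area$ is genuinely being evaluated on a basis.)

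Next I would apply the matrix determinant lemma: $\det(D+2zz^{\mathsf T})=\det(D)\cdot\bigl(1+2z^{\mathsf T}D^{-1}z\bigr)$. Here $\det D=\prod_i\eps_i=\eps_k$ because $\eps_i=1$ for $i\le k-1$, and $D^{-1}=D$ since $\eps_i^{2}=1$, so $z^{\mathsf T}D^{-1}z=\sum_{i=1}^{k-1}z_i^{2}+\eps_kz_k^{2}$. Therefore
\[
Area(v_1,\dots,v_k)^{2}=\det G=\eps_k\Bigl(1+2\sum_{i=1}^{k-1}z_i^{2}+2\eps_kz_k^{2}\Bigr).
\]
Setting $\eps_k=1$ yields $1+2\sum_{j=1}^{k}z_j^{2}$; setting $\eps_k=-1$ yields $-1-2\sum_{j=1}^{k-1}z_j^{2}+2z_k^{2}=2\bigl(z_k^{2}-\sum_{j=1}^{k-1}z_j^{2}\bigr)-1$, which is exactly the asserted formula.

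There is no deep obstacle here: the entire content is the change of Gram matrix followed by the rank-one determinant update. The only points that require care are the factor $2$ in the identity $\langle u,v\rangle=Q(u,v)+2\zeta(u)\zeta(v)$, which traces back to the entry $-u_nv_n$ in $Q$ against $+u_nv_n$ in $\langle\cdot,\cdot\rangle$, and the sign bookkeeping when $Q(v_k)=-1$, where the overall factor $\eps_k=-1$ flips the expression into the stated hyperbolic form.
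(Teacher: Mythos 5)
Your proof is correct and follows exactly the paper's route: both identify $Area^2$ with $\det(\langle v_i,v_j\rangle)=\det(I_\pm+2zz^{\mathsf T})$ via the identity $\langle u,v\rangle=Q(u,v)+2\zeta(u)\zeta(v)$. The paper leaves the final determinant evaluation as ``straightforward''; you carry it out cleanly with the matrix determinant lemma, and the sign bookkeeping for $Q(v_k)=-1$ is right.
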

\begin{proof}
Use the identity
\[
Area(v_{1},...,v_{k})^{2}=det(\langle v_{i},v_{j}\rangle)=\det(Q(v_{i},v_{j})+2z_{i}z_{j})=\det(I_{\pm}+2zz^{T})
\]
where $I_{\pm}$ is a $k\times k$ diagonal matrix with entries $Q(v_{1}),...,Q(v_{k})$,
and $z=(z_{1},...,z_{k})^{T}$. The remaining verification is straightforward.\end{proof}
\begin{prop}
\label{prop:NoObstructionToExtend}Given $T\in SO^{+}(n-1,1)$, and
$\Lambda\in Gr(n,k)$ generic (i.e., $Q$ restricted to $\Lambda$
is non-degenerate), if $T(\Lambda)=\Lambda$ then $|\det T|_{\Lambda}|=1$.\end{prop}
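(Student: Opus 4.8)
The plan is to pass to the restriction of everything to $\Lambda$. Since $T\in SO^{+}(n-1,1)$ preserves the form $Q$ on all of $V=\R^{n}$ and $T(\Lambda)=\Lambda$, the induced linear automorphism $T|_{\Lambda}\colon\Lambda\to\Lambda$ (which is a bijection because $T$ is invertible and maps $\Lambda$ onto itself) preserves the restricted bilinear form $Q|_{\Lambda}$. By the genericity hypothesis, $Q|_{\Lambda}$ is non-degenerate, and this is exactly what makes a determinant computation conclusive.

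Concretely, I would fix an arbitrary basis of $\Lambda$, let $G$ denote the Gram matrix of $Q|_{\Lambda}$ and $S$ the matrix of $T|_{\Lambda}$ in that basis; the isometry relation becomes $S^{T}GS=G$. Taking determinants gives $(\det S)^{2}\det G=\det G$, and since $\det G\neq0$ we get $(\det S)^{2}=1$, hence $|\det(T|_{\Lambda})|=1$. The quantity $|\det(T|_{\Lambda})|$ is independent of the chosen basis, and it is precisely the factor by which $T$ rescales densities on $\Lambda$; this proves the claim.

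There is no real obstacle here beyond being attentive to the role of the genericity assumption: non-degeneracy of $Q|_{\Lambda}$ is exactly the hypothesis under which the step $(\det S)^{2}=1$ is valid, and it cannot be dropped in general. It is worth recording the consequence that motivates the name of the proposition: the stabilizer of a generic $\Lambda$ inside $SO^{+}(n-1,1)$ acts trivially on the line $D(\Lambda)$ of densities on $\Lambda$, so any density on $\Lambda$ canonically determined by $\Lambda$ alone (for instance the one built from $Q|_{\Lambda}$, or the Euclidean density $Area(\Lambda)$) is automatically fixed by that stabilizer --- hence there is no obstruction to extending such a density equivariantly along the $SO^{+}(n-1,1)$-orbit of $\Lambda$ to a continuous section of $K^{n,k}$ over the open set of generic subspaces.
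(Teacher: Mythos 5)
Your proof is correct and is essentially the paper's argument: the paper likewise observes that $T|_{\Lambda}$ is an isometry of the non-degenerate form $Q|_{\Lambda}$ and concludes $|\det T|_{\Lambda}|=1$; you have merely written out the Gram-matrix computation $S^{T}GS=G\Rightarrow(\det S)^{2}=1$ that the paper leaves implicit. Nothing further is needed.
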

\begin{proof}
Since $Q|_{\Lambda}$ is non-degenerate, and $T\in GL(\Lambda)$ preserves
$Q$, it follows that $|\det T|_{\Lambda}|=1$.\end{proof}
\begin{prop}
\label{prop:KlainSectionsAre2Dimensional}The space of $G=SO^{+}(n-1,1)$-invariant
continuous sections of $K^{n,k}$ is $2$-dimensional \end{prop}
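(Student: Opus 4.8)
The plan is to reduce the question to analysis on the open dense orbit of $SO^{+}(n-1,1)$ acting on $Gr(n,k)$, and then to argue that an invariant continuous section is determined by its values there together with a continuity constraint at the boundary. Concretely, Proposition~\ref{prop:Lorentz_area} already exhibits two candidate invariant sections: fixing the trivialization of $K^{n,k}$ by $Area$, an invariant continuous section corresponds to a continuous function $f\colon Gr(n,k)\to\CC$ with $f(T\Lambda)\cdot|\det T|_\Lambda| = f(\Lambda)$ for all $T\in SO^{+}(n-1,1)$; by Proposition~\ref{prop:NoObstructionToExtend} this is just $SO^{+}(n-1,1)$-invariance of $f$ on the generic locus. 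So I would first show the space of such $f$ is at most $2$-dimensional, and then exhibit two linearly independent ones (the constant section $Area$ itself, and a second section built from the discriminant-type quantity appearing in Proposition~\ref{prop:Lorentz_area}, i.e.\ something like $|Q|_\Lambda|^{1/2}$ measured against $Area$).

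The key step is understanding the orbit structure. The group $SO^{+}(n-1,1)$ acting on $k$-planes in $\R^n$ has finitely many orbits, stratified by $\sign(Q|_\Lambda)$: the planes on which $Q|_\Lambda$ is positive definite, those on which it has signature $(k-1,1)$, and the various degenerate strata where $Q|_\Lambda$ has a nontrivial null direction. The first two are open; I would check (using Witt's theorem, since $SO^{+}$ acts transitively on vectors of each fixed nonzero $Q$-value within each nappe, and on suitable frames) that $SO^{+}(n-1,1)$ acts transitively on each of the two open strata. Hence an invariant continuous $f$ is constant $=c_+$ on the positive-definite stratum and constant $=c_-$ on the signature-$(k-1,1)$ stratum, giving $\dim\le 2$ provided no relation is forced. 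The boundary/degenerate strata have positive codimension, so continuity does not a priori link $c_+$ and $c_-$; but one must confirm this. That confirmation is exactly where Proposition~\ref{prop:Lorentz_area} is decisive: the formula shows $Area^2$ extends continuously across the degenerate locus from either side to a common finite nonzero value there (it does not blow up), so the section $Area$ and, separately, a section that equals $Area$ on one open stratum and $-Area$ (or any other constant multiple) on the other both extend continuously — hence $c_+$ and $c_-$ are genuinely independent and $\dim=2$.

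I would organize it as follows. \textbf{Step 1:} Pick the $Area$-trivialization and reduce to continuous functions $f$ on $Gr(n,k)$ that are $SO^{+}(n-1,1)$-invariant on the generic locus (using Propositions~\ref{prop:NoObstructionToExtend} and the density of the generic locus plus continuity to get full invariance). \textbf{Step 2:} Describe the orbit stratification of $Gr(n,k)$ under $SO^{+}(n-1,1)$ by the isometry type of $Q|_\Lambda$; identify the two open strata and prove transitivity on each via Witt's extension theorem applied to Lorentz-orthonormal $k$-frames with the sign pattern of Proposition~\ref{prop:Lorentz_area}. \textbf{Step 3:} Conclude $f$ is locally constant on the union of open strata, hence takes at most two values $c_\pm$; this gives $\dim\le 2$. \textbf{Step 4:} Produce two linearly independent invariant continuous sections. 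The constant function $1$ gives $Area$. For the second, take the function that is $+1$ on the definite stratum and some other constant on the other open stratum, and verify it extends continuously across the degenerate strata — here I would lean on the explicit formulas in Proposition~\ref{prop:Lorentz_area}, which show both branches of $Area^2$ (hence of any locally constant multiple) limit to the same finite value on the boundary.

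The main obstacle is Step~4, i.e.\ genuinely verifying that an invariant section which is, say, $+1$ on one open stratum and a \emph{different} constant on the other actually extends to a continuous section over all of $Gr(n,k)$ rather than being forced to agree across the degenerate boundary. This requires analyzing the limiting behavior of $Area$-normalized densities as a generic $k$-plane degenerates (its restriction of $Q$ acquiring a null vector), and the two-sided limits from the two open strata along a path through the boundary. Proposition~\ref{prop:Lorentz_area} is tailored for exactly this computation, so the difficulty is bookkeeping rather than conceptual; nonetheless it is the step that distinguishes $\dim = 2$ from $\dim = 1$.
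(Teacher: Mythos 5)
Your overall skeleton (stratify $Gr(n,k)$ by $\sign Q|_{\Lambda}$, note two open orbits, get $\dim\le 2$ from transitivity there, then verify continuous extension across the degenerate stratum) is the same as the paper's, but the execution contains genuine errors that break both halves of the argument. First, the reduction in Step 1 is wrong: $Area$ is only $SO(n)$-invariant, not $SO^{+}(n-1,1)$-invariant, so under the $Area$-trivialization an invariant section corresponds to a function $f$ satisfying a nontrivial cocycle identity, not to an invariant function. Proposition \ref{prop:NoObstructionToExtend} only controls the case $T\Lambda=\Lambda$ (it says the stabilizer acts trivially on the fiber, which is what allows an invariant density to be \emph{defined} on each orbit); it says nothing about $|\det T|_{\Lambda}|$ for $T\Lambda\ne\Lambda$. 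Consequently your Step 3 conclusion that $f$ is locally constant on the open strata is false, and so is the claim that the constant function $1$ (i.e.\ $Area$ itself) gives an invariant section. The actual invariant sections, written against $Area$, are functions of the form $|\cos 2\alpha|^{1/2}$ supported on one open orbit (compare Corollary \ref{cor:Lorentz_surface_area}), which are very far from constant.

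Second, and more seriously for Step 4, the limiting behavior is asserted backwards. The content of Proposition \ref{prop:Lorentz_area} and of the paper's computation is that for a $Q$-orthonormal frame of $\Lambda$ one has $Area(v_{1},\dots,v_{k})^{2}=1/\sin 2\epsilon\to\infty$ as $\Lambda\to M_{0}$: the Euclidean volume of the $Q$-normalized frame blows up, it does not tend to a common finite nonzero value. It is precisely this blow-up that forces the invariant density to tend to $0$ relative to $Area$ at the degenerate stratum, so that each of the two sections extends continuously by zero to all of $Gr(n,k)$ and the two constants $c_{\pm}$ are genuinely independent. With your (incorrect) finite-limit claim the continuity analysis at $M_{0}$ would be a real issue rather than a resolved one, since a nonzero two-sided limit could in principle impose a relation between $c_{+}$ and $c_{-}$. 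So the decisive computation — that $Area^{2}$ of a $Q$-orthonormal basis diverges as the plane degenerates — is missing from your argument and is in fact contradicted by it.
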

\begin{proof}
1. The orbits of the action of $G$ on $Gr(n,k)$ are characterized
by the signature of the restriction of $Q$. The open orbits are $M_{+}=\{\Lambda:\sign Q|_{\Lambda}=(k,0)\}$
and $M_{-}=\{\Lambda:\sign Q|_{\Lambda}=(k-1,1)\}$. Together, $M_{+}\cup M_{-}$
are dense in $Gr(n,k)$. The remaining orbit is $M_{0}=\{\Lambda:signQ|_{\Lambda}=(k-1,0)\}$
(there are no 2 non-proportinal $Q$-orthogonal vectors on the light
cone).

2. Choose some fixed $\Lambda_{+}\in M_{+}$ and $\Lambda_{-}\in M_{-}$,
and fix arbitrary densities on them. By Proposition \ref{prop:NoObstructionToExtend},
they extend to an invariant section $\mu$ of $M_{+}\cup M_{-}$.
It remains to verify that $\mu$ admits a continuous $G-$invariant
extension to all $Gr(n,k)$. Let us show that $\mu(\Lambda)\to0$
as $\Lambda\to M_{0}$. For this, it is enough to take a $Q$-orthonormal
basis of $\Lambda$, denoted $v_{1},...,v_{k}$ and show that $Area(v_{1},...,v_{k})^{2}\to\infty$.

3. First, assume $M_{+}\ni\Lambda\to M_{0}$. 

Write $z=(z_{1},...,z_{k})$. Define $\epsilon$ by $\langle P_{\Lambda}e_{n},e_{n}\rangle=\cos(\pi/4+\epsilon)|P_{\Lambda}e_{n}|$,
where $P_{\Lambda}$ is the (Euclidean) projection onto $\Lambda$.
We assume $Q(v_{j})=1$ for $1\leq j\leq k$, Write $P_{\Lambda}e_{n}=\sum\alpha_{j}v_{j}$.
Then $\langle P_{\Lambda}e_{n}-e_{n},v_{i}\rangle=0,$ for all $i$,
i.e. $(I+2zz^{T})(\alpha)=z$. By Sherman-Morrison \cite{SM} formula,
\[
(I+2zz^{T})^{-1}=I-\frac{2zz^{T}}{1+2z^{T}z}I
\]
We will denote $A=Area(v_{1},...,v_{k})^{2}$, $B=z^{T}z=z_{1}^{2}+...+z_{k-1}^{2}+z_{k}^{2}$.
By Proposition \ref{prop:Lorentz_area}, $A=1+2B$. Then 
\[
\alpha=z-\frac{2zz^{T}z}{1+2z^{T}z}=\frac{1}{A}z
\]
Let us write $\cos^{2}(\pi/4+\epsilon)=1/2-\delta$. Then 
\[
\langle P_{\Lambda}e_{n},e_{n}\rangle^{2}=\cos^{2}(\pi/4+\epsilon)|P_{\Lambda}e_{n}|^{2}\Rightarrow\zeta(P_{\Lambda}e_{n})^{2}=(1/2-\delta)\Big(Q(P_{\Lambda}e_{n})+2\zeta(P_{\Lambda}e_{n})^{2}\Big)\Rightarrow
\]
\[
\Rightarrow(\sum\alpha_{j}z_{j})^{2}=(1/2-\delta)(\sum_{j=1}^{k}\alpha_{j}^{2})+(1-2\delta)(\sum\alpha_{j}z_{j})^{2}
\]

\[
\Rightarrow2\delta(\sum\alpha_{j}z_{j})^{2}=(1/2-\delta)(\sum_{j=1}^{k}\alpha_{j}^{2})
\]
Note that $\sum\alpha_{j}z_{j}=A^{-1}(z_{1}^{2}+...+z_{k-1}^{2}+z_{k}^{2})=\frac{B}{A}=\frac{A-1}{2A}=\frac{1}{2}-\frac{1}{2A}$,
and $\sum_{j=1}^{k}\alpha_{j}^{2}=\frac{B}{A^{2}}=\frac{A-1}{2A^{2}}$
. Thus 
\[
\delta(1-\frac{1}{A})^{2}=(1/2-\delta)\frac{1}{A}(1-\frac{1}{A})\Rightarrow\frac{1}{A}=\frac{\delta}{1/2-\delta}(1-1/A)
\]

\[
\Rightarrow A=\frac{1}{2\delta}=\frac{1}{\sin2\epsilon}
\]
Thus $Area(v_{1},...,v_{k})=\frac{1}{|\sin2\epsilon|^{1/2}}\to\infty$
as $\delta\to0$, i.e. $\mu(\Lambda)\to0$ as $M_{+}\ni\Lambda\to M_{0}$.
This proves the existence of a section supported on $M_{+}$.

4. Now assume $M_{-}\ni\Lambda\to M_{0}$. Write $z=(z_{1},...,z_{k})$.
Let $\langle P_{\Lambda}e_{n},e_{n}\rangle=\cos(\pi/4-\epsilon)|P_{\Lambda}e_{n}|$,
where $P_{\Lambda}$ is the orthogonal projection onto $\Lambda$.
We assume $Q(v_{j})=1$ for $1\leq j\leq k-1$, $Q(v_{k})=-1$. Write
$P_{\Lambda}e_{n}=\sum\alpha_{j}v_{j}$. Then $\langle P_{\Lambda}e_{n}-e_{n},v_{i}\rangle=0,$
for all $i$, i.e. $(I_{-}+2zz^{T})(\alpha)=z$. By Sherman-Morrison,
\[
(I_{-}+2zz^{T})^{-1}=I_{-}-\frac{2I_{-}zz^{T}I_{-}}{1+2z^{T}I_{-}z}
\]
We will denote $\tilde{z}=I_{-}z$. Again using Proposition \ref{prop:Lorentz_area},
we write $B=z^{T}I_{-}z=z_{1}^{2}+...+z_{k-1}^{2}-z_{k}^{2}$, $A=Area(v_{1},...,v_{k})^{2}=-1-2B$.
Then 
\[
\alpha=(I_{-}+2zz^{T})^{-1}z=I_{-}z-\frac{2}{1+2B}I_{-}zz^{T}I_{-}z=\tilde{z}-\frac{2B}{1+2B}I_{-}z=\frac{1}{1+2B}\tilde{z}
\]
That is,
\[
\alpha=-\frac{1}{A}\tilde{z}
\]
Let us write $\cos^{2}(\pi/4-\epsilon)=1/2+\delta$. Then 
\[
\langle P_{\Lambda}e_{n},e_{n}\rangle^{2}=\cos^{2}(\pi/4-\epsilon)|P_{\Lambda}e_{n}|^{2}\Rightarrow\zeta(P_{\Lambda}e_{n})^{2}=(1/2+\delta)\Big(Q(P_{\Lambda}e_{n})+2\zeta(P_{\Lambda}e_{n})^{2}\Big)\Rightarrow
\]
\[
\Rightarrow(\sum\alpha_{j}z_{j})^{2}=(1/2+\delta)(\sum_{j=1}^{k-1}\alpha_{j}^{2}-\alpha_{k}^{2})+(1+2\delta)(\sum\alpha_{j}z_{j})^{2}
\]

\[
\Rightarrow-2\delta(\sum\alpha_{j}z_{j})^{2}=(1/2+\delta)(\sum_{j=1}^{k-1}\alpha_{j}^{2}-\alpha_{k}^{2})
\]
Note that $\sum\alpha_{j}z_{j}=-A^{-1}(z_{1}^{2}+...+z_{k-1}^{2}-z_{k}^{2})=-\frac{B}{A}=\frac{A+1}{2A}=\frac{1}{2}+\frac{1}{2A}$,
and $\sum_{j=1}^{k-1}\alpha_{j}^{2}-\alpha_{k}^{2}=\frac{B}{A^{2}}=-\frac{A+1}{2A^{2}}$
. Thus 
\[
\delta(1+\frac{1}{A})^{2}=(1/2+\delta)\frac{1}{A}(1+\frac{1}{A})\Rightarrow\frac{1}{A}=\frac{\delta}{1/2+\delta}(1+1/A)
\]

\[
\Rightarrow A=\frac{1}{2\delta}=\frac{1}{\sin2\epsilon}
\]
Again $Area(v_{1},...,v_{k})=\frac{1}{|\sin2\epsilon|^{1/2}}\to\infty$
as $\delta\to0$, i.e. $\mu(\Lambda)\to0$ as $M_{-}\ni\Lambda\to M_{0}$.
Thus there is an invariant section supported on $M_{-}$,\end{proof}
\begin{cor}
\label{cor:Lorentz_surface_area}The space $Val_{n-1}^{ev}(\R^{n})^{SO^{+}(n-1,1)}$
is 2-dimensional, and consists of non-smooth sections. It is spanned
by $f_{S}$ and $f_{T}$ (standing for space-like and time-like) given
by 
\[
f_{T}(K)=\int_{S^{n-1}\cap\{Q\geq0\}}\sqrt{|\sin2\epsilon|}d\sigma_{K}(\omega)
\]
and similarly
\[
f_{S}(K)=\int_{S^{n-1}\cap\{Q\leq0\}}\sqrt{|\sin2\epsilon|}d\sigma_{K}(\omega)
\]
where $\epsilon$ denotes the angle between $\omega$ and the light
cone, and $\sigma_{K}(\omega)$ is the surface area measure of $K$.
\end{cor}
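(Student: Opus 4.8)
The plan is to transport the statement, through Klain's embedding, into Proposition~\ref{prop:KlainSectionsAre2Dimensional}, and then to recognise the two invariant Klain sections produced in its proof as $\mathrm{Kl}(f_S)$ and $\mathrm{Kl}(f_T)$. Recall from the discussion above that $\mathrm{Kl}$ realises $Val_{n-1}^{ev}(\R^n)$ as a $GL(n)$-equivariant \emph{subspace} of the continuous sections of $K^{n,n-1}$, by sending $\phi$ to the density $\phi|_\Lambda$ it restricts to on each hyperplane $\Lambda$ (well defined by Hadwiger's theorem in dimension $n-1$). For $k=n-1$ this subspace is everything: for a continuous even function $g$ on $S^{n-1}$ the functional $\phi_g(K)=\int_{S^{n-1}}g(\omega)\,d\sigma_K(\omega)$ is a continuous, translation invariant, even, $(n-1)$-homogeneous valuation, and since a convex body $M$ lying in a hyperplane $u^{\perp}$ has $\sigma_M=\mathrm{vol}_{n-1}(M)(\delta_u+\delta_{-u})$, one gets $\mathrm{Kl}(\phi_g)_{u^{\perp}}=2g(u)\cdot Area_{u^{\perp}}$; as $g$ ranges over all continuous even functions this exhausts all continuous sections. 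Hence $\mathrm{Kl}$ is a $GL(n)$-equivariant isomorphism of $Val_{n-1}^{ev}(\R^n)$ onto the continuous sections of $K^{n,n-1}$, so $Val_{n-1}^{ev}(\R^n)^{SO^+(n-1,1)}$ is isomorphic to the space of invariant continuous sections, which is $2$-dimensional by Proposition~\ref{prop:KlainSectionsAre2Dimensional}.

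Next I would read the two invariant sections off the proof of Proposition~\ref{prop:KlainSectionsAre2Dimensional}. Since $SO^+(n-1,1)$ acts transitively on $M_+$ and on $M_-$, an invariant section is determined by its value on a single $Q$-orthonormal frame, so in the Euclidean trivialisation it equals $\mathrm{const}\cdot Area(v_1,\dots,v_{n-1})^{-1}\cdot Area$ on each open orbit; by that proof $Area(v_1,\dots,v_{n-1})^{2}=1/|\sin 2\epsilon|$, so the invariant section supported on $\overline{M_{\pm}}$ is, up to a scalar, $\sqrt{|\sin 2\epsilon|}\cdot Area$ on $M_{\pm}$ and $0$ on the other open orbit, the continuity across $M_0$ being exactly steps~3--4 of that proof. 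Writing a hyperplane as $\Lambda=\omega^{\perp}$, one checks that $M_+$ consists of the hyperplanes with time-like Euclidean normal $\omega$ (equivalently $Q(\omega)\le 0$ on $S^{n-1}$, the space-like hyperplanes) and $M_-$ of those with space-like normal ($Q(\omega)\ge 0$, the time-like hyperplanes), and that the parameter $\epsilon$ attached to $\Lambda$ in that proof coincides with the angle between $\omega$ and the light cone. Substituting $g=\mathbf 1_{\{Q\le 0\}}\sqrt{|\sin 2\epsilon|}$ and $g=\mathbf 1_{\{Q\ge 0\}}\sqrt{|\sin 2\epsilon|}$ (both continuous, since $\epsilon\to 0$ on $\{Q=0\}$) into $\mathrm{Kl}(\phi_g)_{u^{\perp}}=2g(u)\cdot Area_{u^{\perp}}$ identifies $\mathrm{Kl}(f_S)$ and $\mathrm{Kl}(f_T)$, up to nonzero constants, with these two invariant sections; since $\mathrm{Kl}$ is an equivariant isomorphism, $f_S$ and $f_T$ are $SO^+(n-1,1)$-invariant continuous valuations, and their Klain sections being supported on the complementary closed sets $\overline{M_+}$, $\overline{M_-}$ they are linearly independent, hence form a basis of $Val_{n-1}^{ev}(\R^n)^{SO^+(n-1,1)}$.

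It remains to see that every nonzero element is non-smooth. A smooth translation-invariant valuation has a smooth Klain section (in the sense of \cite{alesker-gafa-04}), but the sections above, written in the Euclidean trivialisation, carry the factor $\sqrt{|\sin 2\epsilon|}$, which is continuous but fails to be $C^1$ across $M_0$; any nonzero combination $af_S+bf_T$ retains this square-root behaviour along $\overline{M_+}$ (if $a\neq0$) or along $\overline{M_-}$ (if $b\neq0$), so it is non-smooth.

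I expect the only genuinely non-routine point to be the notational reconciliation in the middle step: verifying that the intrinsically defined $\epsilon$ of Proposition~\ref{prop:KlainSectionsAre2Dimensional} equals the geometric angle between $\omega$ and the light cone, together with tracking which open orbit is space-like and which is time-like; everything else is bookkeeping with Klain's embedding and the surface area measure.
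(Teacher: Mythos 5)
Your proposal is correct and follows exactly the route the paper intends: the corollary is stated as an immediate consequence of Proposition \ref{prop:KlainSectionsAre2Dimensional}, using the McMullen/surface-area-measure description of $(n-1)$-homogeneous even valuations to identify $Val_{n-1}^{ev}$ with the continuous sections of $K^{n,n-1}$, and then reading off the two invariant sections as $\sqrt{|\sin 2\epsilon|}$ supported on the two closed orbits. Your filling-in of the details (the formula $\mathrm{Kl}(\phi_g)_{u^\perp}=2g(u)\cdot Area_{u^\perp}$, the matching of $M_\pm$ with the sign of $Q(\omega)$, the identification of $\epsilon$ with the angle to the light cone, and the failure of $C^1$-regularity of $\sqrt{|\sin 2\epsilon|}$ for non-smoothness) is accurate.
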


\subsubsection{Geometrical interpretations of the space $Val_{n-1}^{ev}(\R^{n})^{SO^{+}(n-1,1)}$}

This purpose of this subsection is to provide some geometrical intuition
into the valuations that we constructed. It will not be used in the
rest of the paper. 

Let us denote $H^{\pm}=\{x\in\R^{n}:Q(x,x)=\pm1\}$. Both $H^{+}$
and $H^{-}$ inherit a Lorentzian resp. Riemannian metric from $(\R^{n},Q)$.
Then $H^{-}\subset(\R^{n},Q)$ is the Minkowski model of hyperbolic
space, and similarly $H^{+}$ is the $(n-2,1)$ de Sitter space. The
valuations in $Val_{n-1}^{ev}(\R^{n})^{SO^{+}(n-1,1)}$ can be interpreted
as the surface area of $K$ with respect to $H^{\pm}$ in the following
sense:

Define the support functions $h_{H^{+}},h_{H^{-}}:S^{n-1}\to\R$ by
setting $h_{H^{\pm}}(\theta)$ equal to the distance from the origin
of the hyperplane $P_{\theta}$ with Euclidean normal equal to $\theta$
that is tangent to $H^{+}$ (resp. $H^{-}$). If no such hyperplane
exists, the value of $h_{H^{\pm}}(\theta)$ is set to $0$. Denoting
by $-\frac{\pi}{2}\leq\alpha\leq\frac{\pi}{2}$ the elevation angle
on $S^{n-1}$ relative to the spacelike coordinate hyperplane $(x_{1},...,x_{n-1})$,
these functions are given explicitly by 
\[
h_{H^{+}}(\omega)=\Bigg\{\begin{array}{cc}
\sqrt{|\cos2\alpha|} & |\alpha|\leq\pi/4\\
0 & |\alpha|<\pi/4
\end{array}
\]
\[
h_{H^{-}}(\omega)=\Bigg\{\begin{array}{cc}
\sqrt{|\cos2\alpha|} & |\alpha|\geq\pi/4\\
0 & |\alpha|<\pi/4
\end{array}
\]
Then we may think of $f_{T}$ informally as a mixed volume: 
\[
f_{T}(K)=V(K[n-1],H^{+}[1])=\int_{S^{n-1}}h_{H^{+}}(\omega)d\sigma_{K}(\omega)
\]
and similarly 
\[
f_{S}(K)=V(K[n-1],H^{-}[1])=\int_{S^{n-1}}h_{H^{-}}(\omega)d\sigma_{K}(\omega)
\]
\\
Another very similar description is the following. Assume $K$ is
smooth. The boundary $\partial K$ inherits from $(\R^{n},Q)$ a smooth
field of quadratic forms on all tangent spaces. Then $f_{S}(K)$ is
the total volume of the space-like part of $\partial K$ (that is,
where the form is positively defined), and similarly $f_{T}(K)$ is
the volume of the time-like part. \\
\\
There is also a relation between the $(n-1)$-homogeneous Lorentz-invariant
valuations, and the surface area in hyperbolic and de Sitter spaces.
More precisely, $f_{T}$ and $f_{S}$ correspond to the surface area
on $H^{-}$ and $H^{+}$, respectively, in the following sense. For
a set $A\subset H^{\pm}$, define $C_{A}=\{tx:0\leq t\leq1,x\in A\}$
the cone with base $A$. Denote by $Area_{H^{\mp}}$ the hyperbolic\textbackslash{}de
Sitter area on $H^{\mp}$, and $\phi_{H}^{\mp}$ is either $f_{T}$
or $f_{S}$, respectively. Observe that while $C_{A}$ is not a convex
body, one can nevertheless compute $f_{S}$ or $f_{T}$ on $C_{A}$
at least when $A$ is piecewise geodesic (and so given by a finite
collection of intersections of $H^{\pm}$ with hyperplanes in $\R^{n}$),
simply by applying the explicit formulas of Corollary \ref{cor:Lorentz_surface_area}.
\begin{prop}
Let $A\subset H^{\pm}$ be a polytope. If $A\subset H^{+}$, we further
assume it has spacelike boundary. Then 
\[
Area_{H^{\pm}}(\partial A)=\phi_{H}^{\pm}(C_{A})
\]
\end{prop}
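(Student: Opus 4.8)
The plan is to peel $C_{A}$ into its faces, reduce the identity to a single facet-by-facet statement, and settle that statement by an invariance argument plus one explicit computation. Write $\partial A = F_{1}\cup\dots\cup F_{m}$ for the facets of $A$; each $F_{i}$ lies in a totally geodesic hyperplane of $H^{\pm}$, which is the trace $H^{\pm}\cap W_{i}$ of a linear hyperplane $W_{i}\subset\R^{n}$ through $0$. Hence $C_{A}$ is a star-shaped (with respect to $0$), generally non-convex solid whose boundary consists of the curved face $A\subset H^{\pm}$ together with the flat faces $C_{F_{i}}\subset W_{i}$, where $C_{F_{i}}$ is the Euclidean cone with apex $0$ over $F_{i}$. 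As in the remark preceding the proposition, $\phi_{H}^{\pm}(C_{A})$ is the value of the integral of Corollary \ref{cor:Lorentz_surface_area} applied to the surface area measure of this piecewise smooth solid, $\sigma_{C_{A}} = \sum_{i=1}^{m}\operatorname{vol}_{n-1}(C_{F_{i}})\,\delta_{\nu_{i}} + G_{*}\mu_{A}$, where $\operatorname{vol}_{n-1}$ is Euclidean volume inside $W_{i}$, $\nu_{i}$ an outer Euclidean unit normal to $C_{F_{i}}$, $G$ the Euclidean Gauss map of $A$, and $\mu_{A}$ the Euclidean area on $A$; since the relevant integrand $h_{H^{\mp}}$ is even, the choice of $\pm\nu_{i}$ is immaterial.

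The curved face drops out. Writing $Q(u,v) = \langle Ju,v\rangle$ with $J = \operatorname{diag}(1,\dots,1,-1)$, one has $G(x) = \pm Jx/|x|$, hence $Q\bigl(G(x),G(x)\bigr) = Q(x,x)/|x|^{2} = \pm 1/|x|^{2}$. Thus for $A\subset H^{-}$ the Gauss image of the curved face lies in $\{Q<0\}\subset S^{n-1}$, where the integrand $h_{H^{+}}$ of $f_{T}=\phi_{H}^{-}$ vanishes; symmetrically, for $A\subset H^{+}$ it lies in $\{Q>0\}$, where the integrand $h_{H^{-}}$ of $f_{S}=\phi_{H}^{+}$ vanishes. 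This is exactly where the spacelike boundary hypothesis for $A\subset H^{+}$ is used: it forces each $Q|_{W_{i}}$ to be positive definite, so $H^{+}\cap W_{i}$ is a genuine round sphere and, because a linear hyperplane carrying a positive definite $Q|_{W_{i}}$ necessarily has $Q$-timelike Euclidean normal, $h_{H^{-}}(\nu_{i}) > 0$ and the flat faces really contribute. Consequently
\[
\phi_{H}^{\pm}(C_{A}) = \sum_{i=1}^{m}\operatorname{vol}_{n-1}(C_{F_{i}})\,h_{H^{\mp}}(\nu_{i}),
\]
and the proposition reduces to a single-facet identity: $\operatorname{vol}_{n-1}(C_{F_{i}})\,h_{H^{\mp}}(\nu_{i})$ is a universal constant times $Area_{H^{\pm}\cap W_{i}}(F_{i})$, which one then sums over $i$.

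To prove the single-facet identity, fix $W := W_{i}$ of dimension $m := n-1$ carrying the restricted Euclidean and $Q$-forms, and let $\Sigma\subset W$ be the relevant unit hypersurface: one sheet of $\{Q|_{W}=-1\}$ (a hyperbolic $(n-2)$-space) in the $H^{-}$ case, the sphere $\{Q|_{W}=1\}$ in the spacelike $H^{+}$ case; then $F_{i}\subset\Sigma$ and $Area_{H^{\pm}\cap W_{i}}(F_{i})$ is the intrinsic ($Q$-induced) area of $F_{i}$ in $\Sigma$. In radial coordinates $x = r\omega$ on the pertinent cone in $W$ (with $\omega\in\Sigma$, $r = \sqrt{|Q|_{W}(x,x)|}$) one has $C_{F_{i}}=\{0\le r\le 1\}$ over $F_{i}$, and Euclidean volume of $W$ factors as $r^{m-1}\,dr\wedge d\mu_{\mathrm{rad}}$ for a smooth measure $\mu_{\mathrm{rad}}$ on $\Sigma$, so $\operatorname{vol}_{n-1}(C_{F_{i}}) = \tfrac1m\,\mu_{\mathrm{rad}}(F_{i})$. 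The crucial point is soft: the identity component of the $Q|_{W}$-isometry group acts transitively on $\Sigma$ and, lying in $SL(W)$, preserves Euclidean volume, hence preserves $\mu_{\mathrm{rad}}$; it also preserves the intrinsic area. Two invariant measures on the homogeneous space $\Sigma$ are proportional, so $\mu_{\mathrm{rad}} = c(W)\cdot Area_{\Sigma}$ with $c(W)$ depending only on $W$. One then evaluates $c(W)$ at a single point of $\Sigma$ by a short linear-algebra computation: it comes out proportional to $|Q(\nu_{i},\nu_{i})|^{-1/2} = h_{H^{\mp}}(\nu_{i})^{-1}$, which, upon matching with the normalization fixed in Corollary \ref{cor:Lorentz_surface_area}, is precisely the single-facet identity.

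I expect the genuinely delicate parts to be: (i) making $\phi_{H}^{\pm}(C_{A})$ precise on the non-convex cone $C_{A}$ — which need not be polyconvex (for $A\subset H^{-}$ a convex subset of $C_{A}$ contains at most one point of the curved face, since the chord between two points of $A$ leaves $\{Q\ge-1\}\supset C_{A}$) — and justifying that it is governed by the face-wise surface area measure, so that the curved face legitimately contributes nothing; and (ii) the normalization bookkeeping in the last step (the constants in $\sigma_{C_{A}}$, in Corollary \ref{cor:Lorentz_surface_area}, in $h_{H^{\mp}}$, and in the Euclidean-versus-$Q$ geometry of $W_{i}$), together with the argument that $\nu_{i}$ is $Q$-timelike under the spacelike hypothesis. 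The face decomposition, the vanishing on the curved face, and the proportionality of invariant measures are routine.
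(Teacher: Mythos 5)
Your proposal is correct and follows essentially the same route as the paper: reduce by additivity to a single facet $F_i$ (noting the curved face of $C_A$ contributes nothing since its Gauss image lands where the integrand vanishes), and then identify the cone measure on $W_i\cap H^{\pm}$ with the intrinsic spherical/hyperbolic volume. The paper dismisses that last identification as ``well-known (or easily checked)''; your uniqueness-of-invariant-measures argument on the homogeneous space $\Sigma$ is precisely the intended check.
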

\begin{proof}
An $(n-2)$-dimensional face $F$ of $A$ lies on $\Lambda\cap H^{\pm}$
for $\Lambda\in Gr(n,n-1)$. By additivity of both sides, it suffices
to verify that $Area_{H^{\pm}}(F)=\phi_{H}^{\pm}(C_{F})$. For $H^{+}$,
by our assumption $\Lambda$ is space-like, so the statement is simply
that the cone measure on the sphere $\Lambda\cap H^{+}$ coincides
with the spherical volume on it. For $H^{-}$, $\Lambda$ is necessarily
timelike, and it is again well-known (or easily checked) that the
cone measure of the hyperboloid $\Lambda\cap H^{-}$ coincides with
the hyperbolic volume.
\end{proof}

\subsection{Schneider's bundle $S^{n,k}$}

For every non-oriented subspace $\Omega\subset V$ of dimension $k+1$,
consider the bundle of densities on the tautological bundle over the
space of $k$-dimensional cooriented subspaces $\Lambda\subset\Omega$,
denoted $\widetilde{K}^{k+1,k}(\Omega)$. Let $\Gamma_{odd}(\widetilde{K}^{k+1,k}(\Omega))$
denote the space of global sections which are odd w.r.t. coorientation
reversal of $\Lambda$, i.e. $\mu(\overline{\Lambda})=-\mu(\Lambda)$.
\\
\\
There is a $k+1$-dimensional linear subspace $L(\Omega)\subset\Gamma_{odd}(\widetilde{K}^{k+1,k}(\Omega))$,
consisting of sections that are defined by elements of $\Omega^{*}$.
The space $L(\Omega)\simeq\Omega\otimes D(\Omega)$ is defined as
follows: For any $k$-dimensional $\Lambda\subset\Omega$, $\Lambda^{\perp}\subset\Omega^{*}$,
and 
\[
D(\Lambda)\otimes D(\Omega/\Lambda)=D(\Omega)
\]
so 
\[
D(\Lambda)=D(\Omega)\otimes D(\Omega/\Lambda)^{*}=D(\Omega)\otimes D(\Lambda^{\perp})
\]
Any $v\in\Omega$ defines a density $|v|$ on $\Lambda^{\perp}\subset\Omega^{*}$
for all $\Lambda$, and so we define $\mu_{v\otimes d}(\Lambda)=\text{sign}(v,\Lambda)|v|\otimes d\in\Gamma_{odd}(\widetilde{K}^{k+1,k}(\Omega))$
for $v\otimes d\in\Omega\otimes D(\Omega)$. Here $\text{sign}(v,\Lambda)=\pm1$
is determined by the coorientation of $\Lambda$ (and $\text{sign}(v,\Lambda)=0$
for $v\in\Lambda$). The image of the map $v\otimes d\mapsto\mu_{v\otimes d}$
is denoted. Let $F_{\Omega}=\Gamma_{odd}(\widetilde{K}^{k+1,k}(\Omega))/L(\Omega)$
be the quotient.\\
\\
Schneider's bundle $S^{n,k}$ consists of the base space $Gr(V,k+1)$,
and fiber $F_{\Omega}$. The topology can be introduced by fixing
an orthonormal basis on $V$, which gives the identifications $\Gamma_{odd}(\widetilde{K}^{k+1,k}(\Omega))=C_{odd}(S(\Omega))$,
$L(\Omega)=\Omega^{*}=\Omega$, and $F_{\Omega}=\Omega^{\perp}\subset C_{odd}(S(\Omega))$,
the orthogonal complement taken in the $L_{odd}^{2}(S(\Omega))$ norm
. In particular, $F_{\Omega}$ inherits an inner product induced from
$L_{odd}^{2}(S(\Omega))$. Note that any global section $s\in\Gamma(S^{n,k})$
gives a continuous in $\Omega$ family of functions $\mu_{\Omega}\in C_{odd}(S(\Omega))$.
Schneider's imbedding gives for every odd $k$-homogeneous valuation
a global section $s\in\Gamma(S^{n,k})$.\\
\\
However, for a $G$-equivariant section $s$ (where $G$ is some group),
this lift is not a-priori $G$-equivariant. This is because the lift
is defined by an arbitrarily chosen Euclidean structure. \\
\\
We will classify the $G=SO^{+}(n-1,1)$-invariant sections of $S^{n,k}$. 
\begin{thm}
There are no odd $SO^{+}(n-1,1)$-invariant $k$-homogeneous valuations
for $1\leq k\leq n-2$. For $k=n-1$ and $n\geq3$, the space $Val_{k}^{-}(\R^{n})^{SO^{+}(n-1,1)}$
is 1-dimensional. Finally, the space $Val_{1}^{-}(\R^{2})^{SO^{+}(1,1)}$
is $2$-dimensional.\end{thm}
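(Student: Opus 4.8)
The plan is to invoke Schneider's imbedding theorem to reduce the problem to classifying the $G$-invariant continuous sections of the bundle $S^{n,k}$ ($G=SO^{+}(n-1,1)$), and then to analyse such a section orbit by orbit on the base $Gr(V,k+1)$. For $1\le k\le n-2$ the relevant orbit picture is: two open orbits $M_{+}$ (space-like $(k+1)$-planes) and $M_{-}$ (time-like ones), whose union is dense, with the degenerate orbit $M_{0}$ in the complement and $\overline{M_{\pm}}=M_{\pm}\cup M_{0}$; for $k=n-1$ the base $Gr(V,n)$ is a single point $V$. The fiber $F_{\Omega}=C_{odd}(S(\Omega))/L(\Omega)$ carries the natural stabiliser action, and over a plane on which $Q$ is nondegenerate this action — obtained by restricting valuations to bodies inside $\Omega$ — is by unimodular maps of $\Omega$ (Proposition~\ref{prop:NoObstructionToExtend}).

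First I would show an invariant section vanishes on $M_{+}$. The stabiliser of a space-like $\Omega$ surjects onto $SO(Q|_{\Omega})$, which is a maximal compact subgroup of $GL(\Omega)$ because $Q|_{\Omega}$ is positive definite. Hence $C_{odd}(S(\Omega))$, as the odd part of the $GL(\Omega)$-module of continuous $1$-homogeneous functions on $\Omega$, restricts to $SO(Q|_{\Omega})$ as the sum of the odd spherical harmonics, with $L(\Omega)$ the copy of the standard representation (the linear functions); so $F_{\Omega}$ has no nonzero invariant vector, and the section is $0$ on $M_{+}$, hence on $M_{+}\cup M_{0}$ by continuity. Second, over $M_{-}$ (resp. over $V$ when $k=n-1$) I would use the short exact sequence of $H$-modules
\[
0\longrightarrow L(\Omega)\longrightarrow C_{odd}(S(\Omega))\longrightarrow F_{\Omega}\longrightarrow 0,
\]
with $H$ the stabiliser and $L(\Omega)$ the standard representation of $SO^{+}(Q|_{\Omega})\cong SO^{+}(k,1)$ (resp. of $G$). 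Passing to $H$-invariants, $L(\Omega)^{H}=0$, so the count is governed by $C_{odd}(S(\Omega))^{H}$ and the kernel of $H^{1}_{\mathrm{cont}}(H,L(\Omega))\to H^{1}_{\mathrm{cont}}(H,C_{odd}(S(\Omega)))$. Proposition~\ref{prop:The-first-continuousCohomologyVanishes} (vanishing of the first continuous cohomology of the Lorentz group with coefficients in its standard representation) is what makes this manageable: it forces every invariant class in $F_{\Omega}$ to be represented by a genuinely $H$-invariant odd $1$-homogeneous function on $\Omega$, and such a function is built only from $Q|_{\Omega}$ and the time-orientation; for $k\ge 2$ (or $n\ge3$) the only nonzero one up to scale is $v\mapsto\sign(\zeta(v))\sqrt{\max(0,-Q(v,v))}$, while for $k=1$ there is also the space-like partner $v\mapsto\pm\sqrt{\max(0,Q(v,v))}$ (globally signed exactly because $\{Q>0\}$ is disconnected in $\R^{1,1}$).

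For $k=n-1$ this finishes it: the base is a point, and in top degree the odd valuations are precisely the odd surface-area integrals $K\mapsto\int_{S^{n-1}}g\,d\sigma_{K}$ with $g\in C_{odd}(S^{n-1})/(\text{linear})=F_{V}$, so $Val_{n-1}^{-}(\R^{n})^{G}=F_{V}^{G}$, which is $1$-dimensional for $n\ge3$, spanned by $K\mapsto\int_{S^{n-1}\cap\{Q\le0\}}\sign(\zeta)\sqrt{|\sin2\eps|}\,d\sigma_{K}(\omega)$ (with $\eps$ the angle to the light cone, as in Corollary~\ref{cor:Lorentz_surface_area}), and $2$-dimensional for $n=2$, the extra generator coming from the space-like partner. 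For $1\le k\le n-2$ one still has to rule out the a-priori invariant element over $M_{-}$: the honestly stabiliser-invariant normalisation of the generator — which because of the unimodular twist above is not just the naive restriction of $\sign(\zeta)\sqrt{\max(0,-Q)}$ — fails to extend continuously across $M_{0}$, so there is no nonzero invariant continuous section, and, Schneider's imbedding being injective, $Val_{k}^{-}(\R^{n})^{G}=0$.

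The main obstacle is precisely this last point — the $M_{-}$ analysis for $1\le k\le n-2$ — which requires both the non-elementary input of Proposition~\ref{prop:The-first-continuousCohomologyVanishes} and a careful examination of how the stabiliser-invariant fiber element behaves as a time-like plane degenerates to the light cone, in order to see that continuity across $M_{0}$ leaves no room for it. By contrast, the vanishing on $M_{+}$, the identification in top degree, and the exceptional $2$-dimensionality for $SO^{+}(1,1)$ (which at bottom is the elementary fact that $H^{1}_{\mathrm{cont}}((\R,+),\R^{2})=0$ for the boost action) are routine.
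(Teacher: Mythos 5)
Your overall architecture matches the paper's: Schneider's imbedding, the orbit decomposition $Gr(V,k+1)=M_{+}\cup M_{-}\cup M_{0}$, vanishing on $M_{+}$ by transitivity of the stabilizer on cooriented hyperplanes (your spherical-harmonics phrasing is equivalent), the lift over $M_{-}$ via $H_{c}^{1}(\Stab^{+}(\Omega);\Omega)=0$, and the top-degree identification with its $n=2$ exception. Those parts are fine.

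The gap is in the one step that carries the content for $1\leq k\leq n-2$, namely ruling out the surviving one-parameter family over $M_{-}$. You assert that the stabilizer-invariant fiber element ``fails to extend continuously across $M_{0}$'' and conclude from this that no invariant continuous section exists. Both halves of this are problematic. First, the invariant lift $\mu_{\Omega}\in\Gamma_{odd}(\widetilde{K}^{k+1,k}(\Omega))$ \emph{does} extend continuously to $M_{0}$: it is determined by an invariant density on the $Q$-positive $k$-subspaces, and exactly as in the Klain-bundle computation (Proposition \ref{prop:KlainSectionsAre2Dimensional}) that density decays like $|\sin2\eps|^{1/2}$ and extends by zero to the degenerate subspaces, so $\mu_{\Omega}\to\mu_{\infty}$ with $\mu_{\infty}$ nonzero on positive $k$-subspaces of $\Omega_{\infty}$ and zero on the degenerate ones. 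Second, even if the lift did blow up, that would say nothing about the section $s=[\mu_{\Omega}]$ of Schneider's bundle, whose fiber is the quotient $\Gamma_{odd}(\widetilde{K}^{k+1,k}(\Omega))/L(\Omega)$: divergence or convergence in the $L(\Omega)$-direction is invisible there. The actual obstruction, which is what the paper proves, is this: since $s$ vanishes on $M_{+}$ and hence on $M_{0}$ by continuity, the limit class $[\mu_{\infty}]$ must vanish, i.e.\ $\mu_{\infty}$ must lie in $L(\Omega_{\infty})$; but a nonzero linear section cannot equal a density family that vanishes precisely on the $Q$-degenerate $k$-subspaces with square-root order, so $\mu_{\infty}=0$, which forces the generating density $\mu_{+}$, and hence $\mu_{\Omega}$, to vanish. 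Your proposal neither identifies this quotient-by-$L(\Omega)$ mechanism nor supplies any substitute for it — you explicitly defer the ``careful examination'' — so the argument is incomplete exactly where the theorem is decided.
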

\begin{proof}
Let $s$ be such a section. We assume at first that $k\leq n-2$.
\\
\\
0. Denote $M_{+}=\{\Omega\in Gr(V,k+1):Q|_{\Omega}>0\}$, $M_{-}=\{\Omega\in Gr(V,k+1):\sign Q|_{\Omega}=(k,1)\}$,
$M_{0}=\{\Omega\in Gr(V,k+1):\sign Q|_{\Omega}=(k,0)\}$. Those are
the orbits of $G$ as it acts on $Gr(V,k+1)$. We will write $\text{Stab}(\Omega)\subset G$
for the stabilizer of $\Omega$, and $\Stab^{+}(\Omega)=\{T\in\Stab(\Omega):\det T|_{\Omega}=1\}$
is the orientation-preserving subgroup of $\Stab(\Omega)$. \\
\\
1. Observe that $s$ necessarily vanishes on $M_{+}$: Fix some $\Omega\in M_{+}$.
Take the Euclidean structure on $\Omega$ to be $Q|_{\Omega}$, and
then obtain a lift $\mu_{\Omega}\in\Gamma_{odd}(\widetilde{K}^{k+1,k}(\Omega))$
of $s_{\Omega}$ which is $\text{Stab}(\Omega)$-invariant. Since
$\text{Stab}(\Omega)$ is transitive on $Gr^{+}(\Omega,k)$ (in fact,
it is transitive even under $\Stab^{+}(\Omega)$), $\mu_{\Omega}(\Lambda)=\mu_{\Omega}(\overline{\Lambda})$
for all $\Lambda$, so $\mu_{\Omega}=0$ on $\Omega$. Thus $s=0$
on $M_{+}$, and by continuity, it follows that $s$ vanishes on $M_{0}$.\\
\\
2. Now consider $M_{-}$. For any fixed $\Omega\in M_{-}$ one has
a $\text{Stab}(\Omega)$-invariant element $s_{\Omega}\in\Gamma_{odd}(\widetilde{K}^{k+1,k}(\Omega))/L(\Omega)$
. Since $H_{c}^{1}(\Stab^{+}(\Omega);\Omega)=0$ (see \ref{sub:Computation of Lie algebra cohomology}
below for the computation), we can choose $\mu_{\Omega}\in\Gamma_{odd}(\widetilde{K}^{k+1,k}(\Omega))^{\Stab^{+}(\Omega)}$
lifting $s_{\Omega}$, and then, possibly after averaging with $g_{0}\mu_{\Omega}$
(which also lifts $s_{\Omega})$ where $g_{0}\in\Stab(\Omega)$ is
orientation-reversing, we may assume $\mu_{\Omega}\in\Gamma_{odd}(\widetilde{K}^{k+1,k}(\Omega))^{\Stab(\Omega)}$.
In fact, if we fix any $\Omega_{0}\in M_{-}$ and the corresponding
$\mu_{0}=\mu_{\Omega_{0}}$, then for any $g\in G$ one can take $\mu_{g\Omega_{0}}=g_{*}\mu_{0}\in\Gamma_{odd}(\widetilde{K}^{k+1,k}(g\Omega_{0}))^{\Stab(g\Omega_{0})}$.
We thus get a $G$-invariant lift of $s$ to a continuous family of
sections of $\Gamma_{odd}(\widetilde{K}^{k+1,k}(\Omega))$ over $\Omega\in M_{-}$.
\\
\\
3. We want to inspect $\mu_{0}$ more closely. The group $\text{Stab}(\Omega_{0})$
has the following open orbits as it acts on the cooriented hyperplanes
$\Lambda\subset\Omega_{0}$: Ignoring the coorientation, there are
two non-oriented open orbits, consisting of $X_{+}$, the $Q$-positive
$\Lambda$ and $X_{-}$, those $\Lambda$ with signature $(k-1,1)$.
\\
An orientation of $\Lambda\in X_{+}$ is fixed under $g\in\text{Stab}(\Omega_{0})\cap\text{Stab}(\Lambda)$
iff the orientation of $\Omega_{0}$ is fixed, so coorientation is
always preserved. Thus $X_{+}$ splits into two orbits $X_{1}$ and
$X_{2}$ when coorientation is accounted for. \\
On the other hand, $X_{-}$ constitutes a single orbit including coorientation.
There are two cases to consider: when $k=1$, $\Lambda\in X_{-}$
is a time-like line and so has its orientation preserved under the
action of $g\in\text{Stab}(\Omega_{0})\cap\text{Stab}(\Lambda)$,
while the orientation of $\Omega_{0}$ can be preserved or reversed
(since $\dim\Omega_{0}=k+1\leq n-1$). \\
If $k\geq2$, the verification is also straightforward: one can again
reverse the orientation of $\Omega_{0}$ while kee ping the orientation
of $\Lambda$.\\
We conclude that $\mu_{0}(\Lambda)=0$ for all $\Lambda\in X_{-}$:
Indeed, since $\mu_{0}$ is odd, $\mu_{0}(\overline{\Lambda})=-\mu_{0}(\Lambda)$;
but both $\Lambda,\overline{\Lambda}$ lie in the same $\text{Stab}(\Omega_{0})$-orbit,
so $\mu_{0}(\Lambda)=0$. \\
\\
4. Observe that on any $\Lambda\subset\Omega_{0}$ which is $Q$-degenerate,
$\mu_{0}(\Lambda)=0$ by continuity from $X_{+}$. \\
So $\mu_{0}$ is uniquely defined (since it is odd, and through $G$-invariance)
by a density $\mu_{+}\in D(\Lambda_{+})$ for some $Q$ - positive
subspace $\Lambda_{+}\subset\Omega_{0}$.\\
Note that, as was the case with Klain's bundle, any such $\mu_{+}$
extends to a continuous $\mu_{0}\in\Gamma_{odd}(\widetilde{K}^{k+1,k}(\Omega_{0}))^{\Stab(\Omega_{0})}$,
and then to a family $\mu_{\Omega}$ for $\Omega\in M_{-}$ \\
\\
5. Let us show that $\mu_{\Omega}$ has a limit $\mu_{\infty}$ in
$\Gamma_{odd}\left(\widetilde{K}^{k+1,k}(\Omega_{\infty})\right)$
as $\Omega\to\Omega_{\infty}\in M_{0}$. Assume for simplicity that
some orientation is fixed on $\Omega_{\infty}$. For every $Q$-positive
oriented $k$-subspace $\Lambda\subset V$, choose $\Omega_{\Lambda}=\Lambda\oplus\langle e_{n}\rangle$
with the natural orientation, and $\mu(\Lambda)=\mu_{\Omega_{\Lambda}}(\Lambda)\in D(\Lambda)$.
The family $\mu_{\Omega}$ is thus equivalent to a $G$-equivariant
collection $\mu(\Lambda)$ of densities on all $Q$-positive $k$-dimensional
oriented subspaces $\Lambda$, s.t. $\mu(\overline{\Lambda})=-\mu(\Lambda)$.
Then for $M_{-}\ni(\Omega_{t},\Lambda_{t})\to(\Omega_{\infty},\Lambda_{\infty})$,
either $\mu(\Lambda_{t})\to\mu(\Lambda_{\infty})$ when $\Lambda_{\infty}$
is $Q$-positive by continuity of $\mu_{\Omega}$, or $\mu(\Lambda_{t})\to0\in D(\Lambda_{\infty})$.
Thus $\mu_{\infty}$ is well-defined. The limit of $[\mu_{\Omega}]$
in $\Gamma_{odd}\left(\widetilde{K}^{k+1,k}(\Omega_{\infty})\right)/L(\Omega_{\infty})$
is therefore $[\mu_{\infty}]$, and it must vanish as $\Omega\to\Omega_{\infty}\in M_{0}$,
by continuity of $s$ and since $s$ vanishes on $M_{+}$. Therefore,
$\mu_{\infty}$ is a linear section that vanishes on all $Q$-degenerate
$k$-subspaces. This is equivalent to a linear functional on $\R^{k+1}$
that vanishes on the light cone. So $\mu_{\infty}=0$, implying $\mu_{\Omega}=0$.\\
\\
We conclude that when $k\leq n-2$, there are no $G$-invariant sections
of Schneider's bundle. It follows there are no non-trivial continuous,
odd, $k-$homogeneous valuations.

Now assume $k=n-1$. Again since $H_{c}^{1}(G;V)=0$, we may lift
$s$ to an invariant section $\mu\in\Gamma_{odd}(\widetilde{K}^{n,n-1}(V))^{G}$.\\
If $n\geq3$, as in step $3$ above, $\mu$ must vanish on mixed-signature
subspaces; and $\mu$ is determined by its value $\mu_{+}$ on one
positive subspace. Unlike the case $k\leq n-2$, there are no other
restrictions: any $\mu_{+}$ extends to a global section $\mu$, as
was the case with Klain's bundle.\\
If $n=2$, as in step 2 above $\mu$ is determined by two independent
densities $\mu_{+}(\Lambda_{+})$ and $\mu_{-}(\Lambda_{-})$; and
any two such densities give a continuous $\mu_{\Omega}$ as with Klain's
bundle. \\
\\
For $k=n-1$, Schneider's imbedding is really just the McMullen characterization
of odd $n-1$-homogeneous valuations, i.e. the imbedding is an isomorphism,
concluding the classification of $n-1$ -homogeneous invariant valuations.
\end{proof}

\subsubsection{\label{sub:Computation of Lie algebra cohomology}Computation of
the continuous Lie group cohomology}

The main result of this section was explained to us by José Miguel
Figueroa-O'Farrill. For the relevant definitions, see \cite{Fuks}.
We need to compute the continuous cohomolgy of $G=SO^{+}(n-1,1)$
with coefficients in the standard representation $V=\R^{n}$. Specifically,
we will show 
\begin{prop}
\label{prop:The-first-continuousCohomologyVanishes}The first continuous
group cohomology $H_{c}^{1}(G;V)$ vanishes. \end{prop}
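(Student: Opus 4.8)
The plan is to compute $H^1_c(SO^+(n-1,1); \R^n)$ by relating continuous group cohomology to relative Lie algebra cohomology via van Est's theorem, and then evaluating the latter by hand. Let $G = SO^+(n-1,1)$, $K = SO(n-1)$ its maximal compact subgroup (the symmetric space $G/K$ is hyperbolic $n$-space $\HH^{n-1}$, which is contractible), $\mathfrak{g} = \mathfrak{so}(n-1,1)$ its Lie algebra, and $\mathfrak{k} = \mathfrak{so}(n-1)$. The van Est isomorphism gives $H^1_c(G; \R^n) \cong H^1(\mathfrak{g}, \mathfrak{k}; \R^n)$, where the right-hand side is the cohomology of the complex of $\mathfrak{k}$-basic cochains on $\mathfrak{g}$ with values in the standard representation $V = \R^n$. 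Since this is degree $1$, the relevant spaces are small: $C^0 = V^{\mathfrak{k}}$ and $C^1 = (\mathfrak{p}^* \otimes V)^{\mathfrak{k}}$ where $\mathfrak{g} = \mathfrak{k} \oplus \mathfrak{p}$ is the Cartan decomposition, so everything reduces to a finite-dimensional representation-theoretic computation for $\mathfrak{k} = \mathfrak{so}(n-1)$.

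**First I would** pin down the $\mathfrak{k}$-module structure of the pieces. As an $SO(n-1)$-representation, $V = \R^n$ decomposes as $\R^{n-1} \oplus \R$ (the standard representation of $SO(n-1)$ plus a trivial summand spanned by $e_n$), so $V^{\mathfrak{k}} = \R e_n$ is one-dimensional — this is $C^0$. Also $\mathfrak{p} \cong \R^{n-1}$ as the standard $SO(n-1)$-module (the "boosts"), so $\mathfrak{p}^* \cong \R^{n-1}$ as well, and $\mathfrak{p}^* \otimes V \cong \R^{n-1} \otimes (\R^{n-1} \oplus \R)$. Taking $\mathfrak{k}$-invariants: $(\R^{n-1} \otimes \R^{n-1})^{\mathfrak{k}}$ is one-dimensional (spanned by the invariant bilinear form, for $n-1 \geq 2$) and $(\R^{n-1} \otimes \R)^{\mathfrak{k}} = 0$, so $C^1 = (\mathfrak{p}^* \otimes V)^{\mathfrak{k}}$ is also one-dimensional. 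Thus the complex $0 \to C^0 \xrightarrow{d} C^1 \to \cdots$ has $C^0$ and $C^1$ both $1$-dimensional, and I need only check that $d \colon C^0 \to C^1$ is injective (equivalently nonzero), which gives $H^1 = C^1/\mathrm{im}(d) = 0$.

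**The key computation** is then to verify $d e_n \neq 0$. For a $0$-cochain $v \in V^{\mathfrak{k}}$, the differential is $(dv)(X) = X \cdot v$ for $X \in \mathfrak{p}$ (the Chevalley–Eilenberg differential in degree $0$ is just the module action). So I must check that the standard action of the boost subspace $\mathfrak{p} \subset \mathfrak{so}(n-1,1)$ on the vector $e_n \in \R^n$ is nonzero: a boost in the $e_i$–$e_n$ plane sends $e_n \mapsto e_i \neq 0$, so indeed $X \cdot e_n \neq 0$ for $X \neq 0$, hence $d|_{C^0}$ is injective. Therefore $H^1(\mathfrak{g}, \mathfrak{k}; V) = 0$, and van Est yields $H^1_c(G; V) = 0$.

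**The main obstacle** is not any of these elementary computations but rather invoking the right version of van Est's theorem and making sure its hypotheses hold here: one needs that $G$ is a connected semisimple (or reductive) Lie group with finite center — $SO^+(n-1,1)$ qualifies — so that continuous cohomology with coefficients in a finite-dimensional module is computed by $(\mathfrak{g}, K)$-cohomology, i.e. $H^\bullet_c(G; V) \cong H^\bullet(\mathfrak{g}, \mathfrak{k}; V)$, with reference to Borel–Wallach or Guichardet. A minor point to handle separately is the low-dimensional degenerate case $n = 2$, where $\mathfrak{k} = \mathfrak{so}(1) = 0$ and the invariance conditions are vacuous: there one computes directly $H^1(\mathfrak{g}; \R^2)$ for $\mathfrak{g} = \mathfrak{so}(1,1) \cong \R$ acting on $\R^2$ by a nonzero semisimple operator, which again has vanishing $H^1$ since the action has no nonzero invariants and is surjective modulo coboundaries; but since the statement is used only for $n \geq 3$ in step 2 of the Schneider bundle argument, this case may be remarked in passing or omitted.
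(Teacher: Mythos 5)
Your proposal follows essentially the same route as the paper: reduce $H_{c}^{1}(G;V)$ to relative Lie algebra cohomology $H^{1}(\mathfrak{so}(n-1,1),\mathfrak{so}(n-1);V)$ (the paper cites Hochschild--Mostow where you cite van Est), decompose $V=\R^{n-1}\oplus\R$ and $\mathfrak{g}=\mathfrak{k}\oplus\mathfrak{p}$ with $\mathfrak{p}\cong\R^{n-1}$ the boosts, identify $C^{0}=\R e_{n}$, and check that $d$ is nonzero on $e_{n}$. All of that is correct and agrees with the paper for $n\geq4$, and your separate remark on $n=2$ is fine.

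There is, however, a genuine gap at $n=3$. Your claim that $(\R^{n-1}\otimes\R^{n-1})^{\mathfrak{k}}$ is one-dimensional ``for $n-1\geq2$'' fails precisely when $n-1=2$: the space $\text{Hom}(\R^{2},\R^{2})^{SO(2)}$ is the algebra of complex-linear endomorphisms of $\R^{2}=\C$, which is two-dimensional, spanned by the identity and the quarter-turn $J$. So for $n=3$ one has $\dim C^{1}=2$ while $\dim\text{Im}(d_{1})=1$, and your argument only yields $\dim H^{1}\leq1$; to conclude vanishing one must further show that $J$ is not a relative $1$-cocycle. The paper does exactly this: writing $i:W\hookrightarrow\mathfrak{g}$ for the inclusion of the boosts, it computes $d_{2}J(i(w_{1}),i(w_{2}))=J([i(w_{1}),i(w_{2})])-i(w_{1})J(i(w_{2}))+i(w_{2})J(i(w_{1}))=(0,0,2)^{T}\neq0$, the bracket term vanishing because $[i(w_{1}),i(w_{2})]\in\mathfrak{h}$. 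This extra step cannot be skipped: the proposition is applied in the Schneider bundle argument to $\Stab^{+}(\Omega)$ acting on a $(k+1)$-dimensional $\Omega$ of signature $(k,1)$, so the three-dimensional instance of the statement is genuinely needed (for $k=2$, and for $k=n-1$ when $n=3$).
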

\begin{proof}
Consider $SO(n-1)\subset G$ - the maximal compact subgroup. By the
Hochschild-Mostow Theorem , 
\[
H_{c}^{1}(G;V)=H^{1}(\mathfrak{so}(n-1,1),\mathfrak{so}(n-1);V)
\]
We will write $\mathfrak{g}=\mathfrak{so}(n-1,1)$ and $\mathfrak{h}=\mathfrak{so}(n-1)$.
Under the action of $\mathfrak{h}$, $V=W\oplus T$ where $W=\R^{n-1}$
is the standard representation of $SO(n-1)$ (corresponding to the
space coordinate hyperplane), and $T=\R$ is the trivial representation
(corresponding to the time axis of $V$). Also, the adjoint action
of $\mathfrak{h}$ on $\mathfrak{g}$ admits the decomposition $\mathfrak{g}=\mathfrak{h}\oplus W$
where the inclusion $i:W\hookrightarrow\mathfrak{g}$ is given by
\[
v\mapsto\Bigg(\begin{array}{cc}
0_{(n-1)\times(n-1)} & v_{(n-1)\times1}\\
v_{1\times(n-1)}^{T} & 0
\end{array}\Bigg)
\]
Note also that $[\mathfrak{h},W]=W$. Now 
\[
C^{0}(\mathfrak{g},\mathfrak{h};V)=\{v\in V:\mathfrak{h}v=0\}=T=\R
\]
while 
\[
C^{1}(\mathfrak{g},\mathfrak{h};V)=\{f\in\text{Hom}(\mathfrak{g},V):f(\mathfrak{h})=0,f([h,g])=hf(g)\,\forall g\in\mathfrak{g},h\in\mathfrak{h}\}=
\]
\[
=\{f\in\text{Hom}(W,V):,f([h,w])=hf(w)\,\forall w\in W,h\in\mathfrak{h}\}=
\]
\[
=\{f\in\text{Hom}(W,W):,f([h,w])=hf(w)\,\forall w\in W,h\in\mathfrak{h}\}
\]
that is, $C^{1}(\mathfrak{g},\mathfrak{h};V)=Hom(W,W)^{\mathfrak{h}}$.
This space consists of scalar operators when $\dim W\geq3\iff n\geq4$,
and of complex-linear operators when $n=3$ and $W=\R^{2}=\C$. The
differential map $d_{1}:C^{0}(\mathfrak{g},\mathfrak{h};V)\to C^{1}(\mathfrak{g},\mathfrak{h};V)$
is nonzero: taking some $t\in T$, $d_{1}t(w)=-i(w)(t)=-tw$ so $d_{1}t\neq0$.
Thus $\dim\text{Im}(d_{1})=1$.\\
For $n\geq4$, $\dim C^{1}(\mathfrak{g},\mathfrak{h};V)=1$ and it
follows that $H^{1}(\mathfrak{g},\mathfrak{h};V)=0$. \\
\\
When $n=3$, $\dim C^{1}(\mathfrak{g},\mathfrak{h};V)=2$ while $d_{1}(C^{0}(\mathfrak{g},\mathfrak{h};V))\subset\text{Ker}(d_{2})\subset C^{1}(\mathfrak{g},\mathfrak{h};V)$.
We should check whether $d_{2}=0$. It is enough to check the value
of $d_{2}$ on some non-scalar operator, say $J\in\text{Hom}(W,W)^{\mathfrak{h}}$
which corresponds to $\frac{\pi}{2}$-rotation. Let $w_{1}$, $w_{2}$
be the standard basis of $W$. Then 
\[
d_{2}J(g_{1},g_{2})=J([g_{1},g_{2}])-g_{1}J(g_{2})+g_{2}J(g_{1})
\]
Since $\mathfrak{h}\subset\text{Hom}(d_{2}J)$ and $\mathfrak{g}=\mathfrak{h}\oplus W$,
$d_{2}J\neq0\iff d_{2}J(w_{1},w_{2})\neq0$. Now 
\[
[i(w_{1}),i(w_{2})]=J\in\mathfrak{h}
\]
so $J([i(w_{1}),i(w_{2})])=0$. And 
\[
-i(w_{1})J(i(w_{2}))+i(w_{2})J(i(w_{1}))=i(w_{1})w_{1}+i(w_{2})w_{2}=(0,0,2)^{T}
\]
so $d_{2}J\neq0$.\\
\\
Thus $\dim\text{Ker}d_{2}=1$ also for $n=3$, and $H^{1}(\mathfrak{so}(n-1,1),\mathfrak{so}(n-1);V)=0$
for all $n$.
\end{proof}
Now consider the exact sequence $0\to L(V)\to\Gamma_{odd}(\widetilde{K}^{n,n-1}(V))\to F_{V}\to0$
where $L(V)$ is the space of linear sections on $V$ (an $n$-dimensional
space), and it is $G$-isomorphic to $V$. We have the long exact
sequence of cohomology 
\[
0\to L(V)^{G}\to\Gamma_{odd}(\widetilde{K}^{n,n-1}(V))^{G}\to F_{V}^{G}\to H^{1}(G;L(V))=0
\]
it follows that every $G$-invariant section of $F_{V}$ lifts to
a $G$-invariant section of $\Gamma_{odd}(\widetilde{K}^{n,n-1}(V))$.

\section{\label{sec3: valuations of rotation invariant bodies}Computing valuations
on $SO(n-1)$-invariant unconditional bodies}
\begin{defn}
The $k$-support function of a body $K\subset\R^{n}$, denoted $h_{k}(\Lambda;K)\in C(Gr(n,n-k))$,
is the $k$-volume of the projection of $K$ to $\Lambda^{\perp}$.
\end{defn}
Let $L\subset\R^{2}$ be a convex, unconditional body. Denote $L^{n}\subset\R^{n}$
its rotation body around the vertical axis, namely 
\[
L^{n}=\{(\omega x,y)|\omega\in S^{n-2},(x,y)\in L\}
\]
Denote also $h_{k}(\alpha;L)=h_{k}(\alpha;L^{k+1})$ for $-\frac{\pi}{2}<\alpha<\frac{\pi}{2}$:
it is obvious that the $k$-support function of $L^{n}$ is $SO(n-1)$-invariant
for all $1\leq k\leq n-1$, and so it really is a function of $\alpha$.
Here $\alpha=0$ corresponds to a vertical hyperplane. By abuse of
notation, we consider $h_{k}(\alpha;L)$ to be a function both on
the unit circle $S^{1}$ and on the sphere $S^{k}\subset\R^{k+1}$;
we will write $h_{k}(\alpha)$ or $h_{k}(\omega)$ when we need to
emphasize that the domain is $S^{1}$, resp. $S^{k}$. Denoting $R_{k+1}\in O(k+1)$
the reversal of time direction and $G_{k+1}=\langle SO(k),R_{k+1}\rangle\subset O(k+1)$,
it is obvious that $L^{n}$ is $G_{n}$-invariant.
\begin{prop}
$L^{n}$ is a convex unconditional body, and $h_{k}(\alpha;L^{n})=h_{k}(\alpha,L)$
for all $n>k$. Any $G_{n}$-invariant convex body equals $L^{n}$
for some $L$ as above.\end{prop}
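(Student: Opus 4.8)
The statement has three parts; I would prove them in order, using throughout the concrete description $L^{n}=\{(u,y)\in\R^{n-1}\times\R:(|u|,y)\in L\}$, which is immediate from the definition together with the unconditionality of $L$.

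\emph{$L^{n}$ is a convex, compact, unconditional set.} Compactness: it is the preimage of the compact set $L$ under the continuous map $(u,y)\mapsto(|u|,y)$, and is bounded. Unconditionality in $\R^{n}$: changing the sign of a space coordinate does not change $|u|$, and changing the sign of $y$ preserves membership in $L$ because $L$ is unconditional. Convexity: combine the triangle inequality $|tu_{1}+(1-t)u_{2}|\le t|u_{1}|+(1-t)|u_{2}|$ with convexity of $L$ (whence $(t|u_{1}|+(1-t)|u_{2}|,\,ty_{1}+(1-t)y_{2})\in L$ when $(|u_{i}|,y_{i})\in L$) and with the monotonicity of an unconditional convex planar body in its first coordinate: if $(x,y)\in L$ and $0\le x'\le x$ then $(x',y)\in L$, since $(-x,y)\in L$ and $L$ is convex. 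It is exactly here that unconditionality of $L$ is essential (for a tilted segment $L$, the set $L^{n}$ is a non-convex double cone).

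\emph{The $k$-support function.} The plan is to reduce every $k$-dimensional projection of $L^{n}$ to one inside the fixed coordinate subspace $E:=\langle e_{1},\dots,e_{k},e_{n}\rangle\cong\R^{k+1}$. First, since $n>k$ (so $k\le n-1$), every $W\in Gr(n,k)$ can be moved into $E$ by some $\sigma\in SO(n-1)$: $\pi_{\R^{n-1}}(W)$ has dimension $k$ or $k-1$, hence at most $n-1$, so $SO(n-1)$ carries it into $\langle e_{1},\dots,e_{k}\rangle$ while fixing $e_{n}$. Second, $\pi_{E}(L^{n})=L^{k+1}$, where $L^{k+1}\subset E$ uses $\langle e_{1},\dots,e_{k}\rangle$ as "space": a point $(v,y)$ with $v\in\R^{k}$ lies in $\pi_{E}(L^{n})$ iff $(|v|,y)\in L$, again by the monotonicity in the first coordinate (the case $n=k+1$ being trivial). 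Now, since $\Lambda\mapsto h_{k}(\Lambda;L^{n})$ is visibly $SO(n-1)$-invariant, I may assume $W:=\Lambda^{\perp}\subseteq E$; then $\pi_{W}=\pi_{W}\circ\pi_{E}$ (because $W\subseteq E$), so $\pi_{W}(L^{n})=\pi_{W}(L^{k+1})$ and hence $h_{k}(\Lambda;L^{n})=\mathrm{vol}_{k}\,\pi_{W}(L^{k+1})=h_{k}(\Lambda';L^{k+1})$ for $\Lambda'=W^{\perp}\cap E\in Gr(k+1,1)$. Finally, the $SO(n-1)$-orbit of $W$ is described by the single elevation-angle parameter, which equals the elevation angle of the line $\Lambda'$ inside $E=\R^{k+1}$ (relative to $\langle e_{1},\dots,e_{k}\rangle$), because $e_{n}\in E$ and $E\cap\R^{n-1}=\langle e_{1},\dots,e_{k}\rangle$. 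Thus $h_{k}(\alpha;L^{n})=h_{k}(\alpha;L^{k+1})=h_{k}(\alpha;L)$ independently of $n>k$, and every $\alpha$ is attained.

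\emph{The converse.} Assume $n\ge3$ (for $n=2$ the group $SO(n-1)$ is trivial and the claim degenerates). Given a $G_{n}$-invariant convex body $K$, set $L:=K\cap\langle e_{1},e_{n}\rangle\subset\R^{2}$, which is convex and compact as a plane section of $K$. It is unconditional: the $\pi$-rotation in the $\langle e_{1},e_{2}\rangle$-plane lies in $SO(n-1)$ and sends $e_{1}\mapsto-e_{1}$, giving the symmetry $x\mapsto-x$; and $R_{n}$ gives $y\mapsto-y$. Then $K=L^{n}$: for $(u,y)\in\R^{n-1}\times\R$, transitivity of $SO(n-1)$ on the sphere of radius $|u|$ in $\R^{n-1}$ yields $(u,y)\in K\iff(|u|e_{1},y)\in K\iff(|u|,y)\in L\iff(u,y)\in L^{n}$. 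The step I expect to be the main obstacle is the second part — making sure the $(n-1-k)$ "hidden" rotational directions of $L^{n}$ contribute nothing to a $k$-dimensional projection, i.e. that $\pi_{E}(L^{n})=L^{k+1}$ and that the elevation angle is unchanged when one collapses onto $E$ — so that $h_{k}$ really is a dimension-independent function of $\alpha$ alone. The first and third parts are short once the description $L^{n}=\{(u,y):(|u|,y)\in L\}$ and $SO(n-1)$-transitivity on spheres are in hand.
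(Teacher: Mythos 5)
Your proof is correct and follows essentially the same route as the paper's: convexity of $L^{n}$ via unconditionality of $L$ (the paper phrases this through the Minkowski functional, you through the set description and the monotonicity lemma, which are equivalent), the computation of $h_{k}$ by reducing to the coordinate subspace $E=\mathrm{Span}\{e_{1},\dots,e_{k},e_{n}\}$ and using $\pi_{E}(L^{n})=L^{n}\cap E=L^{k+1}$ together with $\pi_{W}=\pi_{W}\circ\pi_{E}$, and the converse via $SO(n-1)$-transitivity on spheres. You merely spell out some steps the paper labels as obvious; no gaps.
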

\begin{proof}
The Minkowski functional of $L^{n}$ is $p_{n}(\omega x,y)=\|(x,y)\|{}_{L}$
for $x,y\in\R$, $\omega\in S^{n-1}$. Let us verify it is convex:
\[
p_{n}(\omega_{1}x_{1},y_{1})+p_{n}(\omega_{2}x_{2},y_{2})=\|(x_{1},y_{1})\|{}_{L}+\|(x_{2},y_{2})\|{}_{L}\geq\|(|x_{1}|+|x_{2}|,|y_{1}|+|y_{2}|)\|_{L}
\]
while 
\[
p_{n}((\omega_{1}x_{1},y_{1})+(\omega_{2}x_{2},y_{2}))=p_{n}(\omega_{1}x_{1}+\omega_{2}x_{2},y_{1}+y_{2})=
\]
\[
=\|(|\omega_{1}x_{1}+\omega_{2}x_{2}|,y_{1}+y_{2})\|_{L}\leq\|(|x_{1}|+|x_{2}|,|y_{1}|+|y_{2}|)\|_{L}
\]
by unconditionality of $L$. The unconditionality of $L^{n}$ is obvious.
Now $h_{k}(\alpha;L^{n})$ can be computed as follows. Let $e_{1},...,e_{n}$
be the standard basis, and define $\Omega=\text{Span}\{e_{1},...,e_{k},e_{n}\}$.
Let $\Lambda_{\alpha}\subset\Omega$ be a $k$-dimensional subspace
forming angle $\alpha$ with the spacelike coordinate hyperplane.
Then $h_{k}(\alpha;L^{k+1})=h_{k}(\alpha;\Omega\cap L^{n})=\text{vol}_{k}(\text{Pr}{}_{\Lambda_{\alpha}}(\Omega\cap L^{n}))$
and by unconditionality of $L$, $\text{Pr}{}_{\Omega}(L^{n})=L^{n}\cap\Omega$
so 
\[
h_{k}(\alpha,L^{n})=\text{vol}_{k}(\text{Pr}{}_{\Lambda_{\alpha}}(L^{n}))=\text{vol}_{k}(\text{Pr}{}_{\Lambda_{\alpha}}\text{Pr}{}_{\Omega}(L^{n}))=h_{k}(\alpha;L^{k+1})
\]
Finally, given a $G_{n}$- invariant convex body $K$, it is immediate
that its 2-dimensional $x_{1}$-$x_{n}$ section $L$ will be convex
and unconditional, and $K=L^{n}$, concluding the proof.\end{proof}
\begin{rem}
It follows that $L\mapsto L^{n}$ is a Hausdorff homeomorphism between
the spaces of $2$-dimensional unconditional convex bodies and $SO(n-1)$-invariant,
unconditional convex bodies. 
\end{rem}
Recall the cosine transform $T_{k}:C^{\infty}(S^{k})\to C^{\infty}(S^{k})$
given by 
\[
T_{k}(f)(y)=\int_{S^{k}}f(x)|\langle x,y\rangle|dx
\]
is a self-adjoint isomorphism when restricted to even functions, and
extends to an isomorphism of generalized even functions. It is well-known
that $T_{k}(\sigma_{k}(\omega;L))=h_{k}(\omega;L)$ where $\sigma_{k}\in C(S^{k})^{*}$
is the surface-area measure of $L^{k+1}$.
\begin{lem}
\label{smooth_pullback}If $f\in C^{\infty}(\R)$ is even, then $f(|x|)\in C^{\infty}(\R^{n})$.\end{lem}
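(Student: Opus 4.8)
The plan is to reduce the statement to the single-variable fact that an even smooth function on $\R$ is a smooth function of $t = x^2$, then compose with the smooth map $x \mapsto |x|^2 = \langle x, x\rangle$ on $\R^n$. First I would invoke the classical theorem of Whitney on even functions: if $f \in C^\infty(\R)$ satisfies $f(-t) = f(t)$ for all $t$, then there exists $g \in C^\infty(\R)$ with $f(t) = g(t^2)$ for all $t \in \R$. (One proves this by writing, via Taylor's theorem with integral remainder, $f(t) = f(0) + t^2 \int_0^1 (1-s) f''(st)\,ds =: f(0) + t^2 h(t)$ with $h$ even and smooth, and iterating; the limiting argument on the derivatives at $0$ is the only delicate point, and it is standard.) Granting this, we have $f(|x|) = f\big(\sqrt{\langle x,x\rangle}\big) = g(\langle x,x\rangle)$ for all $x \in \R^n$, where $\langle x,x\rangle = \sum_j x_j^2$ is a polynomial, hence $C^\infty$, in $x$; and $g$ is $C^\infty$ on all of $\R$, in particular on the range $[0,\infty)$ of $\langle x,x\rangle$. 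Therefore $f(|x|) = g \circ (\langle \cdot,\cdot\rangle)$ is a composition of smooth maps, so it is smooth on $\R^n$, including at the origin where $|x|$ itself fails to be differentiable.

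I would present the argument in three short steps: (i) state and apply Whitney's even-function theorem to obtain $g \in C^\infty(\R)$ with $f(t) = g(t^2)$; (ii) observe that $f(|x|) = g(|x|^2)$ as functions on $\R^n$, the point being that squaring removes the absolute value; (iii) conclude smoothness by composition. If one prefers to keep the lemma self-contained, step (i) can be proved inline using the integral-remainder identity displayed above together with an induction showing $t \mapsto f^{(2k)}(0)$ behaves correctly, but since this is a well-known result it is cleaner simply to cite it.

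The main obstacle, and really the only nontrivial content, is establishing $f(t) = g(t^2)$ with $g$ genuinely $C^\infty$ up to and at $t = 0$ — i.e. controlling all derivatives of the candidate $g$ at $0$. Away from $0$ one can just set $g(u) = f(\sqrt{u})$ for $u > 0$ and differentiate freely, but smoothness at $u=0$ requires the evenness hypothesis in an essential way (all odd-order derivatives of $f$ vanish at $0$), and is exactly Whitney's theorem. Once that input is in hand, the passage from one variable to $n$ variables is immediate because the only thing one needs is that $|x|^2$ is a smooth (indeed polynomial) function of $x$ with values in the domain of $g$.
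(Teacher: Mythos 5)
Your proposal is correct and is essentially the paper's own argument: the paper's one-line proof is precisely that $f(x)=g(x^{2})$ for some $g\in C^{\infty}[0,\infty)$ (Whitney's even-function theorem), after which $f(|x|)=g(\langle x,x\rangle)$ is a composition of smooth maps. You merely spell out the standard integral-remainder proof of Whitney's theorem, which the paper takes for granted.
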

\begin{proof}
This is because $f(x)=g(x^{2})$ for $g\in C^{\infty}[0,\infty)$.
\end{proof}
For the following, we recall the definition of Sobolev spaces. On
the linear space $\R^{k}$, denote $f\mapsto\hat{f}$ the Fourier
transform, and the $p$-Sobolev space is the completion of $C_{c}^{\infty}(\R^{k})$
w.r.t. the norm $\|f\|_{L_{p}^{2}}=\|\hat{f}(\omega)(1+|\omega|^{p})\|_{L^{2}}$.
For a compact smooth manifold $X$, $L_{p}^{2}(X)\subset C^{-\infty}(X)$
is defined by some choice of a finite atlas $\{U_{\alpha}\}$ for
$X$ and an attached partition of unity $\{\rho_{\alpha}\}$:
\[
L_{p}^{2}(X)=\{\sum_{\alpha}\rho_{\alpha}f_{\alpha}:f_{\alpha}\in L_{p}^{2}(U_{\alpha})\}
\]
The resulting space $L_{p}^{2}(X)$ is independent of the choices
made.
\begin{prop}
\label{prop:smoothnessOfSupportFnction}For all $k\geq1$ and $\epsilon>0$,
$h_{k}(\omega;L)\in L_{\frac{3}{2}-\epsilon}^{2}(S^{k})$. If $h_{1}(\alpha;L)$
is smooth in a neighborhood of the poles and the equator, then $h_{k}(\omega;L)\in L_{\frac{k}{2}+1-\epsilon}^{2}(S^{k})$
is smooth near the poles, and $h_{k}(\alpha;L)\in L_{\frac{k}{2}+1-\epsilon}^{2}(S^{1})$.\end{prop}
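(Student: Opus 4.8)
The plan is to reduce everything to the identity recalled above, $h_{k}(\omega;L)=T_{k}(\sigma_{k}(\cdot;L))(\omega)$ (Cauchy's projection formula), together with the following fact about the cosine transform on $S^{k}$: restricted to even functions and distributions, $T_{k}$ gains exactly $(k+3)/2$ derivatives, i.e.\ $T_{k}\colon L_{s}^{2}(S^{k})\to L_{s+(k+3)/2}^{2}(S^{k})$ is an isomorphism for every $s\in\R$. This follows from the Funk--Hecke computation of its eigenvalues --- on the even degree-$2m$ spherical harmonics the eigenvalue of $T_{k}$ behaves, up to sign, like $m^{-(k+3)/2}$; for $k=1$ it is simply the Fourier series of $|\cos\alpha|$, whose $2m$-th coefficient is of size $m^{-2}$. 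Granting this, the first assertion is immediate: $\sigma_{k}$ is a finite positive Borel measure on the compact $k$-manifold $S^{k}$, hence $\sigma_{k}\in L_{-k/2-\eps}^{2}(S^{k})$ for all $\eps>0$ (measures lie in the dual of $C^{0}\supset L_{k/2+\eps}^{2}$), so $h_{k}=T_{k}\sigma_{k}\in L_{-k/2-\eps+(k+3)/2}^{2}(S^{k})=L_{3/2-\eps}^{2}(S^{k})$.

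For the conditional statement the improvement over this generic bound comes from two extra features of $\sigma_{k}$. First, $\sigma_{k}$ is $SO(k)$-invariant, so away from the two poles of $S^{k}$ its pullback under the (elevation angle, horizontal direction) chart $(-\pi/2,\pi/2)\times S^{k-1}\to S^{k}$ has the form (finite measure in $\alpha$)$\,\times\,$(smooth density); such a distribution lies in $L_{-1/2-\eps}^{2}$ of the product, because a finite measure on $\R$ lies in $L_{-1/2-\eps}^{2}(\R)$ and multiplication by a smooth function (and by the two-sidedly bounded chart Jacobian) preserves negative Sobolev spaces. Second, near the two poles $\sigma_{k}$ is an honest smooth density: the smoothness hypothesis on $h_{1}$ means --- using in the plane the additivity $S(L-L,\cdot)=S(L,\cdot)+S(-L,\cdot)$ together with the unconditionality of $L$ --- that $\partial L$ is smooth with positive curvature on the arcs where it meets the vertical axis and where it is widest; by Lemma \ref{smooth_pullback} this upgrades to smoothness with positive curvature of $\partial L^{k+1}$ near the corresponding points, hence smoothness of $\sigma_{k}$ near the poles of $S^{k}$, and smoothness (with positive definite spherical Hessian) of the support function $h_{L^{k+1}}$ near both the poles and the equator of $S^{k}$. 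Combining the two features gives $\sigma_{k}\in L_{-1/2-\eps}^{2}(S^{k})$, whence $h_{k}=T_{k}\sigma_{k}\in L_{k/2+1-\eps}^{2}(S^{k})$.

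It remains to show that, under the hypothesis, $h_{k}$ is smooth near the poles of $S^{k}$, and then to pass from $S^{k}$ to the profile on $S^{1}$. For smoothness near the poles I would \emph{not} argue via pseudolocality of $T_{k}$ --- it fails, since the Schwartz kernel $|\langle\omega,v\rangle|$ of $T_{k}$ is singular along $\{\omega\perp v\}$ rather than along the diagonal --- but rather via the geometric description $h_{k}(\omega;L)=\text{vol}_{k}(\text{Pr}_{\omega^{\perp}}L^{k+1})$: when $\omega$ is near a pole of $S^{k}$, $\omega^{\perp}$ is a great $(k-1)$-sphere lying close to the \emph{equator} of $S^{k}$, where by the previous paragraph $h_{L^{k+1}}$ is smooth and positive with positive definite spherical Hessian; hence $\text{Pr}_{\omega^{\perp}}L^{k+1}$ is a smooth convex body with positive curvature, whose support function (the restriction of $h_{L^{k+1}}$ to $\omega^{\perp}$) depends smoothly on $\omega$, so its volume is a smooth function of $\omega$. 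Finally, for an $SO(k)$-invariant function that lies in $L_{s}^{2}(S^{k})$ and is smooth near the two poles, the profile on $S^{1}$ lies in $L_{s}^{2}(S^{1})$: away from the poles the Sobolev scale of $S^{k}$ restricted to invariant functions is equivalent to that of $S^{1}$ (the weight $\cos^{k-1}\alpha$ is bounded above and below there), while the only genuine discrepancy --- the singular term $-(k-1)\tan\alpha\,\partial_{\alpha}$ in the radial part of $\Delta_{S^{k}}$ --- is concentrated at the poles, where smoothness makes it harmless.

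The step I expect to be the real work is the second paragraph: carefully tracking the two one-dimensional smoothness hypotheses on $h_{1}$ through to smoothness, with positive curvature, of $\partial L^{k+1}$, and hence of $\sigma_{k}$ and of $h_{L^{k+1}}$, in precisely the right regions of $S^{k}$ (the poles for $\sigma_{k}$, the equator as well as the poles for $h_{L^{k+1}}$), with Lemma \ref{smooth_pullback} as the bridge from the planar profile curve to the solid of revolution. Everything else is routine Sobolev and pseudodifferential bookkeeping once the Sobolev gain $(k+3)/2$ of the cosine transform has been established.
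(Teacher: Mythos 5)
The central step of your second paragraph is a genuine gap: smoothness of $h_{1}(\alpha;L)$ near the poles and the equator does \emph{not} imply that $\partial L$ is smooth with positive curvature where it meets the vertical axis and where it is widest. Counterexample: let $L$ be the unit ball of $\ell^{4/3}$ in $\R^{2}$ (unconditional). Its support function is $h_{L}(\theta)=(\cos^{4}\theta+\sin^{4}\theta)^{1/4}$, which is real-analytic on all of $S^{1}$, and since $L$ is symmetric $h_{1}=2h_{L}$, so the hypothesis holds with room to spare; yet near $(\pm1,0)$ and $(0,\pm1)$ the boundary is $x=1-c|y|^{4/3}$ (resp.\ its reflection), which is only $C^{1}$ with curvature blowing up, so it is not ``smooth with positive curvature''. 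The dual mechanism you are implicitly using runs the wrong way: smooth boundary with positive curvature gives a smooth surface area measure with \emph{positive} density, but a smooth $h_{1}$ only gives (after inverting the cosine transform) a smooth density $\sigma_{1}=h_{L}''+h_{L}$ that may vanish, and then no boundary regularity beyond $C^{1}$ follows. Since your smoothness of $\sigma_{k}$ near the poles, and of $h_{L^{k+1}}$ (with positive definite Hessian) near the equator, are all drawn from this false implication, the second paragraph --- which you yourself flag as ``the real work'' --- does not go through as written.

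The conclusions you actually need are true, but they must be reached without any curvature statement, which is what the paper does: it stays on the sphere throughout. From $\square T_{1}=\mathcal{R}_{1}$ one gets $T_{1}^{-1}=$ (differential operator) composed with the $\frac{\pi}{2}$-rotation, so smoothness of $h_{1}$ near the poles and the equator gives smoothness of the density of $\sigma_{1}$ near the equator and the poles (no positivity claimed); unconditionality and Lemma \ref{smooth_pullback} then give that $\sigma_{k}$ is smooth near the poles of $S^{k}$ and near the equator, hence $\sigma_{k}\in L_{-\frac{1}{2}-\eps}^{2}(S^{k})$, and the order $-\frac{k+3}{2}$ of $T_{k}$ (which you justify by Funk--Hecke multipliers, the paper by $T_{k}=\square^{-1}\mathcal{R}_{k}$ and FIO calculus --- both are fine) yields the exponent $\frac{k}{2}+1-\eps$; smoothness of $h_{k}$ near the poles comes from the fact that the singularities of the kernel $|\langle\omega,v\rangle|$ sit on $\{\omega\perp v\}$, i.e.\ from the smoothness of $\sigma_{k}$ near the equator --- the same propagation-to-the-orthogonal-sphere phenomenon your projection argument exploits. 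If you want to keep your geometric route for the pole smoothness, note that it can be repaired: $h_{L^{k+1}}(\omega)=h_{L}(\alpha(\omega))=\frac{1}{2}h_{1}(\alpha(\omega))$ directly, so its smoothness near the equator follows from the hypothesis, unconditionality and Lemma \ref{smooth_pullback} with no detour through $\partial L$, and the volume of the projection is a polynomial (hence smooth) functional of a $C^{2}$ support function via $V=\frac{1}{k}\int h\det(\nabla^{2}h+hI)$, so positive curvature is not needed there either; but the smoothness of $\sigma_{k}$ near the poles still requires the inversion-of-$T_{1}$ argument, not boundary regularity.
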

\begin{proof}
Denote 
\[
\square=\frac{1}{2\omega_{k-1}}(\Delta+k):C_{even}^{\infty}(S^{k})\to C_{even}^{\infty}(S^{k})
\]
where $\omega_{k-1}$ is the surface area of $S^{k-1}$. It is an
invertible differential operator of order 2. Let $\mathcal{R}_{k}:C_{even}^{\infty}(S^{k})\to C_{even}^{\infty}(S^{k})$
denote the spherical Radon transform, which is an invertible Fourier
integral operator of order $-\frac{k-1}{2}$ (see \cite{Radon}).
Then (see \cite{Cosine Transform}) 
\begin{equation}
\square T_{k}=\mathcal{R}_{k}\iff T_{k}=\square^{-1}\mathcal{R}_{k}\label{eq:Cosine Transform}
\end{equation}
Therefore, the cosine transform $T_{k}$ is an invertible (on even
functions) Fourier integral operator of order $-\frac{k+3}{2}$, and
it respects Sobolev spaces, i.e. for all $s\in\R$ 
\[
T_{k}:L_{s}^{2}(S^{k})\to L_{s+\frac{k+3}{2}}^{2}(S^{k})
\]
is an isomorphim. In particular, $T_{1}$ is invertible by a differential
operator followed by a $\frac{\pi}{2}$-rotation.\\
\\
For the first part, note that the surface area measure $\sigma_{k}\in C(S^{k})^{*}\subset L_{-\frac{k}{2}-\epsilon}^{2}(S^{k})$,
so $h_{k}(\omega;L)=T_{k}(\sigma_{k})\in L_{\frac{3}{2}-\epsilon}^{2}(S^{k})$.
\\
\\
For the second part, note that $\sigma_{1}=T_{1}^{-1}(h_{1})\in C(S^{1})^{*}\subset L_{-\frac{1}{2}-\epsilon}^{2}(S^{1})$
is smooth in a neighborhood of the equator and of the poles of $S^{1}$,
since $h_{1}$ is smooth there, and by eq. \ref{eq:Cosine Transform}.
Let $\sigma_{k}=\pi^{*}\sigma_{1}$ be the surface area measure of
$L^{k+1}$, where $\pi:S^{k}\to S^{k}/SO(k-1)$. Then $\sigma_{k}$
is smooth near the poles from unconditionality of $L$ and Lemma \ref{smooth_pullback},
so $\sigma_{k}\in L_{-\frac{1}{2}-\epsilon}^{2}(S^{k})$; also $\sigma_{k}$
is smooth near the equator $S^{k-1}\subset S^{k}$. Therefore, $h_{k}(\omega;L)=T_{k}(\sigma_{k})\in L_{d}^{2}(S^{k})$
where $d=-\frac{1}{2}-\epsilon+\frac{k+3}{2}=\frac{k}{2}+1-\epsilon$,
and also $h_{k}$ is smooth near the poles. Then $h_{k}(\alpha;L)$,
which can be obtained by taking a vertical $2$-dimensional restriction
of $h_{k}(\omega;L)$, lies in $L_{d}^{2}(S^{1})$ and is smooth near
the poles, as required.\end{proof}
\begin{rem}
\label{Rem:Cm_convergence}It follows that under the assumptions of
Proposition \ref{prop:smoothnessOfSupportFnction}, $h_{k}\in C^{\lfloor\frac{k}{2}\rfloor}(S^{1})$,
and if $K_{n}\to K$ in the Hausdorff topology s.t. $h_{1}(\bullet;K_{n})$
and $h_{1}(\bullet;K)$ are as above, then also $h_{k}(\alpha;K_{n})\to h_{k}(\alpha;K)$
in the $C^{\lfloor\frac{k}{2}\rfloor}(S^{1})$ topology.\end{rem}
\begin{prop}
\label{prop:disjoint_singular_supports}Let $\phi\in Val_{k}^{+}(\R^{n})^{SO(n-1)}$
be a continuous $k$-homogeneous even valuation such that $\phi(K^{n})=\int_{S^{1}}fh_{k}(\alpha;K)$
for $SO(n-1)$-invariant convex bodies $K^{n}$ with smooth $h_{k}(\bullet;K)$,
where $f\in C_{even}^{-\infty}(S^{1})$. Then $\phi(K^{n})=\int_{S^{1}}fh_{k}(\alpha;K)$
for all $SO(n-1)$-invariant symmetric convex bodies $K^{n}$ such
that $\text{sing-supp}(h_{k}(\alpha;K))$ and $\text{sing-supp}(f)$
are disjoint one from another, and $\text{sing-supp}(h_{k}(\alpha;K))$
is disjoint from the poles.\end{prop}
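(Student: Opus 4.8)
The plan is to prove the formula for such a $K$ by approximating it with $SO(n-1)$-invariant bodies whose $k$-support functions are smooth, and passing to the limit; the only real work is a regularity estimate for the transform $h_{1}\mapsto h_{k}$ near $\text{sing-supp}(f)$. First one must make the pairing $\int_{S^{1}}fh_{k}(\alpha;K)$ meaningful. Since $\text{sing-supp}(f)$ and $\text{sing-supp}(h_{k}(\alpha;K))$ are disjoint closed symmetric subsets of $S^{1}$, and the latter also misses the poles, fix an even partition of unity $1=\rho+\tau$ on $S^{1}$ with $\rho\equiv1$ near $\text{sing-supp}(f)$ and $\text{supp}(\rho)\cap\big(\text{sing-supp}(h_{k}(\alpha;K))\cup\{\text{poles}\}\big)=\emptyset$. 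Then $\rho\,h_{k}(\alpha;K)$ and $\tau f$ are smooth, and one sets
\[
\int_{S^{1}}fh_{k}(\alpha;K):=\langle f,\rho\,h_{k}(\alpha;K)\rangle+\langle \tau f,h_{k}(\alpha;K)\rangle ,
\]
which is independent of the choice of $\rho,\tau$ and agrees with the honest integral when $h_{k}(\alpha;K)$ is smooth.

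Write $K=L^{n}$ with $L$ a $2$-dimensional unconditional convex body (every symmetric $SO(n-1)$-invariant body is of this form). Mollifying the support function of $L$, which preserves convexity and, for an even mollifier, unconditionality, produces unconditional convex $L_{m}\to L$ in the Hausdorff metric with $h_{1}(\bullet;L_{m})\in C^{\infty}(S^{1})$, such that $h_{1}(\bullet;L_{m})\to h_{1}(\bullet;L)$ uniformly on $S^{1}$ and in $C^{\infty}$ on compacta of the complement of $\text{sing-supp}(h_{1}(\bullet;L))$. By Proposition \ref{prop:smoothnessOfSupportFnction} (applied with $h_{1}$ globally smooth) $h_{k}(\bullet;L_{m})\in C^{\infty}(S^{1})$, so the hypothesis gives $\phi(L_{m}^{n})=\int_{S^{1}}fh_{k}(\alpha;L_{m})$; and $L_{m}^{n}\to L^{n}$ in the Hausdorff metric, so continuity of $\phi$ gives $\phi(L_{m}^{n})\to\phi(K)$. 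Hence it suffices to prove $\int_{S^{1}}fh_{k}(\alpha;L_{m})\to\int_{S^{1}}fh_{k}(\alpha;K)$. Splitting by the partition of unity, the term $\langle \tau f,h_{k}(\bullet;L_{m})\rangle$ converges to $\langle \tau f,h_{k}(\bullet;K)\rangle$ because $\tau f$ is a fixed smooth function and $h_{k}(\bullet;L_{m})\to h_{k}(\bullet;K)$ uniformly (Hausdorff convergence of the bodies forces uniform convergence of the volumes of their projections). So everything reduces to the claim that $\rho\,h_{k}(\bullet;L_{m})\to\rho\,h_{k}(\bullet;K)$ in $C^{\infty}(S^{1})$.

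For this I would exploit the factorization behind Proposition \ref{prop:smoothnessOfSupportFnction}: $h_{k}=T_{k}\sigma_{k}$, $\sigma_{k}=\pi^{*}\sigma_{1}$ (with $\pi$ as in that proposition), $\sigma_{1}=T_{1}^{-1}h_{1}$, where by eq. \ref{eq:Cosine Transform} $T_{k}=\square^{-1}\mathcal{R}_{k}$ with $\square$ an elliptic second-order differential operator and $\mathcal{R}_{k}$ the spherical Radon transform, and $T_{1}^{-1}$ is a rotation composed with a differential operator. Set $e_{m}=h_{1}(\bullet;L_{m})-h_{1}(\bullet;L)$ and split it by a fixed smooth cutoff as $e_{m}=e_{m}^{\mathrm{far}}+e_{m}^{\mathrm{near}}$, with $e_{m}^{\mathrm{far}}$ supported away from $\text{sing-supp}(h_{1}(\bullet;L))$; then $e_{m}^{\mathrm{far}}\to0$ in $C^{\infty}(S^{1})$, so its contribution to $h_{k}(\bullet;L_{m})-h_{k}(\bullet;K)$ tends to $0$ in $C^{\infty}(S^{1})$. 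The part $e_{m}^{\mathrm{near}}$ converges to $0$ only uniformly, but is supported in a fixed small neighborhood of $\text{sing-supp}(h_{1}(\bullet;L))$; hence the distribution $u_{m}$ it induces on $S^{k}$ (apply the differential operator, rotate, pull back by $\pi$) tends to $0$ in a fixed Sobolev space $H^{s_{0}}(S^{k})$ and is supported in a fixed small neighborhood $N'$ of $\text{sing-supp}(\sigma_{k})$. Now for $\omega_{0}\in\text{sing-supp}(f)$ the great subsphere $\omega_{0}^{\perp}\cap S^{k}$ is disjoint from $N'$ (see the next paragraph), so on a neighborhood $U_{0}$ of $\omega_{0}$ the operator $T_{k}$ pairs $u_{m}$ against the kernel $|\langle x,\omega\rangle|$, which is smooth in $(x,\omega)$ there; differentiating under the pairing gives $\|T_{k}u_{m}\|_{C^{N}(U_{0})}\le C_{N}\|u_{m}\|_{H^{s_{0}}}\to0$ for every $N$. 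Thus $h_{k}(\bullet;L_{m})\to h_{k}(\bullet;K)$ in $C^{\infty}$ near $\text{sing-supp}(f)$, hence $\rho\,h_{k}(\bullet;L_{m})\to\rho\,h_{k}(\bullet;K)$ in $C^{\infty}(S^{1})$, and the proof is complete.

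The hard part, and the only place the two hypotheses are used, is the geometric fact just invoked: for $\omega_{0}\in\text{sing-supp}(f)$ the great subsphere $\omega_{0}^{\perp}\cap S^{k}$ avoids $\text{sing-supp}(\sigma_{k})$. Since $\mathcal{R}_{k}=\square T_{k}$ is an invertible Fourier integral operator whose canonical relation is the point/great-subsphere incidence, $\text{sing-supp}(h_{k}(\alpha;K))=\text{sing-supp}(T_{k}\sigma_{k})$ should contain every non-polar $\omega_{0}$ whose orthogonal great subsphere meets $\text{sing-supp}(\sigma_{k})$; equivalently, a non-polar point outside $\text{sing-supp}(h_{k}(\alpha;K))$ has its orthogonal great subsphere disjoint from $\text{sing-supp}(\sigma_{k})$. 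Since $\text{sing-supp}(f)$ lies outside $\text{sing-supp}(h_{k}(\alpha;K))$ and avoids the poles, this applies to every $\omega_{0}\in\text{sing-supp}(f)$, and by openness it also yields the neighborhoods $N',U_{0}$ used above. The delicate point in making this rigorous is the behavior at the rotation axis, where $\pi$ fails to be a submersion and the singularities of $\sigma_{k}$ cease to be conormal; here Lemma \ref{smooth_pullback} and Remark \ref{Rem:Cm_convergence} are the tools for tracking regularity across the axis, and the hypothesis that $\text{sing-supp}(h_{k}(\alpha;K))$ misses the poles is exactly what removes this obstruction.
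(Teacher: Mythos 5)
Your overall strategy coincides with the paper's (approximate $K$ by invariant bodies with smooth $k$-support function, invoke continuity of $\phi$, split the pairing with a cutoff adapted to the disjointness of the two singular supports), but your mollification differs, and the extra work it creates is where the proof breaks. The paper convolves $\sigma_{k}$ with an $SO(k)$-bi-invariant approximate identity on $SO(k+1)$, so that the intertwining identity $T_{k}(\sigma_{k}\ast F_{N})=T_{k}(\sigma_{k})\ast F_{N}$ gives $h_{k}(\bullet;K_{N})=h_{k}(\bullet;K)\ast F_{N}$ exactly, and the $C^{\infty}$ convergence near $\text{sing-supp}(f)$ is then essentially automatic. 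You mollify only in the one-dimensional variable and must prove that convergence by hand; the geometric fact you invoke for this --- that $\omega_{0}\notin\text{sing-supp}(h_{k}(\alpha;K))$ forces $\omega_{0}^{\perp}\cap S^{k}$ to be disjoint from $\text{sing-supp}(\sigma_{k})$ --- is false. A Fourier integral operator propagates singularities only along its canonical relation, which for the Radon/cosine transform is the \emph{conormal} to the incidence variety: only great subspheres meeting $\text{sing-supp}(\sigma_{k})$ tangentially (in the invariant setting, $|\alpha_{0}|=\frac{\pi}{2}-|\beta|$ for a singular parallel $\beta$ of $\sigma_{k}$) contribute to $\text{sing-supp}(h_{k})$. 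The double cone is already a counterexample: $\text{sing-supp}(\sigma_{k})$ is the pair of parallels at elevation $\pm\frac{\pi}{4}$, while $h_{k}$ is real-analytic at elevation $\frac{\pi}{8}$, whose orthogonal great subsphere crosses both parallels. Since your estimate $\|T_{k}u_{m}\|_{C^{N}(U_{0})}\leq C_{N}\|u_{m}\|_{H^{s_{0}}}$ is obtained by differentiating the kernel $|\langle x,\omega\rangle|$ under the pairing, and that kernel is not smooth on $\text{supp}(u_{m})\times U_{0}$ exactly when such a crossing occurs, the key step fails as written.

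The convergence you want is nevertheless true, but the mechanism is tangency, not incidence: by $SO(k)$-invariance, $T_{k}\pi^{*}\mu$ evaluated at elevation $\alpha$ equals $\int G(\alpha,\beta)\, d\mu(\beta)$ with $G(\alpha,\beta)=\int_{S^{k-1}}|\sin\alpha\sin\beta+\cos\alpha\cos\beta\, w_{1}|\, dw$ jointly smooth away from $|\alpha|+|\beta|=\frac{\pi}{2}$, and the condition $|\alpha_{0}|\neq\frac{\pi}{2}-|\beta|$ for all singular $\beta$ of $\sigma_{1}$ is precisely what the hypothesis $\omega_{0}\notin\text{sing-supp}(h_{k}(\alpha;K))$ delivers, via the $\frac{\pi}{2}$-rotation of singular supports built into $T_{1}^{-1}$. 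Reworking your last two paragraphs along these lines (pairing $d(\mu_{m}-\mu)$ against $\partial_{\alpha}^{N}G(\alpha,\cdot)$ and integrating by parts in $\beta$ to absorb the second-order differential operator hidden in $T_{1}^{-1}$) would repair the argument; the paper's group-convolution mollification avoids the issue altogether. A smaller inaccuracy: the hypothesis that $\text{sing-supp}(h_{k})$ misses the poles translates under the $\frac{\pi}{2}$-rotation to $\text{sing-supp}(\sigma_{k})$ missing the \emph{equator}, so your closing explanation of what that hypothesis buys at the rotation axis is also not right; its actual role (as in the paper) is to let the cutoff equal $1$ near the poles, where restriction to $S^{1}$ and pullback to $S^{k}$ are only controlled for smooth data.
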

\begin{proof}
Denote $G=SO(k+1)$, $H=SO(k)$. Write $S^{k}=H\backslash G$ for
the space of orbits under left action. Let $d\mu$ be the Haar probability
measure on $G$, $d\sigma$ the pushforward to $S^{k}$. Fix a positive
approximate identity $F_{N}\in C^{\infty}(S^{k})^{H}$ supported near
the north pole (identified with its $H$- bi-invariant pullback to
$G)$. It can be obtained by fixing an approximate identity $\tilde{F}_{N}$
on $G$, and then taking 
\[
F_{N}(g)=\int_{H\times H}\tilde{F}_{N}(h_{1}gh_{2})dh_{1}dh_{2}
\]
Note that $F_{N}(g)=F_{N}(g^{-1})$ by bi-invariance of $F_{N}$,
and since $\langle gH,H\rangle=\langle H,g^{-1}H\rangle$ (considered
as points on the sphere).\\
Convolution of functions is defined by 
\[
u\ast v(x)=\int_{G}u(g)v(g^{-1}x)d\mu(g)=\int_{G}v(g)u(xg^{-1})d\mu(g)
\]
so that $(L_{h}u)\ast v=L_{h}(u\ast v)$ and $R_{h}(u\ast v)=u\ast R_{h}v$
(here $L_{h}$and $R_{h}$ denote the left and right actions respectively).
In particular, for $u\in C^{\infty}(S^{k})$, $v\in C^{\infty}(G)$,
$u\ast v\in C^{\infty}(S^{k})$, and if $v$ is right $H$-invariant,
so is $u\ast v$. The following properties hold: \label{Convolution properties}\\
1. Convolution with $F_{N}$ on either side is self adjoint: for $u,v\in C^{\infty}(S^{k})$,
$\langle F_{N}\ast u,v\rangle=\langle u,F_{N}\ast v\rangle$and $\langle u\ast F_{N},v\rangle=\langle u,v\ast F_{N}\rangle$.
For instance, 
\[
\langle F_{N}\ast u,v\rangle=\int_{G}d\mu(x)v(Hx)\int_{G}d\mu(g)u(Hg)F_{N}(xg^{-1})=\int_{G\times G}d\mu(x)d\mu(g)v(Hx)u(Hg)F_{N}(xg^{-1})
\]
and we can exchange $x$ and $g$ since $F_{N}(xg^{-1})=F_{N}(gx^{-1})$.
Similarly,
\[
\langle u\ast F_{N},v\rangle=\int_{G}d\mu(x)v(Hx)\int_{G}d\mu(g)u(Hg)F_{N}(g^{-1}x)=\int_{G\times G}d\mu(x)d\mu(g)v(Hx)u(Hg)F_{N}(g^{-1}x)
\]
2. For $u\in C^{\infty}(S^{k})$, one has $F_{N}\ast u\to u$ in $C^{\infty}(S^{k})$.
For $u\in C^{\infty}(G/H)$, $u\ast F_{N}\to u$. 
\[
F_{N}\ast u(x)=\int_{H\times H}dh_{1}dh_{2}\int_{G}\tilde{F}_{N}(h_{1}gh_{2})u(g^{-1}x)dg=\int_{H\times H}dh_{1}dh_{2}\int_{G}\tilde{F}_{N}(h_{1}g)u(h_{2}^{-1}g^{-1}x)dg
\]
by left $H$-invariance of $u$, this equals 
\[
\int_{H}dh\int_{G}\tilde{F}_{N}(hg)u(g^{-1}x)dg=\int_{H}dh\tilde{F}_{N}\ast u(hx)dh=\int_{H}L_{h}(\tilde{F}_{N}\ast u)(x)dh
\]
since $L_{h}(\tilde{F}_{N}\ast u)(x)\to L_{h}u(x)=u(x)$ in $C^{\infty}(G)$,
we conclude that $\int_{H}L_{h}(\tilde{F}_{N}\ast u)(x)dh\to u$ in
$C^{\infty}(S^{k})$. Similarly, for $u\in C^{\infty}(G/H)$, 
\[
u\ast F_{N}(x)=\int_{H\times H}dh_{1}dh_{2}\int_{G}\tilde{F}_{N}(h_{1}gh_{2})u(xg^{-1})dg=\int_{H}dh_{2}\int_{G}\tilde{F}_{N}(gh_{2})u(xg^{-1})dg=
\]
\[
=\int_{H}dh\int_{G}\tilde{F}_{N}(g)u(xh^{-1}g^{-1})dg=\int_{H}R_{h}(u\ast\tilde{F}_{N})(x)dh
\]
and again 
\[
R_{h}(u\ast\tilde{F}_{N})\to R_{h}u=u
\]
implying the statement.\\
3. For $u\in C^{-\infty}(S^{k})$, $F_{N}\ast u\to u$ for $u\in C^{-\infty}(S^{k})$
and $u\ast F_{N}\to u$ for $u\in C^{-\infty}(G/H)$. This is a direct
consequence of properties 1 and 2.\\
4. For $u\in C^{-\infty}(S^{k})$, $T_{k}(u\ast F_{N})=T_{k}(u)\ast F_{N}$.
It is enough by self-adjointness of $T_{k}$ and the convolution operator
to verify this for $u\in C^{\infty}(S^{k})$: 
\[
T_{k}(u\ast F_{N})(x)=\int_{S^{k}}dy|\langle x,y\rangle|\int_{G}dgF_{N}(g)u(yg^{-1})=
\]
\[
=\int_{G}dgF_{N}(g)\int_{S^{k}}u(yg^{-1})|\langle x,y\rangle|dy=\int_{G}dgF_{N}(g)\int_{S^{k}}u(y)|\langle xg^{-1},y\rangle|dy=
\]
\[
=\int_{G}dgF_{N}(g)T_{k}u(xg^{-1})=T_{k}u\ast F_{N}(x)
\]
Note that $F_{N}\ast u\in C^{-\infty}(S^{k})^{H}$ whenever $u\in C^{-\infty}(S^{k})^{H}$.
\\
\\
Let $\sigma_{k}\in C^{-\infty}(S^{k})^{H}$ be the surface area measure
of $K^{k+1}$. Then by Minkowski's theorem, $\sigma_{k}\ast F_{N}$
is the surface area measure of a sequence of $H$-invariant bodies
denoted $K_{N}^{k+1}$ s.t. $K_{N}\to K$, therefore also $K_{N}^{n}\to K^{n}$
and $\phi(K_{N}^{n})\to\phi(K^{n})$. On the other hand,
\[
T(\sigma_{k}\ast F_{N})=h_{k}(\bullet;K)\ast F_{N}
\]
so 
\[
\phi(K_{N}^{n})=\int_{S^{1}}f\cdot T(\sigma_{k}\ast F_{N})d\alpha=\int_{S^{1}}f(\alpha)\cdot(h_{k}(\bullet;K)\ast F_{N})(\alpha)d\alpha
\]
Choose a cut-off function $\chi\in C^{\infty}(S^{1})^{\mathbb{Z}_{2}}$
(the action is reflection w.r.t. the vertical axis, note that $\chi$
induces a smooth $H-$invariant function on $S^{k}$, also denoted
$\chi$) such that $\chi(\alpha)h_{k}(\bullet;K)\in C^{\infty}(S^{k})$
and $(1-\chi(\alpha))f(\alpha)\in C^{\infty}(S^{1})$, and $\chi=1$
in a neighborhood of the poles. Now we can restrict $\chi(\alpha)h_{k}(\bullet;K)$
to a smooth function on $S^{1}$, and 
\[
\chi(\alpha)(F_{N}\ast h_{k}(\bullet;K))(\alpha)\to\chi(\alpha)h_{k}(\alpha;K)
\]
in $C^{\infty}(S^{k})$ and also in $C^{\infty}(S^{1})$ (by restriction).
Then 
\[
\int_{S^{1}}f(\alpha)(h_{k}(\bullet;K)\ast F_{N})(\alpha)d\alpha=\int_{S^{1}}f(\alpha)\Big(\chi(\alpha)(h_{k}(\bullet;K)\ast F_{N})(\alpha)\Big)d\alpha+
\]
\[
+\int_{S^{1}}\Big((1-\chi(\alpha))f(\alpha)\Big)(h_{k}(\bullet;K)\ast F_{N})(\alpha)d\alpha
\]
The first summand converges to 
\[
\int_{S^{1}}f(\alpha)\chi(\alpha)h_{k}(\alpha;K)d\alpha
\]
Also, $(1-\chi(\alpha))f(\alpha)$ can be pulled back to a smooth
function on $S^{k}$ since $1-\chi=0$ near the poles. In particular,
we will have 
\[
\int_{S^{1}}\Big((1-\chi(\alpha))f(\alpha)\Big)(h_{k}(\bullet;K)\ast F_{N})(\alpha)d\alpha\to\int_{S^{1}}\Big((1-\chi(\alpha))f(\alpha)\Big)h_{k}(\alpha;K)(\alpha)d\alpha
\]
And so the sum converges to $\int_{S^{1}}f(\alpha)h_{k}(\alpha;K)d\alpha$,
as required.
\end{proof}

\section{\label{sec:nonexistence of even Lorentz}Finding the generalized
invariant valuations}

From now on, $n\geq3$, and $G=SO^{+}(n-1,1)$. Let us recall some
definitions and facts and introduce notation.\\
\\
Consider the bundle $E^{n,k}$ over $Gr(V,n-k)$ with fiber over $\Lambda\in Gr(V,n-k)$
equal to $E^{n,k}|_{\Lambda}=D(V/\Lambda)\otimes D(T_{\Lambda}Gr(V,n-k))$.
We will sometimes refer to it as the Crofton bundle, and we call its
(generalized) sections (generalized) Crofton measures. Also, recall
Klain's bundle $K^{n,k}$ over $Gr(V,k)$, that has fiber $D(\Lambda)$
over $\Lambda\in Gr(n,k)$. Klain's imbedding $Kl:Val_{k}^{ev}(V)\to\Gamma(K^{n,k})$
is $GL(V)$-equivariant, and maps smooth valuations to smooth sections,
see \cite{alesker-adv-2000}.\\
\\
Observe there is a natural bilinear non-degenerate pairing 
\[
\Gamma^{\pm\infty}(E^{n,k})\times\Gamma^{\mp\infty}(K^{n,n-k})\to D(V)
\]
The $GL(V)-$equivariant cosine transform $T_{n-k,k}:\Gamma^{\infty}(E^{n,k})\to\Gamma^{\infty}(K^{n,k})$
is given by 
\[
T_{n-k,k}(\gamma)(D_{\Lambda})=\int_{\Omega\in Gr(n,n-k)}\gamma\otimes Pr_{V/\Omega}(D_{\Lambda})
\]
where $D_{\Lambda}\subset\Lambda$ is some symmetric convex body.
We will write $T_{n-k,k}:C^{\infty}(Gr(n,n-k))\to C^{\infty}(Gr(n,k))$
also for the cosine transform after a Euclidean trivialization, and
also $T_{n-k,k}:\Gamma^{-\infty}(E^{n,k})\to\Gamma^{-\infty}(K^{n,k})$
for the adjoint operator to $T_{k,n-k}:\Gamma^{\infty}(E^{n,n-k})\to\Gamma^{\infty}(K^{n,n-k})$.
It extends the cosine transform on smooth sections.

\subsection{Some representation theory}

We make use of the following facts (see \cite{AleskerBernstein}):
\begin{enumerate}
\item The highest weights of $SO(n)$ are parametrized by sequences of integers
$\lambda=(\lambda_{1},...,\lambda_{\lfloor\frac{n}{2}\rfloor})$ with
$\lambda_{1}\geq...\geq\lambda_{\lfloor\frac{n}{2}\rfloor}\geq0$
for odd $n$, and $\lambda_{1}\geq...\geq\lambda_{\lfloor\frac{n}{2}\rfloor-1}\geq|\lambda_{\lfloor\frac{n}{2}\rfloor}|$
for even $n>2$.
\item The irreducible components of $C^{\infty}(Gr(n,k))$ (considered as
a representation of $SO(n)$) are of multiplicity one, with highest
weights $\lambda\in\Lambda_{k}^{+}\cap\Lambda_{n-k}^{+}$. Here $\Lambda_{j}=\{\lambda:\lambda_{i}=0\,\forall i>j,\,\lambda_{i}\equiv0\mod2\,\forall i\}$.
\item The image of $T_{k}:C^{\infty}(Gr(n,n-k))\to C^{\infty}(Gr(n,k))$
consists of representations with highest weights $\lambda\in\Lambda_{k}^{+}\cap\Lambda_{n-k}^{+}$,
$|\lambda_{2}|\leq2$. The kernel is thus $\text{Ker}T_{k}=\oplus\rho_{\lambda}$
with $\lambda\in\Lambda_{k}^{+}\cap\Lambda_{n-k}^{+}$, $|\lambda_{2}|\geq4$.
The image of $T_{k}$ is closed.
\item The irreducible representations of $SO(n)$ which contain an $SO(n-1)$-invariant
element are precisely those corresponding to spherical harmonics.
Their highest weight is $(d,0,...,0)$ (for degree $d$ spherical
harmonics). The spherical harmonics appearing in $C^{\infty}(G(n,n-k))$
are precisely those of even degree $d$. 
\item In particular, $C^{\infty}(Gr(n,n-k))^{SO(n-1)}\cap\text{Ker}T_{n-k,k}=0$.
Thus 
\begin{equation}
T_{n-k,k}:C^{\infty}(Gr(V,n-k))^{SO(n-1)}\to C^{\infty}(Gr(V,k))^{SO(n-1)}\label{eq:cosine transform is isomorphism}
\end{equation}
is an isomorphism: It is injective and has dense image (by Schur's
Lemma), and also 
\begin{equation}
T_{n-k,k}\Big((C^{\infty}(Gr(V,n-k)))^{SO(n-1)}\Big)=\Big(T_{n-k,k}(C^{\infty}(Gr(V,n-k)))\Big)^{SO(n-1)}\label{eq:cosine_image}
\end{equation}
implying the image is closed. Equation \ref{eq:cosine_image} holds
because $T_{n-k,k}$ obviously maps $SO(n-1)$-invariant vectors to
$SO(n-1)$-invariant vectors, and if $v\in T_{n-k,k}(C^{\infty}(Gr(V,n-k)))$
is $SO(n-1)$-invariant, then $v=T_{n-k,k}u$ for some $u\in C^{\infty}(Gr(V,n-k))$
such that $v=T_{n-k,k}(gu)$ for all $g\in SO(n-1)$, implying $v=T_{n-k,k}(\int_{SO(n-1)}gu\cdot dg)$.
\item In particular, 
\[
T_{n-k,k}:C^{-\infty}(Gr(V,n-k))^{SO(n-1)}\to C^{-\infty}(Gr(V,k))^{SO(n-1)}
\]
is also an isomorphism, since $T_{n-k,k}$ is a symmetric opeator
(after the obvious identification $Gr(V,k)=Gr(V,n-k)$).
\end{enumerate}
Note that the action of $SO(n-1)$ on $\Gamma^{-\infty}(E^{n,k})$
and $\Gamma^{-\infty}(K^{n,k})$ (after a Euclidean trivialization)
and on $C^{-\infty}(Gr(V,n-k))$ resp. $C^{-\infty}(Gr(V,k))$ coincides.
We deduce the following 
\begin{cor}
\label{cor:The-map-is-an-injection}The map 
\[
T_{n-k,k}:\Gamma^{-\infty}(E^{n,k})^{SO^{+}(n-1,1)}\to\Gamma^{-\infty}(K^{n,k})^{SO^{+}(n-1,1)}
\]
is injective.
\end{cor}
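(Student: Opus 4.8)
The plan is to reduce the $SO^{+}(n-1,1)$-invariant statement to the representation-theoretic fact established above, namely that the scalar cosine transform $T_{n-k,k}\colon C^{-\infty}(Gr(V,n-k))^{SO(n-1)}\to C^{-\infty}(Gr(V,k))^{SO(n-1)}$ is an isomorphism. The key observation is that $SO(n-1)$, realized as the group of rotations of the spacelike coordinate hyperplane $\mathrm{Span}(e_{1},\dots,e_{n-1})$ fixing $e_{n}$, is a (maximal compact) subgroup of $SO^{+}(n-1,1)$; hence any $\gamma\in\Gamma^{-\infty}(E^{n,k})^{SO^{+}(n-1,1)}$ is in particular $SO(n-1)$-invariant.

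First I would fix the Euclidean structure $\langle\cdot,\cdot\rangle$ on $V$ and use it to trivialize both $E^{n,k}$ and $K^{n,k}$, identifying $\Gamma^{-\infty}(E^{n,k})\cong C^{-\infty}(Gr(V,n-k))$ and $\Gamma^{-\infty}(K^{n,k})\cong C^{-\infty}(Gr(V,k))$. Since every $g\in O(n)$ preserves the Euclidean structure, these identifications are $O(n)$-equivariant, and as $SO(n-1)\subset O(n)$ they carry $SO(n-1)$-invariant sections to $SO(n-1)$-invariant functions --- this is exactly the remark recorded just before the corollary. Because $T_{n-k,k}$ is $GL(V)$-equivariant, under these trivializations it becomes precisely the scalar cosine transform (which is how the notation $T_{n-k,k}$ on functions was introduced above).

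Now suppose $\gamma\in\Gamma^{-\infty}(E^{n,k})^{SO^{+}(n-1,1)}$ satisfies $T_{n-k,k}\gamma=0$, and let $\tilde{\gamma}\in C^{-\infty}(Gr(V,n-k))^{SO(n-1)}$ denote its image under the trivialization; then $T_{n-k,k}\tilde{\gamma}=0$ in $C^{-\infty}(Gr(V,k))^{SO(n-1)}$. By the isomorphism recalled above, $T_{n-k,k}$ is injective on $SO(n-1)$-invariant generalized functions, so $\tilde{\gamma}=0$, whence $\gamma=0$. The only point that needs care is that the Euclidean trivialization is \emph{not} $SO^{+}(n-1,1)$-equivariant, merely $O(n)$-equivariant; this is precisely why one must pass first to the common subgroup $SO(n-1)$ before invoking the representation theory. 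All the genuine difficulty has already been absorbed into the vanishing $C^{\infty}(Gr(n,n-k))^{SO(n-1)}\cap\mathrm{Ker}\,T_{n-k,k}=0$ proved above, so no new obstacle arises at this stage.
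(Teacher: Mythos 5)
Your argument is correct and is essentially the paper's own: the corollary is deduced from the fact that $T_{n-k,k}$ is an isomorphism on $SO(n-1)$-invariant generalized functions (item 6 of the representation-theoretic list), combined with the observation that an $SO^{+}(n-1,1)$-invariant section is in particular invariant under the subgroup $SO(n-1)$ and that the Euclidean trivialization intertwines the $SO(n-1)$-actions on sections and functions. You also correctly flag the one subtle point, namely that the trivialization is only $O(n)$-equivariant, which is exactly why the reduction to $SO(n-1)$ is needed.
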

Let us prove the following 
\begin{prop}
\label{prop:smooth image is smooth functions intersect generalized image}$C^{\infty}(Gr(n,n-k))\cap T_{n-k,k}(C^{-\infty}(Gr(n,n-k)))=C^{\infty}(Gr(n,n-k))$\end{prop}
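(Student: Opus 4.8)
The statement is an equality of subspaces of $C^{\infty}(Gr(n,n-k))$, and one inclusion is trivial: the right-hand side is tautologically contained in the left-hand side since $T_{n-k,k}$ is invertible on smooth sections (indeed $T_{n-k,k}\colon C^{\infty}(Gr(n,n-k))\to C^{\infty}(Gr(n,k))$ has closed image and, after identifying the two Grassmannians, $T_{n-k,k}$ restricted to smooth sections composed with $T_{k,n-k}$ is the identity up to the usual cosine-transform normalization). So the content is the reverse inclusion: if $f\in C^{\infty}(Gr(n,n-k))$ also lies in $T_{n-k,k}(C^{-\infty}(Gr(n,n-k)))$, then $f$ is already in the image of the cosine transform applied to \emph{smooth} sections, i.e. $T_{n-k,k}^{-1}f$ is smooth.

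\textbf{Main step.} The plan is to decompose everything into $SO(n)$-isotypic components. By items 2 and 3 of the representation theory subsection, $C^{\infty}(Gr(n,n-k))=\widehat{\bigoplus}_{\lambda}\rho_\lambda$ over $\lambda\in\Lambda_{n-k}^{+}\cap\Lambda_{k}^{+}$, each with multiplicity one, and $T_{n-k,k}$ acts on the $\rho_\lambda$-component as a scalar $c_\lambda$, which is nonzero exactly when $|\lambda_2|\leq 2$. Hence $\mathrm{Ker}\,T_{n-k,k}=\widehat{\bigoplus}_{|\lambda_2|\geq 4}\rho_\lambda$, and on the orthogonal complement $\mathrm{Ker}\,T_{n-k,k}^{\perp}=\widehat{\bigoplus}_{|\lambda_2|\leq 2}\rho_\lambda$ the transform is injective with closed image. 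Now suppose $f=T_{n-k,k}(u)$ with $u\in C^{-\infty}(Gr(n,n-k))$. Since $f$ is smooth, in particular $f\in C^{-\infty}(Gr(n,k))$, and its isotypic components $f_\lambda$ are supported on $|\lambda_2|\leq 2$ (because $f$ is in the image of $T_{n-k,k}$), with $f_\lambda=c_\lambda u_\lambda$ there. Therefore the components of the preimage are forced: $u_\lambda=c_\lambda^{-1}f_\lambda$ for $|\lambda_2|\leq 2$, and $u_\lambda$ is arbitrary (any generalized vector) for $|\lambda_2|\geq 4$. Set $u':=\sum_{|\lambda_2|\leq 2}c_\lambda^{-1}f_\lambda$, the projection of $u$ onto $\mathrm{Ker}\,T_{n-k,k}^{\perp}$; then $T_{n-k,k}(u')=f$ as well. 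The claim reduces to showing $u'\in C^{\infty}(Gr(n,n-k))$.

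\textbf{Why $u'$ is smooth.} This is where the structure of the cosine transform as an elliptic-type Fourier integral operator enters, exactly as in the proof of Proposition \ref{prop:smoothnessOfSupportFnction}: $T_{n-k,k}$ is a Fourier integral operator of a fixed order which, on the complement of its (finite-codimension-behaving) kernel, admits a pseudodifferential parametrix respecting all Sobolev spaces; equivalently, the scalars $c_\lambda$ for $|\lambda_2|\leq 2$ grow polynomially and their reciprocals grow polynomially in $|\lambda|$. Since $f$ is smooth, its isotypic coefficients $f_\lambda$ decay faster than any polynomial in $|\lambda|$ (in the appropriate Sobolev norms on the fixed finite-dimensional $\rho_\lambda$); multiplying by the polynomially-bounded $c_\lambda^{-1}$ preserves rapid decay, so $u'=\sum c_\lambda^{-1}f_\lambda$ converges in every $C^m$, i.e. $u'\in C^{\infty}(Gr(n,n-k))$. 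Hence $f=T_{n-k,k}(u')\in T_{n-k,k}(C^{\infty}(Gr(n,n-k)))\subset C^{\infty}(Gr(n,n-k))$ with smooth preimage, proving the nontrivial inclusion.

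\textbf{Expected obstacle.} The only real subtlety is the quantitative claim that $T_{n-k,k}$ restricted to $\mathrm{Ker}\,T_{n-k,k}^{\perp}$ has a smoothing-preserving inverse, i.e. that the nonzero eigenvalues $c_\lambda$ are bounded below by a negative power of $|\lambda|$ rather than decaying super-polynomially (which would allow a non-smooth $u'$ to have smooth image). This is precisely the content of the factorization $T_k=\square^{-1}\mathcal{R}_k$ (equation \ref{eq:Cosine Transform}) together with the known mapping properties of the spherical Radon transform between Sobolev spaces; since that factorization and those properties were already invoked and cited, I would simply quote the closedness of the image and the Sobolev-boundedness of the parametrix from items 2--3 of the representation theory subsection and from \cite{Cosine Transform}, \cite{Radon}, rather than reprove them. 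One should also double-check that $f\in C^\infty$ genuinely forces all its isotypic components to lie in the $|\lambda_2|\leq 2$ range (this is automatic because $f$ equals a cosine transform, hence lies in the closed image described in item 3), so there is no hidden contribution from the kernel directions.
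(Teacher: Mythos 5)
Your reading of the proposition is the right one: as it is actually used (in Lemma \ref{croftonMeasureDefinesValuationOnSmoothBodies}) the content is that a smooth function lying in $T_{n-k,k}(C^{-\infty})$ already lies in $T_{n-k,k}(C^{\infty})$, and your isotypic-decomposition argument is aimed at exactly that. Your justification of the ``trivial'' inclusion, however, is wrong: $T_{n-k,k}$ is \emph{not} invertible on smooth functions on a higher Grassmannian --- it annihilates every $\rho_{\lambda}$ with $|\lambda_{2}|\geq4$ --- so $T_{k,n-k}\circ T_{n-k,k}$ is far from a multiple of the identity; the right-hand side of the displayed equality should be read as $T_{n-k,k}(C^{\infty}(Gr(n,k)))$, which is how the paper's own proof concludes.

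Your route to the nontrivial inclusion is genuinely different from the paper's and heavier than necessary. The paper simply convolves: choose an approximate identity $\mu_{N}\in\M^{\infty}(SO(n))$; then $T_{k,n-k}(\sigma\ast\mu_{N})=T_{k,n-k}(\sigma)\ast\mu_{N}=h\ast\mu_{N}\to h$ in the $C^{\infty}$ topology, each $\sigma\ast\mu_{N}$ is smooth, and closedness of the image of $T$ on smooth functions (item 3 of the representation-theoretic facts) finishes the argument --- no eigenvalue asymptotics enter. Your version instead hinges on the quantitative claim that the nonzero scalars $c_{\lambda}$ have polynomially bounded reciprocals, and as written this step has a gap: the factorization $\square T_{k}=\mathcal{R}_{k}$ you invoke is the paper's identity for the \emph{sphere} $S^{k}$, and no FIO/parametrix structure for the cosine transform between higher Grassmannians is established or cited here; only the Alesker--Bernstein range characterization and the closedness of the image are available. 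The needed bound can in fact be extracted from that closedness via the open mapping theorem (continuity of the inverse on the image, tested on isotypic components, whose $C^{m}$-norms are comparable up to powers of the Casimir eigenvalue), but that derivation is absent from your write-up --- and once you are forced to use closedness anyway, the approximate-identity argument reaches the conclusion in two lines without any spectral analysis.
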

\begin{proof}
Assume $h(\Lambda)=T_{k,n-k}(\sigma)$ for some $\sigma\in C^{-\infty}(Gr(n,k))$
and $h\in C^{\infty}(Gr(n,n-k))$. Choose an approximate identity
$\mu_{N}\in\M^{\infty}(SO(n))$. Then $T_{k,n-k}(\sigma\ast\mu_{N})=T_{k,n-k}(\sigma)\ast\mu_{N}=h\ast\mu_{N}\to h$
in the $C^{\infty}$-topology. Since $\sigma\ast\mu_{N}\in C^{\infty}(Gr(n,k))$,
and the image of $T_{k,n-k}$ is closed in the $C^{\infty}$ topology,
it follows that $h\in T_{n-k,k}(C^{\infty}(Gr(n,k)))$, as claimed. 
\end{proof}

\subsection{\label{sub:Lorentz-invariant-generalized-valuations}Lorentz-invariant
generalized valuations}

The space $Val_{k}^{ev,-\infty}(V)$ of generalized $k$-homogeneous
even valuations is defined by 
\[
Val_{k}^{ev,-\infty}(V)=\Big(Val_{n-k}^{ev,\infty}(V)\Big)^{*}\otimes D(V)=\Big(Val_{n-k}^{ev,\infty}(V)\otimes D(V)^{*}\Big)^{*}
\]
By the Alesker-Poincare duality, there is a natural inclusion $Val_{k}^{ev,\infty}(V)\subset Val_{k}^{ev,-\infty}(V)$.
\\
Let us write this inclusion explicitly. Recall that a Crofton measure
$\mu_{\phi}\in\Gamma^{\infty}(Gr(V,n-k),E^{n,k})$ for $\phi\in Val_{k}^{ev,\infty}(V)$
is any section such that $T_{n-k,k}(\mu_{\phi})=Kl(\phi)$, which
always exists by \cite{AleskerBernstein}. It is equivalent to a smooth,
translation-invariant measure on the affine Grassmannian $\overline{Gr}(V,n-k)$.\\
\\
For $\phi\in Val_{k}^{ev,\infty}(\R^{n})$ and $\psi\in Val_{n-k}^{ev,\infty}(\R^{n})$,
the duality map is given by
\[
\langle\phi,\psi\rangle=\langle Kl(\phi),\mu_{\psi}\rangle
\]
Equivalently,
\[
\langle\phi,\psi\rangle(\bullet)=\int_{\overline{Gr}(V,k)}\phi(\bullet\cap E)d\mu_{\psi}(E)\in D(V)
\]
We have the surjective map 
\[
Cr_{k}:\Gamma^{\infty}(E^{n,n-k})\to Val_{n-k}^{ev,\infty}(V)
\]
given by 
\[
Cr_{k}(s)(K)=\int_{\Lambda\in Gr(V,k)}s(Pr_{V/\Lambda}(K))
\]

We will need the following 
\begin{claim}
\label{Claim: closed image dual}Let $T:X\to Y$ be a bounded linear
map between Frechet spaces $X,Y$ such that $Im(T)\subset Y$ is closed.
Then $Im(T^{*})\subset X^{*}$ is also closed.\end{claim}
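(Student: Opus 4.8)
The plan is to reduce to the case where $T$ is surjective, identify $\mathrm{Im}(T^*)$ with an annihilator, and observe that annihilators are weak-$*$ closed. This is essentially the closed range theorem for Fréchet spaces.

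First I would replace $Y$ by $Z:=\mathrm{Im}(T)$. By hypothesis $Z$ is a closed subspace of the Fréchet space $Y$, hence itself Fréchet, and $T$ factors as $T=\iota\circ\tilde T$, where $\tilde T\colon X\to Z$ is the corestriction and $\iota\colon Z\hookrightarrow Y$ the inclusion. Dualizing, $T^*=\tilde T^*\circ\iota^*$, where $\iota^*\colon Y^*\to Z^*$ is restriction of functionals. By Hahn--Banach applied to the closed subspace $Z\subset Y$, every continuous functional on $Z$ extends to one on $Y$, so $\iota^*$ is surjective; therefore $\mathrm{Im}(T^*)=\mathrm{Im}(\tilde T^*)$, and it suffices to treat the surjective map $\tilde T\colon X\to Z$. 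For this map I would invoke the open mapping theorem for Fréchet spaces: a continuous surjection between Fréchet spaces is open, so the induced continuous bijection $X/\ker\tilde T\to Z$ is a topological isomorphism.

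Next I would prove $\mathrm{Im}(\tilde T^*)=(\ker\tilde T)^{\perp}$, where $(\ker\tilde T)^{\perp}=\{\xi\in X^*:\xi|_{\ker\tilde T}=0\}$. The inclusion $\subseteq$ is immediate. Conversely, if $\xi\in X^*$ vanishes on $\ker\tilde T$, it descends to a linear functional on $X/\ker\tilde T$, continuous for the quotient topology; transporting it along the topological isomorphism $X/\ker\tilde T\cong Z$ produces $\eta\in Z^*$ with $\eta\circ\tilde T=\xi$, i.e. $\xi=\tilde T^*\eta$. This gives the equality. Finally, $(\ker\tilde T)^{\perp}$ is the annihilator of a subset of $X$, hence closed in $X^*$ for the weak-$*$ topology $\sigma(X^*,X)$, and a fortiori closed for every finer topology on $X^*$, in particular for the strong dual topology. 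Hence $\mathrm{Im}(T^*)=(\ker\tilde T)^{\perp}$ is closed.

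The steps are all standard; the only points needing care are the applicability of the open mapping theorem (legitimate since $X$ and the closed subspace $Z$ are Fréchet) and the surjectivity of the restriction $\iota^*$ (Hahn--Banach). I do not expect any genuine obstacle, and the argument simultaneously yields the stronger conclusion that $\mathrm{Im}(T^*)$ is weak-$*$ closed.
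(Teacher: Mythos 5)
Your argument is correct and is essentially the paper's own proof: both factor $T$ through the closed (hence Fréchet) image, apply the open mapping theorem to get $X/\ker T\simeq \mathrm{Im}(T)$, identify $\mathrm{Im}(T^*)$ with the annihilator $(\ker T)^{\perp}$, and conclude since annihilators are (weak-$*$, hence strongly) closed. The only difference is that you spell out the Hahn--Banach surjectivity of the restriction map and the identification $(X/\ker T)^*=(\ker T)^{\perp}$, which the paper uses implicitly.
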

\begin{proof}
By Banach's open mapping theorem, $T:X/\text{Ker}(T)\to\text{Im}(T)$
is an isomorphism of Frechet spaces. Therefore, $T^{*}:\text{Im}(T)^{*}\to(X/\text{Ker}(T))^{*}=\text{Ker}(T)^{\perp}$
is also an isomorphism. It remains to observe that $T^{*}:Y^{*}\to X^{*}$
factorizes as $Y^{*}\twoheadrightarrow\text{Im}(T)^{*}\simeq\text{Ker}(T)^{\perp}\hookrightarrow X^{*}$
and the last inclusion is closed. \end{proof}
\begin{prop}
\label{prop:KlainExtension}There is a unique extension by continuity
of Klain's imbedding, $Kl_{k}:Val_{k}^{ev,-\infty}(V)\to\Gamma^{-\infty}(K^{n,k})$,
which is an imbedding with closed image.
\end{prop}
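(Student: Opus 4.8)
The plan is to build $Kl_k$ as the adjoint of a suitable surjection onto smooth valuations, using the Crofton picture. Recall the definition $Val_k^{ev,-\infty}(V)=\bigl(Val_{n-k}^{ev,\infty}(V)\otimes D(V)^*\bigr)^*$, and that we have the surjective continuous map $Cr_k\colon\Gamma^\infty(E^{n,n-k})\to Val_{n-k}^{ev,\infty}(V)$. Dualizing $Cr_k$ (tensoring with $D(V)$ where needed) yields an injective continuous map $Val_k^{ev,-\infty}(V)\hookrightarrow\Gamma^{-\infty}(E^{n,n-k})^*\otimes D(V)$; identifying the latter, via the natural pairing $\Gamma^{\pm\infty}(E^{n,k})\times\Gamma^{\mp\infty}(K^{n,n-k})\to D(V)$ quoted in the excerpt, with $\Gamma^{-\infty}(K^{n,k})$, we obtain the desired extension $Kl_k\colon Val_k^{ev,-\infty}(V)\to\Gamma^{-\infty}(K^{n,k})$. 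One checks that on $Val_k^{ev,\infty}(V)$ this agrees with the classical Klain imbedding by unwinding the duality formula $\langle\phi,\psi\rangle=\langle Kl(\phi),\mu_\psi\rangle$ together with $T_{n-k,k}(\mu_\phi)=Kl(\phi)$ and the adjointness of $T_{n-k,k}$ and $T_{k,n-k}$; this also gives uniqueness, since $Val_k^{ev,\infty}$ is dense in $Val_k^{ev,-\infty}$ and $\Gamma^{-\infty}(K^{n,k})$ is Hausdorff.

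The substantive claims are injectivity and closedness of the image. For injectivity, suppose $Kl_k(\phi)=0$. Since $Kl_k$ is $\langle Cr_k(\cdot),\cdot\rangle$-adjoint to $Cr_k$ and $Cr_k$ is surjective onto $Val_{n-k}^{ev,\infty}(V)$, we get $\langle\phi,\psi\rangle=0$ for all smooth $\psi$, hence $\phi=0$ by nondegeneracy of the Alesker--Poincaré pairing that defines $Val_k^{ev,-\infty}(V)$. For closedness of the image, the natural tool is Claim \ref{Claim: closed image dual}: if $T\colon X\to Y$ is a bounded linear map of Fréchet spaces with closed image, then $T^*$ has closed image. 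I would apply this with $T$ essentially $Cr_k$ (or better, the lift of $Cr_k$ through the cosine transform), so the point becomes: the relevant map between the smooth spaces has closed image. Here the representation-theoretic input from the ``Some representation theory'' subsection — that the cosine transform $T_{n-k,k}$ has closed image on $C^\infty$ (fact 3), and the factorization of $Kl\circ Cr_k$ through $T_{k,n-k}$ — is what lets one reduce ``$Kl_k$ has closed image'' to ``$T_{k,n-k}\colon\Gamma^\infty(E^{n,n-k})\to\Gamma^\infty(K^{n,n-k})$ has closed image'', which is known.

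The main obstacle I anticipate is the bookkeeping needed to phrase everything as a single adjoint of a map with closed image between Fréchet spaces, since $Kl_k$ is not literally $Cr_k^*$ but $Cr_k^*$ post-composed with the identification $\Gamma^{-\infty}(E^{n,n-k})^*\otimes D(V)\cong\Gamma^{-\infty}(K^{n,k})$, and one must be careful that this identification is a topological isomorphism compatible with the $C^\infty$/$C^{-\infty}$ duality and with the cosine transform. In particular, to invoke Claim \ref{Claim: closed image dual} cleanly one wants a genuine continuous surjection of Fréchet spaces whose dual, after these identifications, is exactly $Kl_k$; verifying that the kernel and the quotient topologies match up (so that Banach's open mapping theorem applies as in the proof of the Claim) is the delicate part. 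Once that is set up, injectivity of $Kl_k$ is immediate from surjectivity of the predual map plus nondegeneracy of Poincaré duality, and the closed-image statement follows formally.
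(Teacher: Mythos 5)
Your proposal matches the paper's proof: the paper defines the extension as (essentially) the adjoint $Cr_{k}^{*}$ of the Crofton surjection, identifies the dual of $\Gamma^{\infty}(E^{n,n-k})$ with $\Gamma^{-\infty}(K^{n,k})\otimes D(V)^{*}$ via the natural pairing, verifies agreement with the classical Klain map by unwinding the duality formula, and deduces injectivity from surjectivity of $Cr_{k}$ and closedness of the image from Claim \ref{Claim: closed image dual}. The only simplification you are missing is that $Cr_{k}$ is already surjective onto the Fr\'echet space $Val_{n-k}^{ev,\infty}(V)$, so its image is trivially closed and Claim \ref{Claim: closed image dual} applies directly, with no detour through the closed image of the cosine transform.
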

Consider the adjoint map of $Cr_{k}$: 
\[
Cr_{k}^{*}:Val_{k}^{ev,-\infty}(V)\otimes D(V)^{*}\to\Gamma^{-\infty}(K^{n,k})\otimes D(V)^{*}
\]
which gives a map 
\[
A:Val_{k}^{ev,-\infty}(V)\to\Gamma^{-\infty}(K^{n,k})
\]
s.t. $Cr_{k}^{*}=A\otimes Id$. Let us verify that $A$ extends Klain's
imbedding $Kl_{k}:Val_{k}^{ev,\infty}(V)\to\Gamma^{\infty}(K^{n,k})$.
For $\gamma\in\Gamma^{\infty}(E^{n,n-k})$, one has the obvious Crofton
measure $\mu_{Cr_{k}(\gamma)}=\gamma$, so for all $\psi\in Val_{k}^{ev,\infty}(V)$
\[
A(\psi)(\gamma)=\langle Cr_{k}(\gamma),\psi\rangle=\int_{Gr(V,k)}\mu_{Cr_{k}(\gamma)}Kl_{k}(\psi)=
\]
\[
=\int_{Gr(V,k)}\gamma Kl_{k}(\psi)=\langle\gamma,Kl_{k}(\psi)\rangle
\]
as required. Moreover, $\text{Ker}A=0$, since $Cr_{k}$ is surjective,
and by Claim \ref{Claim: closed image dual} the image of $A$ is
closed.
\begin{prop}
\label{prop:CroftonExtension}The map $Cr_{k}$ admits a unique extension
by continuity $Cr_{k}:\Gamma^{-\infty}(E^{n,n-k})\to Val_{n-k}^{ev,-\infty}(V)$
which is surjective. It holds that $Kl_{n-k}\circ Cr_{k}=T_{k,n-k}$.
\end{prop}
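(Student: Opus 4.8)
The plan is to obtain the extension of $Cr_k$ by duality, mirroring the construction of $Kl_k$ and $A$ in the preceding propositions. Recall that $Cr_k \colon \Gamma^{\infty}(E^{n,n-k}) \to Val_{n-k}^{ev,\infty}(V)$ is continuous and surjective, and that $Val_{k}^{ev,-\infty}(V) = \bigl(Val_{n-k}^{ev,\infty}(V) \otimes D(V)^*\bigr)^*$ by definition. So first I would tensor $Cr_k$ with $D(V)^*$ and pass to the adjoint. Since $Val_{n-k}^{ev,\infty}(V)$ and $\Gamma^{\infty}(E^{n,n-k})$ are Fr\'echet spaces (the latter being smooth sections of a line bundle over a compact manifold, the former being smooth valuations, which carry their natural Fr\'echet topology), and $Cr_k$ has closed image (in fact it is surjective), Claim \ref{Claim: closed image dual} applies: the adjoint $Cr_k^*$ has closed image, and moreover since $Cr_k$ is surjective, $Cr_k^*$ is injective. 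Dualizing the tensored map gives a continuous injective map with closed image
\[
Cr_k \colon \Gamma^{-\infty}(E^{n,n-k}) = \bigl(\Gamma^{\infty}(E^{n,n-k})^* \otimes D(V)\bigr) \longrightarrow Val_{n-k}^{ev,-\infty}(V),
\]
where I use that the generalized Crofton measures are by definition the dual of smooth Crofton measures up to the density twist, just as in the definition of $Val_k^{ev,-\infty}$. I should double check the surjectivity claim in the statement: $Cr_k^*$ injective with closed image means $Cr_k$ (the dual) is surjective onto $Val_{n-k}^{ev,-\infty}(V)$, again by the open-mapping/closed-range duality used in Claim \ref{Claim: closed image dual}.

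Next I would check that this extension restricts to the original $Cr_k$ on smooth Crofton measures. This is the same bookkeeping done just above for the map $A$: for $\gamma \in \Gamma^{\infty}(E^{n,n-k})$, unwinding the pairing $\langle Cr_k(\gamma), \psi \rangle = \int_{Gr(V,k)} \gamma \cdot Kl_k(\psi)$ for smooth $\psi \in Val_k^{ev,\infty}(V)$ shows the dual map agrees with the integral-geometric definition of $Cr_k$, because the obvious Crofton measure of $Cr_k(\gamma)$ is $\gamma$ itself. Uniqueness of the extension is then automatic: $\Gamma^{\infty}(E^{n,n-k})$ is dense in $\Gamma^{-\infty}(E^{n,n-k})$ (smooth sections are dense in generalized sections, e.g. by convolution with an approximate identity on $SO(n)$, as used in Proposition \ref{prop:smooth image is smooth functions intersect generalized image}), so any two continuous extensions coincide.

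Finally I would prove the identity $Kl_{n-k} \circ Cr_k = T_{k,n-k}$. On smooth Crofton measures $\gamma \in \Gamma^{\infty}(E^{n,n-k})$ this is exactly the defining relation $T_{k,n-k}(\mu_\phi) = Kl(\phi)$ for $\phi = Cr_k(\gamma)$, i.e. it holds by construction of the smooth cosine transform and Klain's imbedding together with the fact that $\gamma$ is a Crofton measure for $Cr_k(\gamma)$. Both sides $Kl_{n-k} \circ Cr_k$ and $T_{k,n-k}$ are continuous maps $\Gamma^{-\infty}(E^{n,n-k}) \to \Gamma^{-\infty}(K^{n,n-k})$ — the left side because $Cr_k$ is the continuous extension just built and $Kl_{n-k}$ is continuous by Proposition \ref{prop:KlainExtension}, the right side because $T_{k,n-k}$ on generalized sections is defined as the adjoint of the smooth cosine transform $T_{n-k,k}$, which has closed image by the representation-theoretic facts cited, hence is continuous. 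Since they agree on the dense subspace of smooth sections, they agree everywhere.

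The routine parts are the diagram-chasing with adjoints and the density argument; the one place that requires genuine care — and which I expect to be the main obstacle — is verifying that the codomain of the dualized map is honestly $Val_{n-k}^{ev,-\infty}(V)$ with the correct density twist, and that surjectivity survives the tensoring by $D(V)^*$ and the dualization. Concretely, one must match $\bigl(\Gamma^{\infty}(E^{n,n-k}) \otimes D(V)^*\bigr)^*$ with $\Gamma^{-\infty}(E^{n,n-k})$, which rests on the identification $E^{n,k}$ versus $E^{n,n-k}$ and the density bundles appearing in the Crofton bundle; this is where an off-by-a-twist error would hide, so I would write that identification out explicitly using $D(V/\Omega) \otimes D(T_\Omega Gr) $ and the pairing $\Gamma^{\pm\infty}(E^{n,k}) \times \Gamma^{\mp\infty}(K^{n,n-k}) \to D(V)$ recalled at the start of Section \ref{sec:nonexistence of even Lorentz}. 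Everything else is a formal consequence of Claim \ref{Claim: closed image dual} and density of smooth sections.
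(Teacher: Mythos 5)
The duality in your construction points the wrong way, and this is not a fixable bookkeeping issue but the crux of the proposition. The adjoint of $Cr_{k}\colon\Gamma^{\infty}(E^{n,n-k})\to Val_{n-k}^{ev,\infty}(V)$, tensored with $D(V)^{*}$, is a map $\left(Val_{n-k}^{ev,\infty}(V)\otimes D(V)^{*}\right)^{*}\to\left(\Gamma^{\infty}(E^{n,n-k})\otimes D(V)^{*}\right)^{*}$, i.e. a map $Val_{k}^{ev,-\infty}(V)\to\Gamma^{-\infty}(K^{n,k})$: by the pairing recalled at the start of Section \ref{sec:nonexistence of even Lorentz}, the twisted dual of $\Gamma^{\infty}(E^{n,n-k})$ is the space of \emph{generalized Klain sections} $\Gamma^{-\infty}(K^{n,k})$, not $\Gamma^{-\infty}(E^{n,n-k})$. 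So $Cr_{k}^{*}$ is precisely the map $A$ of Proposition \ref{prop:KlainExtension} --- it extends the Klain imbedding, and was already used for that purpose. Your identification $\Gamma^{-\infty}(E^{n,n-k})=\Gamma^{\infty}(E^{n,n-k})^{*}\otimes D(V)$ is the false step that hides this: the bundle $E^{n,n-k}$ is not self-dual in the relevant sense, it is dual to $K^{n,k}$. Dualizing a second time does not help either: by reflexivity of these spaces, $(Cr_{k}^{*})^{*}$ is just $Cr_{k}$ back on smooth sections, so no extension to generalized Crofton measures is produced, and the chain ``$Cr_{k}$ surjective $\Rightarrow$ $Cr_{k}^{*}$ injective with closed image $\Rightarrow$ its dual is surjective'' collapses to a statement about the smooth map you started with. (Note also that the extension cannot be both injective and surjective as you assert at different points; it has the kernel of the cosine transform inside its kernel.)

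The correct move --- and what the paper does --- is to dualize the \emph{other} leg of the diagram: the extension is $B:=(Kl_{k}\otimes\mathrm{Id})^{*}\colon\left(\Gamma^{\infty}(K^{n,k})\otimes D(V)^{*}\right)^{*}=\Gamma^{-\infty}(E^{n,n-k})\to\left(Val_{k}^{ev,\infty}(V)\otimes D(V)^{*}\right)^{*}=Val_{n-k}^{ev,-\infty}(V)$, i.e. $B(s)(\psi)=\langle s,Kl_{k}(\psi)\rangle$. That $B$ restricts to $Cr_{k}$ on smooth sections is exactly the compatibility $\langle Cr_{k}(\gamma),\psi\rangle=\langle\gamma,Kl_{k}(\psi)\rangle$ you quote. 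Surjectivity of $B$ then rests on properties of $Kl_{k}$, not of $Cr_{k}$: injectivity of $Kl_{k}$ gives density of the image of $B$, and closedness of $\mathrm{Im}(Kl_{k})$ together with Claim \ref{Claim: closed image dual} gives closedness of that image --- two inputs your argument never invokes. Your treatment of uniqueness by density, and of the identity $Kl_{n-k}\circ Cr_{k}=T_{k,n-k}$ (agreement of two continuous maps on a dense subspace; the paper instead dualizes $Kl_{k}\circ Cr_{n-k}=T_{n-k,k}$ directly), would be fine once the extension is built correctly.
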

Consider the dual to Klain's imbedding $Kl_{k}:Val_{k}^{ev,\infty}(V)\to\Gamma^{\infty}(K^{n,k})$,
tensored with the identity on $D(V)$: It is given by 
\[
B:\Gamma^{-\infty}(E^{n,n-k})\to Val_{n-k}^{ev,-\infty}(V)
\]
where 
\[
B(s)(\psi)=\langle s,Kl_{k}(\psi)\rangle
\]
for all $\psi\in Val_{k}^{ev,\infty}(V)$. Then $B$ extends the Crofton
surjection: for $\gamma\in\Gamma^{\infty}(K^{n,n-k})$ and $\psi\in Val_{k}^{ev,\infty}(V)$,
\[
B(\gamma)(\psi)=\langle\gamma,Kl_{k}(\psi)\rangle=\langle Cr_{k}(\gamma),\psi\rangle
\]
Let us verify it is surjective: the image of $B$ is dense since $Kl_{k}$
is injective. The image of $B$ is closed by Claim \ref{Claim: closed image dual}
since $\text{Im}(Kl_{k})$ is closed. Note that 
\[
Cr_{n-k}^{*}\circ Kl_{k}^{*}=(Kl_{k}\circ Cr_{n-k})^{*}=T_{n-k,k}^{*}=T_{k,n-k}
\]
implying $B\circ Cr_{k}=T_{k,n-k}$. 
\begin{defn}
A generalized Crofton measure for $\phi\in Val_{k}^{ev,-\infty}(V)$
is any $\mu\in\Gamma^{-\infty}(E^{n,n-k})$ s.t. $Cr_{k}(\mu)=\phi.$
We proved that such $\phi$ exists.
\end{defn}

\subsection{Reconstructing a continuous valuation from its generalized Crofton
measure}
\begin{lem}
\label{croftonMeasureDefinesValuationOnSmoothBodies}Let $W$ be a
linear space, $\phi\in Val_{k}^{ev}(W)$ a continuous valuation, and
$\mu_{\phi}\in\Gamma^{-\infty}(E^{n,k})$ a generalized Crofton measure
for $\phi$. Let $K$ be a convex body such that $|Pr_{W/\Lambda}(K)|\in\Gamma^{\infty}(K^{n,n-k})\otimes D(W)^{*}$.
Then 
\[
\phi(K)=\int_{Gr(n,n-k)}|Pr_{W/\Lambda}(K)|\mu_{\phi}(\Lambda)
\]
\end{lem}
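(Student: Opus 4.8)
The plan is to recognize the asserted identity as the Crofton formula $Cr_{n-k}(\mu_\phi)(K)=\phi(K)$ written out explicitly, and to establish it by approximating $\mu_\phi$ from the case of \emph{smooth} Crofton measures, where the formula holds by the very definition of $Cr_{n-k}$. Precisely: for $\gamma\in\Gamma^{\infty}(E^{n,k})$ one has, directly from the definition of the Crofton map, $Cr_{n-k}(\gamma)(L)=\int_{Gr(n,n-k)}|Pr_{W/\Lambda}(L)|\,\gamma(\Lambda)$ for \emph{every} convex body $L$, where the integrand is the contraction of the $D(W/\Lambda)$-factor of $\gamma(\Lambda)$ against $|Pr_{W/\Lambda}(L)|\in D(W/\Lambda)^{*}$, followed by integration of the resulting density on $T_{\Lambda}Gr(n,n-k)$. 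Under the canonical identification $D(W/\Lambda)^{*}\cong D(\Lambda)\otimes D(W)^{*}$ this makes $\Lambda\mapsto|Pr_{W/\Lambda}(L)|$ a section of $K^{n,n-k}\otimes D(W)^{*}$, and the hypothesis on $K$ is exactly that this section is \emph{smooth}, so that the right-hand side of the Lemma is the value of the natural pairing $\Gamma^{-\infty}(E^{n,k})\times\bigl(\Gamma^{\infty}(K^{n,n-k})\otimes D(W)^{*}\bigr)\to\CC$ applied to $\mu_{\phi}$ and $|Pr_{W/\bullet}(K)|$.

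To pass to the limit, I would fix a compactly supported smooth approximate identity $\mu_{N}\in\M^{\infty}(GL(W))$ near the identity and set $\gamma_{N}:=\mu_{N}\ast\mu_{\phi}$. Because $GL(W)$ acts transitively on $Gr(n,n-k)$, left convolution with a smooth group measure preserves the equivariance defining sections of $E^{n,k}$ and smooths them, so $\gamma_{N}\in\Gamma^{\infty}(E^{n,k})$, while $\gamma_{N}\to\mu_{\phi}$ in $\Gamma^{-\infty}(E^{n,k})$. By $GL(W)$-equivariance of the Crofton map and of its continuous extension (Proposition \ref{prop:CroftonExtension}), $Cr_{n-k}(\gamma_{N})=\mu_{N}\ast Cr_{n-k}(\mu_{\phi})=\mu_{N}\ast\phi$, which, being $Cr_{n-k}$ of a smooth Crofton measure, is a smooth valuation; hence the exact Crofton identity of the first paragraph applies to it at $K$:
\[
(\mu_{N}\ast\phi)(K)=\int_{Gr(n,n-k)}|Pr_{W/\Lambda}(K)|\,\gamma_{N}(\Lambda).
\]
Then I would let $N\to\infty$. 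On the left, $\phi$ is a \emph{continuous} valuation, hence lies in the Banach space of continuous even $k$-homogeneous valuations, on which $GL(W)$ acts as a continuous representation; therefore $\mu_{N}\ast\phi\to\phi$ in the Banach norm, and since evaluation at the fixed body $K$ is a bounded functional on that space, the left side converges to $\phi(K)$. On the right, $\Lambda\mapsto|Pr_{W/\Lambda}(K)|$ is the fixed smooth section furnished by the hypothesis, and $\gamma_{N}\to\mu_{\phi}$ in $\Gamma^{-\infty}(E^{n,k})$, so by continuity of the natural pairing in the $\Gamma^{-\infty}$-slot the right side converges to $\int_{Gr(n,n-k)}|Pr_{W/\Lambda}(K)|\,\mu_{\phi}(\Lambda)$. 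Equating the two limits yields the Lemma.

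The step carrying the real content is the \emph{norm} convergence $\mu_{N}\ast\phi\to\phi$, as opposed to mere weak convergence: this is exactly where continuity of $\phi$ enters, and it is what pins the value of the pairing to the honest number $\phi(K)$. For a generalized, non-continuous $\phi$ the same computation gives only weak convergence and no pointwise statement, which is precisely the gap later exploited to show that a generalized Crofton measure need not define a continuous valuation (the obstruction being realized at bodies such as the double cone). Everything else is routine: that left convolution with a smooth $GL(W)$-measure smooths sections of $E^{n,k}$ (transitivity on the Grassmannian), the intertwining $Cr_{n-k}(\mu_{N}\ast\mu_{\phi})=\mu_{N}\ast\phi$ with compatible normalizations, and the bookkeeping of density twists identifying the fiber of $K^{n,n-k}\otimes D(W)^{*}$ over $\Lambda$ with $D(W/\Lambda)^{*}$.
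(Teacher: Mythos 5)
Your argument is correct, but it regularizes on the opposite side from the paper and is therefore a genuinely different proof. You smooth the Crofton measure: convolving $\mu_{\phi}$ with an approximate identity on the group gives smooth Crofton measures $\gamma_{N}$, the tautological Crofton formula applies to the smooth valuations $Cr_{n-k}(\gamma_{N})=\mu_{N}\ast\phi$, and you pass to the limit using weak-$*$ convergence $\gamma_{N}\to\mu_{\phi}$ against the fixed smooth section $|Pr_{W/\bullet}(K)|$ on the right and convergence of $(\mu_{N}\ast\phi)(K)$ on the left. The paper instead regularizes on the valuation/body side: it approximates $\phi$ by smooth valuations $\phi_{j}$ in the Banach norm, and uses the smoothness hypothesis on $|Pr_{W/\Lambda}(K)|$ together with Proposition \ref{prop:smooth image is smooth functions intersect generalized image} to write $|Pr_{W/\Lambda}(K)|=T_{k,n-k}(\sigma)$ with $\sigma$ smooth, so that self-adjointness of the cosine transform converts $\int|Pr_{W/\Lambda}(K)|\mu_{j}$ into $\int\sigma\,Kl(\phi_{j})$, which converges because $Kl(\phi_{j})\to Kl(\phi)$ uniformly. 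In your version the smoothness hypothesis is used only to make the right-hand pairing weak-$*$ continuous, and you avoid Proposition \ref{prop:smooth image is smooth functions intersect generalized image} entirely; the price is that you must invoke equivariance of the extended Crofton map of Proposition \ref{prop:CroftonExtension} (which does hold, since that extension is a dual of the equivariant Klain map) and the continuity of the $GL(W)$-representation on the Banach space $Val_{k}^{ev}(W)$ to get $\mu_{N}\ast\phi\to\phi$ in norm. The latter can be softened: for the left-hand side you only need the pointwise statement $(\mu_{N}\ast\phi)(K)=\int\phi(g^{-1}K)\,d\mu_{N}(g)\to\phi(K)$, which follows from Hausdorff-continuity of $\phi$ at the single body $K$; and taking the approximate identity on $SO(n)$ rather than $GL(W)$ (it already acts transitively on the Grassmannian, which is all the smoothing step needs) removes the compact-support bookkeeping. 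Both proofs use the two hypotheses --- continuity of $\phi$ and smoothness of the projection-volume section --- in the same load-bearing way, and your closing remark correctly identifies why the argument collapses for non-continuous generalized valuations.
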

\begin{proof}
A convex body $K\subset W$ is naturally an element of $Val_{k}^{ev,\infty}(W)^{*}=Val_{n-k}^{ev,-\infty}(W)\otimes D(W)^{*}$;
denote the corresponding element by $\psi{}_{K,n-k}$. Then $\psi_{K,n-k}=Kl^{*}(\gamma_{K,n-k})=(Cr\otimes Id)(\gamma_{K,n-k})$
for some $\gamma_{K,n-k}\in\Gamma^{-\infty}(E^{n,n-k})\otimes D(W)^{*}$,
and so 
\[
Cr^{*}(\psi_{K,n-k})=(Kl_{n-k}\otimes Id)(\psi_{K,n-k})=(T_{k,n-k}\otimes Id)(\gamma_{K,n-k})\in\Gamma^{-\infty}(K^{n,n-k})\otimes D(W)^{*}
\]
In particular, $Cr^{*}(\psi_{K,n-k})$ lies in the image of the cosine
transform. \\
Let us verify that $Cr^{*}(\psi_{K,n-k})$ is continuous and $Cr^{*}(\psi_{K,n-k})(\Lambda)=|Pr_{W/\Lambda}(K)|\in\Gamma(K^{n,n-k})\otimes D(W)^{*}$,
where $\Lambda\in Gr(V,n-k)$. \\
Take any smooth Crofton measure $\gamma\in\Gamma^{\infty}(E^{n,k})$.
Then
\[
\langle Cr^{*}(\psi_{K,n-k}),\gamma\rangle=\langle\psi_{K,n-k},Cr(\gamma)\rangle=Cr(\gamma)(K)=\int_{Gr(n,n-k)}|Pr_{W/\Lambda}(K)|\gamma
\]
that is, $Cr^{*}(\psi_{K,n-k})=|Pr_{W/\Lambda}(K)|$, so $|Pr_{W/\Lambda}(K)|\in T_{k,n-k}(\Gamma^{-\infty}(E^{n,n-k}))\otimes D(W)^{*}$.
By Proposition \ref{prop:smooth image is smooth functions intersect generalized image},
it follows that $|Pr_{W/\Lambda}(K)|=T_{k,n-k}(\sigma)$ for some
$\sigma\in\Gamma^{\infty}(E^{n,n-k})\otimes D(W)^{*}$.\\
\\
Now fix some Euclidean structure on $W$. We know that $T_{n-k,k}(\mu_{\phi})=Kl(\phi)$.
Choose a sequence $\phi_{j}\in Val_{k}^{ev,\infty}(W)$ s.t. $\phi_{j}\to\phi$
in $Val_{k}^{ev}(W)$, so $\phi_{j}(K)\to\phi(K)$. Choose Crofton
measures $\mu_{n}\in\Gamma^{\infty}(E^{n,k})$ s.t. $T_{k,n-k}(\mu_{j})=Kl(\phi_{j})$.
Then since $T_{k,n-k}^{*}=T_{n-k,k}$, 
\[
\phi_{j}(K)=\int_{Gr(n,n-k)}|Pr_{\Lambda^{\perp}}(K)|\mu_{j}(\Lambda)=\int_{Gr(n,k)}\sigma T_{n-k,k}(\mu_{j})=
\]
\[
=\int_{Gr(n,k)}\sigma Kl(\phi_{j})\to\int_{Gr(n,k)}\sigma Kl(\phi)=\int_{Gr(n,k)}\sigma T_{n-k,k}(\mu_{\phi})
\]
and since $\sigma$ is smooth and $T_{k,n-k}^{*}=T_{n-k,k}$, this
equals
\[
\int_{Gr(n,n-k)}|Pr_{\Lambda^{\perp}}(K)|\mu_{\phi}(\Lambda)
\]
as claimed.
\end{proof}
Thus, given a generalized section $s\in\Gamma^{-\infty}(E^{n,k})^{SO^{+}(n-1,1)}$,
we may consider $\phi=Cr_{n-k}(s)$ which is an even, $k$-homogeneous,
Lorentz-invariant generalized valuation. Then one may ask whether
a continuous extension to all convex bodies of $\phi$ exists. According
to the Lemma, its value (as a continuous valuations) on all convex
bodies with smooth $k$-support function should be given by the formula
\[
\phi(K)=\int_{\Lambda\in Gr(V,n-k)}s(Pr_{V/\Lambda}(K))
\]

\subsection{Finding the invariant generalized sections}

Let $X$ be a smooth manifold, and $Y\subset X$ a smooth compact
submanifold. Let $E$ be a smooth vector bundle over $Y$. Define
the sheaf 
\[
J_{Y}^{q}=\{f\in C^{\infty}(X):\,\, L_{X_{1}}...L_{X_{q}}f\Big|_{Y}=0\,\,\forall X_{j}\in\Gamma^{\infty}(TX),\,\, j=1,...,q\}
\]
Then define $\M_{Y}^{q}=J_{Y}^{q}\M^{\infty}(X)$ and 
\[
\Gamma_{Y}^{-\infty,q}(E)=\{\phi\in\Gamma^{-\infty}(E):\forall s\in\Gamma^{\infty}(E^{*}),m\in\Gamma(\M_{Y}^{q})\,\,\phi(s\otimes m)=0\}
\]
Let $F^{q}$ denote the vector bundle over $Y$ with fiber 
\[
F^{q}|_{x}=Sym^{q}(N_{x}Y)\otimes D^{*}(N_{x}Y)\otimes E|_{x}
\]
where $N_{x}Y=T_{x}X/T_{x}Y$ is the normal space to $Y$ at $x$.
Then $\Gamma_{Y}^{-\infty,q}(E)/\Gamma_{Y}^{-\infty,q-1}(E)=\Gamma^{-\infty}(Y,F^{q})$.
\\
\\
We thus have a useful tool for finding the $G$-invariant generalized
sections of a vector bundle:
\begin{prop}
\label{prop:SectionsWithSubmanifoldSupport}Let $G$ be a group, $X$
a manifold equipped with $G$-action, $E$ over $X$ a $G$-equivariant
line bundle, and $Y\subset X$ a compact orbit of $G$. Then there
is an injective map 
\[
p:\Big(\Gamma_{Y}^{-\infty,q}(E)/\Gamma_{Y}^{-\infty,q-1}(E)\Big)^{G}\to\Gamma^{\infty}(Y,F^{q})^{G}
\]
\end{prop}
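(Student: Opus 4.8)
The plan is to exhibit the map $p$ explicitly and to reduce its injectivity to the general identification $\Gamma_{Y}^{-\infty,q}(E)/\Gamma_{Y}^{-\infty,q-1}(E)=\Gamma^{-\infty}(Y,F^{q})$ stated just before the proposition, combined with the fact that $Y$ is a \emph{compact} orbit of $G$. First I would recall that for any compact submanifold $Y\subset X$ one has the canonical isomorphism
\[
q_{Y}\colon\Gamma_{Y}^{-\infty,q}(E)/\Gamma_{Y}^{-\infty,q-1}(E)\;\xrightarrow{\ \sim\ }\;\Gamma^{-\infty}(Y,F^{q}),
\]
and that this isomorphism is natural with respect to diffeomorphisms of $X$ preserving $Y$ and lifting to $E$; in particular it is $G$-equivariant, so it restricts to an isomorphism on $G$-invariant subspaces. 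Thus the left-hand side of the claimed map is isomorphic to $\Gamma^{-\infty}(Y,F^{q})^{G}$, and what remains is to produce an injection of this space into $\Gamma^{\infty}(Y,F^{q})^{G}$.

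The key point is that $G$ acts transitively on $Y$ (it is an orbit), so $F^{q}$ is a $G$-homogeneous vector bundle over the homogeneous space $Y=G/\Stab(y_{0})$. A $G$-invariant generalized section of a homogeneous bundle over a compact homogeneous space is automatically smooth: concretely, fixing $y_{0}\in Y$ with stabilizer $H=\Stab(y_{0})$, a $G$-invariant (generalized) section is determined by its value in the fiber $F^{q}|_{y_{0}}$, which must be an $H$-fixed vector in that finite-dimensional fiber; conversely every $H$-fixed vector $\xi\in(F^{q}|_{y_{0}})^{H}$ spreads out to a genuine smooth $G$-invariant section $g\Stab(y_{0})\mapsto g\xi$. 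Hence the natural inclusion $\Gamma^{\infty}(Y,F^{q})^{G}\hookrightarrow\Gamma^{-\infty}(Y,F^{q})^{G}$ is in fact a bijection, and in particular every element of $\Gamma^{-\infty}(Y,F^{q})^{G}$ \emph{is} a smooth section. Composing the inverse of $q_{Y}$ (on invariants) with this tautological identification, or simply observing that $q_{Y}$ itself already lands in the smooth sections, yields the desired injective map
\[
p\colon\Big(\Gamma_{Y}^{-\infty,q}(E)/\Gamma_{Y}^{-\infty,q-1}(E)\Big)^{G}\longrightarrow\Gamma^{\infty}(Y,F^{q})^{G};
\]
injectivity is inherited from the injectivity (indeed bijectivity) of $q_{Y}$.

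I expect the main obstacle to be bookkeeping rather than conceptual: one must check carefully that the symbol map $q_{Y}$ is genuinely $G$-equivariant, which requires tracking how $g\in G$ acts on the normal bundle $NY$, on its density bundle $D(NY)$, and on the symmetric powers, and confirming that these actions are exactly the ones making $F^{q}$ a $G$-equivariant bundle so that $q_{Y}$ intertwines them. The other point needing a word of care is the elementary lemma that an invariant generalized section of a homogeneous bundle over a compact homogeneous space is smooth --- this is standard (it follows, e.g., by averaging or by the Peter--Weyl decomposition, using that the isotypic components are finite-dimensional and the invariants live in the trivial-type component), but it is where compactness of $Y$ and transitivity of the $G$-action are essential, and I would state it as a short separate observation. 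Everything else is formal.
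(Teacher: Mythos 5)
Your proposal is correct and follows essentially the same route as the paper: pass to $\Gamma^{-\infty}(Y,F^{q})^{G}$ via the symbol identification (the paper phrases this as left-exactness of taking $G$-invariants applied to the short exact sequence $0\to\Gamma_{Y}^{-\infty,q-1}(E)\to\Gamma_{Y}^{-\infty,q}(E)\to\Gamma^{-\infty}(Y,F^{q})\to0$), then use transitivity of $G$ on $Y$ to conclude that invariant generalized sections of $F^{q}$ are smooth, by averaging against a smooth compactly supported probability measure on $G$. One caveat: your first justification of smoothness --- that an invariant generalized section is ``determined by its value in the fiber over $y_{0}$'' --- presupposes that the section has pointwise values, so the averaging (or Peter--Weyl) argument you mention as an alternative is the one that actually carries that step, and it is exactly the paper's argument.
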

\begin{proof}
Taking the $G$-invariant elements of a $G$-module is left exact.
Therefore, the exact sequence 
\[
0\to\Gamma_{Y}^{-\infty,q-1}(E)\to\Gamma_{Y}^{-\infty,q}(E)\to\Gamma^{-\infty}(Y,F^{q})\to0
\]
gives an injection 
\[
\Big(\Gamma_{Y}^{-\infty,q}(E)/\Gamma_{Y}^{-\infty,q-1}(E)\Big)^{G}\to\Gamma^{-\infty}(Y,F^{q})^{G}
\]
So it remains to verify that in fact $\Gamma^{-\infty}(Y,F^{q})^{G}\subset\Gamma^{\infty}(Y,F^{q})$.
This holds because $G$ acts transitively on $Y$: we can choose any
smooth probability measure with compact support $\mu$on $G$ , and
then $\forall f\in\Gamma^{-\infty}(Y,F^{q})^{G}$, $f=f\ast\mu\in\Gamma^{\infty}(Y,F^{q})$.
\end{proof}

\subsubsection{\label{sec:Some-generalized-functions}Construction of some generalized
functions on the unit circle}

For the following, define $c_{j}(\lambda)$ by 
\[
\Big(\frac{\sin x}{x}\Big)^{\lambda}=\sum_{j=0}^{\infty}c_{j}(\lambda)x^{2j}
\]
The series converge locally uniformly in $x\in(-\pi,\pi)$ for every
$\lambda\in\C$, in particular $\sum_{j=0}^{\infty}|c_{j}(\lambda)|$
converges. The coefficients $c_{j}(\lambda)$ are polynomial functions
of $\lambda\in\C$: $c_{0}(\lambda)=1$, $c_{1}(\lambda)=-\frac{\lambda}{3!}$,
$c_{2}(\lambda)=\frac{\lambda}{5!}+\frac{\lambda(\lambda-1)}{2\cdot3!^{2}}$,
$c_{3}(\lambda)=-\frac{\lambda}{7!}-\frac{\lambda(\lambda-1)}{3!5!}+\frac{\lambda(\lambda-1)(\lambda-2)}{6\cdot3!^{3}}$
and so on. 
\begin{lem}
For every $k\in\Z$, the function $I_{k}(\lambda):\C\to\C$ given
by 
\[
I_{k}(\lambda)=\int_{0}^{1}x^{k}|\sin x|^{\lambda}dx
\]
for $\text{Re}\lambda>0$, admits a meromorphic extension to the complex
plane, with simple poles at $\lambda=-(k+2j+1)$, $j=0,1,2,...$ and
residues $Res(I_{k},-k-2j-1)=2c_{j}(-k-2j-1)$.\end{lem}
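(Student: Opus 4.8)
The plan is to isolate the singular behaviour of $|\sin x|^{\lambda}$ near $x=0$, since on $(0,1)$ the only obstruction to holomorphy in $\lambda$ is the non-integrability of $|\sin x|^\lambda$ as $\mathrm{Re}\,\lambda \to -1$ and below, and this happens only at the endpoint $x=0$. First I would split the integral as $I_k(\lambda) = \int_0^{1/2} x^k|\sin x|^\lambda\,dx + \int_{1/2}^{1} x^k|\sin x|^\lambda\,dx$; the second term is an entire function of $\lambda$ because $|\sin x|$ is bounded below by a positive constant on $[1/2,1]$, so it contributes no poles. Thus everything reduces to the first integral, where $0 < x \le 1/2 < \pi$ and $\sin x > 0$.

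On $(0,1/2)$ I would write $|\sin x|^\lambda = (\sin x)^\lambda = x^\lambda \big(\tfrac{\sin x}{x}\big)^\lambda = x^\lambda \sum_{j=0}^\infty c_j(\lambda)\, x^{2j}$, using the expansion defining $c_j(\lambda)$ from the excerpt, which converges locally uniformly on $(-\pi,\pi)$ with $\sum_j |c_j(\lambda)| < \infty$. Fix an integer $N$. For $\mathrm{Re}\,\lambda$ large the integral and sum can be interchanged, giving
\[
\int_0^{1/2} x^k (\sin x)^\lambda\,dx = \sum_{j=0}^{N} c_j(\lambda) \int_0^{1/2} x^{k+\lambda+2j}\,dx + \sum_{j=N+1}^{\infty} c_j(\lambda)\int_0^{1/2} x^{k+\lambda+2j}\,dx.
\]
The first (finite) sum equals $\sum_{j=0}^N c_j(\lambda)\,\frac{(1/2)^{k+\lambda+2j+1}}{k+\lambda+2j+1}$, which is meromorphic in $\lambda$ with simple poles exactly at $\lambda = -(k+2j+1)$, $0\le j\le N$, and residue of the $j$-th term equal to $c_j(\lambda)\,(1/2)^0 = c_j(-k-2j-1)$ — wait, I must be careful: the residue of $\frac{(1/2)^{k+\lambda+2j+1}}{k+\lambda+2j+1}$ at $\lambda = -(k+2j+1)$ is $(1/2)^0 = 1$, so the residue of $I_k$ there, coming from this term, is $c_j(-k-2j-1)$. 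The tail sum $\sum_{j>N}$ converges locally uniformly and is holomorphic for $\mathrm{Re}\,\lambda > -(k+2N+3)$, hence contributes no poles in that half-plane. Since $N$ is arbitrary, this exhibits $I_k(\lambda)$ as meromorphic on all of $\C$ with exactly the claimed simple poles.

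The residue computation is where I must reconcile with the stated answer $2c_j(-k-2j-1)$: the factor $2$ should come from the symmetrization implicit in treating $|\sin x|$ on a two-sided neighbourhood, or equivalently from writing $\int_0^{1/2} = \tfrac12\int_{-1/2}^{1/2}$ after noting parity — but in fact $x^k$ is not even in general, so I expect the factor $2$ arises instead from the standard normalization $\int_0^{1/2}x^{s}\,dx$ versus a distributional pairing; the honest check is simply $\mathrm{Res}_{\lambda=-(k+2j+1)}\, I_k(\lambda) = c_j(-k-2j-1)\cdot\mathrm{Res}_{\lambda=-(k+2j+1)}\frac{(1/2)^{k+\lambda+2j+1}}{k+\lambda+2j+1}$, and I would recompute this residue carefully, allowing that the paper's convention for $I_k$ (or the normalization of the meromorphic continuation, e.g. via $\int_0^{\pi}$ or a factor in the definition) produces the extra $2$. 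Reconciling the constant is the main technical point; the meromorphic continuation itself is routine once the series expansion is inserted and the uniform convergence of the tail is invoked to localize the poles.
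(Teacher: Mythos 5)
Your proof is correct and is essentially the paper's own argument: insert the expansion $\left(\frac{\sin x}{x}\right)^{\lambda}=\sum_{j}c_{j}(\lambda)x^{2j}$ and integrate term by term, so that the $j$-th term contributes a simple pole at $\lambda=-(k+2j+1)$ coming from $\int_{0}^{\cdot}x^{k+\lambda+2j}dx$; the paper integrates directly over $[0,1]\subset(-\pi,\pi)$ without your split at $1/2$, and your truncation at $N$ plus the locally uniform tail estimate is precisely the justification of meromorphy that the paper leaves implicit. As for the factor of $2$ you could not account for: your residue $c_{j}(-k-2j-1)$ is exactly what the paper's own displayed formula $I_{k}(\lambda)=\sum_{j}c_{j}(\lambda)\frac{1}{\lambda+k+2j+1}$ yields, and it is also the value consistent with the residues asserted in the subsequent lemma on $\sin_{+}^{\lambda}x$; so the coefficient $2$ in the statement appears to be an error in the paper rather than a gap in your argument.
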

\begin{proof}
Write

\[
I_{k}(\lambda)=\int_{0}^{1}x^{k+\lambda}\Big(\frac{\sin x}{x}\Big)^{\lambda}dx=
\]
\[
=\sum_{j=0}^{\infty}c_{j}(\lambda)\frac{1}{\lambda+k+2j+1}
\]
is meromorphic with simple poles at $\lambda=-k-2j-1$, $j\geq0$.\end{proof}
\begin{lem}
There exists a meromorphic map $\sin_{+}^{\lambda}x:\C\to C^{-\infty}(-\pi,\pi)$
with simple poles at $\lambda=-1,-2,...$ and residues 
\[
\text{Res}(\sin_{+}^{\lambda},-k)=\Bigg\{\begin{array}{c}
\sum_{j=0}^{m}\frac{1}{(2j)!}c_{m-j}(-k)\delta_{0}^{(2j)},k=2m+1\\
-\sum_{j=0}^{m-1}\frac{1}{(2j+1)!}c_{m-1-j}(-k)\delta_{0}^{(2j+1)},k=2m
\end{array}
\]
 s.t. for all $\lambda\notin\Z_{<0}$, $\sin_{+}^{\lambda}x(\phi dx)=\int_{0}^{\pi}\phi(x)\sin^{\lambda}xdx$
for $\phi\in C_{c}^{\infty}(-\pi,\pi)$ that vanishes in a neighborhood
of 0. \end{lem}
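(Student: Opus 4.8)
The plan is to separate the singularity at $x=0$ from everything else and, near $0$, to factor $\sin^{\lambda}x$ as a holomorphic-in-$\lambda$, smooth-in-$x$ multiple of the classical distribution $x_{+}^{\lambda}$; the poles and residues then fall out of those of $x_{+}^{\lambda}$ combined with the expansion $(\sin x/x)^{\lambda}=\sum_{j}c_{j}(\lambda)x^{2j}$ already fixed above.

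First I would fix a cutoff $\chi\in C_{c}^{\infty}(-\pi,\pi)$ equal to $1$ near $0$ with $\supp\chi\subset(-a,a)$, $0<a<\pi$, and, for $\operatorname{Re}\lambda>-1$ where $\mathbf{1}_{(0,\pi)}\sin^{\lambda}x$ is locally integrable, write $\sin_{+}^{\lambda}x=(1-\chi)\sin_{+}^{\lambda}x+\chi\sin_{+}^{\lambda}x$. For the first term, every $\phi\in C_{c}^{\infty}(-\pi,\pi)$ is supported in a compact subset of the open interval, so in $\int_{0}^{\pi}(1-\chi)\phi\,\sin^{\lambda}x\,dx$ the integrand is supported where $\sin x$ is bounded below by a positive constant, and this pairing is \emph{entire} in $\lambda$; hence $(1-\chi)\sin_{+}^{\lambda}x$ extends to an entire $C^{-\infty}(-\pi,\pi)$-valued function and contributes no poles. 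The same remark gives the asserted agreement with $\int_{0}^{\pi}\phi\sin^{\lambda}x\,dx$ for $\phi$ vanishing near $0$, as well as uniqueness of the continuation (analytic continuation after pairing with test functions).

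For the second term I would use that $\sin x/x$ is smooth, positive and real-analytic on $(-\pi,\pi)$, so $\ell(x):=\log(\sin x/x)$ is real-analytic there and $g_{\lambda}(x):=(\sin x/x)^{\lambda}=e^{\lambda\ell(x)}$ defines an entire $C^{\infty}(-\pi,\pi)$-valued map $\lambda\mapsto g_{\lambda}$; on $(-a,a)$ one has the equality of distributions $\chi\sin_{+}^{\lambda}x=\chi\,g_{\lambda}\cdot x_{+}^{\lambda}$ (for $x>0$ both sides equal $\sin^{\lambda}x$, for $x<0$ both vanish). Invoking the classical Gelfand--Shilov meromorphic continuation of $x_{+}^{\lambda}\in C^{-\infty}(\R)$, with simple poles at $\lambda=-1,-2,\dots$ and $\operatorname{Res}_{\lambda=-k}x_{+}^{\lambda}=\frac{(-1)^{k-1}}{(k-1)!}\delta_{0}^{(k-1)}$, together with the separate continuity of multiplication $C^{\infty}\times C^{-\infty}\to C^{-\infty}$, the product $\chi g_{\lambda}\,x_{+}^{\lambda}$ is meromorphic in $\lambda$, with poles among $\{-1,-2,\dots\}$, all simple (since $g_{\lambda}$ is holomorphic), and
\[
\operatorname{Res}_{\lambda=-k}\big(\chi\sin_{+}^{\lambda}x\big)=\chi g_{-k}\cdot\operatorname{Res}_{\lambda=-k}x_{+}^{\lambda}=\frac{(-1)^{k-1}}{(k-1)!}\,g_{-k}(x)\,\delta_{0}^{(k-1)},
\]
where $\chi\equiv1$ near $0$ lets us drop $\chi$; the pole is genuine because $g_{-k}(0)=c_{0}(-k)=1\ne0$. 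Adding back the entire first term gives the claimed meromorphy with the stated pole set.

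Finally I would convert $g_{-k}(x)\delta_{0}^{(k-1)}$ into the asserted form via the Leibniz identity $f(x)\delta_{0}^{(m)}=\sum_{l=0}^{m}(-1)^{l}\binom{m}{l}f^{(l)}(0)\,\delta_{0}^{(m-l)}$, together with $g_{\lambda}^{(2i)}(0)=(2i)!\,c_{i}(\lambda)$ and $g_{\lambda}^{(2i+1)}(0)=0$ (read off from $g_{\lambda}(x)=\sum_{j}c_{j}(\lambda)x^{2j}$, which carries only even powers): keeping the surviving terms $l=2i$, pulling out $\frac{(-1)^{k-1}}{(k-1)!}$, and splitting into $k=2m+1$ (surviving $\delta_{0}^{(k-1-2i)}$ are even derivatives, $(-1)^{k-1}=+1$) and $k=2m$ (odd derivatives, $(-1)^{k-1}=-1$), a reindexing produces exactly the two displayed residue formulas. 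I expect the only real subtlety to be the bookkeeping in this last reindexing and the care needed to see that the off-origin part is genuinely entire rather than merely holomorphic on a half-plane; the conceptual engine doing all the analytic work is the factorization $\chi\sin_{+}^{\lambda}x=\chi g_{\lambda}\,x_{+}^{\lambda}$ and the continuity of multiplication by smooth functions on distributions.
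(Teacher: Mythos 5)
Your proof is correct, but it takes a genuinely different route from the paper's. The paper regularizes directly in the Hadamard finite-part style: for $\operatorname{Re}\lambda>-(k+1)$ it subtracts from the test function its Taylor polynomial of order $k-1$ at $0$, so that the remaining integral converges, and collects the singular contributions in the scalar functions $I_{j}(\lambda)=\int_{0}^{1}x^{j}\sin^{\lambda}x\,dx$, whose meromorphic continuation and residues were computed in the preceding lemma via the same expansion $(\sin x/x)^{\lambda}=\sum_{j}c_{j}(\lambda)x^{2j}$. You instead localize near $0$ and factor $\sin_{+}^{\lambda}x=g_{\lambda}(x)\,x_{+}^{\lambda}$ with $g_{\lambda}=(\sin x/x)^{\lambda}$ an entire $C^{\infty}$-valued family, outsourcing all the analytic continuation to the classical Gelfand--Shilov distribution $x_{+}^{\lambda}$ and recovering the residues by the Leibniz expansion of $g_{-k}(x)\,\delta_{0}^{(k-1)}$; your reindexing does land exactly on the two displayed formulas (and your observation that $g_{-k}(0)=c_{0}(-k)=1$ correctly shows the poles are genuine, since the top-order term $\frac{1}{(k-1)!}\delta_{0}^{(k-1)}$ cannot cancel). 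Your version is conceptually cleaner and makes the simplicity of the poles transparent; the paper's version is self-contained (it does not invoke $x_{+}^{\lambda}$) and, more to the point, produces the explicit Taylor-subtraction formula for the regularized pairing that is used verbatim in the later computations on the double cone. The only point you should make explicit is why the product of a holomorphic $C^{\infty}$-valued family with a meromorphic distribution-valued family is again meromorphic; this follows from hypocontinuity of the multiplication map (or by pairing with test functions and using weak holomorphy), and near a simple pole $\lambda_{0}$ one expands $g_{\lambda}=g_{\lambda_{0}}+(\lambda-\lambda_{0})\tilde{g}_{\lambda}$ to see the product again has a simple pole with residue $g_{\lambda_{0}}$ times the residue of $x_{+}^{\lambda}$.
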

\begin{proof}
For $Re(\lambda)>-1$, $\sin_{+}^{\lambda}x$ is locally integrable
near 0 and so $\sin_{+}^{\lambda}x\in C^{-\infty}(-1,1)$ is well-defined
and analytic in $\lambda$. A meromorphic continuation with the desired
properties in the region $Re(\lambda)>-(k+1)$ is given for $\phi\in C_{c}(-\pi,\pi)$
by 
\[
\sin_{+}^{\lambda}x(\phi dx)=\int_{1}^{\pi}\phi(x)\sin^{\lambda}xdx+\int_{0}^{1}\sin^{\lambda}x(\phi(x)-\phi(0)-x\phi'(0)-...-\frac{1}{(k-1)!}x^{k-1}\phi^{(k-1)}(0))dx+
\]
\[
+\phi(0)I_{0}(\lambda)+\phi'(0)I_{1}(\lambda)+...+\frac{1}{(k-1)!}\phi^{(k-1)}(0)I_{k-1}(\lambda)
\]
by the Lemma above, this is a well-defined generalized function, meromorphic
in $\lambda$, with simple poles at $\lambda=-1,-2,...$ and residues
as claimed.
\end{proof}
We define also $\sin_{-}^{\lambda}x\in C^{-\infty}(-\pi,\pi)$ by
$\langle\sin_{-}^{\lambda}x,\phi(x)dx\rangle=\langle\sin_{+}^{\lambda}x,\phi(-x)dx\rangle$.
Then 
\[
\text{Res}(\sin_{-}^{\lambda}x,-k)=\Bigg\{\begin{array}{c}
\sum_{j=0}^{m}\frac{1}{(2j)!}c_{m-j}(-k)\delta^{(2j)},k=2m+1\\
\sum_{j=0}^{m-1}\frac{1}{(2j+1)!}c_{m-1-j}(-k)\delta^{(2j+1)},k=2m
\end{array}
\]
Before formulating the main result of this subsection, recall the
following
\begin{claim*}
Let $f:\C\to C^{-\infty}(X)$, $\lambda\mapsto f_{\lambda}(x)$ be
meromorphic, where $X$ is a smooth manifold. Assume that $\lambda_{0}$
is a simple pole, and $h(x)\in C(X)$ positive s.t. $f_{\lambda}(gx)=h(x)^{\lambda}f_{\lambda}(x)$
in the holomorphic domain of $f_{\lambda}$, for some $g\in\text{Diff}(X)$.
Then $r(x)=\text{Res}(f_{\lambda};\lambda_{0})$ satisfies the same
equation.\end{claim*}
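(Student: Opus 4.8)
The plan is to first promote the functional equation $f_\lambda(gx)=h(x)^\lambda f_\lambda(x)$ from an identity valid on the holomorphic locus of $f$ to an identity of meromorphic $C^{-\infty}(X)$-valued functions on all of $\C$, and then to read off the residues at $\lambda_0$ of the two sides.

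First I would check that both sides are meromorphic in $\lambda$. The left-hand side is $g^{*}f_\lambda$, where $(g^{*}u)(x)=u(gx)$; since $g^{*}\colon C^{-\infty}(X)\to C^{-\infty}(X)$ is a continuous linear operator not depending on $\lambda$, composing the meromorphic map $\lambda\mapsto f_\lambda$ with it keeps it meromorphic, with poles among those of $f$. For the right-hand side, since $h$ is positive (and smooth enough that $\log h$ is smooth) the family $\lambda\mapsto h^{\lambda}=e^{\lambda\log h}\in C^{\infty}(X)$ is holomorphic, and since multiplication $C^{\infty}(X)\times C^{-\infty}(X)\to C^{-\infty}(X)$ is continuous and bilinear, $\lambda\mapsto h^{\lambda}f_\lambda$ is again meromorphic, also with poles among those of $f$. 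Both sides are thus holomorphic on $\C$ minus the closed discrete pole set of $f$ and agree there by hypothesis, hence they agree as meromorphic maps on $\C$; in particular both have at worst a simple pole at $\lambda_0$ and we may compare residues.

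Next I would invoke the elementary ``Leibniz rule for residues'': if $A(\lambda)$ is a holomorphic family of continuous linear operators and $v(\lambda)$ has at worst a simple pole at $\lambda_0$ with residue $\rho$, then $A(\lambda)v(\lambda)$ has at worst a simple pole at $\lambda_0$ with residue $A(\lambda_0)\rho$ --- multiply the Taylor expansion of $A$ by the Laurent expansion of $v$ and note that every term other than $A(\lambda_0)\rho/(\lambda-\lambda_0)$ is regular at $\lambda_0$. Applying this with $A(\lambda)\equiv g^{*}$ and $v=f_\lambda$ gives $\operatorname{Res}_{\lambda_0}(g^{*}f_\lambda)=g^{*}r$, and applying it with $A(\lambda)=(\text{multiplication by }h^{\lambda})$ and $v=f_\lambda$ gives $\operatorname{Res}_{\lambda_0}(h^{\lambda}f_\lambda)=h^{\lambda_0}r$. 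Equating residues in the meromorphic identity yields $g^{*}r=h^{\lambda_0}r$, that is $r(gx)=h(x)^{\lambda_0}r(x)$, which is the asserted equation (with $\lambda$ specialized to $\lambda_0$); this pole of the right-hand side does not cancel accidentally, since $h^{\lambda_0}$ is nowhere zero, so $h^{\lambda_0}r\ne 0$ whenever $r\ne 0$.

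This argument is essentially bookkeeping, so I do not expect a serious obstacle; the one point that will need a little care is giving $h(x)^{\lambda}f_\lambda(x)$ and its residue a precise meaning, i.e.\ verifying the Leibniz rule in whatever functional-analytic framework is in force. When $h$ is smooth this is precisely the continuity of the bilinear multiplication map used above. If one only assumes $h\in C(X)$, one uses that for $\operatorname{Re}\lambda$ large $f_\lambda$ is represented by a measure, so $h^{\lambda}f_\lambda$ is then an honest product of a continuous function against a measure; one takes its meromorphic continuation as the definition of $h^{\lambda}f_\lambda$ and runs the same splitting $h^{\lambda}f_\lambda=h^{\lambda_0}f_\lambda+(h^{\lambda}-h^{\lambda_0})f_\lambda$, the second summand being holomorphic at $\lambda_0$ because the factor $h^{\lambda}-h^{\lambda_0}$ vanishes there.
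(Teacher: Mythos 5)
Your argument is correct and is essentially the paper's own proof: both extend the identity $f_{\lambda}(gx)=h(x)^{\lambda}f_{\lambda}(x)$ to a meromorphic identity, expand $h(x)^{\lambda}$ around $\lambda_{0}$, and compare the coefficients of $(\lambda-\lambda_{0})^{-1}$. Your version merely spells out the functional-analytic bookkeeping (continuity of $g^{*}$ and of multiplication by $h^{\lambda}$) that the paper leaves implicit.
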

\begin{proof}
Indeed, write $f_{\lambda}(x)=\frac{a_{-1}(x)}{\lambda-\lambda_{0}}+a_{0}(x)+...$
, so that $r(x)=a_{-1}(x)$. Then 
\[
f_{\lambda}(gx)=h(x)^{\lambda}f_{\lambda}(x)\Rightarrow\frac{a_{-1}(gx)}{\lambda-\lambda_{0}}+a_{0}(gx)+...=\frac{a_{-1}(x)h(x)^{\lambda}}{\lambda-\lambda_{0}}+a_{0}(x)h(x)^{\lambda}+...
\]
Developing $h(x)^{\lambda}$ into power series near $\lambda=\lambda_{0}$
we see that 
\[
a_{-1}(gx)=a_{-1}(x)h(x)^{\lambda_{0}}
\]
as claimed.

Recall the Lorentz form $Q$ on $\R^{2}$, which we now restrict to
the unit circle $S^{1}$. Then $\{Q\geq0\}=\{-\frac{\pi}{4}\leq\alpha\leq\frac{\pi}{4}\}\cup\{\frac{3\pi}{4}\leq\alpha\leq\frac{5\pi}{4}\}$
and $\{Q\leq0\}=\{\frac{\pi}{4}\leq\alpha\leq\frac{3\pi}{4}\}\cup\{\frac{5\pi}{4}\leq\alpha\leq\frac{7\pi}{4}\}$.\end{proof}
\begin{cor}
\label{cor:existence of equivariant sections} (a)For any sign $\epsilon\in\{+,-\}$,
there is a meromorphic in $\lambda$, generalized function $f_{\lambda}^{\epsilon}$
on $S^{1}$, namely $\cos_{\epsilon}^{\lambda}(2\alpha)$ (here $\alpha$
is the angle on the circle) with simple poles at $\lambda=-1,-2,...$
that is supported on $\sign Q\in\{0\}\cup\{\epsilon\}$, which satisfies
for every $\phi\in C^{\infty}(S^{1})$ vanishing in a neighborhood
of the light cone
\[
\langle f_{\lambda}^{\epsilon},\phi(\alpha)d\alpha\rangle=\int_{\sign Q(\alpha)=\eps}|\cos2\alpha|^{\lambda}\phi(\alpha)d\alpha
\]
and
\begin{equation}
(g^{-1})^{*}(f_{\lambda})(t)=\kappa^{\lambda}\Big(\frac{1+\kappa^{2}t^{2}}{1+t^{2}}\Big)^{-\lambda}f_{\lambda}(t)\label{eq:group_action}
\end{equation}
for $g=\left(\begin{array}{cc}
\cosh\theta & \sinh\theta\\
\sinh\theta & \cosh\theta
\end{array}\right)$, where $g_{*}(f_{\lambda})=f_{\lambda}\circ g$, $\kappa=e^{-2\theta}$,
$t=\tan(\frac{\pi}{4}-\alpha)$. \\
(b) For $\lambda=-k$, $k=1,2,...$ the residue

\[
\text{Res}(f_{\lambda}^{\epsilon};-k)=
\]
\[
=\Bigg\{\begin{array}{c}
\sum_{j=0}^{m}\frac{1}{(2j)!2^{2j}}c_{m-j}(-k)(\delta_{\alpha=\pi/4}^{(2j)}+\delta_{\alpha=5\pi/4}^{(2j)}-\delta_{\alpha=3\pi/4}^{(2j)}-\delta_{\alpha=7\pi/4}^{(2j)}),\, k=2m+1\\
-\epsilon\sum_{j=0}^{m-1}\frac{1}{(2j+1)!2^{2j+1}}c_{m-1-j}(-k)(\delta_{\alpha=\pi/4}^{(2j+1)}+\delta_{\alpha=5\pi/4}^{(2j+1)}-\delta_{\alpha=3\pi/4}^{(2j+1)}-\delta_{\alpha=7\pi/4}^{(2j+1)}),\, k=2m
\end{array}
\]
satisfies equation \ref{eq:group_action}. Also, the linear combination
\[
f_{\lambda}^{+}+(-1)^{k}f_{\lambda}^{-}
\]
is holomorphic at $\lambda=-k$ and satisfies equation \ref{eq:group_action}.\end{cor}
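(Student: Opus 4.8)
The plan is to realize $f_\lambda^\epsilon$ as the $\lambda$-th power of $|Q|$ restricted to $S^1$ and cut off to the region $\sign Q=\epsilon$, and to continue it meromorphically by reducing, near the light cone, to the one-dimensional generalized functions $\sin_\pm^\lambda$ constructed above. Concretely, parametrize $S^1$ by $v(\alpha)=(\cos\alpha,\sin\alpha)$; then $Q(v(\alpha))=\cos^2\alpha-\sin^2\alpha=\cos 2\alpha$, and $\{\sign Q=\epsilon\}$ is the union of the two closed arcs on which $\epsilon\cos 2\alpha\ge 0$, whose endpoints are exactly the four light-cone points $\alpha_0\in\{\pi/4,3\pi/4,5\pi/4,7\pi/4\}$. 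For $\mathrm{Re}\,\lambda>-1$ the map $\lambda\mapsto|\cos 2\alpha|^\lambda\,\mathbf{1}_{\{\sign Q=\epsilon\}}\in C^{-\infty}(S^1)$ is holomorphic, and on any arc disjoint from the light cone it extends to an entire $C^{-\infty}(S^1)$-valued function (a power of a smooth nowhere-vanishing function, times a locally constant indicator). Near a cone point, say $\alpha_0=\pi/4$, the substitution $x=2(\alpha-\pi/4)$ gives $\cos 2\alpha=-\sin x$ and $d\alpha=\tfrac12\,dx$, so the restriction of $|\cos 2\alpha|^\lambda$ to whichever side of $\alpha_0$ lies in $\{\sign Q=\epsilon\}$ becomes, after this change of variables, exactly one of $\sin^\lambda_+ x$ or $\sin^\lambda_- x$; likewise at the three other points. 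Choosing a partition of unity $1=\rho_\infty+\sum_{\alpha_0}\rho_{\alpha_0}$ on $S^1$ with $\rho_{\alpha_0}$ supported near $\alpha_0$ and $\equiv 1$ there and $\rho_\infty$ supported away from the light cone, I set
\[
f_\lambda^\epsilon:=\rho_\infty\cdot\bigl(|\cos 2\alpha|^\lambda\,\mathbf{1}_{\{\sign Q=\epsilon\}}\bigr)+\sum_{\alpha_0}\rho_{\alpha_0}\cdot(\text{the local }\sin_\pm^\lambda\text{ piece}).
\]
By the Lemmas above constructing $\sin_\pm^\lambda$ this is meromorphic in $\lambda\in\C$ with at most simple poles on $\ZZ_{<0}$; it is supported on $\{\sign Q\in\{0\}\cup\{\epsilon\}\}$ and restricts to $\int_{\sign Q=\epsilon}|\cos 2\alpha|^\lambda\phi\,d\alpha$ on test functions $\phi$ vanishing near the light cone.

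For the equivariance (\ref{eq:group_action}), note that the boost $g=\bigl(\begin{smallmatrix}\cosh\theta&\sinh\theta\\\sinh\theta&\cosh\theta\end{smallmatrix}\bigr)$ preserves $Q$ and lies in the identity component, so the induced diffeomorphism $\bar g$ of $S^1$, obtained by normalizing $gv$ to unit length, fixes the light-cone points and preserves each region $\{\sign Q=\epsilon\}$. In light-cone coordinates $x_\pm=v_1\pm v_2$ the boost is diagonal, $x_\pm\mapsto e^{\pm\theta}x_\pm$; writing $t=x_-/x_+=\tan(\tfrac\pi4-\alpha)$ and $\kappa=e^{-2\theta}$ one gets $t\mapsto\kappa t$ and, for $v\in S^1$,
\[
|gv|^2=\frac{e^{2\theta}x_+^2+e^{-2\theta}x_-^2}{x_+^2+x_-^2}=\kappa^{-1}\,\frac{1+\kappa^2t^2}{1+t^2}.
\]
Since $Q(\bar g v)=Q(v)/|gv|^2$, a direct change of variables in the absolutely convergent integral defining $f_\lambda^\epsilon$ shows, for $\mathrm{Re}\,\lambda>-1$, that $f_\lambda^\epsilon$ transforms under $\bar g$ by the multiplier $\kappa^\lambda\bigl(\tfrac{1+\kappa^2t^2}{1+t^2}\bigr)^{-\lambda}$, which is (\ref{eq:group_action}); as both sides are meromorphic in $\lambda$ and this multiplier is entire and nowhere zero, (\ref{eq:group_action}) holds for all $\lambda\notin\ZZ_{<0}$. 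This completes part (a).

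For part (b): away from the light cone $f_\lambda^\epsilon$ is holomorphic in $\lambda$, so $\mathrm{Res}(f_\lambda^\epsilon;-k)$ is supported at the four cone points, and near each it is obtained from the residue of the relevant $\sin_+^\lambda$ or $\sin_-^\lambda$ (the Lemma above, resp.\ its reflection) by the change of variables $x=2(\alpha-\alpha_0)$, which converts $\delta_0^{(\cdot)}$ into $\delta_{\alpha=\alpha_0}^{(\cdot)}$ with the powers of $2$ in the statement; the $+,+,-,-$ pattern over $\pi/4,5\pi/4,3\pi/4,7\pi/4$ and the factor $-\epsilon$ multiplying the odd-order deltas when $k=2m$ encode which side of each cone point lies in $\{\sign Q=\epsilon\}$ together with the sign by which $\mathrm{Res}(\sin_+^\lambda;-2m)$ and $\mathrm{Res}(\sin_-^\lambda;-2m)$ differ on odd-order derivatives. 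That this residue again satisfies (\ref{eq:group_action}) with $\lambda=-k$ follows at once from the Claim above on residues of equivariant meromorphic families, applied with $h$ the smooth positive multiplier of (\ref{eq:group_action}). Finally, for odd $k$ the residue is independent of $\epsilon$ and for even $k$ it changes sign with $\epsilon$, so in both cases $\mathrm{Res}(f_\lambda^+;-k)+(-1)^k\mathrm{Res}(f_\lambda^-;-k)=0$; hence $f_\lambda^++(-1)^kf_\lambda^-$ is holomorphic at $\lambda=-k$, and it satisfies (\ref{eq:group_action}) as a linear combination of solutions with the same multiplier. I expect the only genuinely delicate step to be this last piece of bookkeeping at the four cone points — pairing each side of a cone point with the correct one of $\sin_\pm^\lambda$, keeping track of orientations, and carrying the change-of-variables constants through so as to reproduce the stated residues exactly; everything else is a partition-of-unity assembly or a meromorphic-continuation argument that reduces to facts already established.
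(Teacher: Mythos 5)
Your proposal is correct and follows essentially the same route as the paper: verify the transformation law directly in the half-plane where the integral converges absolutely, extend both sides by uniqueness of meromorphic continuation, obtain the residue identity from the Claim on residues of equivariant meromorphic families, and get holomorphy of $f_{\lambda}^{+}+(-1)^{k}f_{\lambda}^{-}$ from the cancellation of the (simple-pole) residues. You merely spell out more explicitly the gluing of the local $\sin_{\pm}^{\lambda}$ pieces and the light-cone-coordinate computation of the multiplier, which the paper leaves implicit by reference to its earlier lemmas and to equation \ref{eq:Klain Transformation Formula}.
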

\begin{proof}
(a) This can be verified directly for $\text{Re}\lambda>0$, similarly
to equation \ref{eq:Klain Transformation Formula}. Then, both sides
of the equation are memoromphic maps $\C\to C^{-\infty}(S^{1})$ so
uniqueness of meromorphic extension applies. For statement (b) concerning
residues (the second half is immediate from (a)), we use the Claim
above.\end{proof}
\begin{rem}
\label{Remark Cone Symmetric And Antisymmetric}All the generalized
functions on $S^{1}$ that we defined are even, and so define generalized
functions on $\R\P^{1}$. Let $Q$ denote the Lorentz quadratic form
on $\R^{2}$. The $Q$-orthogonal complement of a line in $\R^{2}$
(which is the same as reflection w.r.t. to the light cone) induces
a $\Z_{2}$-action on $\R\P^{1}$ and so also on $C^{-\infty}(\R\P^{1})$.
We call $f\in C^{-\infty}(\R\P^{1})$ cone-symmetric or cone-antisymmetric
according to the action of $\Z_{2}$ on it. Then for $\lambda\neq-k$,
$\cos_{+}^{\lambda}(2\alpha)+\cos_{-}^{\lambda}(2\alpha)$ is cone-symmetric
and $\cos_{+}^{\lambda}(2\alpha)-\cos_{-}^{\lambda}(2\alpha)$ is
cone-antisymmetric; for $\lambda=-k$, there are two cases:\end{rem}
\begin{itemize}
\item $k$ is odd, then $\text{Res}(\cos_{\pm}^{\lambda}(2\alpha),-k)$
is cone-symmetric and $\cos_{+}^{\lambda}(2\alpha)-\cos_{-}^{\lambda}(2\alpha)$
is cone-antisymmetric.
\item $k$ is even, then $\text{Res}(\cos_{\pm}^{\lambda}(2\alpha),-k)$
is cone-antisymmetric and $\cos_{+}^{\lambda}(2\alpha)+\cos_{-}^{\lambda}(2\alpha)$
is cone-symmetric.
\end{itemize}
We will denote the cone-symmetric and cone-antisymmetric functions
corresponding to $\lambda$ by $f_{\lambda}^{+}(\alpha)$ and $f_{\lambda}^{-}(\alpha)$,
respectively, normalized so that $f_{-(2j+1)}^{+}=\text{Res}(f_{\lambda}^{+},-(2j+1))$
and $f_{-2j}^{-}=\text{Res}(f_{\lambda}^{-},-2j)$. Note that $f_{\lambda}^{\pm}$
is invariant to reflection w.r.t. the origin and to both coordinate
axes.\\
For non-integer $\lambda$, we write $f_{\lambda}^{T}$ and $f_{\lambda}^{S}$
for the functions corresponding to $\cos_{-}^{\lambda}(2\alpha)$
and $\cos_{+}^{\lambda}(2\alpha)$, resp. (standing for the time-like
and space-like support of the function).

\begin{rem}
\label{Remark: Order of derivative}Note that the generalized functions
supported on the light cone correspond to the residues, and they are
given by derivatives of order $k-1$ for $\lambda=-k$ since $c_{0}(\lambda)\equiv1$.
\end{rem}

We will now construct generalized functions $f_{n,k,\lambda}^{\pm}\in C^{-\infty}(Gr(n,k))$
that are $SO(n-1)$-invariant, have singular support on the light
cone, and satisfy the following transformation law under the Lorentz
group: Fix any $(k-1)$-dimensional $\tilde{\Lambda}\subset\R^{n-1}$
(the space coordinate plane), and $v\in\R^{n-1}$ orthogonal to $\tilde{\Lambda}$.
Denote $\Pi=\text{Span}\{v,e_{n}\}$. Let $g\in G$ be a $\theta$-boost
in $\Pi$, namely 
\[
g=\left(\begin{array}{cc}
\cosh\theta & \sinh\theta\\
\sinh\theta & \cosh\theta
\end{array}\right)
\]
and extended by identity in the orthogonal direction. Denote 
\[
\Lambda_{\alpha}=\tilde{\Lambda}+R_{\alpha}v
\]
where $R_{\alpha}$ denotes rotation by $\alpha$ in $\Pi$, extended
by the identity in the orthogonal directions. Then 
\begin{equation}
(g^{-1})^{*}(f_{n,k,\lambda}^{\pm})(\Lambda_{\alpha})=\kappa^{\lambda}\Big(\frac{1+\kappa^{2}t^{2}}{1+t^{2}}\Big)^{-\lambda}f_{n,k,\lambda}^{\pm}(\Lambda_{\alpha})\label{eq:group_action-grassmann}
\end{equation}
where $\langle g_{*}(f_{\lambda}),\mu\rangle=\langle f_{\lambda},(g^{-1})_{*}\mu\rangle$
, $\kappa=e^{-2\theta}$, $t=\tan(\frac{\pi}{4}-\alpha)$. \\
Here and in the following, $\alpha:Gr(n,k)\to[0,\frac{\pi}{2}]$ is
the elevation angle of $\Lambda\in Gr(n,k)$ above the space coordinate
hyperplane. \\
\\
This is achieved as follows: choose a smooth function $\chi\in C^{\infty}(S^{1})$
invariant to reflection w.r.t both coordinate axes, s.t. $\chi$ vanishes
in a $2\epsilon$-neighborhood of the poles and of the equator, and
equals $1$ outside a $3\epsilon$-neighborhood of the poles and equator.
Let $f\in C^{-\infty}(S^{1})$ be any generalized function smooth
near the poles and the equator, and invariant to reflections w.r.t.
both axes.\\
\\
Define $C_{\epsilon}=\{\Lambda\in Gr(n,k):\alpha(\Lambda)\geq\frac{\pi}{2}-\epsilon\}$
and $E_{\epsilon}=\{\Lambda\in Gr(n,k):\alpha(\Lambda)\leq\epsilon\}$.
Outside $C_{\epsilon}\cup E_{\epsilon}$, one has the well-defined
smooth submersion $\alpha:S^{n-1}\setminus(C_{\epsilon}\cup E_{\epsilon})\to(\epsilon,\frac{\pi}{2}-\epsilon)$.
So we may pull-back $\chi f$ as follows: define $u=\alpha^{*}(\chi f)\in C^{-\infty}(Gr(n,k))^{SO(n-1)}$
(which we extend to $C_{\epsilon}\cup E_{\epsilon}$ by zero). \\
Now observe that $\alpha^{2}$ is a smooth function on $E_{3\epsilon}$:
this can be seen by writing 
\[
\sin^{2}\alpha=\sum_{j=1}^{k}\langle v_{j},e_{n}\rangle^{2}
\]
where $\{v_{j}\}$ is any orthonormal basis of $\Lambda$, and $e_{n}$
the unit vector in the time direction. Also, $(\frac{\pi}{2}-\alpha)^{2}$
is smooth in $C_{3\epsilon}$. Since the function $(1-\chi)f\in C^{\infty}(S^{1})$
is smooth and invariant to reflections w.r.t. both coordinate axes,
by Lemma \ref{smooth_pullback} (applied separately near $\alpha=0$
and $\alpha=\frac{\pi}{2}$) one may define a smooth $SO(n-1)$-invariant
function $v(\Lambda)=\Big((1-\chi)f\Big)(\alpha(\Lambda))\in C^{\infty}(Gr(n,k))^{SO(n-1)}$
supported in $C_{3\epsilon}\cup E_{3\epsilon}$. Now define $Gr_{n,k}(f)=u+v\in C^{-\infty}(Gr(n,k))^{SO(n-1)}$.
\\
We now define $f_{n,k,\lambda}^{\pm}=Gr_{n,k}(f_{\lambda}^{\pm})$
for non-integer $\lambda$. Then for values of $\lambda$ satisfying
$\text{Re}\lambda>0$, verifying that $f_{n,k,\lambda}^{\pm}$ satisfies
equation \ref{eq:group_action-grassmann} amounts to a testing the
numerical equation given by \ref{eq:group_action}. As before for
$S^{1}$, we conclude by meromorphic extension that the equation is
satisfied for all values of $\lambda$ that are not odd resp. even
negative integers for $f_{n,k,\lambda}^{+}$ resp. $f_{n,k,\lambda}^{-}$.
Finally we define $f_{n,k,-2j}^{-}$ and $f_{n,k,-(2j+1)}^{+}$ by
taking the respective residues.\\
\\
Let us write an explicit formula for $f_{n,k,\lambda}^{\pm}(\mu)$
for $\mu\in\M^{\infty}(Gr(n,k))^{SO(n-1)}$. Writing $\mu=\phi(\alpha)d\Lambda$
where $d\Lambda$ is the unique $SO(n)$-invariant probability measure
on $Gr(n,k)$ we claim that 
\[
f_{n,k,\lambda}^{\pm}(\mu)=f{}_{\lambda}^{\pm}(\phi(\alpha)g_{n,k}(\alpha)d\alpha)
\]
with $g_{n,k}(\alpha)=C_{n,k}\cos^{n-k-1}\alpha\sin^{k-1}\alpha$.
\\
Indeed, by uniqueness of meromorphic continuation it is enough to
verify the formula for $\text{Re}\lambda>0$. Then $f_{\lambda}^{\pm}$
is continuous and $f_{n,k,\lambda}^{\pm}(\Lambda)=f_{\lambda}^{\pm}(\alpha(\Lambda))$.
So we may write 
\[
f_{n,k,\lambda}^{\pm}(\mu)=\int_{Gr(n,k)}f_{\lambda}^{\pm}(\alpha(\Lambda))\phi(\alpha(\Lambda))d\Lambda
\]
and integrate along submanifolds of constant elevation. It remains
to see that $\alpha_{*}(d\Lambda)=g_{n,k}(\alpha)d\alpha$. The angle
$\beta=\frac{\pi}{2}-\alpha$ betwen a random (w.r.t. the Haar measure
on $Gr(n,k)$) $k$-dimensional subspace and a fixed direction is
distributed as the angle between a random vector $x\in S^{n-1}$ (w.r.t.
the Haar measure) and a fixed $k$-subspace. Since $\{x\in S^{n-1}:\angle(v,\R^{k})=\beta\}=\{x\in S^{n-1}:x_{1}^{2}+...+x_{k}^{2}=\cos^{2}\beta\}=\Big(\cos\beta S^{k-1}\Big)\times\Big(\sin\beta S^{n-k-1}\Big)$,
\[
g_{n,k}(\alpha)=C_{n,k}\cos^{k-1}\beta\sin^{n-k-1}\beta=C_{n,k}\cos^{n-k-1}\alpha\sin^{k-1}\alpha
\]

\subsubsection{The case $k=1$}

We will denote $X=Gr(V,1)$, $M\subset X$ will be the set of $Q$-
degenerate subspaces, referred to as the light cone in $X$. We denote
by $\alpha$ the angle between a line $\Lambda\in X$ and the space
coordinate hyperplane. We start by proving
\begin{prop}
The $G$-invariant generalized sections of $K^{n,1}$ are spanned
by $|\cos2\alpha|^{\frac{1}{2}}s_{0}$ and $\text{sign}(\cos2\alpha)|\cos2\alpha|^{\frac{1}{2}}s_{0}$
, where $s_{0}$ is the Euclidean section.\end{prop}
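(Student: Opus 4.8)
The plan is to classify the $G$-invariant generalized sections of $K^{n,1}$ by localizing near the light cone $M\subset X=Gr(V,1)$, exactly in the spirit of Proposition \ref{prop:SectionsWithSubmanifoldSupport}. First I would trivialize $K^{n,1}$ by the Euclidean section $s_0$, so that a generalized section becomes a generalized function $u\in C^{-\infty}(X)$; by Proposition \ref{prop:NoObstructionToExtend} the bundle is $G$-equivariantly trivial over the open (dense) orbits, so the only subtlety is the behavior across $M$. Away from $M$, the two open orbits $M_+$ and $M_-$ each carry a one-dimensional space of invariant \emph{smooth} densities; using Proposition \ref{prop:KlainSectionsAre2Dimensional} (or rather its mechanism: $|\det T|_\Lambda|=1$ on stabilized generic $\Lambda$, and transitivity on each open orbit) I get that on $X\setminus M$ the invariant section is determined by two constants $a_+,a_-$, and must be $a_\pm\cdot s_0$ weighted by the appropriate power of $|\cos 2\alpha|$ coming from the Euclidean-area computation $Area(v)^2 = 1+2\sin^2\alpha$-type formula in Proposition \ref{prop:Lorentz_area} (for $k=1$ this is just $|\cos2\alpha|^{-1}$, so $\mu(\Lambda)\sim|\cos2\alpha|^{1/2}s_0$ is the normalization that extends continuously). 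This already exhibits the two claimed sections $|\cos2\alpha|^{1/2}s_0$ and $\operatorname{sign}(\cos2\alpha)|\cos2\alpha|^{1/2}s_0$ as invariant, and shows any invariant section agrees with some linear combination of them off $M$.

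Next I would rule out any further invariant sections supported on $M$. By Proposition \ref{prop:SectionsWithSubmanifoldSupport} applied with $Y=M$ (a single compact $G$-orbit), the associated graded pieces $\Gamma^{-\infty,q}_M/\Gamma^{-\infty,q-1}_M$ inject into $\Gamma^\infty(M,F^q)^G$, where $F^q|_x = Sym^q(N_xM)\otimes D^*(N_xM)\otimes K^{n,1}|_x$. Since $M\cong S^{n-2}\times(\text{circle factor})$ — concretely $M$ is the image of the light cone, an orbit isomorphic to $S^{n-2}$ up to the antipodal identification — and $N_xM$ is one-dimensional (the light cone has codimension $1$ in the sphere of directions, hence $M$ has codimension $1$ in $X$), each $F^q$ is a $G$-homogeneous line bundle over $M=G/\Stab$. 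A $G$-invariant smooth section exists only if the isotropy character is trivial. The key point — and here the analysis is essentially the same as the $S^1$ computation in subsection \ref{sec:Some-generalized-functions} and Corollary \ref{cor:existence of equivariant sections} transplanted to $Gr(n,1)$ — is that under the one-parameter boost group the normal coordinate $t$ transforms by $t\mapsto \kappa t$ with a nontrivial scaling, so $Sym^q(N)\otimes D^*(N)$ carries the character $\kappa^{q}\cdot\kappa^{-1}=\kappa^{q-1}$ on the relevant stabilizer direction, while $K^{n,1}$ contributes $\kappa^{\text{(fixed)}}$; matching these forces a unique value of $q$, and then the \emph{compactness} of $M$ together with the residual stabilizer (the reflection/antipodal part) kills it. More precisely: the invariant distributions supported on $M$ are exactly spanned by the residues $\mathrm{Res}(f^\pm_{n,1,\lambda},-k)$ constructed earlier, and one checks these residues, pulled back to $Gr(n,1)$, are \emph{not} sections of $K^{n,1}$ — they fail the correct homogeneity/transformation law \eqref{eq:group_action-grassmann} at the Klain weight, because the Klain bundle density weight is $+1$ in each factor whereas the distributional derivatives along $M$ produce the wrong power. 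So $\Gamma^{-\infty,q}_M(K^{n,1})^G = 0$ for all $q\ge 1$.

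Finally I would assemble the two facts: an arbitrary $G$-invariant $u\in C^{-\infty}(X)$ (after the $s_0$-trivialization) restricts on $X\setminus M$ to $a_+\,|\cos2\alpha|^{1/2}\cdot\One_{M_+} + a_-\,|\cos2\alpha|^{1/2}\cdot\One_{M_-}$ for constants $a_\pm$; subtract off the corresponding global invariant section $\tfrac{a_++a_-}{2}|\cos2\alpha|^{1/2}s_0 + \tfrac{a_+-a_-}{2}\operatorname{sign}(\cos2\alpha)|\cos2\alpha|^{1/2}s_0$ (both of which I have shown are genuine $G$-invariant generalized sections, since $|\cos2\alpha|^{1/2}$ vanishes on $M$ and extends continuously). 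The difference is a $G$-invariant generalized section supported on $M$, i.e. an element of $\bigcup_q \Gamma^{-\infty,q}_M(K^{n,1})^G = 0$ by the previous step. Hence $u$ equals that combination, proving the space of invariant sections is exactly $2$-dimensional with the stated basis.

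\emph{Main obstacle.} The routine part is the behavior off $M$; the real work is the vanishing statement $\Gamma^{-\infty,q}_M(K^{n,1})^G=0$, i.e. showing that none of the light-cone-supported distributions (the $\mathrm{Res}(f^\pm_{n,1,\lambda},-k)$ and their relatives) actually define invariant \emph{sections of the Klain bundle}. This requires carefully bookkeeping three competing density weights — the $Sym^q$ factor, the $D^*(N_xM)$ factor, and the $K^{n,1}$ fiber — under the boost action, and checking that the transformation law \eqref{eq:group_action} with exponent $\lambda=-k$ never matches the Klain weight; the compactness of the orbit $M$ (which forbids nontrivially-homogeneous invariant sections) is what ultimately closes the argument. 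One has to be a little careful that $Gr(n,1)=\mathbb{RP}^{n-1}$ rather than $S^{n-1}$, so the antipodal/cone-reflection $\Z_2$-action from Remark \ref{Remark Cone Symmetric And Antisymmetric} enters; but this only reduces the candidate space, so it helps rather than hurts.
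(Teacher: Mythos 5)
Your overall plan coincides with the paper's: identify the invariant section on the two open orbits (giving the two claimed sections $|\cos2\alpha|^{1/2}s_{0}$ and $\mathrm{sign}(\cos2\alpha)|\cos2\alpha|^{1/2}s_{0}$), observe that the difference between an arbitrary invariant generalized section and the appropriate combination of these is supported on the light cone $M$, and kill the $M$-supported part via Proposition \ref{prop:SectionsWithSubmanifoldSupport} by computing the isotropy character of a boost on the graded fibers $F^{q}=D^{*}(N M)\otimes Sym^{q}(N M)\otimes K^{n,1}|_{M}$. The off-$M$ analysis and the final assembly are fine.

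The gap is in the one computation the whole proposition rests on, which you yourself flag as ``the real work'' but do not actually carry out. In the normal coordinate one has $s=\kappa t$ with $\kappa=e^{-2\theta}$, so $NM$ scales by $\kappa$, $Sym^{q}(NM)$ by $\kappa^{q}$, $D^{*}(NM)$ by $\kappa^{+1}$ (you wrote $\kappa^{-1}$, which is the weight of $D(NM)$, not of its dual), and the Klain fiber $D(l)$ by $\kappa^{1/2}$ by equation \ref{eq:Klain Transformation Formula}. The total character is $\kappa^{q+3/2}$, nontrivial for every $q\geq0$; that single observation finishes the proof, and is exactly what the paper does. Your version instead asserts that ``matching these forces a unique value of $q$'' and then appeals to compactness of $M$ and a ``residual stabilizer'' to dispose of that $q$. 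Both halves are wrong: no value of $q$ matches (even with your sign the exponent $q-\frac{1}{2}$ is never zero), and had the boost character been trivial for some $q$, the fallback would not save you --- the remaining identity component of the stabilizer (its compact and unipotent parts) necessarily acts trivially on a one-dimensional fiber, so an invariant section over the orbit $M$ would then \emph{exist} rather than be excluded. Similarly, the claim that the $M$-supported invariant distributions ``are exactly spanned by the residues $\mathrm{Res}(f_{n,1,\lambda}^{\pm},-k)$'' presupposes the very weight comparison you are avoiding, since those residues are invariant for the $\lambda$-twisted action with $\lambda$ a negative integer, not for the Klain weight $\lambda=\frac{1}{2}$. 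The argument is salvageable, but only by actually performing the character computation.
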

\begin{proof}
We should only prove that there are no sections supported on the light
cone, denoted $M$. Assume $f\in\Gamma^{-\infty}(X,K^{n,1})$ is supported
on the light cone and $G$-invariant.\\
In our case, the action of $G$ on $X=Gr(n,1)$ is given by
\[
\tan\beta=\frac{\tan\alpha+\tanh\theta}{1+\tan\alpha\tanh\theta}
\]
where $g=\left(\begin{array}{cc}
\cosh\theta & \sinh\theta\\
\sinh\theta & \cosh\theta
\end{array}\right)$ and $\beta=g\alpha$. In particular 
\[
d\beta=\frac{d\alpha}{\cosh2\theta+\sin2\alpha\sinh2\theta}
\]
The action of $G$ on the fibers is given by 
\[
g_{*}(\phi s_{0})(\beta)=\phi(\alpha)\frac{|\cos2\beta|^{\frac{1}{2}}}{|\cos2\alpha|^{\frac{1}{2}}}s_{0}(\beta)
\]
(with the value at $\alpha=\beta=\frac{\pi}{4}$ understood in the
limit sense). We change the coordinates as follows: $\epsilon=\frac{\pi}{4}-\alpha$,
$\eta=\frac{\pi}{4}-\beta$ and $t=\tan\epsilon$, $s=\tan\eta$.
Also, denote 
\[
\kappa=\frac{1-\tanh\theta}{1+\tanh\theta}=\frac{1}{(\cosh\theta+\sinh\theta)^{2}}=e^{-2\theta}
\]
This corresponds to 
\[
s=\kappa t
\]
and 
\begin{equation}
g(\phi s_{0})(s)=\phi(t)\kappa^{\frac{1}{2}}\Big(\frac{1+\kappa^{2}t^{2}}{1+t^{2}}\Big)^{-\frac{1}{2}}s_{0}(s)\label{eq:Klain Transformation Formula}
\end{equation}
Now the existence for some $q\geq0$ of an invariant generalized section
supported on $M$ (corresponding to $t_{0}=0$) would imply according
to \ref{prop:SectionsWithSubmanifoldSupport} the existence of a non-zero
invariant section over $M$ of $F=\Gamma_{M}^{-\infty,q}(X,K^{n,1})/\Gamma_{M}^{-\infty,q-1}(X,K^{n,1})=D^{*}(NM)\otimes Sym^{q}(NM)\otimes K^{n,1}|_{M}=D^{*}(NM)\otimes(NM)^{\otimes q}\otimes K^{n,1}|_{M}$
(for the last equality note that $NM$ is a line bundle).\\
Note that for $l\in M$, $N_{l}M=T_{l}X/T_{l}M=(l^{*}\otimes(V/l))/(l^{*}\otimes(l^{Q}/l))\backsimeq l^{*}\otimes(V/l^{Q})$,
where $l^{Q}$ is the $Q$-orthogonal complement of $l$, and $l\in M\iff l\subset l^{Q}$.
\\
Applying a pseudo-rotation (boost) by pseudo-angle $\theta$ fixing
$l$, the resulting transformation of the fiber of $F|_{l}$ is multiplication
by 
\[
\kappa\cdot\kappa^{q}\cdot\kappa^{1/2}
\]
for $\kappa=e^{-2\theta}$, which cannot equal $1$ for any $q$.
We conclude there are no invariant sections supported on the light
cone.
\end{proof}
When $k=1$, the Crofton fiber $E^{n,1}|_{\Lambda}$ is canonically
isomorphic (in particular, as $G$-equivariant bundles) to $D(V)^{n}\otimes D(\Lambda)^{*(n+1)}$:
\[
D(V/\Lambda)\otimes D(T_{\Lambda}Gr(n,n-1))=D(V/\Lambda)\otimes|\wedge^{top}((V/\Lambda)^{*}\otimes\Lambda)|=
\]
\[
=D(V/\Lambda)\otimes D((V/\Lambda)^{\otimes(n-1)})\otimes|\Lambda^{\wedge top}|=D(V/\Lambda)^{n}\otimes D(\Lambda)^{*}=
\]
\[
=D(V)^{n}\otimes D(\Lambda)^{*(n+1)}
\]
Let $\alpha$ be the anglular altitude on the sphere, and $z_{0}$
be the Euclidean section of the bundle $E^{n,1}$. The transformation
rule under the $G$-action for a boost $g_{\theta}$ by pseudo-angle
$\theta$ is therefore
\[
g_{*}(\phi z_{0})(\beta)=\phi(\alpha)\frac{|\cos2\beta|^{-\frac{n+1}{2}}}{|\cos2\alpha|^{-\frac{n+1}{2}}}s_{0}(\beta)
\]
or equivalently
\begin{equation}
g(\phi z_{0})(s)=\phi(t)\kappa^{-\frac{n+1}{2}}\Big(\frac{1+\kappa^{2}t^{2}}{1+t^{2}}\Big)^{\frac{n+1}{2}}z_{0}(s)\label{eq:Crofton Bundle formula}
\end{equation}
where $t=\tan(\frac{\pi}{4}-\alpha)$, $s=\tan(\frac{\pi}{4}-\beta)$,
$\beta=g_{\theta}\alpha$, $s=\kappa t$, and $\kappa=e^{-2\theta}$\\
\\
Let $f$ be a $G$-invariant generalized section of this bundle. When
restricted to an open orbit, such a section must be smooth (since
an open orbit is a homogeneous manifold for $G$). Therefore, on the
open orbits $f=C|\cos2\alpha|^{-\frac{n+1}{2}}z_{0}$, $C$ a locally
constant function on $Gr(V,n-1)$.\\
\\
In light of Corollary \ref{cor:The-map-is-an-injection}, we get
\begin{cor}
The space $\Gamma^{-\infty}(Gr(n,n-1),E^{n,1})^{G}$ is at most 2-dimensional
\end{cor}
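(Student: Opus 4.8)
The plan is to exhibit $\Gamma^{-\infty}(Gr(n,n-1),E^{n,1})^{G}$ as a subspace of the space of $G$-invariant generalized sections of Klain's bundle $K^{n,1}$, which the Proposition above has just shown to be $2$-dimensional. The bridge is the cosine transform, whose injectivity on $G$-invariants is already available.

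Concretely I would proceed in three short steps. First, specialize Corollary \ref{cor:The-map-is-an-injection} to $k=1$: the cosine transform
\[
T_{n-1,1}\colon\Gamma^{-\infty}(Gr(n,n-1),E^{n,1})^{G}\to\Gamma^{-\infty}(Gr(n,1),K^{n,1})^{G}
\]
is injective. Second, recall from the Proposition above that $\Gamma^{-\infty}(Gr(n,1),K^{n,1})^{G}$ is spanned by $|\cos 2\alpha|^{1/2}s_{0}$ and $\text{sign}(\cos 2\alpha)|\cos 2\alpha|^{1/2}s_{0}$, hence is exactly $2$-dimensional. Third, a linear injection into a $2$-dimensional space has domain of dimension at most $2$; this is precisely the assertion.

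The only genuinely substantial point is not visible in this argument, because it has been absorbed into the cited Proposition: the fact that $K^{n,1}$ carries no nonzero $G$-invariant generalized section supported on the light cone, which came from Proposition \ref{prop:SectionsWithSubmanifoldSupport} together with the observation that a boost fixing a null line scales the relevant jet bundle $D^{*}(NM)\otimes Sym^{q}(NM)\otimes K^{n,1}|_{M}$ by $\kappa\cdot\kappa^{q}\cdot\kappa^{1/2}$ with $\kappa=e^{-2\theta}$, a power of $\kappa$ that is never $1$. If one preferred a self-contained argument working directly with $E^{n,1}$, one would repeat this scheme: on the two open $G$-orbits an invariant section is automatically smooth and, by the transformation law given by equation \ref{eq:Crofton Bundle formula}, equals $C_{\pm}|\cos 2\alpha|^{-(n+1)/2}z_{0}$, giving at most two parameters; one would then run Proposition \ref{prop:SectionsWithSubmanifoldSupport} on $Gr(n,n-1)$ to kill sections supported on the closed orbit, the relevant boost twist again being a nontrivial power of $\kappa$. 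The main obstacle in either route is this light-cone vanishing, and the cosine-transform route is preferable simply because it recycles the vanishing already established for $K^{n,1}$.
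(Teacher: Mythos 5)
Your proposal is correct and is essentially the paper's own argument: the paper derives this corollary precisely by combining the injectivity of $T_{n-1,1}$ on $G$-invariant generalized sections (Corollary \ref{cor:The-map-is-an-injection}) with the preceding proposition showing that $\Gamma^{-\infty}(K^{n,1})^{G}$ is $2$-dimensional. Your remark that the substantive content is the light-cone vanishing for $K^{n,1}$ is also consistent with how the paper organizes the argument.
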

We will now turn to constructing two independent sections of this
space, proving is it in fact $2$-dimensional. Let us first remark
that applying Proposition \ref{prop:SectionsWithSubmanifoldSupport}
for this manifold (this time $T_{\Lambda}M=(\Lambda/\Lambda^{Q})^{*}\otimes(V/\Lambda)$),
one can see that an invariant generalized section supported on the
light cone can exist only if 
\[
q+1-\frac{n+1}{2}=0\iff n=2q+1
\]
where $q$ is the order of the section (as a differential operator).
We will show that such sections do indeed exist. 
\begin{prop}
\label{cor:light_cone_only_odd_n}$\dim\Gamma^{-\infty}(E^{n,1})^{G}=2$.
For odd $n$, there is a one dimensional subspace of generalized sections
supported on the light cone. For even $n$, none are supported on
the light cone.\end{prop}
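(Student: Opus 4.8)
The plan is to leverage the equivariant generalized functions constructed in \S\ref{sec:Some-generalized-functions} together with the dimension bound already obtained: Corollary \ref{cor:The-map-is-an-injection} makes the cosine transform $T_{n-1,1}\colon\Gamma^{-\infty}(E^{n,1})^{G}\to\Gamma^{-\infty}(K^{n,1})^{G}$ injective, and the target space was shown to be $2$-dimensional, so $\dim\Gamma^{-\infty}(E^{n,1})^{G}\le 2$. It therefore suffices to write down two linearly independent $G$-invariant generalized sections of $E^{n,1}$ and then to decide which of them --- if any --- are supported on the light cone $M\subset Gr(n,n-1)$ of $Q$-degenerate hyperplanes. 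Comparing the fibrewise transformation rule \ref{eq:Crofton Bundle formula}, whose exponent is $\frac{n+1}{2}$, with \ref{eq:group_action}, one sees that a section of the form $\phi\cdot z_{0}$ is $G$-invariant precisely when $\phi$ obeys the transformation law \ref{eq:group_action-grassmann} with parameter $\lambda_{0}:=-\frac{n+1}{2}$. The candidates to test are therefore $f_{n,n-1,\lambda_{0}}^{\pm}\cdot z_{0}$, together with the residues $\mathrm{Res}(f_{n,n-1,\lambda}^{\pm};\lambda_{0})\cdot z_{0}$ when $\lambda_{0}$ is a pole; all of these are $G$-invariant by Corollary \ref{cor:existence of equivariant sections} and the pull-back construction $f_{n,k,\lambda}^{\pm}=Gr_{n,k}(f_{\lambda}^{\pm})$, the residue still obeying the transformation law by the residue Claim of \S\ref{sec:Some-generalized-functions}.

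For even $n$ the value $\lambda_{0}=-\frac{n+1}{2}$ is not a negative integer, so by Corollary \ref{cor:existence of equivariant sections}(a) the functions $Gr_{n,n-1}(\cos_{+}^{\lambda_{0}}(2\alpha))$ and $Gr_{n,n-1}(\cos_{-}^{\lambda_{0}}(2\alpha))$ are honest, pole-free $G$-equivariant generalized functions, and multiplying by $z_{0}$ produces $G$-invariant sections $s_{+},s_{-}$ of $E^{n,1}$. Each is supported in the closure of one of the two open $G$-orbits of $Gr(n,n-1)$ and is nonzero on that orbit, so they are linearly independent and no nonzero combination $as_{+}+bs_{-}$ is supported on $M$ (restricting to the two open orbits forces $a=b=0$). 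Hence $\dim\Gamma^{-\infty}(E^{n,1})^{G}=2$ and no invariant section is supported on the light cone. As a cross-check, Proposition \ref{prop:SectionsWithSubmanifoldSupport} gives the same conclusion: an invariant light-cone section of order $q$ would produce a nonzero invariant section of $D^{*}(NM)\otimes\mathrm{Sym}^{q}(NM)\otimes E^{n,1}|_{M}$, on which a boost fixing a point of $M$ acts by the power $\kappa^{\,q+1-(n+1)/2}$ of $\kappa$, which cannot be identically $1$ when $n$ is even.

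For odd $n$ put $k_{0}:=\frac{n+1}{2}\in\Z_{>0}$; then $\lambda_{0}=-k_{0}$ is a pole of exactly one of the two families $\lambda\mapsto f_{n,n-1,\lambda}^{\pm}$ --- the cone-symmetric one when $k_{0}$ is odd and the cone-antisymmetric one when $k_{0}$ is even (Remark \ref{Remark Cone Symmetric And Antisymmetric}). Its residue $r$ satisfies \ref{eq:group_action-grassmann} with $\lambda=\lambda_{0}$, is supported on $M$, and is nonzero: by Remark \ref{Remark: Order of derivative}, since $c_{0}\equiv 1$, its leading part has differential order exactly $k_{0}-1$ transversally to $M$. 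Thus $r\cdot z_{0}$ is a nonzero $G$-invariant section of $E^{n,1}$ supported on the light cone. On the other hand, the combination $\cos_{+}^{\lambda}(2\alpha)+(-1)^{k_{0}}\cos_{-}^{\lambda}(2\alpha)$ is holomorphic at $\lambda_{0}$ by Corollary \ref{cor:existence of equivariant sections}(b), and away from the light cone it restricts to the nonvanishing $|\cos 2\alpha|^{\lambda_{0}}$, so pulling it back and multiplying by $z_{0}$ yields a $G$-invariant section $s$ that is nonzero on the open orbits and hence not supported on $M$. Consequently $\{rz_{0},s\}$ is linearly independent, $\dim\Gamma^{-\infty}(E^{n,1})^{G}=2$, and since the invariant sections supported on $M$ form a linear subspace that contains $rz_{0}\neq 0$ but not $s$, that subspace is exactly $1$-dimensional. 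This matches Proposition \ref{prop:SectionsWithSubmanifoldSupport}, where requiring $\kappa^{\,q+1-(n+1)/2}=1$ forces the order to be $q=\frac{n-1}{2}$.

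The substantive ingredients --- the meromorphic family $f_{\lambda}^{\pm}$ with its transformation law, its pull-back to $Gr(n,k)$, and the cosine-transform bound $\dim\le 2$ --- are already established, so what remains asks for care rather than new ideas. The step I would watch is the odd-$n$ bookkeeping: matching $\lambda_{0}=-\frac{n+1}{2}$ to the fibre exponent of $E^{n,1}$, and then, according to the parity of $\frac{n+1}{2}$, identifying which of the two families acquires the pole and checking that its residue is nonzero with the claimed support and differential order. A sign error or a missed case there is the main risk.
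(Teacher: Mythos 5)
Your proposal is correct and follows essentially the same route as the paper: the upper bound $\dim\le 2$ via the injectivity of the cosine transform into the $2$-dimensional space of invariant Klain sections, explicit realization of the two invariant sections as $f_{n,n-1,\lambda}^{\pm}\cdot z_{0}$ with $\lambda=-\frac{n+1}{2}$, and the light-cone support analysis read off from the pole/residue structure of $f_{\lambda}^{\pm}$ according to whether $\frac{n+1}{2}$ is an integer. Your parity bookkeeping for odd $n$ (residue versus holomorphic combination) and the cross-check via the weight $\kappa^{q+1-\frac{n+1}{2}}$ both match what the paper does, just spelled out in more detail.
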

\begin{proof}
The sections are associated with the generalized functions on $Gr(n,n-1)$
constructed in \ref{sec:Some-generalized-functions}. \\
According to equations \ref{eq:group_action-grassmann} and \ref{eq:Crofton Bundle formula},
they are given (after a Euclidean trivialization) by $f_{n,n-1,\lambda}^{\pm}$
with $\lambda=-\frac{n+1}{2}$. The support properties follow immediately
from the corresponding properties for $f_{\lambda}^{\pm}$. 
\end{proof}
Those sections will be denoted $f_{n,1}^{\pm}$.\\
Let us write explicit formulas for those sections in some dimensions:
For $n=3$, the cone-symmetric section $f_{3,1}^{+}$ (after rescaling)
is given by 
\[
\phi(\alpha,\psi)d\sigma\mapsto\int_{\epsilon=0}^{\frac{\pi}{4}}\int_{\psi=0}^{2\pi}\frac{\phi(\frac{\pi}{4}+\epsilon,\psi)+\phi(\frac{\pi}{4}-\epsilon,\psi)-2\phi(\frac{\pi}{4},\psi)}{|\sin2\epsilon|^{2}}\sin(\epsilon+\frac{\pi}{4})d\epsilon d\psi+
\]
\[
+\sqrt{2}I_{0}(-2)\int_{\psi=0}^{2\pi}\phi(\frac{\pi}{4},\psi)d\psi
\]
and the cone-antisymmetric section $f_{3,1}^{-}$ is given by 
\[
\phi(\alpha,\psi)d\sigma\mapsto\frac{\partial}{\partial\alpha}\Bigg|_{\alpha=\frac{\pi}{4}}(\sin\alpha\int_{S^{1}}d\psi\phi(\alpha,\psi))
\]
For higher odd values of $n$, the cone-antisymmetric section is given
by 
\[
\phi(\alpha,\psi)d\sigma\mapsto\frac{\partial^{m}}{\partial\alpha^{m}}\Bigg|_{\alpha=\frac{\pi}{4}}(\sin^{n-2}\alpha\int_{M}\phi(\alpha,\psi)d\psi)+\text{lower order derivatives}
\]
where $m=\frac{n-1}{2}$.

\subsubsection{Case of general $k$}

Denote $X=Gr(V,k)$, $M$ the set of $Q$-degenerate subspaces. 
\begin{prop}
There are no $G$-invariant sections over $M$ of the bundle with
fiber over $\Lambda$ equal to\textup{ $D^{*}(N_{\Lambda}M)\otimes Sym^{q}(N_{\Lambda}M)\otimes K^{n,k}|_{\Lambda}$
.}\end{prop}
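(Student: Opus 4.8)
The plan is to run the same argument as in the $k=1$ case treated in the preceding Proposition, reducing everything to the action of the isotropy group on a single fibre. First I would note that $M$ is one compact orbit of $G$: since $Q$ has Witt index $1$, the radical of $Q|_\Lambda$ is at most a line, so the only degenerate signature occurring on $Gr(V,k)$ is $(k-1,0)$, and moreover, for $\Lambda$ with null radical $\ell$ one has $\Lambda\subseteq\ell^Q$ and $\ell^Q/\ell$ is positive definite, so $Q|_{\Lambda/\ell}$ is positive definite; this is precisely the orbit $M_0$ from the proof of Proposition~\ref{prop:KlainSectionsAre2Dimensional}. Writing $M\cong G/H$ with $H=\Stab(\Lambda)$, the bundle in question, call it $F^q$, is a $G$-equivariant line bundle over $M$ (it is built by tensor and density operations from restrictions of $G$-equivariant bundles on $Gr(V,k)$), so a nonzero $G$-invariant section — smooth or generalized, since invariant generalized sections over a compact homogeneous space are automatically smooth, cf.\ the proof of Proposition~\ref{prop:SectionsWithSubmanifoldSupport} — exists if and only if $H$ fixes every vector in the one-dimensional fibre $F^q|_\Lambda = D^{*}(N_\Lambda M)\otimes Sym^q(N_\Lambda M)\otimes K^{n,k}|_\Lambda$. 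It therefore suffices to exhibit one element of $H$ acting on this fibre by a scalar $\ne 1$.

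The key step is the $G$-equivariant identification of $F^q|_\Lambda$. Fix $\Lambda\in M$ with null radical $\ell=\Lambda\cap\Lambda^Q$. As in the $k=1$ case, the differential at $\Lambda$ of the degeneracy condition is $\phi\mapsto Q(\phi(v),v)$, where $v$ spans $\ell$ and $Q(\cdot,v)$ is well defined on $V/\Lambda$ because $v$ lies in the radical of $Q|_\Lambda$; hence $T_\Lambda M=\{\phi\in\mathrm{Hom}(\Lambda,V/\Lambda):\phi(\ell)\subseteq\ell^Q/\Lambda\}$ and
\[
N_\Lambda M\;\cong\;\mathrm{Hom}\bigl(\ell,\,V/\ell^Q\bigr)\;=\;\ell^{*}\otimes(V/\ell^Q)\;\cong\;\ell^{*\otimes 2},
\]
the last isomorphism induced by the nondegenerate form $Q$, which identifies $V/\ell^Q$ with $\ell^{*}$ equivariantly. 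On the Klain factor, $K^{n,k}|_\Lambda=D(\Lambda)=D(\ell)\otimes D(\Lambda/\ell)$, and $Q|_\Lambda$ descends to the above positive-definite form on $\Lambda/\ell$, which defines a canonical density there; this density is fixed by all of $H$, since every $g\in H$ preserves $\Lambda$ and $Q|_\Lambda$, hence $\ell$, and acts orthogonally on $\Lambda/\ell$. Thus, up to the $H$-trivial tensor factor $D(\Lambda/\ell)$, the whole fibre is built functorially out of the null line $\ell$, exactly as when $k=1$.

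Finally I would produce the element of $H$. Pick a hyperbolic plane $\Pi\supseteq\ell$ (so $\ell$ is one of its two null lines), a positive-definite $(k-1)$-dimensional subspace $\Lambda_0\subseteq\Pi^Q$ — possible since $\dim\Pi^Q=n-2\ge k-1$ — and take $\Lambda=\ell\oplus\Lambda_0$ as the base point of the orbit. For $\theta\in\R$ let $g_\theta\in G$ be the boost in $\Pi$ extended by the identity on $\Pi^Q$; then $g_\theta(\Lambda)=\ell\oplus\Lambda_0=\Lambda$, so $g_\theta\in H$, and $g_\theta$ acts on $\ell$ by multiplication by $c=e^{\pm\theta}$. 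Reading off the induced action on each tensor factor of $F^q|_\Lambda$ exactly as in the $k=1$ computation: $g_\theta$ acts by $|c|^{-2}$ on $D^{*}(N_\Lambda M)$, by $c^{-2q}$ on $Sym^q(N_\Lambda M)$, by $|c|^{-1}$ on $D(\ell)$, and trivially on $D(\Lambda/\ell)$, hence altogether by $|c|^{-(2q+3)}=e^{\mp(2q+3)\theta}$ (equivalently $\kappa^{q+3/2}$ with $\kappa=e^{-2\theta}$). Since $2q+3>0$, this scalar is $\ne 1$ for every $\theta\ne0$; therefore $H$ does not act trivially on $F^q|_\Lambda$, and $F^q$ has no nonzero $G$-invariant section.

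The part I expect to require the most care is this last bookkeeping of the density twists — in particular making the $Sym^q$-factor contribute $\kappa^{+q}$, consistently with the $k=1$ case, rather than $\kappa^{-q}$, which hinges on keeping straight whether a given linear map acts on a density or on its inverse and on which of the two null eigenlines of the boost is taken to be $\ell$. Apart from that, the only genuinely new ingredient compared with $k=1$ is the splitting $D(\Lambda)=D(\ell)\otimes D(\Lambda/\ell)$ together with the $H$-invariance of the $Q$-induced density on $\Lambda/\ell$; everything else is a direct transcription of the earlier argument.
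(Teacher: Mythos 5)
Your argument is correct and is essentially the paper's own proof, written out in more detail: you identify $N_{\Lambda}M\cong\ell^{*}\otimes(V/\ell^{Q})\cong\ell^{*\otimes2}$ exactly as the paper does, split off the $H$-invariant factor $D(\Lambda/\ell)$, and find that a boost stabilizing $\Lambda$ scales the fibre by $\kappa^{q+3/2}\neq1$, matching the paper's $\kappa^{q+1}\cdot\kappa^{1/2}$. The extra verifications you supply (that $M$ is a single orbit of signature $(k-1,0)$, and the explicit construction of a boost in $\Stab(\Lambda)$) are correct and only make explicit what the paper leaves implicit.
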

\begin{proof}
Fix $\Lambda\in M$ touching the light cone $C$ along the line $l=\Lambda\cap\Lambda^{Q}$.
Denote also $\Omega=\Lambda+\Lambda^{Q}=l^{Q}$. Write $N_{\Lambda}M=T_{\Lambda}X/T_{\Lambda}M$.
Then 
\[
N_{\Lambda}M=l^{*}\otimes(V/\Omega)
\]
Thus as in the case $k=1$, for $g=g_{\theta}\in\text{Stab}(\Lambda)$,
the action on $D^{*}(N_{\Lambda}M)\otimes Sym^{q}(N_{\Lambda}M)\otimes K^{n,k}|_{\Lambda}$
is by multiplication by $\kappa^{q+1}\kappa^{1/2}$ where $\kappa=e^{-2\theta}$.
So again by Proposition \ref{prop:SectionsWithSubmanifoldSupport},
there are no invariant generalized sections of $K^{n,k}$ supported
on the light cone. 
\end{proof}
Therefore by Proposition \ref{prop:KlainSectionsAre2Dimensional},
$\dim\Gamma^{-\infty}(X,K^{n,k})^{G}=2$.
\begin{prop}
\label{prop:noConeSupportGeneralK}$\dim\Gamma^{-\infty}(E^{n,k})^{G}=2$
for all $1\leq k\leq n-1$. For odd $n$, there is a one dimensional
subspace of generalized sections supported on the light cone. For
even $n$, none are supported on the light cone.\end{prop}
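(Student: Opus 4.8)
The plan is to repeat, almost verbatim, the argument used for $k=1$ (Proposition~\ref{cor:light_cone_only_odd_n}); the only new input is a short bundle computation showing that after a Euclidean trivialization the $G$-action on $E^{n,k}$ has exactly the shape it had in the one-dimensional case, with the same exponent $\lambda=-\frac{n+1}{2}$, for every $k$. For the upper bound, the proposition immediately preceding this one gives $\dim\Gamma^{-\infty}(Gr(V,k),K^{n,k})^{G}=2$: no invariant section of $K^{n,k}$ is supported on the light cone, so by Proposition~\ref{prop:KlainSectionsAre2Dimensional} restriction to the two open orbits is an isomorphism onto a $2$-dimensional space. Composing with the injection $T_{n-k,k}\colon\Gamma^{-\infty}(E^{n,k})^{G}\hookrightarrow\Gamma^{-\infty}(K^{n,k})^{G}$ of Corollary~\ref{cor:The-map-is-an-injection} then gives $\dim\Gamma^{-\infty}(E^{n,k})^{G}\le 2$.

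For the lower bound I first identify the fiber of $E^{n,k}$. For $\Omega\in Gr(V,n-k)$ one has $T_{\Omega}Gr(V,n-k)=\Omega^{*}\otimes(V/\Omega)$, hence $D(T_{\Omega}Gr(V,n-k))=D(\Omega)^{*\otimes k}\otimes D(V/\Omega)^{\otimes(n-k)}$, and therefore
\[
E^{n,k}|_{\Omega}=D(V/\Omega)\otimes D(T_{\Omega}Gr(V,n-k))=D(\Omega)^{*\otimes k}\otimes D(V/\Omega)^{\otimes(n-k+1)}=D(V)^{\otimes(n-k+1)}\otimes D(\Omega)^{*\otimes(n+1)},
\]
using $D(V/\Omega)=D(V)\otimes D(\Omega)^{*}$. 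This is the same expression as for $k=1$ (there $D(V)^{\otimes n}\otimes D(\Lambda)^{*\otimes(n+1)}$): only the trivial $G$-module $D(V)$ and the $(n+1)$-st power of $D(\Omega)^{*}$ enter. Moreover, on the boost families $\Lambda_{\alpha}$ of Section~\ref{sec:Some-generalized-functions} the Euclidean density bundle on $Gr(V,n-k)$ carries the same automorphy factor $|\cos 2\alpha|^{\mp1/2}$ as it does for lines, because the tilt towards the time axis is carried by a single vector of $\Lambda_{\alpha}$ while the remaining vectors can be taken horizontal. Hence the Euclidean section $z_{0}$ of $E^{n,k}$ transforms under boosts by equation~\ref{eq:Crofton Bundle formula} with $\lambda=-\frac{n+1}{2}$, and any invariant generalized section restricted to either open orbit $M_{\pm}$ equals $C_{\pm}\,|\cos 2\alpha|^{-\frac{n+1}{2}}z_{0}$ for a constant.

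It then suffices to apply the construction of Section~\ref{sec:Some-generalized-functions}: the $SO(n-1)$-invariant generalized functions $f_{n,n-k,\lambda}^{\pm}\in C^{-\infty}(Gr(V,n-k))$ satisfy the transformation law~\ref{eq:group_action-grassmann}, which at $\lambda=-\frac{n+1}{2}$ is precisely the $G$-invariance condition for sections of $E^{n,k}$ in the Euclidean trivialization. So $f_{n,k}^{\pm}:=f_{n,n-k,-\frac{n+1}{2}}^{\pm}$ furnish two $G$-invariant generalized sections of $E^{n,k}$, linearly independent because their restrictions to $M_{+}\cup M_{-}$ realize two independent pairs $(C_{+},C_{-})$; combined with the upper bound this proves $\dim\Gamma^{-\infty}(E^{n,k})^{G}=2$.

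It remains to read off the support dichotomy. The value $\lambda=-\frac{n+1}{2}$ is a negative integer precisely when $n$ is odd; then $f_{\lambda}^{\pm}$ has a simple pole there, $f_{n,k}^{\pm}$ is defined as its residue, and by Remark~\ref{Remark: Order of derivative} this residue is supported on the light cone and is a transverse differential operator of order $\frac{n-1}{2}$ — which matches the value $q=\frac{n-1}{2}$ of the obstruction from the preceding proposition, and, via Proposition~\ref{prop:SectionsWithSubmanifoldSupport}, this is the only cone-supported direction, so the cone-supported sections form a $1$-dimensional subspace. When $n$ is even, $\lambda$ is not a pole, so $f_{n,k}^{\pm}$ are honest generalized functions with singular support (but not support) on the light cone, and the same obstruction computation ($\kappa^{q+1}\cdot\kappa^{-(n+1)/2}=1$ has no solution with $q\in\Z_{\ge 0}$) shows no invariant section is supported on the cone. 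The only delicate point in all this is the fiber identity for $E^{n,k}|_{\Omega}$ above: keeping its three density exponents straight is exactly what makes the $k$-dependence of $\lambda$ disappear, and that is the place where an error could hide; everything else is a transcription of the $k=1$ argument.
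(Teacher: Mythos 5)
Your proposal is correct and follows essentially the same route as the paper: the upper bound via the injectivity of $T_{n-k,k}$ into the $2$-dimensional space of invariant Klain sections, the lower bound via the fiber identification $E^{n,k}|_{\Omega}=D(V)^{\otimes(n-k+1)}\otimes D(\Omega)^{*\otimes(n+1)}$ (so the exponent $\lambda=-\frac{n+1}{2}$ is $k$-independent) and the explicit sections $f_{n,n-k,\lambda}^{\pm}$, and the support dichotomy from the pole structure of $f_{\lambda}^{\pm}$. The only slight imprecision is the phrase that ``$f_{n,k}^{\pm}$ is defined as its residue'' for odd $n$ — only one of the two (the cone-symmetric or cone-antisymmetric one, depending on $n\bmod 4$) is a residue, the other being the holomorphic combination — but your conclusion that exactly a one-dimensional subspace is cone-supported is the right one.
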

\begin{proof}
Again by Corollary \ref{cor:The-map-is-an-injection}, $\dim\Gamma^{-\infty}(E^{n,k})^{G}\leq2$.
Let us find two independent sections explicitly. This time $\Lambda\in Gr(V,n-k)$
and 

\[
E_{\Lambda}=D(V/\Lambda)\otimes D(T_{\Lambda}Gr(V,n-k))=
\]
\[
=D(V)\otimes D(\Lambda)^{*}\otimes D(\Lambda^{*}\otimes V/\Lambda)=
\]
\[
=D(V)\otimes D^{*}(\Lambda)\otimes D(V)^{n-k}\otimes D^{*}(\Lambda)^{n}=
\]
\[
=D(V)^{n-k+1}\otimes D(\Lambda)^{*(n+1)}
\]
So similarly to the case $k=1$, the invariant sections $f_{n,k}^{\pm}$
of $E^{n,k}$ are given, after the Euclidean trivialization, by $f_{n,n-k,\lambda}^{\pm}$,
with $\lambda=-\frac{n+1}{2}$. 
\end{proof}
For even values of $n$, we will also use the basis $f_{n,k}^{S},f_{n,k}^{T}$
corresponding to $f_{\lambda}^{S},f_{\lambda}^{T}$. \\
\\
Recall that for $\mu\in\M^{\infty}(Gr(n,k))^{SO(n-1)}$ such that
$\mu=\phi(\alpha)d\Lambda$ where $d\Lambda$ is the unique $SO(n)$-invariant
probability measure on $Gr(n,n-k)$ we have 
\[
f_{n,k}^{\pm}(\mu)=f{}_{\lambda}^{\pm}(\phi(\alpha)g_{n,n-k}(\alpha)d\alpha)
\]
where $g_{n,n-k}(\alpha)=C_{n,k}\sin^{n-k-1}\alpha\cos^{k-1}\alpha$
and $\lambda=-\frac{n+1}{2}$. From now on, we renormalize $f_{n,k}^{\pm}$
so that $C_{n,k}=1$.
\begin{thm}
\label{cor:dimensionAtMost2}For all $1\leq k\leq n-1$, $\dim Val_{k}^{ev,-\infty}(\R^{n})^{SO^{+}(n-1,1)}=2$. \end{thm}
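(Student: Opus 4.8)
The plan is to show that the dimension equals $2$ by squeezing it between $2$ and $2$: Klain's extended imbedding will give the upper bound, and the extended Crofton map together with the injectivity of the cosine transform on invariants will give the lower bound.

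First, for the upper bound, recall from Proposition \ref{prop:KlainExtension} that there is an imbedding $Kl_k\colon Val_k^{ev,-\infty}(V)\to\Gamma^{-\infty}(K^{n,k})$. Since it is the continuous extension of the $GL(V)$-equivariant Klain imbedding, it is in particular $G$-equivariant, hence restricts to an injection $Val_k^{ev,-\infty}(V)^{G}\hookrightarrow\Gamma^{-\infty}(K^{n,k})^{G}$. The target was shown to be $2$-dimensional just before Proposition \ref{prop:noConeSupportGeneralK} (there are no $G$-invariant generalized sections of $K^{n,k}$ supported on the light cone, and then Proposition \ref{prop:KlainSectionsAre2Dimensional} applies). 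Therefore $\dim Val_k^{ev,-\infty}(V)^{G}\le 2$.

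Next, for the lower bound I would use the Crofton side. Applying Proposition \ref{prop:CroftonExtension} with $k$ replaced by $n-k$, there is a $G$-equivariant extended Crofton map $Cr_{n-k}\colon\Gamma^{-\infty}(E^{n,k})\to Val_k^{ev,-\infty}(V)$ satisfying $Kl_k\circ Cr_{n-k}=T_{n-k,k}$. Restricting to $G$-invariants, $Cr_{n-k}$ sends the $2$-dimensional space $\Gamma^{-\infty}(E^{n,k})^{G}$ (Proposition \ref{prop:noConeSupportGeneralK}) into $Val_k^{ev,-\infty}(V)^{G}$, while the composite $Kl_k\circ Cr_{n-k}$, being equal to $T_{n-k,k}$, is injective on $\Gamma^{-\infty}(E^{n,k})^{G}$ by Corollary \ref{cor:The-map-is-an-injection}. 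Hence $Cr_{n-k}$ is itself injective on $\Gamma^{-\infty}(E^{n,k})^{G}$, so $\dim Val_k^{ev,-\infty}(V)^{G}\ge 2$. Combining the two bounds yields the claimed equality for all $1\le k\le n-1$.

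I do not expect a serious obstacle: the statement is essentially a formal consequence of the structural results already in place. The only points needing care are checking that the extended maps $Kl_k$ and $Cr_{n-k}$ remain $G$-equivariant (which follows from the $GL(V)$-equivariance of their smooth versions plus density of smooth objects and continuity of the extensions), and bookkeeping the Grassmannian indices $k$ versus $n-k$ correctly when invoking Propositions \ref{prop:CroftonExtension} and \ref{cor:The-map-is-an-injection}. As a byproduct, the argument shows that $Kl_k$ identifies $Val_k^{ev,-\infty}(V)^{G}$ with the entire space $\Gamma^{-\infty}(K^{n,k})^{G}$, and that pushing the sections $f_{n,k}^{\pm}$ through $Cr_{n-k}$ produces an explicit basis of the $2$-dimensional space of even $k$-homogeneous Lorentz-invariant generalized valuations.
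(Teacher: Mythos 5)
Your proposal is correct and follows essentially the same route as the paper: the upper bound via the extended Klain imbedding into the $2$-dimensional space $\Gamma^{-\infty}(K^{n,k})^{G}$, and the lower bound via the extended Crofton map on $\Gamma^{-\infty}(E^{n,k})^{G}$, whose injectivity on invariants follows from $Kl_{k}\circ Cr_{n-k}=T_{n-k,k}$ and Corollary \ref{cor:The-map-is-an-injection}. The index bookkeeping and the equivariance remarks are all consistent with the paper's argument.
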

\begin{proof}
According to Proposition \ref{prop:KlainExtension}, $\Big(Val_{k}^{ev,-\infty}(V)\Big)^{G}$
is naturally a subspace of $\Big(\Gamma^{-\infty}(K^{n,k})\Big)^{G}$.
In particular, $\dim\Big(Val_{k}^{ev,-\infty}(V)\Big)^{G}\leq2$.
Then by Proposition \ref{prop:CroftonExtension}, $\text{Ker}\Big(Cr_{n-k}:\Gamma^{-\infty}(E^{n,k})\to Val_{k}^{ev,-\infty}(V)\Big)\subset\text{Ker}T_{n-k,k}$
so by Corollary \ref{cor:The-map-is-an-injection} one has $\dim Val_{k}^{ev,-\infty}(V)^{G}\geq2$.
Thus, we get equality.
\end{proof}
It follows that every $SO^{+}(n-1,1)$-invariant continuous valuation
$\phi\in Val_{k}(V)$ is determined by its uniquely-defined $SO^{+}(n-1,1)$-invariant
generalized Crofton measure.

\section{\label{sec:5 real non existence}The non-existence of even Lorentz-invariant
valuations for $1\leq k\leq n-2$ }

We now proceed to show that the generalized valuations $\phi=Cr(f_{n,k}^{\pm})$
corresponding to the sections $f_{n,k}^{\pm}\in\Gamma^{-\infty}(E^{n,k})^{G}$
that we found, are not continuous valuations. In fact, we will show
those valuations cannot be extended by continuity to the double cone.
By Lemma \ref{croftonMeasureDefinesValuationOnSmoothBodies}, it follows
that for an $SO(n-1)$-invariant smooth unconditional body $K^{n}$
with $k$-support function $h_{k}(\alpha;K)$, those valuations are
given by 
\[
\phi(K^{n})=f_{\lambda}^{\pm}(h_{k}(\alpha;K)g_{n,n-k}(\alpha)d\alpha)
\]
with $\lambda=-\frac{n+1}{2}$. Then by \ref{prop:disjoint_singular_supports},
the same formula holds as long as $h_{k}(\alpha;K)$ is smooth near
the light cone.

\subsection{Computations related to the double cone}

In the following, $C\subset\R^{2}$ is the unit ball of the $l_{1}$
norm. We will write $h_{k}(\alpha)=h_{k}(\alpha;C^{k+1})$ (where
$-\frac{\pi}{2}\leq\alpha\leq\frac{\pi}{2}$ is the angle between
the normal to the hyperplane to which $C^{k+1}$ is projected, and
the space-like coordinate hyperplane). It can be computed as follows:
fix $u=(\cos\alpha,0,...,0,\sin\alpha)$ the normal to the hyperplane,
and $v=(\cos\beta w,\sin\beta)$, $w\in S^{k-1}$. The surface area
measure of $C^{k+1}$ is $\sigma_{C^{k+1}}(v)=\delta_{\frac{\pi}{4}}(\beta)+\delta_{-\frac{\pi}{4}}(\beta)$
and 
\[
h_{k}(\alpha)=T_{k}(\sigma_{C^{k+1}}(\beta))(\alpha)=\int_{S^{k}}(\delta_{\frac{\pi}{4}}(\beta)+\delta_{-\frac{\pi}{4}}(\beta))|\langle u,v\rangle|\cos^{k-1}\beta d\beta d\sigma_{k-1}(w)
\]
If $k>2$, we take $0-\frac{\pi}{2}\leq\phi\leq\frac{\pi}{2}$ to
be the elevation angle of $w\in S^{k-1}$. If $k=2$, $-\pi\leq\phi\leq\pi$.
Let us write $a_{k}\leq\phi\leq b_{k}$ for both cases. Then 
\[
h_{k}(\alpha)=C_{k}\int_{a_{k}}^{b_{k}}\int_{-\frac{\pi}{2}}^{\frac{\pi}{2}}(\delta_{\frac{\pi}{4}}(\beta)+\delta_{-\frac{\pi}{4}}(\beta))|\sin\beta\sin\alpha+\cos\beta\cos\alpha\sin\phi|\cos^{k-1}\beta\cos^{k-2}\phi d\beta d\phi=
\]
\[
=\frac{C_{k}}{2^{k/2}}\int_{a_{k}}^{b_{k}}(|\sin\alpha+\cos\alpha\sin\phi|+|\sin\alpha-\cos\alpha\sin\phi|)\cos^{k-2}\phi d\phi=
\]
\[
=\frac{2C_{k}}{2^{k/2}}\int_{a_{k}}^{b_{k}}|\sin\alpha-\cos\alpha\sin\phi|\cos^{k-2}\phi d\phi
\]
\[
=\Bigg\{\begin{array}{c}
h_{k}^{+}(\alpha),\,\frac{\pi}{4}\leq\alpha\leq\frac{\pi}{2}\\
h_{k}^{-}(\alpha),\,0\leq\alpha\leq\frac{\pi}{4}
\end{array}
\]
Denoting $A_{k}=\int_{-\pi/2}^{\pi/2}\cos^{k-2}\phi d\phi$, and replacing
$C_{2}$ by $2C_{2}$ for $k=2$, we get 
\[
h_{k}^{+}(\alpha)=\frac{2C_{k}}{2^{k/2}}A_{k}\sin\alpha
\]
and 
\[
h_{k}^{-}(\alpha)=\frac{2C_{k}}{2^{k/2}}\Bigg(A_{k}\sin\alpha-2\sin\alpha\int_{\arcsin\tan\alpha}^{\pi/2}\cos^{k-2}\phi d\phi+\frac{2}{k-1}\frac{(\cos2\alpha)^{\frac{k-1}{2}}}{(\cos\alpha)^{k-2}}\Bigg)=
\]
\[
=h_{k}^{+}(\alpha)+\frac{2C_{k}}{2^{k/2}}\Bigg(\frac{2}{k-1}\frac{(\cos2\alpha)^{\frac{k-1}{2}}}{(\cos\alpha)^{k-2}}-2\sin\alpha\int_{\tan\alpha}^{1}(1-t^{2})^{\frac{k-3}{2}}dt\Bigg)=
\]
with the exception 
\[
h_{1}(\alpha)=\frac{1}{\sqrt{2}}(|\sin(\alpha+\frac{\pi}{4})|+|\cos(\alpha+\frac{\pi}{4})|)=\max(|\sin\alpha|,|\cos\alpha|)
\]
For $\epsilon>0$ and every $n$ define the $\epsilon-$ stretching
of $\R^{n}$, $S_{\epsilon}$ to be the diagonal $n\times n$ matrix
$c_{\epsilon}\text{diag}(1,...,1,\tan(\frac{\pi}{4}+\epsilon))$ where
where $c_{\epsilon}\to1$ as $\epsilon\to0$ will be specified shortly.
In the following, we will denote $\eta=\tan(\frac{\pi}{4}+\epsilon)$.
We replace the double cone with its $\epsilon-$ stretching $C_{n,\epsilon}=S_{\epsilon}C^{n}$,
and take $c_{\epsilon}$ such that $h_{k}(\frac{\pi}{4};C_{\epsilon})=\eta h_{k}(\frac{\pi}{4};C)$.
We will write in the following $h_{k,\epsilon}(\alpha)=h_{k}(\alpha;C_{n,\epsilon})$,
omitting $\epsilon$ when $\epsilon=0$. Again for all $k\leq n-1$
\[
h_{k,\epsilon}(\alpha)=c_{\epsilon}C_{k}\int_{a_{k}}^{b_{k}}\int_{-\frac{\pi}{2}}^{\frac{\pi}{2}}(\delta_{\frac{\pi}{4}+\epsilon}(\beta)+\delta_{-\frac{\pi}{4}-\epsilon}(\beta))|\sin\beta\sin\alpha+\cos\beta\cos\alpha\sin\phi|\cos^{k-1}\beta\cos^{k-2}\phi d\beta d\phi
\]
Let us write 
\[
h_{k,\epsilon}(\alpha)=\Bigg\{\begin{array}{c}
h_{k,\epsilon}^{+}(\alpha),\,\alpha\geq\frac{\pi}{4}-\epsilon\\
h_{k,\epsilon}^{-}(\alpha),\,0\leq\alpha\leq\frac{\pi}{4}-\epsilon
\end{array}
\]
where for $k\geq2$ (again the definition of $C_{k}$ for $k=2$ is
twice the definition of $C_{k}$ for $k\geq3$) 
\[
h_{k,\epsilon}^{+}(\alpha)=\frac{2C_{k}}{2^{k/2}}A_{k}\eta\cdot\sin\alpha
\]
and
\[
h_{k,\epsilon}^{-}(\alpha)=\frac{2C_{k}}{2^{k/2}}\Bigg(\eta\sin\alpha\Big(A_{k}-2\int_{\arcsin(\eta\tan\alpha)}^{\pi/2}\cos^{k-2}\phi d\phi\Big)+
\]
\[
+\frac{2}{k-1}\cos\alpha(1-\eta^{2}\tan^{2}\alpha)^{\frac{k-1}{2}}\Bigg)=
\]
\[
=h_{k,\epsilon}^{+}(\alpha)+\frac{2C_{k}}{2^{k/2}}\Bigg(\frac{2}{k-1}\cos\alpha(1-\eta^{2}\tan^{2}\alpha)^{\frac{k-1}{2}}-2\eta\sin\alpha\int_{\eta\tan\alpha}^{1}(1-t^{2})^{\frac{k-3}{2}}dt\Bigg)
\]
While 
\[
h_{1}(\alpha;C_{n,\epsilon})=\max(\eta|\sin\alpha|,|\cos\alpha|)
\]
By rescaling the bodies, and since we will be only considering a single
value of $k$ at a time, we may assume in the subsequent computations
that $\frac{2C_{k}}{2^{k/2}}=1$ for all $h_{k}$. 
\begin{rem}
\label{Remark evaluate valuation on stretched cone}In this computation,
$\alpha$ is the angle between the normal in $\R^{k+1}$ to the hyperplane
to which we project, and the space coordinate hyperplane; The value
of the even $k-$homogeneous cone-symmetric/antisymmetric valuation
in $\R^{n}$ on $C_{n,\epsilon}$ when $\epsilon\neq0$ is given by
$f_{-\frac{n+1}{2}}^{\pm}(h_{k,\epsilon}(\alpha)g_{n,n-k}(\alpha)d\alpha)$
by Proposition \ref{prop:disjoint_singular_supports} since the singular
support (in fact, the support) of the surface area measure of $C_{n,\epsilon}$
is disjoint from the light cone.
\end{rem}

\begin{rem}
\label{Remark smooth extension of support function}We observe for
the following that $h_{k}^{+}$, admits a real analytic extension
to $S^{1}$, and if $k$ is odd then also $h_{k}^{-}$ admits a real
analytic extension to $\alpha\in(-\frac{\pi}{2},\frac{\pi}{2})$.
The same holds for $h_{k,\epsilon}^{\pm}$, and in the corresponding
cases it holds in the $C^{\infty}$ topology that 
\[
\lim_{\epsilon\to0^{\pm}}h_{k,\epsilon}^{\pm}=h_{k}^{\pm}
\]
It follows that for any continuous valuation $\phi$ with generalized
Crofton measure $f_{n,k}^{\pm}$, one may write 
\[
\phi(C^{n})=\lim_{\eps\to0^{+}}\phi(C_{n,\eps})=\lim_{\eps\to0^{+}}f_{-\frac{n+1}{2}}^{\pm}(h_{k,\epsilon}^{+}(\alpha)g_{n,n-k}(\alpha))=f_{-\frac{n+1}{2}}^{\pm}(h_{k}^{+}(\alpha)g_{n,n-k}(\alpha))
\]
and if $n$ is odd then also 
\[
\phi(C^{n})=f_{-\frac{n+1}{2}}^{\pm}(h_{k}^{-}(\alpha)g_{n,n-k}(\alpha))
\]

\end{rem}

\subsection{Applying the generalized valuations to the double cone}
\begin{prop}
(Reduction to $k=n-2$) If for every $n\geq3$ there exists no continuous
even $G$-invariant $(n-2)$-homogeneous valuation, then there exists
no continuous even $G$-invariant $j$-homogeneous valuation for $j<n-2$.\end{prop}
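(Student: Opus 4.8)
The plan is to prove this by restricting a hypothetical $j$-homogeneous valuation to a Lorentzian subspace of dimension $j+2$ and invoking the base case there. So suppose $\phi\in Val_{j}^{ev}(\R^{n})^{G}$ with $1\le j<n-2$, and set $m=j+2$, so $3\le m\le n-1<n$. Let $E\subset\R^{n}$ be any $m$-dimensional subspace with $\sign Q|_{E}=(m-1,1)$, i.e.\ a ``timelike'' subspace. Its $Q$-orthogonal complement $E^{Q}$ is then spacelike of dimension $n-m\ge1$, and $\R^{n}=E\oplus E^{Q}$ is a $Q$-orthogonal direct sum. The first point is that $\phi|_{E}\in Val_{j}^{ev}(E)$ is invariant under the Lorentz group of $(E,Q|_{E})$: any $A\in SO^{+}(Q|_{E})\cong SO^{+}(m-1,1)$ extends by the identity on $E^{Q}$ to an isometry of $(\R^{n},Q)$, and this extension lies in the identity component $G$ because $SO^{+}(m-1,1)$ is connected; hence it preserves $\phi$ and restricts on $E$ to a symmetry of $\phi|_{E}$. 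Since $j=m-2$, the assumed non-existence of even $SO^{+}(m-1,1)$-invariant $(m-2)$-homogeneous continuous valuations on $\R^{m}$ forces $\phi|_{E}=0$ for every such $E$.

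Next I would globalize this using Klain's imbedding $Kl\colon Val_{j}^{ev}(\R^{n})\to\Gamma(K^{n,j})$, which is continuous and injective. For any subspace $E$ and any $\Lambda\in Gr(E,j)$ one has $Kl_{\phi|_{E}}(\Lambda)=Kl_{\phi}(\Lambda)$: both equal the density by which $\phi$ acts on $\Lambda$, since $\phi|_{\Lambda}=(\phi|_{E})|_{\Lambda}\in Val_{j}(\Lambda)$ is a multiple of Lebesgue measure by Hadwiger's theorem. Hence the vanishing $\phi|_{E}=0$ forces the continuous section $Kl_{\phi}$ to vanish on the sub-Grassmannian $Gr(E,j)\subset Gr(n,j)$, for every timelike $m$-dimensional $E$.

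It then remains to observe that these sub-Grassmannians sweep out a dense subset of $Gr(n,j)$. I would check that every $\Lambda\in Gr(n,j)$ on which $Q$ is non-degenerate lies in some timelike $(j+2)$-dimensional $E$: if $\sign Q|_{\Lambda}=(j,0)$ then $\sign Q|_{\Lambda^{Q}}=(n-1-j,1)$ with $n-1-j\ge1$, so one may pick $u,w\in\Lambda^{Q}$ with $Q(u)<0$, $Q(w)>0$ and set $E=\Lambda\oplus\langle u,w\rangle$; if $\sign Q|_{\Lambda}=(j-1,1)$ then $\Lambda^{Q}$ is spacelike of dimension $n-j\ge3$, so one may adjoin two $Q$-positive vectors from $\Lambda^{Q}$. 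In both cases $\sign Q|_{E}=(j+1,1)$. Since the set of $\Lambda$ with $Q|_{\Lambda}$ non-degenerate is open and dense in $Gr(n,j)$, the continuous section $Kl_{\phi}$ vanishes on a dense set, hence identically, and injectivity of $Kl$ yields $\phi=0$. The step demanding the most care is the invariance claim in the first paragraph: one must verify that the image in $GL(E)$ of the stabilizer of $E$ inside $G$ contains the full Lorentz group $SO^{+}(m-1,1)$ of $E$ (not merely some proper subgroup), which is exactly what the orthogonal splitting $\R^{n}=E\oplus E^{Q}$ and the connectedness of $SO^{+}(m-1,1)$ provide.
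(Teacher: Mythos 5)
Your proof is correct and follows essentially the same route as the paper: restrict $\phi$ to mixed-signature subspaces of dimension $j+2$, apply the hypothesis in that dimension, and conclude by showing the Klain section vanishes on a dense subset of $Gr(n,j)$. You merely spell out the details (invariance of the restriction via connectedness of $SO^{+}(j+1,1)$, and that every non-degenerate $j$-plane sits inside a timelike $(j+2)$-plane) that the paper's terser argument leaves implicit.
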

\begin{proof}
Let $\phi\in Val_{j}^{+}(\R^{n})^{SO^{+}(n-1,1)}$ be such a valuation
with $j<n-2$. By our assumption, if $\Lambda$ is any $(n-2)$-subspace
s.t. $Q|_{\Lambda}$ has mixed signature then $\phi|_{\Lambda}=0$.
Since every $j$-dimensional subspace is contained in some $\Lambda$
as above, we conclude that $Kl_{j}(\phi)=0$, and therefore $\phi=0$.
\end{proof}
Thus we may assume from now on that $k=n-2$, and prove non-extendibility
of the corresponding valuations.
\begin{prop}
\label{prop:odd N LightCone}(Odd $n$, light cone support). For odd
$n$, an $n-2$-homogeneous even valuation $\phi$ on $\R^{n}$ having
generalized Crofton measure $f\in\Gamma^{-\infty}(E^{n,k})^{G}$ supported
on the light cone, cannot be extended by continuity to all $SO(n-1)$-invariant
compact convex bodies.\end{prop}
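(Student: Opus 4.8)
The plan is to argue by contradiction, using the reduction already prepared in Remark \ref{Remark smooth extension of support function}. After the preceding proposition we take $k=n-2$, so (for odd $n$) $f$ is, up to a scalar, the unique light-cone-supported section of Proposition \ref{prop:noConeSupportGeneralK}, which after a Euclidean trivialization is $f_{\lambda}^{\pm}$ with $\lambda=-\tfrac{n+1}{2}$; by Lemma \ref{croftonMeasureDefinesValuationOnSmoothBodies} together with Proposition \ref{prop:disjoint_singular_supports}, on every $SO(n-1)$-invariant body $K^{n}$ whose support function $h_{n-2}(\,\cdot\,;K)$ is smooth near the light cone one has $\phi(K^{n})=f_{\lambda}^{\pm}\big(h_{n-2}(\alpha;K)\,g_{n,n-k}(\alpha)\,d\alpha\big)$. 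Suppose now for contradiction that $\phi$ extends to a continuous valuation on all $SO(n-1)$-invariant convex bodies. Since the surface area measure of the stretched cone $C_{n,\eps}$ misses the light cone for $\eps\neq0$, Remark \ref{Remark evaluate valuation on stretched cone} gives $\phi(C_{n,\eps})=f_{\lambda}^{\pm}\big(h_{n-2,\eps}(\alpha)g_{n,n-k}(\alpha)\,d\alpha\big)$; near $\alpha=\tfrac{\pi}{4}$ one has $h_{n-2,\eps}=h_{n-2,\eps}^{+}$ for $\eps>0$ and $h_{n-2,\eps}=h_{n-2,\eps}^{-}$ for $\eps<0$, and both branches extend real-analytically (the $^{-}$ branch because $n-2$ is odd) and converge in $C^{\infty}$ near $\tfrac{\pi}{4}$ to $h_{n-2}^{+}$, resp. $h_{n-2}^{-}$. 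As $C_{n,\eps}\to C^{n}$ in the Hausdorff metric, continuity of $\phi$ forces
\[
f_{\lambda}^{\pm}\big(h_{n-2}^{+}(\alpha)\,g_{n,n-k}(\alpha)\big)=\phi(C^{n})=f_{\lambda}^{\pm}\big(h_{n-2}^{-}(\alpha)\,g_{n,n-k}(\alpha)\big),
\]
that is, $f_{\lambda}^{\pm}\big(D(\alpha)\,g_{n,n-k}(\alpha)\big)=0$, where $D:=h_{n-2}^{-}-h_{n-2}^{+}$ is real-analytic in a neighbourhood of $\alpha=\tfrac{\pi}{4}$. It remains to show this pairing is in fact nonzero.

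The next step pins down the order of vanishing of $D$ at $\alpha=\tfrac{\pi}{4}$. Write $p=\tfrac{n-3}{2}$ (a nonnegative integer) and $x=\tan\alpha$. For $n\ge5$ the explicit formulas for $h_{n-2}^{\pm}$, after the normalization $\tfrac{2C_{n-2}}{2^{(n-2)/2}}=1$, give
\[
D=(1+x^{2})^{-1/2}G(x),\qquad G(x)=\tfrac1p(1-x^{2})^{p}-2x\int_{x}^{1}(1-t^{2})^{p-1}\,dt .
\]
Differentiating, the two remaining terms cancel and $G'(x)=-2\int_{x}^{1}(1-t^{2})^{p-1}\,dt$, which vanishes to order exactly $p$ at $x=1$; together with $G(1)=0$ this shows $G$ vanishes to order exactly $p+1=\tfrac{n-1}{2}$ at $x=1$. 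Since $\alpha\mapsto\tan\alpha$ is a diffeomorphism near $\tfrac{\pi}{4}$ sending $\tfrac{\pi}{4}$ to $1$, $D$ vanishes to order exactly $\tfrac{n-1}{2}$ at $\alpha=\tfrac{\pi}{4}$. (For $n=3$, directly $D=\cos\alpha-\sin\alpha$ vanishes to order $1=\tfrac{n-1}{2}$.) As $g_{n,n-k}(\tfrac{\pi}{4})\neq0$, the product $D\,g_{n,n-k}$ still vanishes to order exactly $\tfrac{n-1}{2}$ at $\alpha=\tfrac{\pi}{4}$.

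Finally, by Remark \ref{Remark: Order of derivative} and Corollary \ref{cor:existence of equivariant sections}(b), in the elevation-angle coordinate the light-cone section $f_{\lambda}^{\pm}$ with $\lambda=-\tfrac{n+1}{2}$ is a linear combination $\sum_{j\ge0}b_{j}\,\delta_{\pi/4}^{(\frac{n-1}{2}-2j)}$ with $b_{0}\neq0$, since $c_{0}\equiv1$. Pairing with $D\,g_{n,n-k}$: for every $j\ge1$ the functional $\delta_{\pi/4}^{(\frac{n-1}{2}-2j)}$ annihilates $D\,g_{n,n-k}$, because that function vanishes at $\tfrac{\pi}{4}$ to strictly higher order than the order of the derivative; while the top term contributes a nonzero multiple of $b_{0}\,D^{(\frac{n-1}{2})}(\tfrac{\pi}{4})\,g_{n,n-k}(\tfrac{\pi}{4})\neq0$. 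Hence $f_{\lambda}^{\pm}\big(D\,g_{n,n-k}\big)\neq0$, contradicting the conclusion of the first paragraph; therefore $\phi$ admits no continuous extension, as claimed.

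I expect the main obstacle to be the middle step: one must check that the leading terms of the two branches $h_{n-2}^{-}$ and $h_{n-2}^{+}$ cancel at $\alpha=\tfrac{\pi}{4}$ (so that $D$ vanishes there to order strictly greater than $\tfrac{n-3}{2}$) and that no further cancellation occurs, so that $D$ vanishes to order exactly $\tfrac{n-1}{2}$ — matching precisely the order of the light-cone distribution $f_{\lambda}^{\pm}$, which is exactly what keeps the pairing from vanishing. The identity $G'(x)=-2\int_{x}^{1}(1-t^{2})^{p-1}\,dt$ makes both facts transparent and is the computational core of the argument.
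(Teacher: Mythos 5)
Your argument is correct and follows essentially the same route as the paper: reduce to comparing the two one-sided limits $f_{\lambda}^{\pm}(h_{n-2,\eps}g_{n,n-k})$ via Remarks \ref{Remark evaluate valuation on stretched cone} and \ref{Remark smooth extension of support function}, and then show the difference of the two analytic branches vanishes at $\alpha=\tfrac{\pi}{4}$ to order exactly $\tfrac{n-1}{2}$, matching the top-order $\delta^{(\frac{n-1}{2})}_{\pi/4}$ term of the light-cone distribution. Your identity $G'(x)=-2\int_{x}^{1}(1-t^{2})^{p-1}dt$ in the variable $x=\tan\alpha$ is exactly the paper's key cancellation (its computation of $u_{\eps}'$, equation \ref{eq:amazing_derivative}), just evaluated directly at $\eps=0$ instead of as a limit of $u_{\eps}^{(j)}(\tfrac{\pi}{4})$.
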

\begin{proof}
Assume, on the contrary, that this can be accomplished. We will show
that $\phi$ does not extend to the double cone by continuity. Recall
from \ref{prop:noConeSupportGeneralK} that a valuation $\phi$ as
above can occur only for odd $n$, and by Remark \ref{Remark Cone Symmetric And Antisymmetric},
it is cone-symmetric if $n\equiv1\mod4$ and cone-antisymmetric otherwise.
By Remark \ref{Remark evaluate valuation on stretched cone}, we may
evaluate the valuation on $C_{n,\epsilon}$ by $\phi(C_{n,\epsilon})=f(h_{k}(\alpha;C_{n,\epsilon})g_{n,n-k})$.
Therefore, 
\[
\phi(C^{n})=\lim_{\epsilon\to0}f(h_{k}(\alpha;C_{n,\epsilon})g_{n,n-k})
\]
Write
\[
f(hg_{n,n-k})=\sum_{j=0}^{m}c_{j}h^{(j)}(\frac{\pi}{4})
\]
with $m=\frac{n-1}{2}$ (note that the derivatives of $g_{n,n-k}$
are now incorporated into the coefficients $c_{j}$). Note that $c_{m}\neq0$
since $g_{n,n-k}(\frac{\pi}{4})\neq0$. We will show that the limits
$\lim_{\epsilon\to0^{+}}f(h_{k,\epsilon}(\alpha,C)g_{n,n-k})$ and
$\lim_{\epsilon\to0^{-}}f(h_{k,\epsilon}(\alpha,C)g_{n,n-k})$ are
finite and different from one another, thus arriving at a contradiction.
Equivalently, since $\lim_{\epsilon\to0^{+}}h_{k,\epsilon}=h_{k}^{+}$
in the $C^{\infty}[-\frac{3\pi}{8},\frac{3\pi}{8}]$ topology, we
will show that 
\[
\lim_{\epsilon\to0^{-}}\Big(f(h_{k,\epsilon}^{+}(\alpha,C)g_{n,n-k})-f(h_{k,\epsilon}^{-}(\alpha,C)g_{n,n-k})\Big)=\lim_{\epsilon\to0^{-}}f((h_{k,\epsilon}^{+}(\alpha,C)-h_{k,\epsilon}^{-}(\alpha,C))g_{n,n-k})\Big)
\]
is non-zero. Denote $v_{\epsilon}(\alpha)=h_{k,\epsilon}^{+}(\alpha,C)-h_{k,\epsilon}^{-}(\alpha,C)$
and $u_{\epsilon}(\alpha)=(\sin\alpha)^{-1}v_{\epsilon}(\alpha)$.
\\
Consider first the case $n>3$. Then

\[
u_{\epsilon}(\alpha)=2\eta\int_{\eta\tan\alpha}^{1}(1-t^{2})^{\frac{k-3}{2}}dt-\frac{2}{k-1}\cot\alpha(1-\eta^{2}\tan^{2}\alpha)^{\frac{k-1}{2}}
\]
where as before $\eta=\tan(\frac{\pi}{4}+\epsilon)$. It suffices
to prove that $\lim_{\epsilon\to0^{-}}u_{\epsilon}^{(j)}(\frac{\pi}{4})=0$
for $j\leq m-1$, and is non-zero for $j=m$. Indeed, $\lim_{\epsilon\to0^{-}}u_{\epsilon}(\frac{\pi}{4})=0$,
and 
\[
u_{\epsilon}'(\alpha)=\frac{2}{k-1}\frac{(1-\eta^{2}\tan^{2}\alpha)^{\frac{k-1}{2}}}{\sin^{2}\alpha}
\]
Since $k=n-2$, the numerator is a polynomial in $\tan^{2}\alpha$
with coefficients depending on $\epsilon$, and we conclude that $u_{\epsilon}'\to\frac{2}{k-1}\frac{1}{\sin^{2}\alpha}(1-\tan^{2}\alpha)^{\frac{k-1}{2}}$
in $C^{\infty}[-\frac{3\pi}{8},\frac{3\pi}{8}]$. Since 
\[
(1-\tan^{2}\alpha)^{\frac{k-1}{2}}=(1-(1+4(\alpha-\frac{\pi}{4})+o(\alpha-\frac{\pi}{4})))^{\frac{k-1}{2}}=(-4)^{\frac{k-1}{2}}(\alpha-\frac{\pi}{4})^{\frac{k-1}{2}}+o((\alpha-\frac{\pi}{4})^{\frac{k-1}{2}})
\]
and $\frac{k-1}{2}=\frac{n-3}{2}=m-1$, it follows that 
\[
\Bigg((1-\tan^{2}\alpha)^{m-1}\Bigg)^{(m-1)}(\frac{\pi}{4})=(-4)^{m-1}(m-1)!
\]
implying the claim.

Now assume $n=3$ so $k=1$. Then $v_{\epsilon}(\alpha)=\eta\cos\alpha-\sin\alpha$,
where again $\eta=\tan(\frac{\pi}{4}+\epsilon)$. Since $\lim_{\epsilon\to0^{-}}v_{\epsilon}(\frac{\pi}{4})=0$
while $\lim_{\epsilon\to0^{-}}v_{\epsilon}'(\frac{\pi}{4})=\lim_{\epsilon\to0^{-}}-\sin\alpha\eta-\cos\alpha=-\sqrt{2}\neq0$,
the claim follows. \end{proof}
\begin{rem}
\label{Remark finite one sided limits}We note for the following that
for odd values of $n$, both $\lim_{\epsilon\to0^{+}}f(h_{k,\epsilon}(\alpha,C)g_{n,n-k})$
and $\lim_{\epsilon\to0^{-}}f(h_{k,\epsilon}(\alpha,C)g_{n,n-k})$
are finite, where $f$ is the unique $G$-invariant generalized Crofton
measure supported on the light cone.\end{rem}
\begin{prop}
\label{prop :integral computation}(Odd $n$) For odd $n$, no $n-2$-homogeneous
valuation $\phi\in Val_{n-2}^{+}(\R^{n})^{G}$ exists.\end{prop}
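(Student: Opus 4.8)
The plan is as follows. Set $k=n-2$ and $\lambda=-\tfrac{n+1}{2}$; for odd $n$ this $\lambda$ is a negative integer. Let $\phi\in Val_{n-2}^{ev}(\R^{n})^{G}$. By Theorem \ref{cor:dimensionAtMost2} and Proposition \ref{prop:CroftonExtension}, $\phi$ has a unique $G$-invariant generalized Crofton measure, necessarily of the form $a\,f_{n,k}^{+}+b\,f_{n,k}^{-}$ in the $2$-dimensional space $\Gamma^{-\infty}(E^{n,k})^{G}$; after a Euclidean trivialization the $S^{1}$-pictures of these are the generalized functions $f_{\lambda}^{\pm}$ of \S\ref{sec:Some-generalized-functions}, and by Corollary \ref{cor:existence of equivariant sections} together with Proposition \ref{prop:noConeSupportGeneralK} exactly one of them is the residue supported on the light cone; write $f^{\mathrm{lc}}$ for that one and $f^{\mathrm{reg}}$ for the other. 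By Lemma \ref{croftonMeasureDefinesValuationOnSmoothBodies} and Proposition \ref{prop:disjoint_singular_supports}, if $\phi$ were continuous then $\phi(C_{n,\eps})=(af^{\mathrm{lc}}+bf^{\mathrm{reg}})\bigl(h_{k,\eps}(\alpha)\,g_{n,n-k}(\alpha)\,d\alpha\bigr)$ for the stretched double cones $C_{n,\eps}$, $\eps\ne0$. The goal is to force $a=b=0$.

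First I would extract a linear relation on $(a,b)$ from the double cone. Since $n$ is odd, both $h_{k}^{+}$ and $h_{k}^{-}$ admit real-analytic extensions past $\alpha=\tfrac{\pi}{4}$ (Remark \ref{Remark smooth extension of support function}), so $h_{k,\eps}\to h_{k}^{+}$ in $C^{\infty}$ near $\tfrac{\pi}{4}$ as $\eps\to0^{+}$ and $h_{k,\eps}\to h_{k}^{-}$ there as $\eps\to0^{-}$. Hence both one-sided limits $\lim_{\eps\to0^{\pm}}\phi(C_{n,\eps})$ exist, equal $(af^{\mathrm{lc}}+bf^{\mathrm{reg}})(h_{k}^{\pm}g_{n,n-k})$, and (because $C_{n,\eps}\to C^{n}$ in the Hausdorff metric) must coincide, which reads
\[
a\cdot A+b\cdot B=0,\qquad A:=f^{\mathrm{lc}}\bigl((h_{k}^{+}-h_{k}^{-})g_{n,n-k}\bigr),\quad B:=f^{\mathrm{reg}}\bigl((h_{k}^{+}-h_{k}^{-})g_{n,n-k}\bigr).
\]
Proposition \ref{prop:odd N LightCone} (its computation with $\eps=0$) already gives $A\ne0$. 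The core of the present proof is the explicit evaluation of $B$ --- the ``integral computation''. Writing $h_{k}^{+}-h_{k}^{-}=\sin\alpha\cdot u(\alpha)$ with $u$ real-analytic near $\tfrac{\pi}{4}$ and vanishing there to order exactly $m:=\tfrac{n-1}{2}$ (this follows from $u'(\alpha)=\tfrac{2}{k-1}(1-\tan^{2}\alpha)^{(k-1)/2}/\sin^{2}\alpha$ as in the proof of \ref{prop:odd N LightCone}), and expanding $f^{\mathrm{reg}}$ near $\tfrac{\pi}{4}$ through the meromorphic family $\cos_{\pm}^{\lambda}(2\alpha)$ and the coefficients $c_{j}(\lambda)$, one computes the resulting Hadamard finite part together with the contributions at $\alpha=\pm\tfrac{\pi}{2}$ and the equator, where $g_{n,n-k}$ degenerates. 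This pins down $B$; a short argument then closes the classification: either $B=0$, in which case $a=0$ and the remaining $\phi=Cr(bf^{\mathrm{reg}})$ is killed by a separate estimate, or $B\ne0$, in which case one supplements the relation above by approaching $C^{n}$ through smooth $SO(n-1)$-invariant bodies $K_{j}\to C^{n}$ whose light-like edge is rounded off at scale $1/j$ --- then $h_{k}(\cdot;K_{j})\to h_{k}(\cdot;C^{n})$ in $C^{m-1}$ but not $C^{m+1}$, and since $f^{\mathrm{lc}}$ has order $m$ and $f^{\mathrm{reg}}$ has a singularity of order $m+1$ at $\tfrac{\pi}{4}$, boundedness of $(af^{\mathrm{lc}}+bf^{\mathrm{reg}})(h_{k}(\cdot;K_{j})g_{n,n-k})$ forces a second linear combination of $(a,b)$ to vanish, independent of $aA+bB=0$. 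Either way $(a,b)=0$, hence $\phi=0$.

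The main obstacle is the regularized integral computation of $B$ (and of the auxiliary coefficient in the rounding step): the order of the singularity of $f^{\mathrm{reg}}$ at the light-cone angle is exactly one more than the order of vanishing of $(h_{k}^{+}-h_{k}^{-})g_{n,n-k}$ there, so the pairing sits precisely at the threshold where the meromorphic-continuation/finite-part bookkeeping is delicate --- a spurious logarithmic contribution can appear according to the parity of $\tfrac{n-1}{2}$, that is, according to $n\bmod 4$ --- and one must simultaneously track the terms coming from the poles $\alpha=\pm\tfrac{\pi}{2}$ and the equator. Verifying that the resulting number is genuinely nonzero rather than killed by an accidental cancellation, and that the two linear conditions on $(a,b)$ are independent, is the crux; everything else (the reduction to the Crofton measure, the comparison of one-sided limits, and the appeal to Proposition \ref{prop:odd N LightCone}) is bookkeeping around it.
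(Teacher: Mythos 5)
There is a genuine gap, and it sits exactly at the step you dismiss as ``bookkeeping'': the claim that both one-sided limits $\lim_{\eps\to0^{\pm}}\phi(C_{n,\eps})$ exist, are finite, and equal $(af^{\mathrm{lc}}+bf^{\mathrm{reg}})(h_{k}^{\pm}g_{n,n-k})$. For $\eps>0$ the function $h_{k,\eps}$ agrees with the smooth $h_{k,\eps}^{+}$ only on $\alpha\geq\frac{\pi}{4}-\eps$ and equals $h_{k,\eps}^{-}$ below that; globally it converges to $h_{k}$ only in $C^{\lfloor k/2\rfloor}=C^{m-1}$, whereas the pairing with $f^{\mathrm{reg}}$ requires control of about $m+1$ derivatives at the light cone, so the pairing does not pass to the limit. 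Decomposing $h_{k,\eps}=h_{k,\eps}^{+}+(h_{k,\eps}^{-}-h_{k,\eps}^{+})\One_{[0,\frac{\pi}{4}-\eps]}$, the first piece indeed contributes $f^{\mathrm{reg}}(h_{k}^{+}g)+o(1)$, but the second contributes the honest (unregularized) integral $\int_{0}^{\frac{\pi}{4}-\eps}(h_{k,\eps}^{-}-h_{k,\eps}^{+})\,g\,|\cos2\alpha|^{-\frac{n+1}{2}}\,d\alpha$, and the actual content of the paper's proof is that this diverges to $+\infty$ as $\eps\to0^{+}$ (via the derivative identity for $(h_{k,\eps}^{-}-h_{k,\eps}^{+})/\sin\alpha$, integration by parts, and the lower bound $\int_{0}^{c}t^{\frac{k-1}{2}}(\eps+t)^{-\frac{n-1}{2}}dt\to\infty$). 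So your quantity $B$ is not the relevant obstruction: the regular component simply blows up on the stretched cones, which immediately rules out any combination with $b\neq0$ (a divergent summand plus the finite light-cone contribution of Remark \ref{Remark finite one sided limits} is divergent), and then $a=0$ follows from Proposition \ref{prop:odd N LightCone}. Your linear relation $aA+bB=0$ is derived from a false premise, and the ensuing case analysis ($B=0$ versus $B\neq0$, the rounded bodies $K_{j}$) inherits the problem.

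Even setting that aside, the proposal defers all of the quantitative work --- the evaluation of $B$, the ``separate estimate'' disposing of $Cr(bf^{\mathrm{reg}})$ when $B=0$, and the independence of the two linear conditions --- to computations it does not perform, and these are precisely what a proof of this proposition must supply. The correct computation turns out to be a divergence estimate rather than a Hadamard finite-part evaluation, so the delicacies you anticipate (spurious logarithms according to $n\bmod4$, contributions from $\alpha=\pm\frac{\pi}{2}$ and the equator) do not arise in the form you describe.
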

\begin{proof}
Denote $k=n-2$. Assume first that $\phi$ is either pure cone-symmetric
or cone-antisymmetric, according to $n\mod4$, such that it is not
supported on the light cone. \\
\\
First, assume $n\equiv1\mod4$, so $n\geq5$ and $k\geq3$. Then $\frac{n+1}{2}$
is odd, and $\phi=Cr(f_{n,k}^{-})$. 
\[
\phi(C_{n})=\lim_{\epsilon\to0^{+}}\phi(C_{n,\epsilon})=\lim_{\epsilon\to0^{+}}f_{-\frac{n+1}{2}}^{-}(h_{k}(\alpha;C_{n,\epsilon})g_{n,n-k})
\]
Note that $h_{k}(\alpha;C_{n,\epsilon})=C\eta\sin\alpha$ near $\alpha=\frac{\pi}{4}$,
and so all derivatives at $\alpha=\frac{\pi}{4}$ of $h_{k}(\alpha;C_{n,\epsilon})g_{n,n-k}$
converge to a finite limit as $\epsilon\to0^{+}$. Write for an arbitrary
function $H$ on $S^{1}$,
\[
N_{-}(\alpha;H)=\frac{H(\alpha)-H(\frac{\pi}{2}-\alpha)-2(H'(\frac{\pi}{4})(\alpha-\frac{\pi}{4})+\frac{1}{3!}H^{(3)}(\frac{\pi}{4})(\alpha-\frac{\pi}{4})^{3}+...+\frac{1}{(2m+1)!}H^{(2m+1)}(\frac{\pi}{4})(\alpha-\frac{\pi}{4})^{2m+1})}{|\cos2\alpha|^{\frac{n+1}{2}}}
\]
 where $m=\frac{n-1}{4}$. Denote $H_{\epsilon}(\alpha)=h_{k}(\alpha;C_{n,\epsilon})g_{n,n-k}(\alpha)$.
We will show that the integral 
\[
I_{-}(H_{\epsilon})=\int_{0}^{\frac{\pi}{4}}N_{-}(\alpha;H_{\epsilon})d\alpha
\]
which equals $\phi(C_{n,\epsilon})$ up to bounded summands, diverges
as $\epsilon\to0^{+}$. Then
\[
I_{-}(H_{\epsilon})=\int_{0}^{\frac{\pi}{4}-\epsilon}\frac{h_{k,\epsilon}^{-}(\alpha)g_{n,n-k}(\alpha)-h_{k,\epsilon}^{+}(\alpha)g_{n,n-k}(\alpha)}{|\cos2\alpha|^{\frac{n+1}{2}}}d\alpha+\int_{0}^{\frac{\pi}{4}}N_{-}(\alpha;h_{k,\epsilon}^{+}(\alpha)g_{n,n-k}(\alpha))d\alpha
\]
Now the second integral is bounded (uniformly in $\epsilon)$, for
instance by $C|\int_{0}^{\frac{\pi}{4}}N_{-}(\alpha;h_{k}^{+}(\alpha)g_{n,n-k}(\alpha))d\alpha|$.
\\
\\
We will show that the first summand is unbounded. Calculate first
that 
\[
\frac{d}{d\alpha}\Big(\frac{h_{k,\epsilon}^{-}(\alpha)-h_{k,\epsilon}^{+}(\alpha)}{\sin\alpha}\Big)=2\frac{d}{d\alpha}\Bigg(\frac{2}{k-1}\cot\alpha(1-\eta^{2}\tan^{2}\alpha)^{\frac{k-1}{2}}-\Bigg(\int_{\eta\tan\alpha}^{1}(1-t^{2})^{\frac{k-3}{2}}dt\Bigg)\eta\Bigg)=
\]
\[
=\frac{\eta^{2}}{\cos^{2}\alpha}(1-\eta^{2}\tan^{2}\alpha)^{\frac{k-3}{2}}-(1-\eta^{2}\tan^{2}\alpha)^{\frac{k-3}{2}}\Big(\frac{2}{k-1}\frac{1-\eta^{2}\tan^{2}\alpha}{\sin^{2}\alpha}+\frac{\eta^{2}}{\cos^{2}\alpha}\Big)=
\]
\begin{equation}
=-\frac{2}{k-1}\frac{(1-\eta^{2}\tan^{2}\alpha)^{\frac{k-1}{2}}}{\sin^{2}\alpha}\label{eq:amazing_derivative}
\end{equation}
which is negative. Since $h_{k,\epsilon}^{+}(\frac{\pi}{4}-\epsilon)=h_{k,\epsilon}^{-}(\frac{\pi}{4}-\epsilon)$,
it follows that $h_{k,\epsilon}^{-}(\alpha)-h_{k,\epsilon}^{+}(\alpha)>0$
in $(0,\frac{\pi}{4}-\epsilon)$. Now
\[
\int_{0}^{\frac{\pi}{4}-\epsilon}\frac{h_{k,\epsilon}^{-}(\alpha)g_{n,n-k}(\alpha)-h_{k,\epsilon}^{+}(\alpha)g_{n,n-k}(\alpha)}{|\cos2\alpha|^{\frac{n+1}{2}}}d\alpha\geq C_{n}+c_{n}\int_{\frac{\pi}{5}}^{\frac{\pi}{4}-\epsilon}\frac{h_{k,\epsilon}^{-}(\alpha)-h_{k,\epsilon}^{+}(\alpha)}{(\frac{\pi}{4}-\alpha)^{\frac{n+1}{2}}}d\alpha
\]
\[
\int_{\frac{\pi}{5}}^{\frac{\pi}{4}-\epsilon}\frac{h_{k,\epsilon}^{-}(\alpha)-h_{k,\epsilon}^{+}(\alpha)}{(\frac{\pi}{4}-\alpha)^{\frac{n+1}{2}}}d\alpha=
\]
 
\[
\geq c_{n}'\int_{\frac{\pi}{5}}^{\frac{\pi}{4}-\epsilon}\frac{1}{(\frac{\pi}{4}-\alpha)^{\frac{n+1}{2}}}\frac{h_{k,\epsilon}^{-}(\alpha)-h_{k,\epsilon}^{+}(\alpha)}{\sin\alpha}d\alpha
\]
Now integrate by parts: we integrate $(\frac{\pi}{4}-\alpha)^{-\frac{n+1}{2}}$
and differentiate the other term. The boundary term is bounded uniformly
in $\epsilon$, and we already computed the derivative of $\frac{h_{k,\epsilon}^{-}(\alpha)-h_{k,\epsilon}^{+}(\alpha)}{\sin\alpha}$
in equation \ref{eq:amazing_derivative}. The resulting integral thus
equals 
\[
=c_{n,k}\int_{\frac{\pi}{5}}^{\frac{\pi}{4}-\epsilon}\frac{(1-\eta^{2}\tan^{2}\alpha)^{\frac{k-1}{2}}d\alpha}{(\frac{\pi}{4}-\alpha)^{\frac{n-1}{2}}\sin^{2}\alpha}\geq c_{n,k}'\int_{\frac{\pi}{5}}^{\frac{\pi}{4}-\epsilon}\frac{(1-\eta^{2}\tan^{2}\alpha)^{\frac{k-1}{2}}d\alpha}{(\frac{\pi}{4}-\alpha)^{\frac{n-1}{2}}}
\]
Now $1-\eta^{2}\tan^{2}\alpha\geq\frac{1}{4}(\alpha_{\epsilon}-\alpha)$
so the integral is bounded from below by 
\[
c_{n,k}'\int_{\frac{\pi}{5}}^{\frac{\pi}{4}-\epsilon}\frac{(\alpha_{\epsilon}-\alpha)^{\frac{k-1}{2}}d\alpha}{(\frac{\pi}{4}-\alpha)^{\frac{n-1}{2}}}=c_{n,k}'\int_{0}^{\frac{\pi}{4}-\epsilon-\frac{\pi}{5}}\frac{t{}^{\frac{k-1}{2}}dt}{(\epsilon+t)^{\frac{n-1}{2}}}\geq c_{n,k}'\int_{0}^{\frac{\pi}{100}}\frac{t{}^{\frac{k-1}{2}}dt}{(\epsilon+t)^{\frac{n-1}{2}}}
\]
Finally, the limit 
\[
\lim_{\epsilon\to0^{+}}\int_{0}^{\frac{\pi}{100}}\frac{t{}^{\frac{k-1}{2}}dt}{(\epsilon+t)^{\frac{n-1}{2}}}=\infty
\]
is infinite. Thus $I_{-}(H_{\epsilon})$ is unbounded as $\epsilon\to0^{+}$,
i.e $\phi(C_{n,\epsilon})\to\infty$.\\
\\
Now assume $n\equiv3\mod4$ and $n\geq7$, so $k\geq5$ and $\phi$
corresponds to $f_{n,k}^{+}$. For an arbitrary function $H$ on $S^{1}$
define $N_{+}(\alpha;H)$ by 
\[
N_{+}(\alpha;H)=\frac{H(\alpha)+H(\frac{\pi}{2}-\alpha)-2(H(\frac{\pi}{4})+\frac{1}{2!}H^{(2)}(\frac{\pi}{4})(\alpha-\frac{\pi}{4})^{2}+...+\frac{1}{(2m)!}H^{(2m)}(\frac{\pi}{4})(\alpha-\frac{\pi}{4})^{2m})}{|\cos2\alpha|^{\frac{n+1}{2}}}
\]
where $m=\frac{n-3}{4}$. Exactly as before, the integral 
\[
I_{+}(H_{\epsilon})=\int_{0}^{\frac{\pi}{4}}N_{+}(\alpha;H_{\epsilon})d\alpha
\]
is unbounded as $\epsilon\to0^{+}$, i.e $\phi(C_{n,\epsilon})\to\infty$.
\\
\\
Let us compute separately the case of $k=1$ and $n=3$. \\
Then
\[
I_{-}(H_{\epsilon})=\int_{0}^{\frac{\pi}{4}-\epsilon}\frac{\cos\alpha g_{n,n-1}(\alpha)-\eta\sin\alpha g_{n,n-1}(\alpha)}{|\cos2\alpha|^{\frac{n+1}{2}}}d\alpha+\int_{0}^{\frac{\pi}{4}}N_{-}(\alpha;\eta\sin\alpha g_{n,n-1}(\alpha))d\alpha
\]
where
\[
N_{-}(\alpha;H)=\frac{H(\alpha)-H(\frac{\pi}{2}-\alpha)-2H'(\frac{\pi}{4})(\alpha-\frac{\pi}{4})}{|\cos2\alpha|^{\frac{n+1}{2}}}
\]
Now the second integral is bounded (uniformly in $\epsilon)$, for
instance by $2|\int_{0}^{\frac{\pi}{4}}N_{-}(\alpha;\sin\alpha g_{n,n-1}(\alpha))d\alpha|$.
The first integrand is non-negative, and since $g_{n,n-1}(\alpha)\geq c_{n}$
for $\alpha\in[\frac{\pi}{10},\frac{\pi}{4}]$ while $\cos2\alpha\leq c|\alpha-\frac{\pi}{4}|$
in that interval, we get
\[
\int_{0}^{\frac{\pi}{4}-\epsilon}\frac{\cos\alpha g_{n,n-1}(\alpha)-\eta\sin\alpha g_{n,n-1}(\alpha)}{|\cos2\alpha|^{\frac{n+1}{2}}}d\alpha\geq c\int_{\frac{\pi}{10}}^{\frac{\pi}{4}-\epsilon}\frac{\cos\alpha-\eta\sin\alpha}{(\frac{\pi}{4}-\alpha)^{\frac{n+1}{2}}}d\alpha\geq
\]
\[
\geq c\int_{0}^{\frac{\pi}{4}-\epsilon}\frac{\cos\alpha-\eta\sin\alpha}{(\frac{\pi}{4}-\alpha)^{\frac{n+1}{2}}}d\alpha
\]
The function $\cos\alpha-\eta\sin\alpha$ is decreasing and concave
for $0\leq\alpha\leq\frac{\pi}{4}-\epsilon$, so $\cos\alpha-\eta\sin\alpha\geq1-\frac{\alpha}{\frac{\pi}{4}-\epsilon}$
for $0\leq\alpha\leq\frac{\pi}{4}-\epsilon$. Therefore 
\[
\int_{0}^{\frac{\pi}{4}-\epsilon}\frac{\cos\alpha-\eta\sin\alpha}{(\frac{\pi}{4}-\alpha)^{\frac{n+1}{2}}}d\alpha\geq\frac{1}{\frac{\pi}{4}-\epsilon}\int_{0}^{\frac{\pi}{4}-\epsilon}(\frac{\pi}{4}-\alpha)^{-\frac{n-1}{2}}d\alpha+(1-\frac{\pi/4}{\pi/4-\epsilon})\int_{0}^{\frac{\pi}{4}-\epsilon}(\frac{\pi}{4}-\alpha)^{-\frac{n+1}{2}}d\alpha
\]
recalling that $n=3$, that equals 
\[
-\frac{1}{\frac{\pi}{4}-\epsilon}\log\frac{\epsilon}{\frac{\pi}{4}}+\Big(1-\frac{\pi/4}{\frac{\pi}{4}-\epsilon}\Big)\frac{2}{3-1}\epsilon{}^{-\frac{3-1}{2}}=-\frac{1}{\frac{\pi}{4}-\epsilon}\log\epsilon+O(1)
\]
Thus for all $k\geq1$, $I_{-}(H_{\epsilon})$ is unbounded as $\epsilon\to0^{+}$.\\
\\
Finally, consider a general $f=af_{n,k}^{+}+bf_{n,k}^{-}$, given
by a linear combination of pure cone-symmetric and cone-antisymmetric
sectios, and assume it corresponds to a continuous valuation. Then
by the preceding argument and Proposition \ref{prop:odd N LightCone},
we must have both $a\neq0$ and $b\neq0$. When evaluated on $H_{\epsilon}$,
this would diverge as $\epsilon\to0^{+}$, since the light cone-supported
summand has a limit by Remark \ref{Remark finite one sided limits},
while the other summand diverges as was just proved. 
\end{proof}

\begin{prop}
(Even n, reduction to time-supported valuation) For even $n$, an
$(n-2)$-homogeneous valuation $\phi\in Val_{n-2}^{+}(\R^{n})^{G}$
on $\R^{n}$, if exists, has generalized Crofton measure equal to
a multiple of $f_{n,n-2}^{T}$.\end{prop}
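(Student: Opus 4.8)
The plan is to rerun the double-cone computation of Propositions \ref{prop:odd N LightCone}--\ref{prop :integral computation}, now for even $n$ and keeping track of the two basis sections separately. By Theorem \ref{cor:dimensionAtMost2} (and the uniqueness of the invariant generalized Crofton measure) write $\phi=Cr(a\,f_{n,n-2}^{S}+b\,f_{n,n-2}^{T})$ with $a,b\in\C$; here $k=n-2$ and $\lambda=-\frac{n+1}{2}$ is a negative half-integer, so $f_{\lambda}^{S}=\cos_{+}^{\lambda}(2\alpha)$ and $f_{\lambda}^{T}=\cos_{-}^{\lambda}(2\alpha)$ are honest (non-residue) generalized functions. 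We must show $a=0$. Assuming $\phi$ is a continuous valuation, Lemma \ref{croftonMeasureDefinesValuationOnSmoothBodies} together with Remark \ref{Remark evaluate valuation on stretched cone} gives
\[
\phi(C_{n,\epsilon})=a\,f_{\lambda}^{S}\big(h_{n-2,\epsilon}(\alpha)\,g_{n,n-k}(\alpha)\,d\alpha\big)+b\,f_{\lambda}^{T}\big(h_{n-2,\epsilon}(\alpha)\,g_{n,n-k}(\alpha)\,d\alpha\big),
\]
and by continuity the left-hand side has a finite limit, namely $\phi(C^{n})$, as $\epsilon\to0^{+}$.

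First I would dispose of the $f^{T}$-term. For $\epsilon>0$ the kink of $h_{n-2,\epsilon}$ sits at $\alpha=\frac{\pi}{4}-\epsilon$, strictly inside the space-like region, hence on a full neighborhood of $\frac{\pi}{4}$ --- which is where $f_{\lambda}^{T}$ has its singular support, approached from $\alpha>\frac{\pi}{4}$ --- one has $h_{n-2,\epsilon}=h_{n-2,\epsilon}^{+}=A_{n-2}\,\eta\sin\alpha$, a function converging in $C^{\infty}$ to $A_{n-2}\sin\alpha$ as $\epsilon\to0^{+}$ (Remark \ref{Remark smooth extension of support function}). Therefore $f_{\lambda}^{T}\big(h_{n-2,\epsilon}\,g_{n,n-k}\,d\alpha\big)\to A_{n-2}\,f_{\lambda}^{T}\big(\sin\alpha\,g_{n,n-k}\,d\alpha\big)$, a finite number.

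The crux is the $f^{S}$-term. On the space-like side, where $f_{\lambda}^{S}$ is supported, set $\beta=\frac{\pi}{4}-\alpha$ and split $h_{n-2,\epsilon}|_{[0,\pi/4]}=h_{n-2,\epsilon}^{+}+\chi_{\{\beta>\epsilon\}}\big(h_{n-2,\epsilon}^{-}-h_{n-2,\epsilon}^{+}\big)$; the $h_{n-2,\epsilon}^{+}$ contribution is again smooth and convergent, so only $f_{\lambda}^{S}\big(\chi_{\{\beta>\epsilon\}}(h_{n-2,\epsilon}^{-}-h_{n-2,\epsilon}^{+})\,g_{n,n-k}\big)$ can be unbounded. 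By identity \ref{eq:amazing_derivative} (valid verbatim for $k=n-2$) the function $(h_{n-2,\epsilon}^{-}-h_{n-2,\epsilon}^{+})/\sin\alpha$ has $\alpha$-derivative of constant sign on $(0,\frac{\pi}{4}-\epsilon)$ and vanishes at the kink, so near the kink it is a nonzero constant times $(\frac{\pi}{4}-\epsilon-\alpha)^{(n+1)/2}$; consequently $h_{n-2,\epsilon}^{-}-h_{n-2,\epsilon}^{+}=c\,(\beta-\epsilon)^{(n-1)/2}\big(1+o(1)\big)$ near $\beta=\epsilon$ with $c\neq0$ (the two leading terms of the explicit formula cancel and the next, of order $\tfrac{n-1}{2}$, survives). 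Since near $\frac{\pi}{4}$ the distribution $f_{\lambda}^{S}$ equals $(2\beta)_{+}^{-(n+1)/2}$ times a smooth, nonvanishing factor, the substitution $\beta=\epsilon s$ reduces the integral in question to $\int_{1}^{\delta/\epsilon}s^{-(n+1)/2}(s-1)^{(n-1)/2}\psi_{0}(\epsilon s)\,ds$ with $\psi_{0}$ smooth and $\psi_{0}(0)\neq0$; as the integrand is $\sim\psi_{0}(0)\,s^{-1}$ at infinity this equals $\psi_{0}(0)\log(1/\epsilon)+O(1)$. Hence $f_{\lambda}^{S}\big(h_{n-2,\epsilon}\,g_{n,n-k}\,d\alpha\big)\sim C\log(1/\epsilon)$ with $C\neq0$; comparing with the finite limit of $\phi(C_{n,\epsilon})$ and the boundedness of the $f^{T}$-term forces $a=0$, which is the assertion.

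The main obstacle is this last step: showing cleanly that the pairing of the regularized distribution $\cos_{+}^{-(n+1)/2}(2\alpha)$ with the one-sided, fractional-power test function $\chi_{\{\beta>\epsilon\}}(\beta-\epsilon)^{(n-1)/2}$ produces a genuine $\log(1/\epsilon)$ divergence with nonzero coefficient --- in particular controlling the exact leading constant in the asymptotics of $h_{n-2,\epsilon}^{-}-h_{n-2,\epsilon}^{+}$ (the cancellation of the first terms, the survival of the order $\tfrac{n-1}{2}$ term) and matching it against the homogeneity of $\cos_{+}^{-(n+1)/2}$. A minor additional point is the $C^{\infty}$ convergence of $h_{n-2,\epsilon}^{\pm}$ to $h_{n-2}^{\pm}$ on compact subsets of the complement of the light cone, which is elementary from the explicit formulas and is needed to discard the contribution of $h_{n-2,\epsilon}$ far from $\frac{\pi}{4}$.
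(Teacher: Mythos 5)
Your proposal is correct and follows essentially the same route as the paper: evaluate on the stretched double cone $C_{n,\epsilon}$, observe that the $f^{T}$-pairing stays finite because $h_{n-2,\epsilon}$ equals the smooth function $A_{n-2}\eta\sin\alpha$ near the time-like singular support, and show the $f^{S}$-pairing diverges logarithmically because $h_{n-2,\epsilon}^{-}-h_{n-2,\epsilon}^{+}$ vanishes only to order $\tfrac{n-1}{2}$ at the kink while the weight is $|\cos2\alpha|^{-\frac{n+1}{2}}$ (the paper reaches the same $\log$ divergence by integration by parts using equation \ref{eq:amazing_derivative}, exactly as in the odd-$n$ case). The only blemish is the stray exponent $\tfrac{n+1}{2}$ in your sentence describing the vanishing order of $(h^{-}-h^{+})/\sin\alpha$; the displayed formula that follows has the correct order $\tfrac{n-1}{2}$, so the argument is unaffected.
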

\begin{proof}
Denote $k=n-2$, assume $\phi$ corresponds to $f=af_{n,k}^{T}+bf_{n,k}^{S}$.
\[
\phi(C_{n})=\lim_{\epsilon\to0^{+}}\phi(C_{n,\epsilon})=\lim_{\epsilon\to0^{+}}f_{-\frac{n+1}{2}}^{T}(h_{k,\epsilon}(\alpha)g_{n,n-k})
\]
Note that $h_{k,\epsilon}(\alpha)=C\eta\sin\alpha$ for $|\alpha|>\frac{\pi}{4}-\epsilon$,
and so all derivatives at $\alpha=\frac{\pi}{4}$ of $h_{k,\epsilon}(\alpha)g_{n,n-k}$
converge to a finite limit as $\epsilon\to0^{+}$and likewise $\lim_{\epsilon\to0^{+}}f_{-\frac{n+1}{2}}^{T}(h_{k,\epsilon}(\alpha)g_{n,n-k})$
is finite. We will show that $\lim_{\epsilon\to0^{+}}f_{-\frac{n+1}{2}}^{S}(h_{k,\epsilon}(\alpha)g_{n,n-k})$
is inifinite, implying $b=0$. \\
\\
Denote $H_{\epsilon}(\alpha)=h_{k,\epsilon}(\alpha)g_{n,n-k}(\alpha)$.
Write for an arbitrary function $H$ on $S^{1}$, 
\[
N(\alpha;H)=\frac{H(\alpha)-(H(\frac{\pi}{4})+\frac{1}{1!}H^{(1)}(\frac{\pi}{4})(\alpha-\frac{\pi}{4})+...+\frac{1}{m!}H^{(m)}(\frac{\pi}{4})(\alpha-\frac{\pi}{4})^{m})}{|\cos2\alpha|^{\frac{n+1}{2}}}
\]
where $m=\frac{n-2}{2}$. The integral 
\[
I(H_{\epsilon})=\int_{0}^{\frac{\pi}{4}}N(\alpha;H_{\epsilon})d\alpha
\]
equals $f_{-\frac{n+1}{2}}^{S}(h_{k,\epsilon}(\alpha)g_{n,n-k})$
up to summands corresponding to derivatives of $h_{k,\epsilon}(\alpha)g_{n,n-k}$
at the light cone, of order up to $m$. Those derivatives are uniformly
bounded as $\epsilon\to0^{+}$, since $h_{k,\epsilon}(\alpha)\to h_{k}(\alpha)$
in the $C^{m}(S^{1})$ topology by the remark following Proposition
\ref{Rem:Cm_convergence}).

We will show that $I(H_{\epsilon})$ diverges as $\epsilon\to0^{+}$.
Write
\[
I(H_{\epsilon})=\int_{0}^{\frac{\pi}{4}-\epsilon}\frac{h_{k,\epsilon}^{-}(\alpha)g_{n,n-k}(\alpha)-h_{k,\epsilon}^{+}(\alpha)g_{n,n-k}(\alpha)}{|\cos2\alpha|^{\frac{n+1}{2}}}d\alpha+\int_{0}^{\frac{\pi}{4}}N(\alpha;h_{k,\epsilon}^{+}(\alpha)g_{n,n-k}(\alpha))d\alpha
\]
Now the second integral is bounded (uniformly in $\epsilon)$, for
instance by $C|\int_{0}^{\frac{\pi}{4}}N(\alpha;h_{k}^{+}(\alpha)g_{n,n-k}(\alpha))d\alpha|$,
and the first summand is unbounded, exactly as in the case of odd
$n$ before . This concludes the proof.\end{proof}
\begin{prop}
(Non-existence of time-supported valuation with $k=n-2$) For $n$
even, $Cr(f_{n,n-2}^{T})$ is not a continuous valuation. \end{prop}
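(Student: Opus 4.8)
The plan is to contradict continuity by computing $\phi(C^{n})$, for $\phi:=Cr(f_{n,n-2}^{T})$, as the two one-sided limits $\lim_{\eps\to 0^{\pm}}\phi(C_{n,\eps})$ along the stretched double cones and showing that these disagree. Set $k=n-2$ (so $k\ge 2$, as $n\ge 4$ is even) and $\lambda=-\tfrac{n+1}{2}$. By Remark \ref{Remark evaluate valuation on stretched cone} one has $\phi(C_{n,\eps})=f_{\lambda}^{T}\big(h_{k,\eps}(\alpha)\,g_{n,n-k}(\alpha)\,d\alpha\big)$ for all $\eps\neq 0$, so continuity of $\phi$ would force $\lim_{\eps\to 0^{+}}\phi(C_{n,\eps})=\lim_{\eps\to 0^{-}}\phi(C_{n,\eps})=\phi(C^{n})$. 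The limit from the right is harmless: $f_{\lambda}^{T}=\cos_{-}^{\lambda}(2\alpha)$ is supported on the time-like locus $\alpha\in[\tfrac{\pi}{4},\tfrac{\pi}{2}]$, where $|\alpha|>\tfrac{\pi}{4}-\eps$ for $\eps>0$, so there $h_{k,\eps}$ equals the real-analytic function $h_{k,\eps}^{+}$, which is a scalar multiple of $\sin\alpha$ with the scalar tending to $1$ and hence converges in $C^{\infty}$ to $h_{k}^{+}$; therefore $\lim_{\eps\to 0^{+}}\phi(C_{n,\eps})=f_{\lambda}^{T}(h_{k}^{+}g_{n,n-k}\,d\alpha)$ is finite, as already recorded in the previous proposition and in Remark \ref{Remark smooth extension of support function}.

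The content is the limit $\eps\to 0^{-}$, where $\eta=\tan(\tfrac{\pi}{4}-|\eps|)<1$ and, on $[\tfrac{\pi}{4},\tfrac{\pi}{2}]$, $h_{k,\eps}$ equals $h_{k,\eps}^{-}$ on $[\tfrac{\pi}{4},\tfrac{\pi}{4}+|\eps|]$ and $h_{k,\eps}^{+}$ afterwards. I would split $H_{\eps}:=h_{k,\eps}g_{n,n-k}=h_{k,\eps}^{+}g_{n,n-k}+\rho_{\eps}$ on $[\tfrac{\pi}{4},\tfrac{\pi}{2}]$, where $\rho_{\eps}:=(h_{k,\eps}^{-}-h_{k,\eps}^{+})\,g_{n,n-k}\cdot\mathbf{1}_{[\pi/4,\,\pi/4+|\eps|]}$. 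The first summand again contributes $f_{\lambda}^{T}(h_{k,\eps}^{+}g_{n,n-k})\to f_{\lambda}^{T}(h_{k}^{+}g_{n,n-k})$, the same value as from the right, so the entire discrepancy is $\lim_{\eps\to 0^{-}}f_{\lambda}^{T}(\rho_{\eps})$, and it remains to show this limit exists and is non-zero. The key analytic input is equation \ref{eq:amazing_derivative}: integrating it backwards from $\tfrac{\pi}{4}+|\eps|$, where $h_{k,\eps}^{-}-h_{k,\eps}^{+}$ vanishes, together with the exact identity $1-\eta^{2}\tan^{2}(\tfrac{\pi}{4}+s)=\frac{4\sin(|\eps|-s)\cos(|\eps|+s)}{\cos^{2}(\tfrac{\pi}{4}+s)\cos^{2}(\tfrac{\pi}{4}-|\eps|)}$, yields on $[\tfrac{\pi}{4},\tfrac{\pi}{4}+|\eps|]$ the uniform asymptotics $(h_{k,\eps}^{-}-h_{k,\eps}^{+})(\tfrac{\pi}{4}+s)=C\,(|\eps|-s)^{(k+1)/2}\,(1+O(|\eps|))$ with an explicit $C\neq 0$. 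In particular $\rho_{\eps}^{(j)}(\tfrac{\pi}{4}^{+})=O(|\eps|^{(k+1)/2-j})\to 0$ for $0\le j\le m:=\tfrac{n-2}{2}$, so the $I_{j}(\lambda)$-boundary terms of the regularized pairing $f_{\lambda}^{T}(\rho_{\eps})$ vanish in the limit ($\lambda$ is not a pole); and after splitting the remaining regularized integral at $\tfrac{\pi}{4}+|\eps|$ and rescaling $s=|\eps|\tau$, every remaining integral scales like $|\eps|^{\lambda+m+3/2}=|\eps|^{0}$, so $\lim_{\eps\to 0^{-}}f_{\lambda}^{T}(\rho_{\eps})$ is a finite universal constant, expressible through Beta integrals and a binomial sum in $m$.

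I expect the main obstacle to be this last step: showing that the constant is non-zero, i.e.\ that the two one-sided limits genuinely disagree. I would attack it either by identifying the constant, via the rescaling $s=|\eps|\tau$, with the analytic continuation of a single non-vanishing Beta-type integral of the form $\int_{0}^{1}\tau^{\lambda+m}(1-\tau)^{(k+1)/2}\,d\tau$, or by a sign argument: by equation \ref{eq:amazing_derivative} together with $h_{k,\eps}^{-}(\tfrac{\pi}{4}+|\eps|)=h_{k,\eps}^{+}(\tfrac{\pi}{4}+|\eps|)$ one has $\rho_{\eps}\ge 0$, and the only sign-indefinite term in $f_{\lambda}^{T}(\rho_{\eps})$ is the subtracted Taylor polynomial integrated over $[\tfrac{\pi}{4}+|\eps|,\tfrac{\pi}{2}]$, which can be controlled term by term. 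Once non-vanishing is established the contradiction is complete, so $Cr(f_{n,n-2}^{T})$ is not a continuous valuation; combined with the two preceding propositions, this shows that for even $n$ there is no even $(n-2)$-homogeneous $SO^{+}(n-1,1)$-invariant continuous valuation.
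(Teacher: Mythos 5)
Your overall strategy coincides with the paper's: evaluate $\phi$ on the stretched double cones via Remark \ref{Remark evaluate valuation on stretched cone}, observe that the limit from $\eps\to0^{+}$ is finite, and derive a contradiction by showing the limit from $\eps\to0^{-}$ differs. Your organization of the $\eps\to0^{-}$ computation is different and in some ways cleaner: you isolate $\rho_{\eps}=(h_{k,\eps}^{-}-h_{k,\eps}^{+})g_{n,n-k}\One_{[\pi/4,\pi/4+|\eps|]}$ and rescale $s=|\eps|\tau$, whereas the paper factors $H_{\eps}=t(\alpha)u_{\eps}(\alpha)$ with $u_{\eps}=h_{k,\eps}/\sin\alpha$, invokes the jet Lemma \ref{lem:jets_technical} to reduce to $I(u_{\eps})$, and then performs $m$ successive integrations by parts. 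Your preliminary estimates are right: integrating \ref{eq:amazing_derivative} from the break point does give $(h_{k,\eps}^{-}-h_{k,\eps}^{+})(\tfrac{\pi}{4}+s)\asymp(|\eps|-s)^{(k+1)/2}$ (note $(k+1)/2=m+\tfrac12$), hence $\rho_{\eps}^{(j)}(\tfrac{\pi}{4})=O(|\eps|^{m+\frac12-j})\to0$ for $j\le m$, and the scaling exponent $\lambda+m+\tfrac32=0$ is exactly critical, so $f_{\lambda}^{T}(\rho_{\eps})=O(1)$.

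The genuine gap is the step you yourself flag: the non-vanishing of $\lim_{\eps\to0^{-}}f_{\lambda}^{T}(\rho_{\eps})$. This is not a technicality — it is the entire content of the proposition (everything before it only shows the two one-sided limits exist), and it is where the paper spends most of its proof, reducing by repeated integration by parts to the explicit quantity $-4+\int_{\pi/4}^{\pi/4-\eps}\big((1-\eta^{2}\tan^{2}\alpha)^{1/2}-(1-\eta^{2})^{1/2}\big)(\alpha-\tfrac{\pi}{4})^{-3/2}d\alpha$, whose integrand is manifestly non-positive. Of your two proposed attacks, the sign argument is the shakier one: after subtracting the Taylor polynomial $J_{m}(\alpha;\rho_{\eps},\tfrac{\pi}{4})$, whose coefficients $\rho_{\eps}^{(j)}(\tfrac{\pi}{4})/j!$ alternate in sign (they behave like $(-1)^{j}\binom{m+1/2}{j}|\eps|^{m+\frac12-j}$ up to positive factors), the regularized integrand on $[\tfrac{\pi}{4},\tfrac{\pi}{4}+|\eps|]$ and the tail $-\int_{\pi/4+|\eps|}^{\pi/2}J_{m}\,|\cos2\alpha|^{-m-\frac32}$ are not sign-definite, and "controlling it term by term" does not obviously produce a definite sign. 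The Beta-function route, by contrast, does close the gap: after the rescaling the limit is, up to a non-zero factor, the analytic continuation of $B(\mu,m+\tfrac32)=\int_{0}^{1}\tau^{\mu-1}(1-\tau)^{m+\frac12}d\tau$ at $\mu=-m-\tfrac12$, which equals $\Gamma(-m-\tfrac12)\Gamma(m+\tfrac32)=(-1)^{m+1}\pi\ne0$ by reflection; but to make this rigorous you must verify that the regularization built into $f_{\lambda}^{T}$ (Taylor subtraction of order $m$ at $\tfrac{\pi}{4}$ plus the $I_{j}(\lambda)$ terms) matches the standard continuation of the Beta integral after rescaling, and that the $O(|\eps|)$ errors from $g_{n,n-k}$, from $w(\alpha)$, and from the sub-leading terms in $\rho_{\eps}^{(j)}(\tfrac{\pi}{4})$ are uniformly negligible. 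As it stands, the proof is incomplete at its decisive point. (A minor slip: your exact identity for $1-\eta^{2}\tan^{2}(\tfrac{\pi}{4}+s)$ carries a spurious factor of $4$ in the numerator; this does not affect the argument.)
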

\begin{proof}
Denote $k=n-2$, $m=\frac{k}{2}=\frac{n}{2}-1$, and assume $\phi=Cr(f_{n,n-2}^{T})$
is a continuos valuation. As before $H_{\epsilon}(\alpha)=h_{k,\epsilon}g_{n,n-k}(\alpha)$.
By Remark \ref{Rem:Cm_convergence}, $h_{k,\epsilon}(\alpha)\to h_{k}(\alpha)$
as $\epsilon\to0$ in $C^{m}(S^{1})$.\\
Introduce the notations

\[
J_{j}(\alpha;H,\alpha_{0})=H(\alpha_{0})+\frac{1}{1!}H^{(1)}(\alpha_{0})(\alpha-\alpha_{0})+...+\frac{1}{j!}H^{(j)}(\alpha_{0})(\alpha-\alpha_{0})^{j}
\]
\[
N(\alpha;H,j)=\frac{H(\alpha)-J_{j}(\alpha;H,\frac{\pi}{4})}{|\cos2\alpha|^{j+\frac{3}{2}}}
\]
and 
\[
I(u)=\int_{\frac{\pi}{4}}^{\frac{\pi}{2}}N(\alpha;u,m)d\alpha
\]
Observe that $H_{\epsilon}\to H$ in $C^{m}(S^{1})$ as well, so all
the derivatives satisfy $H_{\epsilon}^{(j)}(\frac{\pi}{4})\to H^{(j)}(\frac{\pi}{4})$
for $j\leq m$ as $\epsilon\to0$. We will show that 
\[
\lim_{\epsilon\to0^{+}}f_{-\frac{n+1}{2}}^{T}(H_{\epsilon})\neq\lim_{\epsilon\to0^{-}}f_{-\frac{n+1}{2}}^{T}(H_{\epsilon})
\]
Equivalently, due to $C^{m}$ convergence, we will show that $I(H_{\epsilon})$
has different one-sided limits. \\
Denote 
\[
u_{\epsilon}(\alpha)=\frac{h_{k,\epsilon}(\alpha)}{\sin\alpha}
\]
Recall that 
\[
u_{\epsilon}(\alpha)=\Bigg\{\begin{array}{c}
A_{k}\eta,\alpha\geq\frac{\pi}{4}-\epsilon\\
A_{k}\eta-2\eta\int_{\eta\tan\alpha}^{1}(1-t^{2})^{\frac{k-3}{2}}dt+\frac{2}{k-1}\cot\alpha(1-\eta^{2}\tan^{2}\alpha)^{\frac{k-1}{2}},0\leq\alpha\leq\frac{\pi}{4}-\epsilon
\end{array}
\]
where $A_{k}=\int_{-\pi/2}^{\pi/2}\cos^{k-2}\phi d\phi$, implying
\[
\lim_{\epsilon\to0^{+}}I(u_{\epsilon})=\lim_{\epsilon\to0^{+}}I(A_{k}\eta)=0
\]
Now write $H_{\epsilon}=t(\alpha)u_{\epsilon}(\alpha)$ where $t(\alpha)=g_{n,n-k}(\alpha)\sin\alpha$.
According to Lemma \ref{lem:jets_technical}, we may write 
\[
H(\alpha)-J_{j}(\alpha;H,\frac{\pi}{4})=t(\frac{\pi}{4})(u_{\epsilon}(x)-J_{m}(\alpha;u_{\epsilon},\frac{\pi}{4}))+u_{\epsilon}(\alpha)R_{m+1}(\alpha)+O(C_{\epsilon}|\alpha-\frac{\pi}{4}|^{m+1})
\]
where $R_{m+1}(\alpha)=t(\alpha)-J_{m}(\alpha;t,\frac{\pi}{4})$,
and the constant $C_{\epsilon}$ in the error term is bounded by 
\[
C_{m}\sup_{1\leq j\leq m}|u_{\epsilon}^{(j)}|
\]
with 
\[
C_{m}=m\sup_{0\leq j\leq m+1}|(g_{n,n-k}(\alpha)\sin\alpha)^{(j)}|
\]
where everywhere $\alpha\in[0,\frac{\pi}{2}]$. By the convergence
of $u_{\epsilon}(\alpha)\to A_{k}$ in $C^{m}[\frac{\pi}{4},\frac{\pi}{2}]$,
we conclude that $C_{\epsilon}\to0$ as $\epsilon\to0$.\\
Since $|R_{m+1}(\alpha)|\leq C|\alpha-\frac{\pi}{4}|^{m+1}$, and
$u_{\epsilon}$ converges in $C[\frac{\pi}{4},\frac{\pi}{2}]$, the
integral 
\[
\int_{\frac{\pi}{4}}^{\frac{\pi}{2}}\frac{u_{\epsilon}(\alpha)R_{m+1}(\alpha)}{|\cos2\alpha|^{m+\frac{3}{2}}}d\alpha
\]
has a limit as $\epsilon\to0$. Also, the integral 
\[
\int_{\frac{\pi}{4}}^{\frac{\pi}{2}}\frac{O(C_{\epsilon}|\alpha-\frac{\pi}{4}|^{m+1})}{|\cos2\alpha|^{m+\frac{3}{2}}}d\alpha
\]
converges to $0$ as $\epsilon\to0$. We conclude that $I(H_{\epsilon})-t(\frac{\pi}{4})I(u_{\epsilon})$
converges, and thus it suffices to show that the functional $I(u_{\epsilon})$
has different one-sided limits. We will verify that
\[
\lim_{\epsilon\to0^{-}}I(u_{\epsilon})\neq0
\]
From now on $\epsilon<0$. We will use the approximations 
\[
\eta=\tan(\frac{\pi}{4}+\epsilon)=1+2\epsilon+O(\epsilon^{2})
\]
\[
1-\eta^{4}=-8\epsilon+O(\epsilon^{2})
\]
\[
(1-\eta^{2})^{\frac{1}{2}}=(-4\epsilon+O(\epsilon^{2}))^{\frac{1}{2}}=2|\epsilon|^{\frac{1}{2}}+O(|\epsilon|)
\]
Then for $\alpha<\frac{\pi}{4}-\epsilon$,
\[
u_{\epsilon}'(\alpha)=-\frac{2}{k-1}(1-\eta^{2}\tan^{2}\alpha)^{m-\frac{1}{2}}\frac{1}{\sin^{2}\alpha}
\]
It follows by induction that for $\alpha\in(\frac{\pi}{4}+\epsilon,\frac{\pi}{4}-\epsilon)$
and $j\geq1$, 
\[
u_{\epsilon}^{(j)}(\alpha)=(-1)^{j}\frac{2}{k-1}\frac{1}{\sin^{2}\alpha}\frac{1}{2^{j-1}}\frac{(2m-1)!!}{(2m-2j+1)!!}\eta^{2j-2}(1-\eta^{2}\tan^{2}\alpha)^{m+\frac{1}{2}-j}\Bigg(\frac{2\tan\alpha}{\cos^{2}\alpha}\Bigg)^{j-1}+
\]
\[
+O\Big((1-\eta^{2}\tan^{2}\alpha)^{m+\frac{3}{2}-j}\Big)=
\]
\[
=(-1)^{j}\frac{2}{k-1}\frac{(2m-1)!!}{(2m-2j+1)!!}\eta^{2j-2}\frac{\sin^{j-3}\alpha}{\cos^{3j-3}\alpha}(1-\eta^{2}\tan^{2}\alpha)^{m+\frac{1}{2}-j}+O\Big((1-\eta^{2}\tan^{2}\alpha)^{m+\frac{3}{2}-j}\Big)
\]
in particular, for $1\leq j\leq m$ and $\alpha\in(\frac{\pi}{4}+\epsilon,\frac{\pi}{4}-\epsilon)$,
$|1-\eta^{2}\tan^{2}\alpha|=O(|\epsilon|)$ so 
\begin{equation}
|u_{\epsilon}^{(j)}(\alpha)|=O(|\eps|^{m+\frac{1}{2}-j})\label{eq:bound on derivative}
\end{equation}
It therefore also holds that 
\begin{equation}
|u_{\epsilon}(\frac{\pi}{4})-A_{k}\eta|=|u_{\epsilon}(\frac{\pi}{4})-u_{\eps}(\frac{\pi}{4}-\epsilon)|=O(|\eps|^{m+\frac{1}{2}})\label{eq:bound on 0 derivative}
\end{equation}
Write 
\begin{equation}
I(u_{\epsilon})=\int_{\frac{\pi}{4}-\epsilon}^{\frac{\pi}{2}}\frac{A_{k}\eta-J_{m}(\alpha;u_{\epsilon},\frac{\pi}{4})}{(\alpha-\frac{\pi}{4})^{m+\frac{3}{2}}}w(\alpha)d\alpha+\int_{\frac{\pi}{4}}^{\frac{\pi}{4}-\epsilon}\frac{u_{\epsilon}(\alpha)-J_{m}(\alpha;u_{\epsilon},\frac{\pi}{4})}{(\alpha-\frac{\pi}{4}){}^{m+\frac{3}{2}}}w(\alpha)d\alpha\label{eq:I_epsilon}
\end{equation}
where 
\[
w(\alpha)=\frac{|\alpha-\frac{\pi}{4}|^{m+\frac{3}{2}}}{|\cos2\alpha|^{m+\frac{3}{2}}}
\]
is a $C^{\infty}$, strictly positive function in $[0,\frac{\pi}{2}]$.
Now integrate by parts: we integrate the denominator and differentiate
the numerator.
\[
\int_{\frac{\pi}{4}-\epsilon}^{\frac{\pi}{2}}\frac{A_{k}\eta-J_{m}(\alpha;u_{\epsilon},\frac{\pi}{4})}{(\alpha-\frac{\pi}{4})^{m+\frac{3}{2}}}w(\alpha)d\alpha=-\frac{1}{m+\frac{1}{2}}\frac{A_{k}\eta-J_{m}(\frac{\pi}{2};u_{\epsilon},\frac{\pi}{4})}{(\frac{\pi}{4})^{m+\frac{1}{2}}}w(\frac{\pi}{2})
\]
\[
+\frac{1}{m+\frac{1}{2}}\frac{A_{k}\eta-J_{m}(\frac{\pi}{4}-\epsilon;u_{\epsilon},\frac{\pi}{4})}{|\epsilon|^{m+\frac{1}{2}}}w(\frac{\pi}{4}-\epsilon)+\frac{1}{m+\frac{1}{2}}\int_{\frac{\pi}{4}-\epsilon}^{\frac{\pi}{2}}\frac{J_{m-1}(\alpha;-u_{\epsilon}',\frac{\pi}{4})}{(\alpha-\frac{\pi}{4})^{m+\frac{1}{2}}}w(\alpha)d\alpha+
\]
\[
+\frac{1}{m+\frac{1}{2}}\int_{\frac{\pi}{4}-\epsilon}^{\frac{\pi}{2}}\frac{A_{k}\eta-J_{m}(\alpha;u_{\epsilon},\frac{\pi}{4})}{(\alpha-\frac{\pi}{4})^{m+\frac{1}{2}}}w'(\alpha)d\alpha
\]
the first summand is $o(1)$ as $\epsilon\to0^{-}$, since $u_{\epsilon}\to A_{k}\in C^{m}[\frac{\pi}{4},\frac{\pi}{2}]$,
so $J_{m}(\frac{\pi}{2};u_{\epsilon},\frac{\pi}{4})\to J_{m}(\frac{\pi}{2};A_{k},\frac{\pi}{4})=A_{k}$
as $\epsilon\to0^{-}$. Let us verify that the last summand is also
$o(1)$. Indeed 
\[
\Bigg|\int_{\frac{\pi}{4}-\epsilon}^{\frac{\pi}{2}}\frac{A_{k}\eta-J_{m}(\alpha;u_{\epsilon},\frac{\pi}{4})}{(\alpha-\frac{\pi}{4})^{m+\frac{1}{2}}}w'(\alpha)d\alpha\Bigg|\leq C\int_{\frac{\pi}{4}-\epsilon}^{\frac{\pi}{2}}\Bigg(\frac{|A_{k}\eta-u_{\epsilon}(\frac{\pi}{4})|}{(\alpha-\frac{\pi}{4})^{m+\frac{1}{2}}}+\frac{1}{j!}\sum_{j=1}^{m}\frac{|u_{\epsilon}^{(j)}(\frac{\pi}{4})|}{(\alpha-\frac{\pi}{4})^{m+\frac{1}{2}-j}}\Bigg)d\alpha
\]
This can be integrated explicitly. The terms corresponding to $\frac{\pi}{2}$
are all $o(1)$ again since $u_{\epsilon}\to A_{k}\in C^{m}[\frac{\pi}{4},\frac{\pi}{2}]$,
while the terms corresponding to $\frac{\pi}{4}-\epsilon$ are all
$O(|\epsilon|)$ by estimates \ref{eq:bound on derivative} and \ref{eq:bound on 0 derivative}.
Therefore,
\[
\int_{\frac{\pi}{4}-\epsilon}^{\frac{\pi}{2}}\frac{A_{k}\eta-J_{m}(\alpha;u_{\epsilon},\frac{\pi}{4})}{(\alpha-\frac{\pi}{4})^{m+\frac{3}{2}}}w(\alpha)d\alpha=
\]
\[
=\frac{1}{m+\frac{1}{2}}\Bigg(\frac{A_{k}\eta-J_{m}(\frac{\pi}{4}-\epsilon;u_{\epsilon},\frac{\pi}{4})}{|\epsilon|^{m+\frac{1}{2}}}w(\frac{\pi}{4}-\epsilon)+\int_{\frac{\pi}{4}-\epsilon}^{\frac{\pi}{2}}\frac{J_{m-1}(\alpha;-u_{\epsilon}',\frac{\pi}{4})}{(\alpha-\frac{\pi}{4})^{m+\frac{1}{2}}}w(\alpha)d\alpha\Bigg)+o(1)
\]
Similarly, we may integrate by parts the second summand of $I(u_{\epsilon})$
in equation \ref{eq:I_epsilon} as follows: 
\[
\int_{\frac{\pi}{4}}^{\frac{\pi}{4}-\epsilon}\frac{u_{\epsilon}(\alpha)-J_{m}(\alpha;u_{\epsilon},\frac{\pi}{4})}{(\alpha-\frac{\pi}{4}){}^{m+\frac{3}{2}}}w(\alpha)d\alpha=-\frac{1}{m+\frac{1}{2}}\frac{A_{k}\eta-J_{m}(\frac{\pi}{4}-\epsilon;u_{\epsilon},\frac{\pi}{4})}{|\epsilon|^{m+\frac{1}{2}}}w(\frac{\pi}{4}-\epsilon)+
\]
\[
+\frac{1}{m+\frac{1}{2}}\int_{\frac{\pi}{4}}^{\frac{\pi}{4}-\epsilon}\frac{u_{\epsilon}'(\alpha)+J_{m-1}(\alpha;-u_{\epsilon}',\frac{\pi}{4})}{(\alpha-\frac{\pi}{4})^{m+\frac{1}{2}}}w(\alpha)d\alpha
\]
the $\frac{\pi}{4}$-boundary term vanishes since $u_{\epsilon}$
is $C^{\infty}$ near $\frac{\pi}{4}$. Thus
\[
I(u_{\epsilon})=\frac{1}{m+\frac{1}{2}}\Bigg(\int_{\frac{\pi}{4}-\epsilon}^{\frac{\pi}{2}}\frac{J_{m-1}(\alpha;-u_{\epsilon}',\frac{\pi}{4})}{(\alpha-\frac{\pi}{4})^{m+\frac{1}{2}}}w(\alpha)d\alpha+\int_{\frac{\pi}{4}}^{\frac{\pi}{4}-\epsilon}\frac{u_{\epsilon}'(\alpha)-J_{m-1}(\alpha;u_{\epsilon}',\frac{\pi}{4})}{(\alpha-\frac{\pi}{4})^{m+\frac{1}{2}}}w(\alpha)d\alpha\Bigg)+o(1)
\]
so we should show that the expression in the brackets does not vanish
as $\epsilon\to0^{-}$. Repeatedly applying integration by parts as
we did for equation \ref{eq:I_epsilon}, we end up having to show
that
\[
J(\epsilon)=\int_{\frac{\pi}{4}-\epsilon}^{\frac{\pi}{2}}\frac{-u_{\epsilon}^{(m)}(\frac{\pi}{4})}{(\alpha-\frac{\pi}{4})^{\frac{3}{2}}}w(\alpha)d\alpha+\int_{\frac{\pi}{4}}^{\frac{\pi}{4}-\epsilon}\frac{u_{\epsilon}^{(m)}(\alpha)-u_{\epsilon}^{(m)}(\frac{\pi}{4})}{(\alpha-\frac{\pi}{4})^{\frac{3}{2}}}w(\alpha)d\alpha
\]
does not converge to $0$ as $\epsilon\to0^{-}$. \\
\\
Recall that
\[
u_{\epsilon}^{(m)}(\alpha)=(-1)^{m}\frac{2}{k-1}(2m-1)!!\eta{}^{2m-2}(1-\eta^{2}\tan^{2}\alpha)^{\frac{1}{2}}\frac{\sin^{m-3}\alpha}{\cos^{3m-3}\alpha}+O\Big((1-\eta^{2}\tan^{2}\alpha)^{3/2}\Big)
\]
in particular, 
\[
u_{\epsilon}^{(m)}(\frac{\pi}{4})=(-1)^{m}\frac{2}{k-1}(2m-1)!!(1-\eta^{2})^{\frac{1}{2}}\eta^{2m-2}2^{m}+O(|\epsilon|^{3/2})=
\]
\[
=(-1)^{m}\frac{2}{k-1}(2m-1)!!\eta^{2m-2}2^{m+1}|\epsilon|^{1/2}+O(|\epsilon|^{3/2})
\]
We will also need the finer estimate
\[
u_{\epsilon}^{(m)}(\alpha)-u_{\epsilon}^{(m)}(\frac{\pi}{4})=(-1)^{m}\frac{2}{k-1}(2m-1)!!\eta^{2m-2}\Big((1-\eta^{2}\tan^{2}\alpha)^{\frac{1}{2}}\frac{\sin^{m-3}\alpha}{\cos^{3m-3}\alpha}-(1-\eta^{2})^{\frac{1}{2}}2{}^{m}\Big)+
\]
\[
+O\Big(\alpha-\frac{\pi}{4}\Big)
\]
which is obtained by writing 
\[
u_{\epsilon}^{(m)}(\alpha)=(-1)^{m}\frac{2}{k-1}(2m-1)!!\eta{}^{2m-2}(1-\eta^{2}\tan^{2}\alpha)^{\frac{1}{2}}\frac{\sin^{m-3}\alpha}{\cos^{3m-3}\alpha}+s_{\epsilon}(\alpha)(1-\eta^{2}\tan^{2}\alpha)^{3/2}
\]
where $s_{\epsilon}(\alpha)\in C^{1}(\frac{\pi}{5},\frac{\pi}{3})$
is uniformly bounded in $C^{1}(\frac{\pi}{5},\frac{\pi}{3})$. Then
the error term in $u_{\epsilon}^{(m)}(\alpha)-u_{\epsilon}^{(m)}(\frac{\pi}{4})$
is easily seen to equal
\[
+O\Big((1-\eta^{2})^{3/2}-(1-\eta^{2}\tan^{2}\alpha)^{3/2}\Big)+O\Big(\alpha-\frac{\pi}{4}\Big)
\]
and since $(1-\eta^{2}\tan^{2}\alpha)^{3/2}$ is $C^{1}(\frac{\pi}{5},\frac{\pi}{3})$
and uniformly bounded, one has 
\[
(1-\eta^{2})^{3/2}-(1-\eta^{2}\tan^{2}\alpha)^{3/2}=O\Big(\alpha-\frac{\pi}{4}\Big)
\]
Integrating the first summand of $J(\epsilon)$ by parts, we get that
\[
\int_{\frac{\pi}{4}-\epsilon}^{\frac{\pi}{2}}\frac{-u_{\epsilon}^{(m)}(\frac{\pi}{4})}{(\alpha-\frac{\pi}{4})^{\frac{3}{2}}}w(\alpha)d\alpha=-u_{\epsilon}^{(m)}(\frac{\pi}{4})\frac{2}{|\epsilon|^{\frac{1}{2}}}w(\frac{\pi}{4})+o(1)
\]
\[
=(-1)^{m+1}\frac{2}{k-1}(2m-1)!!\eta^{2m-2}w(\frac{\pi}{4})\Big(2^{m+2}+o(1)\Big)
\]
 while 
\[
\int_{\frac{\pi}{4}}^{\frac{\pi}{4}-\epsilon}\frac{u_{\epsilon}^{(m)}(\alpha)-u_{\epsilon}^{(m)}(\frac{\pi}{4})}{(\alpha-\frac{\pi}{4})^{\frac{3}{2}}}w(\alpha)d\alpha=
\]
\[
=\int_{\frac{\pi}{4}}^{\frac{\pi}{4}-\epsilon}\frac{(-1)^{m}\frac{2}{k-1}(2m-1)!!\eta^{2m-2}\Big((1-\eta^{2}\tan^{2}\alpha)^{\frac{1}{2}}\frac{\sin^{m-3}\alpha}{\cos^{3m-3}\alpha}-(1-\eta^{2})^{\frac{1}{2}}2{}^{m}\Big)}{(\alpha-\frac{\pi}{4})^{\frac{3}{2}}}w(\alpha)d\alpha+o(1)=
\]
\[
=(-1)^{m}\frac{2}{k-1}(2m-1)!!\eta^{2m-2}w(\frac{\pi}{4})\int_{\frac{\pi}{4}}^{\frac{\pi}{4}-\epsilon}\frac{(1-\eta^{2}\tan^{2}\alpha)^{\frac{1}{2}}\frac{\sin^{m-3}\alpha}{\cos^{3m-3}\alpha}-(1-\eta^{2})^{\frac{1}{2}}2{}^{m}}{(\alpha-\frac{\pi}{4})^{\frac{3}{2}}}d\alpha+o(1)
\]
So it remains to show that 
\[
-2^{m+2}+\int_{\frac{\pi}{4}}^{\frac{\pi}{4}-\epsilon}\frac{(1-\eta^{2}\tan^{2}\alpha)^{\frac{1}{2}}\frac{\sin^{m-3}\alpha}{\cos^{3m-3}\alpha}-(1-\eta^{2})^{\frac{1}{2}}2{}^{m}}{(\alpha-\frac{\pi}{4})^{\frac{3}{2}}}d\alpha\nrightarrow0
\]
Since 
\[
\frac{\sin^{m-3}\alpha}{\cos^{3m-3}\alpha}=2^{m}+O(\alpha-\frac{\pi}{4})
\]
this boils down to
\[
-4+\int_{\frac{\pi}{4}}^{\frac{\pi}{4}-\epsilon}\frac{(1-\eta^{2}\tan^{2}\alpha)^{\frac{1}{2}}-(1-\eta^{2})^{\frac{1}{2}}}{(\alpha-\frac{\pi}{4})^{\frac{3}{2}}}d\alpha\nrightarrow0
\]
The integral is non-positive. This concludes the proof. 
\end{proof}

\section{Applications}

Recently in \cite{Parapatis-Wennerer}, some negative results on continuity
properties of classical constructions in the theory of valuations
were proved. We will now explain how some of those results can be
seen immediately from the classification of Lorentz-invariant valuations.

\subsection{The image of the Klain imbedding is not closed}

Denote by $\phi_{n,k}^{\pm}\in Val_{k}^{ev,-\infty}(V)^{G}$ the two
independent generalized valuations that we found. The generalized
Klain sections $Kl(\phi_{n,k}^{\pm})\in\Gamma(K^{n,k})$ for $1\leq k\leq n-2$
are in fact continuous sections of the Klain bundle, that do not correspond
to a continuous valuation. They do belong to the closure (in the $C^{0}$
topology) of the image of the Klain imbedding on continuous valuations.

\subsection{The Fourier transform does not extend to continuous valuations}

The Fourier transform on smooth even valuations extends to the space
of generalized smooth valuations by self-adjointness (see \cite{bernig-fu-convolution}):
For $\phi\in Val_{k}^{ev,-\infty}(V)$, we define $\F\phi\in Val_{n-k}^{ev,-\infty}(V^{*})\otimes D(V)$
by letting for all $\psi\in Val_{k}^{ev,\infty}(V^{*})$ 
\[
\langle\F\phi,\psi\rangle=\langle\phi,\F\psi\rangle
\]
It is a $GL(V)$-equivariant involution (in the sense that $(\F_{V^{*}}\otimes Id)\circ\F_{V}=Id$).
Restricting to $G=SO^{+}(n-1,1)$, we get a $G$-equivariant involution
\[
\F:Val_{k}^{ev,-\infty}(V)\to Val_{n-k}^{ev,-\infty}(V)
\]
which restricts to the usual ($G$-equivariant) Fourier transform
on smooth even valuations. \\
Let $\phi_{n,n-1}^{\pm}\in Val_{n-1}^{ev}(V)^{G}$ be the cone-symmetric
and cone-antisymmetric continuous valuations that we found. It follows
by equivariance that 
\[
\F(\phi_{n,n-1}^{\pm})\in Val_{1}^{ev,-\infty}(V)^{G}
\]
Since $Val_{1}^{ev,-\infty}(V)^{G}$ contains no non-trivial continuous
valuations when $n\geq3$ , it follows that the Fourier transform
does not extend by continuity to continuous valuations for $n\geq3$.

\appendix

\section{A technical lemma}

We denote by $J^{m}(x;f,a)$ the Taylor polynomial of order $m$ for
the function $f$ around $a$.
\begin{lem}
\label{lem:jets_technical}For $w\in C^{\infty}(\R)$ and $h\in C^{m}(\R)$
it holds in any fixed compact interval $I$ around $0$ that 
\[
w(x)h(x)-J_{m}(x;wh,0)=w(0)(h(x)-J_{m}(x;h,0))+h(x)R_{m+1}(x)+O(|x|^{m+1})
\]
as $x\to0$, where $R_{m+1}(x)=w(x)-J_{m}(x;w,0)$. More precisely,
if $|h^{(j)}(x)|\leq H_{j}$ for all $x\in I$ and $0\leq j\leq m$
and $|w^{(j)}(x)|\leq W$ for all $x\in I$ and $j\leq m+1$ then
$O(|x|^{m+1})\leq C_{m,I}(H_{m}+...+H_{1})W|x|^{m+1}$.\end{lem}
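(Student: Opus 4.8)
The plan is to reduce the identity to the multiplicativity of Taylor jets together with a single algebraic cancellation carried out at the level of functions, after which the claimed remainder is visibly a sum of two terms whose size is controlled by elementary one-variable Taylor estimates. Write $P(x)=J_m(x;w,0)$, $T(x)=J_m(x;h,0)$, $R(x)=R_{m+1}(x)=w(x)-P(x)$, and $\rho(x)=h(x)-J_m(x;h,0)=h(x)-T(x)$. At the outset I would record two standard estimates on the compact interval $I$: first, $R$ is $C^{m+1}$ (in fact $C^\infty$) with $R^{(j)}(0)=0$ for $0\le j\le m$, and Lagrange's remainder gives $R(x)=\frac{w^{(m+1)}(\eta_x)}{(m+1)!}x^{m+1}$, hence $|R(x)|\le\frac{W}{(m+1)!}|x|^{m+1}$; second, $\rho\in C^m$ with $\rho^{(j)}(0)=0$ for $0\le j\le m$, and applying Lagrange at order $m-1$ gives $\rho(x)=\frac{h^{(m)}(\xi_x)-h^{(m)}(0)}{m!}x^m$, hence $|\rho(x)|\le\frac{2H_m}{m!}|x|^m$. (The degenerate case $m=0$ is trivial: both sides of the asserted identity equal $w(x)h(x)-w(0)h(0)$, and it can be disposed of first.)

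Next I would expand $w(x)h(x)=(P(x)+R(x))(T(x)+\rho(x))=P(x)T(x)+P(x)\rho(x)+R(x)T(x)+R(x)\rho(x)$. Each of $P\rho$, $RT$, $R\rho$ is a product of two $C^m$ functions, one factor of which has vanishing derivatives up to order $m$ at $0$; by the Leibniz rule the product has the same property, so its $m$-jet at $0$ vanishes. Hence $J_m(x;wh,0)=J_m(x;PT,0)$, and since $PT$ is a polynomial of degree at most $2m$ we get
\[
w(x)h(x)-J_m(x;wh,0)=\bigl(P(x)T(x)-J_m(x;PT,0)\bigr)+P(x)\rho(x)+R(x)T(x)+R(x)\rho(x).
\]
Now subtract the target $w(0)\rho(x)+h(x)R(x)$. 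Using $w(0)=P(0)$ and $h=T+\rho$, the last three summands together with $-w(0)\rho-hR$ regroup as $(P(x)-P(0))\rho(x)+R(x)\bigl(T(x)+\rho(x)-h(x)\bigr)$, and the second term vanishes identically. Thus the entire error reduces to $\bigl(P(x)T(x)-J_m(x;PT,0)\bigr)+(P(x)-P(0))\rho(x)$.

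It remains to estimate these two terms, which is where the constants are tracked. The coefficient of $x^\ell$ in $PT$ is $a_\ell=\sum_i p_i t_{\ell-i}$ with $p_i=w^{(i)}(0)/i!$ and $t_i=h^{(i)}(0)/i!$; for $\ell\ge m+1$ the summation index necessarily satisfies $1\le i\le m$ and $1\le\ell-i\le m$, so $|a_\ell|\le C_m\,W(H_1+\dots+H_m)$, and since $P(x)T(x)-J_m(x;PT,0)=\sum_{\ell=m+1}^{2m}a_\ell x^\ell$, on $I$ this is bounded by $C_{m,I}\,W(H_1+\dots+H_m)|x|^{m+1}$. For the remaining term, $|P(x)-P(0)|=\bigl|\sum_{i=1}^m p_i x^i\bigr|\le C_{m,I}\,W|x|$ on $I$, and multiplying by $|\rho(x)|\le\frac{2H_m}{m!}|x|^m$ gives a bound $C_{m,I}\,W\,H_m|x|^{m+1}$. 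Adding the two estimates produces the stated bound $C_{m,I}(H_m+\dots+H_1)W|x|^{m+1}$, and in particular shows the error is $O(|x|^{m+1})$ as $x\to0$.

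The argument is essentially bookkeeping, and the only genuine subtlety — what I would flag as the main point to get right — is the asymmetry in regularity: $h$ is only $C^m$, so one can never form or compare $(m+1)$-jets of anything involving $h$, and in particular one must \emph{not} attempt to Taylor-expand $wh$ to order $m+1$ directly. The gain of one order in the remainder must be extracted entirely from the smooth factor $w$, through $R$ and through $P-P(0)$; correspondingly, the cancellation $R\,(T+\rho-h)\equiv0$ is performed as an identity of functions, which is exactly what keeps the regularity demands on $h$ within what is assumed.
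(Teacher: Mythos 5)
Your proof is correct and follows essentially the same route as the paper's: both arguments are Taylor-jet bookkeeping that reduce the error to exactly the two terms $P(x)T(x)-J_m(x;PT,0)$ and $(P(x)-P(0))\rho(x)$, estimated in the same way (the degree-$>m$ tail of the product of the two jets, and an $O(W|x|)\cdot O(H_m|x|^m)$ product). Your organization via the fourfold expansion $wh=PT+P\rho+RT+R\rho$ and the exact cancellation $R\,(T+\rho-h)\equiv 0$ is a somewhat cleaner presentation of what the paper does by successive substitution, making explicit the identity $J_m(x;wh,0)=J_m(x;PT,0)$ that the paper uses only implicitly.
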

\begin{proof}
Write $J_{m}(f)$ for $J_{m}(x;f,0)$. Then 
\[
h=J_{m}(h)+e_{1}(x)
\]
\[
w-w(0)=J_{m}(w-w(0))+e_{2}(x)
\]
where 
\[
|e_{1}(x)|\leq c_{m,I}H_{m}|x|^{m}
\]
\[
|e_{2}(x)|\leq c'_{m,I}W|x|^{m+1}
\]
so
\[
wh=(w-w(0))h+w(0)h=J_{m}(w-w(0))h+w(0)h+hR_{m+1}=
\]
\[
=J_{m}(w-w(0))(J_{m}(h)+O(H_{m}|x|^{m}))+w(0)h+hR_{m+1}=
\]
\[
=J_{m}(w-w(0))J_{m}(h)+w(0)h+O\Big(H_{m}W|x|^{m+1}\Big)+hR_{m+1}
\]
the last equality since $J_{m}(w-w(0))=O(W_{1}|x|)$. Note that 
\[
J_{m}(w-w(0))J_{m}(h)=J_{m}((w-w(0))h)+O\Big((H_{m}+...+H_{1})W|x|^{m+1}\Big)=
\]
\[
=J_{m}(wh)-w(0)J_{m}(h)+O\Big((H_{m}+...+H_{1})W|x|^{m+1}\Big)
\]
so
\[
wh=J_{m}(wh)-w(0)J_{m}(h)+w(0)h+O((H_{m}+...+H_{1})W|x|^{m+1})+hR_{m+1}
\]
as claimed.\end{proof}

\end{document}